\DeclareSymbolFont{cyrletters}{OT2}{wncyr}{m}{n}
\DeclareMathSymbol{\Beh}{\mathalpha}{cyrletters}{"42}
\DeclareMathSymbol{\beh}{\mathalpha}{cyrletters}{"62}
\DeclareMathSymbol{\Sha}{\mathalpha}{cyrletters}{"58}
\newcommand{\N}{\mathbb{N}}
\newcommand{\Z}{\mathbb{Z}}
\newcommand{\Q}{\mathbb{Q}}
\newcommand{\C}{\mathbb{C}}
\newcommand{\F}{\mathbb{F}}
\newcommand{\G}{\mathbb{G}}
\newcommand{\A}{\mathbb{A}}
\newcommand{\Adele}{\mathbf{A}}
\newcommand{\Proj}{\mathbb{P}}
\newcommand{\Frob}{\mathrm{Frob}}
\newcommand{\Hur}{\mathrm{Hur}}
\newcommand{\HurStack}{\mathcal{H}\mathrm{ur}}
\newcommand{\CHur}{\mathrm{CHur}}
\newcommand{\Conf}{\mathrm{Conf}}
\DeclareMathOperator{\Hom}{Hom}
\DeclareMathOperator{\Aut}{Aut}
\DeclareMathOperator{\Gal}{Gal}
\DeclareMathOperator{\Spec}{Spec}
\DeclareMathOperator{\Br}{Br}
\DeclareMathOperator{\HH}{H}
\numberwithin{equation}{section}
\newtheorem{theorem}{Theorem}[section]
\newtheorem{proposition}[theorem]{Proposition}
\newtheorem{corollary}[theorem]{Corollary}
\newtheorem{lemma}[theorem]{Lemma}
\theoremstyle{definition}
\newtheorem{definition}[theorem]{Definition}
\newtheorem{remark}[theorem]{Remark}
\newtheorem{construction}[theorem]{Construction}
\title{The leading constant in Malle's conjecture over function fields}
\author{Tim Santens}
\address{
	University of Cambridge \\ 
	DPMMS \\
	Centre for Mathematical Sciences\\
	Wilberforce Road \\
	Cambridge \\
	CB3 0WB \\ UK}
\email{Tim.Santens@cam.ac.uk}
\begin{document}

\begin{abstract}
    We prove the analogue of Malle's conjecture for the global function field $\F_q(t)$ with $q$ sufficiently large, including a precise formula for the leading constant. The main ingredients are the recent breakthrough of Landesman--Levy on the stable homology of Hurwitz spaces, a novel interpretation of the Frobenius fixed components of Hurwitz spaces in terms of the Brauer group of the stack $BG$ and an interpretation of the number of $\F_q$ points of configuration spaces as a certain Tamagawa volume.
\end{abstract}
\subjclass[2020]
{14G05 (primary), 
11R58,
14D23, 
14F22 
(secondary).}
\maketitle
\tableofcontents
\section{Introduction}
Let $G$ be a finite group. Based on numerical data Malle \cite{Malle2002Galois, Malle2004Galois} made a prediction on the asymptotic distribution of $G$-extensions of
$\Q$ of bounded discriminant. These predictions have been generalized from $\Q$ to general global fields and to more general ways of enumerating $G$-extensions than the discriminant.

For $\Q$ and other number fields, Malle's conjecture is known only for a limited class of groups. However, over the global function field $\F_q(t)$, the seminal work of Ellenberg--Venkatesh--Westerland \cite{Ellenberg2005Counting, Ellenberg2016Homological, ellenberg2013homologicalstabilityhurwitzspaces} showed that Malle's conjecture (among other questions) can be studied using the topology of Hurwitz spaces. At least as long as $q$ is sufficiently large in terms of $G$.

This topological approach has been significantly refined since then. For example Ellenberg--Tran--Westerland \cite{ellenberg2023foxneuwirthfukscellsquantumshuffle} were able to show a weak form of Malle's conjecture for all groups $G$. Very recently, Landesman and Levy \cite{landesman2025cohenlenstramomentsfunctionfields,Landesman2025Homological,landesman2025stablehomologyhurwitzmodules} achieved a major breakthrough in the homological stability of Hurwitz spaces, which allowed them to prove the order of magnitude version of Malle's conjecture over $\F_q(t)$ for all groups $G$.

In another research direction, the author and Loughran \cite{Loughran2025mallesconjecturebrauergroups} recently proposed a prediction of the leading constant in Malle's conjecture for number fields. The version of Malle's conjecture that Landesman and Levy prove only provides the upper and lower bounds. The goal of this paper is to use their powerful topological results to prove Malle's conjecture with a leading constant and to compare the leading constant with the predicted constant in the number field case.

Let us first state a simpler form of the main theorem and introduce some notation. Let $\F_q$ be a finite field of order $q$ coprime to $|G|$. For $d \in \N$ we define $N(G, q^{d})$ as the number of isomorphism classes of geometrically connected $G$-covers $X \to \Proj^1_{\F_q}$ ramified at $d$ geometric points. Let $G(-1)$ denote the anticyclotomic twist of $G$. This is a finite \'etale scheme with a (noncanonical) identification $G(-1)(\bar{\F}_q) \cong G$ such that the (arithmetic) Frobenius of $\F_q$ acts on $g \in G$ by sending it to $g^{q^{-1}}$. Let $\mathcal{C}_G := G(-1)/G$ where $G$ acts on $G(-1)$ via conjugation. Let $b(G, \F_q)$ be the number of connected components of $\mathcal{C}_G^* := \mathcal{C}_G \setminus \{1\}$.

Define the normalised count as $n(G, q^d) := q^{-d} d^{-b(G, \F_q) + 1} N(G, q^d)$. We would like to consider the limit of $n(G, q^d)$ as $d \to \infty$ as the leading constant for $G$. Unfortunately, the function $n(G, q^d)$ does not converge. Instead, it has a certain periodicity in the limit\footnote{This periodicity is the main reason why the author and Loughran restricted ourselves in \cite{Loughran2025mallesconjecturebrauergroups} to the number field case.}. For now we deal with this by averaging over $d$.

\begin{theorem}\label{thm:main_theorem_radical_disc}
    Assume that $q$ is coprime to $|G|$ and sufficiently large in terms of $|G|$. We have
    \[
        \lim_{d \to \infty} |G|^{-2}\sum_{k = 1}^{|G|^2} n(G, q^{d + k}) = \frac{|Z(G)| \cdot |\Br_{\mathrm{un}} (BG)_{\F_q}|}{|\Hom(G, \F_q^{\times})| (b(G, \F_q) - 1)!} \tau(\F_q, G)
    \]
    Here $Z(G)$ is the center of $G$, $\Br_{\mathrm{un}} (BG)_{\F_q}$ is the unramified Brauer group of $BG$ and $\tau(\F_q, G)$ is the normalized Tamagawa measure, explicitly given by the following conditionally convergent Euler product over closed points $P \in \Proj^1_{\F_q}$
    \[
    (1 - q^{-1})^{-b(G, \F_q)}\prod_{P \in \Proj^1_{\F_q}}(1 - q^{-\deg P})^{b(G, \F_q)}(1 + |\mathcal{C}_G(\F_q(P))|q^{-\deg P}).
    \]
    
    Here $\deg P$ is the degree of $P$ and $\F_q(P)$ denotes the residue field of $P$.
\end{theorem}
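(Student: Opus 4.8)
\emph{Reduction to a point count on the Hurwitz stack.} A geometrically connected $G$-cover $X\to\Proj^1_{\F_q}$ has automorphism group, as a $G$-cover, equal to $Z(G)$, and all these automorphisms are defined over $\F_q$; hence $N(G,q^d)=|Z(G)|\cdot\#\HurStack^{\mathrm{conn}}_{G,d}(\F_q)$, where $\HurStack^{\mathrm{conn}}_{G,d}$ is the smooth $d$-dimensional Deligne--Mumford stack of geometrically connected $G$-covers of $\Proj^1$ with reduced branch divisor of degree $d$ and $\#$ denotes groupoid cardinality. This accounts for the factor $|Z(G)|$. Behrend's Lefschetz trace formula then gives
\[
\#\HurStack^{\mathrm{conn}}_{G,d}(\F_q)=q^{d}\sum_{i\geq 0}(-1)^{i}\operatorname{tr}\!\bigl(\Frob_q\mid \HH_i(\HurStack^{\mathrm{conn}}_{G,d,\bar{\F}_q},\Q_\ell)\bigr),
\]
with $\HH_i$ the $\ell$-adic homology, normalised (dual of compactly supported cohomology, suitably Tate-twisted) to carry Frobenius weights $\leq -i$. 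The plan is to feed the Landesman--Levy stable homology computation into this formula and recognise the outcome. (Throughout I would work with a marked-point variant of the Hurwitz stack, for which the $\A^1$-configuration-space models and the Landesman--Levy theorems apply directly, and pass to $\Proj^1$ by a comparison at $\infty$.)

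\emph{Stability, components, and the Galois action.} For $d$ beyond an explicit bound the connected components of $\HurStack^{\mathrm{conn}}_{G,d,\bar{\F}_q}$ are indexed (Conway--Parker, Ellenberg--Venkatesh--Westerland) by pairs $(\mathbf{c},\xi)$: an admissible multiset $\mathbf{c}$ of nontrivial conjugacy classes of total size $d$ together with a lifting invariant $\xi$ in a torsor under a finite abelian subquotient $M(\mathbf{c})$ of the Schur multiplier of $G$. A component contributes to the traces above only when it is $\Frob_q$-fixed, i.e. $\mathbf{c}$ is stable under the anticyclotomic Frobenius action underlying $\mathcal{C}_G$ and $\xi$ is $\Frob_q$-fixed; components in nontrivial Frobenius orbits contribute $0$. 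The Landesman--Levy stable homology theorem, with the uniform exponential Betti-number bounds it yields, makes the series over $i$ converge absolutely and uniformly in $d$ once $q$ exceeds an explicit constant, which legitimises interchanging $\sum_i$ with the later limit, and one rewrites $q^{-d}\#\HurStack^{\mathrm{conn}}_{G,d}(\F_q)$ as a sum over $\Frob_q$-fixed $(\mathbf{c},\xi)$ with $|\mathbf{c}|=d$ of the alternating Frobenius trace $\chi_{\Frob_q}$ on the homology of the corresponding component. Outside a lower-order ($O(d^{\,b-2})$) set of ``unbalanced'' $\mathbf{c}$ this homology is in the stable range, so $\chi_{\Frob_q}$ equals the Frobenius-Euler characteristic of the explicit Landesman--Levy stable homology, depending on $\mathbf{c}$ only through the Galois action on its support.

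\emph{The Euler product and the discrete factors.} Reorganising this sum over branch data --- a reduced divisor on $\Proj^1$ together with, at each closed point $P$ of its support, a local ramification type in $\mathcal{C}_G^*(\F_q(P))$ --- and inserting the stable-homology computation, the local contribution at $P$ becomes (after an inclusion--exclusion over the branch divisor) the Euler factor $1+|\mathcal{C}_G(\F_q(P))|q^{-\deg P}$; the sum over divisors makes $q^{-d}\#\HurStack^{\mathrm{conn}}_{G,d}(\F_q)$ the $T^{d}$-coefficient of an explicit conditionally convergent Euler product over closed points of $\Proj^1$ with a pole of order exactly $b=b(G,\F_q)$ at $T=q^{-1}$, the remaining local $L$-factors being regular and nonvanishing there; here $b$ is the number of Frobenius orbits of nontrivial conjugacy classes, equivalently the order of the pole at $q^{-1}$ of the $\mathcal{C}_G^*$-twisted zeta function of $\Proj^1$, giving the growth $\sim d^{\,b-1}/(b-1)!$. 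A standard Tauberian argument extracts the leading coefficient as $\tau(\F_q,G)$, convergence factors $(1-q^{-\deg P})^{b}$ and normalisation $(1-q^{-1})^{-b}$ included --- the promised interpretation of $\F_q$-point counts of configuration spaces as a Tamagawa volume. It remains to track the global factor: the number of $\Frob_q$-fixed lifting invariants $\xi$ for a given $\mathbf{c}$ is $0$ or $|M(\mathbf{c})^{\Frob_q}|$ according to the vanishing of a class in $\HH^1(\F_q,M(\mathbf{c}))$ depending on $\mathbf{c}$; together with the count of $\F_q$-rational components of the abelianised Hurwitz space this is eventually periodic in $d$ with period dividing $|G|^2$, and interpreting both quantities through the Brauer group of the stack $BG$ identifies the average over a full period with $|\Br_{\mathrm{un}}(BG)_{\F_q}|/|\Hom(G,\F_q^{\times})|$. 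Averaging $n(G,q^d)$ over $k=1,\dots,|G|^2$, a multiple of the period, assembles all the factors into the claimed constant.

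\emph{Main obstacle.} The crux is the Galois-equivariant refinement of Landesman--Levy: converting their topological stable-homology statement into an exact value of $\chi_{\Frob_q}$ with complete control of the Frobenius action on stable homology, on the component set, and on the lifting-invariant torsors, and --- the most delicate point, and the paper's main new input --- matching the resulting periodic correction with $\Br_{\mathrm{un}}(BG)_{\F_q}$ and computing its period-average. Subsidiary technical points are the uniform exponential Betti bounds needed to interchange limits when $q$ is large, the negligibility of unbalanced multisets and of components outside the stable range, and the comparison of the $\A^1$- and $\Proj^1$-Hurwitz stacks at $\infty$.
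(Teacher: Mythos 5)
Your outline follows the same route as the paper: reduce to a groupoid count weighted by $|Z(G)|$, use Landesman--Levy's stable homology plus a trace formula to approximate each Frobenius-fixed component's point count by $\#\Conf_{\underline{n}}(\F_q)$, recognise the resulting generating series as an Euler product with a pole of order $b(G,\F_q)$ at $q^{-1}$, apply a Tauberian theorem, and account for the Frobenius-fixed lifting invariants separately. All of that matches the paper's Lemmas \ref{lem:decomposition_counting_function}, \ref{lem:application_landesman_levy}, \ref{lemma:number_points_Configuration_space} and \ref{lem:Tauberian_theorem}.

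The genuine gap is the step you yourself flag as the ``main obstacle'': the passage from ``the number of $\Frob_q$-fixed lifting invariants is $0$ or $|M(\mathbf{c})^{\Frob_q}|$ according to a class in $\HH^1(\F_q,M(\mathbf{c}))$'' to the factor $|\Br_{\mathrm{un}}(BG)_{\F_q}|/|\Hom(G,\F_q^\times)|$. This is not a bookkeeping identification but the paper's central new computation (\Cref{prop:Brauer_group_controls_components}, \Cref{lem:cocycles_geometric_unramified}, \Cref{cor:size_Brauer_group}): one must show that the dual of the obstruction complex $\HH_{2,\mathrm{orb}}^{\mathcal{C}}(G,\Z)\to\Hom(\mathcal{C},\Z)$ is exactly $\Br_{\bar{\mathcal{C}}}BG_{\F_q}$, and that under this duality the pairing with the obstruction class becomes $\partial_\gamma(\beta)(\varphi)+\sum_c n_c\,\mathrm{cor}_{\F_q(c)/\F_q}(\partial_c(\beta))$. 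Without this, your ``global factor'' and your ``Euler product'' cannot legitimately be treated as independent: after character orthogonality each $\beta\in\Br_{\bar{\mathcal{C}}}BG_{\F_q}$ twists the \emph{local} Euler factors by $e(\mathrm{cor}_{\F_q(P)/\F_q}(\partial_c(\beta)))$, moving the relevant pole to $e(-\alpha)q^{-1}$; the periodicity in $d$ and the fact that averaging over a period isolates precisely the $\beta$ with all residues zero (i.e.\ $\Br_{\mathrm{un}}$), each contributing the \emph{untwisted} product $\tau(\F_q,G)$, is a consequence of this residue formula and cannot be asserted independently of it. A second, smaller omission: the obstruction class also depends on the boundary datum $(\varphi,\gamma)$ at $\infty$, and matching that dependence with the local Brauer--Manin pairing at $\infty$ (so that summing over $\Omega_\infty=BG[\F_q((t^{-1}))]$ produces the factor of $\tau(\F_q,G)$ at the place $\infty$) requires the explicit formula \eqref{eq:formula_local_Brauer_transform}, which your sketch does not address.
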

Without the explicit formula for the leading constant, this is \cite[Thm~10.1.10]{Landesman2025Homological}. The formula for the leading constant answers \cite[Question 11.1.2]{Landesman2025Homological}.

The only case of the following theorem with the leading constant in the literature is the case $G = C_{\ell}^n$ for $\ell \neq p$ a prime by \cite{Wright1989Abelian}, as the discriminant of a $C_{\ell}^n$-extension is determined by the number of ramified points. The case where $G$ is an abelian group is known to experts as a straightforward extension of the number field case in \cite{Maki1985Density, Wood2010Abelian}. 

This averaged constant can be compared with the conjectural constant in the number field case \cite[Conj.~9.1]{Loughran2025mallesconjecturebrauergroups}. One sees that it is exactly analogous, where one interprets $(1 - q^{-1})$ as the value of $(1 - q^{1 - s})\zeta_{\Proj^1_{\F_q}}(s)$ at $s = 1$ and uses that $\Br_{\mathrm{un}} (BG)_{\F_q} = \Br_{\mathrm{un}} (BG)_{\F_q(t)}/\Br \F_q(t)$ and there is no Brauer-Manin obstruction by \cite[Thm.~7.11, Cor.~7.12]{Loughran2025mallesconjecturebrauergroups}.

\subsection{Finer results}
We prove a statement that is stronger than \Cref{thm:main_theorem_radical_disc} in multiple ways. 
\begin{enumerate}
    \item We will also allow $G$ to be a finite \'etale group scheme. The reason is that no new ideas are required, and such group schemes appear when attempting to count extensions that are not geometrically connected, as explained in~\Cref{rem:non_geometrically_connected_covers}.
    \item We want to understand the effect of fixing the local behavior at $\infty$. This will show that the effect of the Brauer-Manin obstruction on equidistribution is exactly as predicted in \cite[\S9.4]{Loughran2025mallesconjecturebrauergroups}.
    \item We want to allow more general heights than just the number of ramified geometric points. Indeed, we will determine the leading constant for all \emph{balanced} height functions \cite[Def.~3.30]{Loughran2025mallesconjecturebrauergroups}.
    \item We want to give a geometric interpretation of the periodicity in the leading constant. In particular, there are examples of groups $G$ where the periodicity cannot be directly explained via the geometry of Hurwitz spaces but instead comes from a reciprocity law. Ellenberg--Venkatesh--Westerland noticed the first example in $G = \mathrm{PSL}_2(\F_7)$ \cite[\S9.3.3]{ellenberg2013homologicalstabilityhurwitzspaces}. We explain this type of periodicity via transcendental Brauer elements whose residues are algebraic but non-trivial.
\end{enumerate}
We leave the statement of the most general version of our main theorem to \Cref{thm:main_theorem_multiheight} as it involves too much notation. Instead, we will only discuss points (3) and (4) now, as they are the most interesting. Let us introduce some additional notation.

Let $f \in \Hom(\mathcal{C}^*_G, \Z_{> 0})$ be a function. Define $a(f) := \min_{c \in \mathcal{C}_G^*} (f(c))^{-1}$ and let $\mathcal{C}_f \subset \mathcal{C}^*_G$ be the scheme of conjugacy classes on which $f$ is minimal. Let $b(\F_q, f)$ be the number of connected components of $\mathcal{C}_f$. 

For a $G$-cover $\varphi: C \to \Proj^{1}_{\F_q}$ and a non-trivial conjugacy class $c \in \mathcal{C}_G^*(\bar{\F}_q)$ let $n_c$ be the number of points $P \in \Proj^1_{\F_q}$, counted with multiplicity, such that the ramification type\footnote{See \Cref{con:inertia_type}, this is basically the monodromy of the cover at $P$, except that it keeps careful track of the Galois action.} at $P$ is $c$. The \emph{height} of $\varphi$ is defined as  $H_f(\varphi) = q^{\sum_{c \in \mathcal{C}_G^*(\bar{\F}_q)} n_c f(c)}$. For $d \in \N$ let $N_f(G, q^d)$ be the number of isomorphism classes of $G$-covers of $\Proj^1_{\F_q}$ of height $q^d$.

To state the theorem, we require the notion of \emph{residue} $\partial_c(\beta)$ of a Brauer element $\beta$ from \cite[Def.~5.25]{Loughran2025mallesconjecturebrauergroups}. For any element $\ell \in \HH^1(\F_q, \Q/\Z)$ we define a subset $\Br_{\mathcal{C}_f, \ell} BG_{\F_q} \subset \Br BG_{\F_q}$ in \Cref{def:Br_C_ell} in terms of this residue. 

Let $e: \HH^1(\F_q, \Q/\Z) \to \C^{\times}$ be the embedding defined by $\chi \to e^{2 i \pi \chi(\Frob_q)}$. For a finite extension $\F_{q^k}/\F_q$ we write $\mathrm{cor}_{\F_{q^k}/\F_q}: \HH^1(\F_{q^k}, \Q/\Z) \to \HH^1(\F_q, \Q/\Z)$ for the corestriction map.

\begin{theorem}\label{thm:main_theorem}
    Assume that $\mathcal{C}_f$ generates $G/Z(G)$ and that $q$ is sufficiently large in terms of $|G|$. We then have
    \[
    N_f(G, q^{d}) = c(G, f, d)d^{b(f, \F_q) - 1}q^{a(f) d} + O(d^{b(f, \F_q) - 2}q^{a(f) d})
    \]
    Here $c(G, f, d)$ is a constant which is $a(f)^{-1}|G|^2$-periodic in $d$. If $\mathcal{C}_f$ generates $G$ then it is equal to the finite sum of Euler products
    \[
    c(G, f, d) = \frac{a(f)^{b(f, \F_q)}|Z(G)|}{|\Hom(G, \F_q^{\times})| (b(f, \F_q) - 1)!}\sum_{\alpha \in \HH^1(\F_q, \Q/\Z)} e(d \cdot \alpha ) \sum_{\beta \in \Br_{\mathcal{C}_f, a(f)^{-1} \cdot \alpha} BG_{\F_q}} \hat{\tau}_{f, \alpha}(\beta).
    \]
    Here $\hat{\tau}_{f, \alpha}(\beta)$ is the conditionally convergent Euler product
    \[
    (1 - q^{-1})^{-b(f, \F_q)}\prod_{P \in \Proj^1_{\F_q}}(1 - q^{-\deg P})^{b(f, \F_q)}\hat{\tau}_{f, \alpha, P}(\beta).
    \]
    Where $\hat{\tau}_{f, \alpha, P}(\beta)$ is equal to
    \[
     1 + \sum_{\substack{c \in \mathcal{C}_G^*(\F_q(P)) \\ \partial_c(\beta) \in \HH^1(\F_q(P), \Q/\Z)}} e( \mathrm{cor}_{\F_q(P)/\F_q}(\partial_c(\beta)))e(-\deg P \cdot f(c) \cdot \alpha)q^{-(\deg P) a(f) f(c)}.
    \]

    Moreover, $\Br_{\mathcal{C}_{f}, a(f)^{-1} \cdot \alpha} BG_{\F_q} = 0 $ unless $\alpha \in \HH^1(\F_q, a(f)|G|^{-2}\Z/\Z)$ so $c(G, f, d)$ is $ a(f)^{-1}|G|^2$-periodic in $d$.
    \end{theorem}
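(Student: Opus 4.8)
The plan is to deduce \Cref{thm:main_theorem} from the Landesman--Levy stable homology computation by extracting, from their point-count formula for Hurwitz spaces, the leading term together with its lower-order error, and then reorganizing the answer into the stated Euler product. The overall strategy has three phases: (i) reduce the count $N_f(G,q^d)$ to a count of $\F_q$-points on Hurwitz stacks weighted by the height, (ii) apply the Lefschetz trace formula together with homological stability to express this in terms of the stable homology groups, which by Landesman--Levy are governed by the coinvariants of the $G$-action on the relevant Hurwitz module / equivalently the components $\mathcal{C}_G^*$ and their Frobenius action, and (iii) identify the arithmetic of those components with the Brauer group $\Br BG_{\F_q}$ and rewrite the resulting sum as the claimed Euler product.

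First I would set up the generating function. Summing $H_f(\varphi)^{-s}$ over all $G$-covers $\varphi$ organizes the count by the tuple $(n_c)_{c}$, and for fixed such a tuple the covers are parametrized by $\F_q$-points of a Hurwitz space $\HurStack$ with marked ramification divisors of the prescribed types. The height being a product over the ramification points of $q^{n_c f(c)}$ means $a(f)$ and $\mathcal{C}_f$ control which configurations dominate: the dominant contribution comes from covers ramified only (or predominantly) along classes in $\mathcal{C}_f$, each contributing $q^{a(f)d}$ to the main term with $d$ the total weighted ramification. Here one uses that $\mathcal{C}_f$ generates $G/Z(G)$ (resp.\ $G$) so that for large $d$ such covers actually exist and are geometrically connected (up to the $Z(G)$-ambiguity), which is precisely where this hypothesis enters, exactly as in \cite{Ellenberg2016Homological,Landesman2025Homological}.

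Next I would invoke the Grothendieck--Lefschetz trace formula together with the Landesman--Levy stable homology theorem: the number of $\F_q$-points of the degree-$n$ Hurwitz space, in the stable range, equals a sum over Frobenius eigenvalues on the stable homology, and their theorem identifies the stable homology with an explicit object whose Frobenius action is read off from the action of $\Frob_q$ on $\mathcal{C}_G^*$, i.e.\ via the anticyclotomic twist. The connected components of $\mathcal{C}_f$ produce the factor $d^{b(f,\F_q)-1}$ (a combinatorial count of how ramification mass distributes among $b(f,\F_q)$ components, hence a polynomial of degree $b(f,\F_q)-1$ in $d$), and the non-dominant classes and the lower eigenvalues feed into the $O(d^{b(f,\F_q)-2}q^{a(f)d})$ error. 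The periodicity in $d$ arises because the Frobenius eigenvalues on stable homology are roots of unity times powers of $q$ of a controlled order; packaging these roots of unity as characters $\alpha \in \HH^1(\F_q,\Q/\Z)$ via $e(\cdot)$ gives the sum over $\alpha$. The contribution of each $\alpha$ is nonzero only when the corresponding "twisted" stable homology is nonzero, and this is exactly the vanishing statement $\Br_{\mathcal{C}_f, a(f)^{-1}\alpha} BG_{\F_q}=0$ unless $\alpha \in \HH^1(\F_q, a(f)|G|^{-2}\Z/\Z)$; that final clause is then immediate and yields the stated $a(f)^{-1}|G|^2$-periodicity.

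Finally I would convert the global sum into the Euler product. The key point here is the interpretation, promised in the introduction, of the Frobenius-fixed stable homology in terms of $\Br BG_{\F_q}$: the sum over $\beta \in \Br_{\mathcal{C}_f,a(f)^{-1}\alpha}BG_{\F_q}$ indexes the Frobenius-fixed components, and for each such $\beta$ the local factor at a closed point $P$ collects the contributions of the allowed ramification classes $c \in \mathcal{C}_G^*(\F_q(P))$ weighted by $e(\mathrm{cor}_{\F_q(P)/\F_q}\partial_c(\beta))$ and by the local height $q^{-(\deg P)a(f)f(c)}$ — this is the Euler factor $\hat\tau_{f,\alpha,P}(\beta)$, and the renormalizing prefactor $(1-q^{-1})^{-b(f,\F_q)}\prod_P(1-q^{-\deg P})^{b(f,\F_q)}$ makes the product conditionally convergent, matching the Tamagawa-measure interpretation. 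The constants $|Z(G)|$, $|\Hom(G,\F_q^\times)|$ and $(b(f,\F_q)-1)!$ come, respectively, from the $Z(G)$-gerbe structure on the Hurwitz stack, from quotienting by automorphisms of the trivial cover / disconnected phenomena, and from the leading coefficient of the degree-$(b(f,\F_q)-1)$ polynomial counting ramification distributions. I expect the main obstacle to be step (iii): making the dictionary between Frobenius-fixed stable homology classes and elements of $\Br_{\mathcal{C}_f,\ell}BG_{\F_q}$ with the correct residue conditions completely precise — in particular matching the combinatorial/topological description of the twisted components with the cohomological description via residues $\partial_c(\beta)$, and checking that the "transcendental but algebraic-residue" Brauer elements are exactly the ones producing the reciprocity-law periodicity of \cite{ellenberg2013homologicalstabilityhurwitzspaces} — together with rigorously controlling the error term uniformly, which requires the full strength of the Landesman--Levy stable range.
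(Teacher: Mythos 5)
Your outline reproduces the paper's high-level architecture (Landesman--Levy point counts on Hurwitz spaces, a Brauer-group indexing of Frobenius-fixed components, an Euler product plus a Tauberian theorem), but it defers or omits precisely the steps that carry the content of the theorem. The central missing piece is the one you yourself flag as "the main obstacle": the exact statement and proof that $U(G_\varphi,\mathcal{C})_{\underline{n},\gamma}$ has an $\F_q$-point if and only if a pairing of $(\underline{n},\gamma)$ against every $\beta\in\Br_{\bar{\mathcal{C}}}BG_{\F_q}$ vanishes (\Cref{prop:Brauer_group_controls_components}). Without this, "sum over Frobenius eigenvalues on stable homology" cannot be converted into the sum over $\beta$ with residue conditions. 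In the stable range the main term lives entirely in $H^0$, so the relevant Frobenius action is on the set of components, classified by the lifting invariant; the paper must first extend the lifting invariant to subsets $C$ not closed under invertible powers and prove stability of components when only \emph{some} of the $n_c$ grow (\Cref{thm:components_partial_stability}) -- needed because the dominant $\underline{n}$ have $n_c$ large only for $c\in\mathcal{C}_f$ while other $n_c$ stay bounded. Your proposal does not address either ingredient. Relatedly, your mechanism for the periodicity is not the correct one: it does not come from Frobenius eigenvalues being roots of unity times powers of $q$, but from the fact that the number of Frobenius-fixed components depends on $\underline{n}$ through a congruence condition (torsion residues $\partial_c(\beta)$), which, after summing over $f(\underline{n})=d$, manifests as poles of the generating series $F_\beta(X)$ at the points $e(-\alpha)q^{-a(f)}$ on the circle $|X|=q^{-a(f)}$ (\Cref{lem:analytic_properties_F_beta}); the sum over $\alpha$ is produced by the Tauberian theorem applied to these several poles of maximal order, not by an eigenvalue decomposition of higher homology.

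Two further gaps. First, the theorem asserts the asymptotic under the hypothesis that $\mathcal{C}_f$ generates only $G/Z(G)$. In that case covers ramified only along $\mathcal{C}_f$ are \emph{never} geometrically connected (their monodromy lies in the proper normal subgroup $M$ generated by $\mathcal{C}_f$), so your claim that the generation hypothesis ensures existence of geometrically connected covers is backwards; the paper handles this by a M\"obius inversion over intermediate subgroups $M\subseteq L\subseteq G$ (\Cref{lem:mobius_inversion_application}, \Cref{thm:main_theorem_height_etale_unbalanced}), which is absent from your sketch. Second, you treat the Euler product as a formal repackaging, whereas the paper needs the identity between $\#\Conf_{\mathcal{C},\underline{n}}(\F_q)$ weighted by residue characters and a Brauer transform of a Tamagawa volume (\Cref{prop:hurwitz_spaces_Tamagawa_measures}), together with the uniform error estimate $\sum_{f(\underline{n})=d}q^{|\underline{n}|-\delta\min n_c}\ll d^{b(f,\F_q)-2}q^{a(f)d}$ (\Cref{lem:error_term}), to justify both the main term and the error term. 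As written, the proposal is a correct roadmap of the paper's strategy but does not constitute a proof.
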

We also prove an explicit constant in the general case that $\mathcal{C}_f$ generates $G/Z(G)$. We leave the precise formula for \Cref{thm:main_theorem_height_etale_unbalanced} as it is more complicated.

This theorem was proven for $G$ abelian and $f$ the index function by Wright \cite{Wright1989Abelian}. The only other known case is when $G = S_n$ and $f$ is the index. This case was proven for $G = S_3, S_4, S_5$ and all $q > 2$ in \cite{bhargava2015geometryofnumbers} and for all $n$ and $q$ sufficiently large in \cite[Thm~1.2.4]{landesman2025stablehomologyhurwitzmodules}\footnote{To get the exact same result one uses \Cref{prop:hurwitz_spaces_Tamagawa_measures} and a Tauberian theorem, such as \Cref{lem:Tauberian_theorem}.}.

To recover \Cref{thm:main_theorem_radical_disc} from \Cref{thm:main_theorem}, we can average over $d$ to eliminate all $\alpha \neq 0$. The correct analogue of the leading constant for the number field case of \cite{Loughran2025mallesconjecturebrauergroups} is thus only the contribution for $\alpha = 0$. A careful comparison shows that they are completely analogous, except for the fact that \Cref{thm:main_theorem} includes a factor $a(f)^{b(f, \F_q)}$ instead of the factor $a(f)^{b(f, \F_q(t)) -1}$ in \cite[Conj.~9.1]{Loughran2025mallesconjecturebrauergroups}. This discrepancy is explained in \Cref{rem:different_powers_a(f)}.
\begin{remark}
    One might wonder at first how the above formula explains the $G =\mathrm{PSL}_2(\F_7)$ example \cite[\S9.3.3]{ellenberg2013homologicalstabilityhurwitzspaces}. This is because $G$ has trivial abelianization, so \cite[Lem.~6.7]{Loughran2025mallesconjecturebrauergroups} shows that $\Br \mathrm{PSL}_2(\F_7)_{\F_q} \cong \Z/2 \Z$ and the non-trivial class $\beta$ is represented by the central extension $\mathrm{SL}_2(\F_7) \to \mathrm{PSL}_2(\F_7)$. Let $f$ be such that $\mathcal{C}_f$ consists of the single conjugacy class $c$ of order $4$ elements. Then \cite[Lem.~6.9]{Loughran2025mallesconjecturebrauergroups} and similar reasoning as in \cite[\S9.3.3]{ellenberg2013homologicalstabilityhurwitzspaces} shows that $\partial_{c}(\beta) \in \HH^1(\F_q, \Z/2 \Z)$ corresponds to the extension $\F_q(\sqrt{2})/\F_q$.

    If $\sqrt{2} \not \in \F_q$ and $a(f) = 1$ then this leads to $c(\mathrm{PSL}(\F_7), f, d)$ being the sum of two Euler products which causes significantly fewer extensions of height $q^d$ for $d$ odd than for $d$ even.
\end{remark}
\begin{remark}
    One interpretation of the subset $\Br_{\mathcal{C}_f, a(f)^{-1} \cdot \alpha} BG_{\F_q} \subset \Br (BG)_{\F_q}$ is that it is the subset of $\beta \in \Br BG_{\F_q}$ such that the Euler product converges. It diverges to $0$ for all other $\beta \in \Br BG_{\F_q}$.
\end{remark}
\begin{remark}\label{rem:non_geometrically_connected_covers}
    We count only geometrically connected $G$-covers in Theorems \ref{thm:main_theorem_radical_disc} and \ref{thm:main_theorem}. This is compatible with the framework of \cite{Loughran2025mallesconjecturebrauergroups} as the non-geometrically connected covers form a thin set.

    T\"urkelli \cite{Turkelli2015Connected} has observed that the non-geometrically connected covers can be decomposed into natural subfamilies such that the components of the corresponding Hurwitz spaces can be understood separately. In the language of \cite{Loughran2025mallesconjecturebrauergroups} these families are images of maps of the form $BH \to BG$ where $H$ is a finite \'etale group scheme over $\F_q$ with $H(\bar{\F}_q) \subsetneq G$ normal. We prove a form of \Cref{thm:main_theorem} which is valid for \'etale group schemes. One could thus try to apply this more general theorem to count points on $BH$.
    
    The issue is that $\mathcal{C}_f \cap H(\bar{\F}_q)$ generally does not generate $H(\bar{\F}_q)$, that is, the restriction of $f$ to $BH$ is not balanced in the sense of \cite[Def.~3.3]{Loughran2025mallesconjecturebrauergroups}. The leading constant in the unbalanced case will be significantly more complicated than in the balanced case, as explained in \cite[\S9.2]{Loughran2025mallesconjecturebrauergroups}. In particular, we expect that to handle the unbalanced case in full generality we need to be able to count $G$-torsors over $\F_q(t)$ for $G$ a finite \'etale group scheme over $\F_q(t)$ which does not come from $\F_q$.
\end{remark}

\subsection{Structure of the argument}
The work of Landesman--Levy \cite{landesman2025cohenlenstramomentsfunctionfields,Landesman2025Homological, landesman2025stablehomologyhurwitzmodules} shows that the number of points on a component of a Hurwitz space is well-approximated by the number of $\F_q$-points on a configuration space. To determine the asymptotics for the number of $G$-extensions we need to carry out the following three steps.
\begin{enumerate}
    \item Determine the stable components of Hurwitz spaces over $\bar{\F}_q$.
    \item Compute how many of these components are fixed by Frobenius.
    \item Determine the asymptotic for the number of points on configuration spaces.
\end{enumerate}

The first problem has been mostly solved by Ellenberg--Venkatesh--Westerland and Wood \cite{ellenberg2013homologicalstabilityhurwitzspaces, Wood2021Lifting}. The main tool is the \emph{lifting invariant} which depends on a conjugacy invariant subset $C \subset G$. For our application we still need two ingredients: a definition of the lifting invariant over general fields when $C$ is not closed under invertible powers and a proof that the lifting invariant is a bijection when one only stabilizes in some of the directions. This is the content of \Cref{thm:lifting_invariant} and \Cref{thm:components_partial_stability}.

How we deal with the second part is one of the main novelties of the paper. We show that the number of components which are fixed by Frobenius is equal to a sum over Brauer elements of $\Br BG_{\F_q}$ whose residues are algebraic. In some sense, the Brauer group is dual to the components. This is done in \Cref{sec:Brauer_group} via a cocycle-theoretic computation. This part of the argument relies crucially  on the theory developed in \cite{Loughran2025mallesconjecturebrauergroups}.

The last step is dealt with in two parts. First we show in \Cref{prop:hurwitz_spaces_Tamagawa_measures} that the number of points on configuration spaces is exactly equal to the Tamagawa volume of certain natural subsets of $BG[\Adele_{\F_q(t)}]$. In the setting of varieties the asymptotic volumes of similar subsets were studied in \cite{Chambert-Loir2010Igusa} and we can use exactly the same tools in this case, namely a Tauberian theorem and the computation of the local Igusa zeta function.

\subsection{Possible extensions}
The homological stability results of \cite{landesman2025stablehomologyhurwitzmodules} hold for Hurwitz modules, which are significantly more general than the Hurwitz spaces used in this paper. For example, they allow one to consider more general global function fields than $\F_q(t)$ and they also allow one to impose arbitrary local conditions at finitely many points of $\Proj^{1}_{\F_q}$. Unfortunately, the strategy used in this paper to turn such homological stability results into quantitative arithmetic results fails already at the first step: there is currently no good description of the stable components of more general Hurwitz modules.

The stable components of the Hurwitz modules for more general global functions fields were determined over $\C$ in \cite{Samperton2020Schur}. However, to carry out the second step it is not sufficient to have a description of the geometric stable components, one needs control on the action of Frobenius.

If a good description of stable components of Hurwitz modules is found then we expect that the second and third step can be generalized to deal with such more general situations.
\subsection{Notations and conventions}
We write $\N$ for the non-negative integers. 

A finite \'etale group scheme $G$ over a base $B$ is \emph{tame} if its order is invertible in $B$. For such a tame group scheme we let $G(-1) = \varinjlim_n \Hom(\mu_n, G)$ be the anti-cyclotomic twist. We also consider $\mathcal{C}_G := G(-1)/G$ and $\mathcal{C}_G^* := \mathcal{C}_G  \setminus \{1\}$.

For schemes $X,Y$ over a base scheme $B$ we will write $\Hom(X, Y)$ for the \'etale sheaf of maps $X \to Y$. We will write $\Hom_B(X, Y) := \Hom(X, Y)(B)$ for the set of maps over $B$.

We will fix an algebraic closure $\bar{\F}_q$ of $\F_q$ for all prime powers $q$. We let $\Frob_q \in \Gal(\bar{\F}_q/\F_q)$ be the (arithmetic) Frobenius. We will freely use Galois theory to identify finite \'etale schemes $X/\F_q$ with their Galois set of points $X(\bar{\F}_q)$ and similarly for profinite/ind-finite \'etale schemes.

Given a scheme $X$ we let $\pi_0(X)$ denote the set of connected components, for a finite \'etale scheme $X/\F_q$ this is the same as the Frobenius orbits of $X(\bar{\F}_q)$.

We will freely use the language of stacks. All stacks in this paper will be assumed to be separated and Deligne-Mumford. If $G$ is a group scheme acting on the left on a scheme $X$ then we write $[X/G]$ for the corresponding quotient stack. For a stack $X$, a scheme $S$, resp.~a ring $R$ we let $X(S)$, resp.~$X(R)$ be the groupoid of morphisms $S \to X$, resp.~$\Spec R \to X$ and $X[S]$, resp.~ $X[R]$ the set of components of this groupoid. If $x \in X(S)$ then we write $\Aut_X(x)$ for the sheaf of automorphism groups of $x$.

The main stack which appears is $BG := [S/G]$ for a finite \'etale group scheme $G$ over a base scheme $S$, for the basic theory of the arithmetic of this stack we refer to \cite[\S2]{Loughran2025mallesconjecturebrauergroups}. 

All cohomology groups are \'etale/Galois cohomology groups, unless otherwise noted.

We will use the embedding $e: \HH^1(\F_q, \Q/\Z) \to \C^{\times}$ from the introduction, defined by the formula $\chi \to e^{2 i \pi \chi(\Frob_q)}$.

We use big $O$ and Vinogradov's $\ll$ notation.

In the following table we give symbols which will be frequently used and their meaning.
\begin{table}[h]
    \centering
\begin{tabular}{|c|c|}
    \hline
     \textbf{Symbol} & \textbf{Meaning}  \\ \hline
      $B$ & A base scheme. \\ \hline
      $G$ & A tame finite \'etale group scheme over $B$. \\ \hline
      $g, h$ & Elements of $G$. \\ \hline
      $\gamma, \xi$ & Elements of $G(-1)$. \\ \hline
      $\zeta$ & A root of unity $\zeta \in \mu_n$ or a compatible system of roots of unity. \\ \hline
      $\varphi$ & An element of $BG(B)$, i.e.~a $G$-torsor over $B$. \\ \hline
      $C$ & A conjugacy invariant subset $C \subset G(-1) \setminus \{1\}$. \\ \hline
      $\mathcal{C}$ & A subscheme of $\mathcal{C}_G^*$, often equal to $C/G$. \\ \hline
      $P$ & A closed point $P \in \Proj^1$. \\ \hline
      $Z$ & A connected component of a Hurwitz scheme. \\ \hline
      $\mathbf{P}$ & A non-empty collection of prime numbers. \\ \hline
      $\underline{n}$ & An element of $\Hom(\mathcal{C}, \N)$. \\ \hline
      $\beta$ & An element of the Brauer group $\Br BG$. \\ \hline
\end{tabular}
\end{table}

In Table 1 we provide a summary and a reference to the definition for the important notation introduced later in the paper. 
\begin{table}[h]\label{tab:notations}
    \centering
    \caption{Summary of notations introduced in the paper.}
    \renewcommand{\arraystretch}{1.3}
    \begin{tabular}{|c|p{10cm}|c|}
        \hline
        \textbf{Notation} & \textbf{Description} & \textbf{Ref.} \\
        \hline
        $\sqrt[w]{D/X}$ & Root stack of $X$ of degree $w$ along D & \S\ref{sec:root_stacks} \\
        \hline
        $\sqrt[w]{D}$ & Divisor of $\sqrt[w]{D/X}$ which is a gerbe over $D$ & \S\ref{sec:root_stacks} \\
        \hline
        $\rho_{G, D}$ & Local ramification type of a $G$-cover at a divisor $D$. & \ref{con:inertia_type} \\
        \hline
        $\Conf_{C/B, \underline{n}}$ & \text{Multicolored configuration space for the colors $C$}. & \ref{def:configuration_space} \\
        \hline
        $\Hur^{G, \varphi}_{B, n}$ & Hurwitz space of $G$-covers ramified at $n$ points and  trivialized with respect to $\varphi$ at $\infty$. & \ref{def:Hurwitz_space} \\
        \hline
        $\CHur^{G, \varphi}_{B, n}$  & Components of $\Hur^{G, \varphi}_{B, n}$ corresponding to geometrically connected covers. & \ref{def:Hurwitz_space} \\
        \hline
        $\pi_0(\Hur^{G, \varphi}_{B, n}/B)$ & Finite \'etale scheme of connected components of $\Hur^{G, \varphi}_{B, n}$. & \ref{def:scheme_of_connected_components} \\
        \hline
        $\mathfrak{R}$ & Map $\pi_0(\Hur^{G, \varphi}_{B}/B) \to \Hom(\mathcal{C}_G^*, \N)$ counting points with given ramification type. & \ref{def:Multiplicity_Hurwitz}
        \\
        \hline
        $(\mathrm{C})\Hur^{G, C, \varphi}_{B}$ & Components of $(\mathrm{C})\Hur^{G, \varphi}_{B}$ corresponding to curves with $\rho_{G,P} \in \mathcal{C}$ & \ref{def:Multiplicity_Hurwitz} \\
        \hline
        $(\mathrm{C})\Hur^{G, C, \varphi}_{B, \underline{n}}$ & Components $Z \subset (\mathrm{C})\Hur^{G, C, \varphi}_{B}$ with $\mathfrak{R}(Z) = \underline{n}$ & \ref{def:Multiplicity_Hurwitz} \\
        \hline
        $(\mathrm{C})\Hur^{G, C, (\varphi, \gamma)}_{B, \underline{n}}$ & Components $Z \subset (\mathrm{C})\Hur^{G, C, \varphi}_{B, \underline{n}}$ corresponding to $G$-covers with monodromy $\gamma$ at $\infty$. & \ref{def:Huritz_space_gamma} \\
        \hline
        $\HurStack^{G}_{B, n}$ & Hurwitz stack of $G$-covers ramified at $n$-points. & \ref{def:Hurwitz_stack} \\
        \hline
        $\hat{U}_{B, \mathbf{P}}(G,  \mathcal{C})(1)$ & Universal $\mathbf{P}$-primary $\mathcal{C}$-marked central extension & \ref{def:universal_marked_central_extension} \\ 
        \hline
        $\HH_{2,\mathrm{orb}}^{\mathcal{C}}(G, \hat{\Z}_{\mathbf{P}}(1))$ & Kernel of $\hat{U}_{B, \mathbf{P}}(G,  \mathcal{C})(1) \to G$ & \ref{def:universal_marked_central_extension} \\
        \hline
        $U_{B}(G,  \mathcal{C})$ & Ind-finite \'etale scheme where the lifting invariant is valued. & \ref{def:lifting_invariant_multiplicity} \\
        \hline 
        $\HH_{2,\mathrm{orb}}^{\mathcal{C}}(G, \Z)$ & Fiber at $1$ of the map $U_{B}(G,  \mathcal{C}) \to G(-1)$ &  \ref{def:lifting_invariant_multiplicity} \\
        \hline
        $\HH_2(G, \mathcal{C})$ & Kernel of $\HH_{2,\mathrm{orb}}^{\mathcal{C}}(G, \Z) \to \Hom(\mathcal{C}, \N)$. & \ref{def:lifting_invariant_multiplicity} \\
        \hline
        $U(G_{\varphi}, \mathcal{C})_{\underline{n}, \gamma}$ & Elements of $U_{B}(G_{\varphi},  \mathcal{C})$ which map to $\underline{n} \in \Hom_B(\mathcal{C}, \N)$ and $\gamma^{-1} \in G(-1)$. & \ref{def:U(G,C) n gamma} \\
        \hline
        $\mathfrak{z}$ & The lifting invariant $\pi_0(\Hur^{G, C, \varphi}_{B}/B) \to U(G_{\varphi},  \mathcal{C}).$ & \ref{def:lifting_invariant} \\ \hline
        $\Br_{\bar{\mathcal{C}}} BG_{\F_q}$ & Group of elements which have algebraic residues & \ref{def:Brauer_group_C_bar}. \\ \hline
        $\Br_{\mathcal{C}, \ell} BG_{\F_q}$ & Brauer elements whose residues are $\ell$. & \ref{def:Br_C_ell}\\ \hline
        $\tau_P$ & Local Tamagawa measure at $P$. & \S\ref{sec:Tamagawa_measures} \\ \hline
        $\tau_{BG, \F_q}$ & Global Tamagawe measure $\prod_{P \in \Proj^1_{\F_q}} \tau_P$ on $BG[\Adele_{\F_q(t)}]$. & \S \ref{sec:Tamagawa_measures} \\ \hline
        $\tau_{BG, \Adele_{K, \infty}}$ & Global Tamagawe measure $\prod_{P \in \A^1_{\F_q}} \tau_P$ on $BG[\Adele_{\F_q(t), \infty}]$.  & \S \ref{sec:Tamagawa_measures} \\ \hline
        $\hat{\tau}_P(\beta ; \Omega_P)$ & Local Brauer transform. & \S \ref{sec:Brauer_Manin_pairing}\\ \hline
        $\hat{\tau}(\beta ; \Omega)$ & Global Brauer transform. & \S \ref{sec:Brauer_Manin_pairing} \\\hline
    \end{tabular}
\end{table}
\subsection{Acknowledgments}
I am grateful to  Melanie Wood, Aaron Landesman, Ishan Levy and Daniel Loughran for useful discussions and encouragement. In particular I want to thank Aaron Landesman for carefully reading a previous version and the suggestion to generalise the paper to the case where $\mathcal{C}_f$ only generates $G/Z(G)$ instead of $G$. The author was supported by the Herschel-Smith Fund.
\section{Geometry}
\subsection{Root stacks}\label{sec:root_stacks}
We will recall some facts about root stacks, see \cite{Cadman2007stacks} for a detailed account. These stacks have proven play in important role when studying the arithmetic of stacks. For example, they appear in the arithmetic valuative criterion for properness \cite{Bresciani_2024Valuative} which plays a crucial role in \cite[\S4,5]{Loughran2025mallesconjecturebrauergroups}. They also appear in the definition of the Hurwitz schemes \cite[Def.~2.4.5]{ellenberg2025homologicalstabilitygeneralizedhurwitz} used in the work of Landesman--Levy.

If $X$ is a stack, $D \subset X$ is a divisor and $w \in \N$ then Cadman \cite[Def.~2.2.1]{Cadman2007stacks} defines the \emph{root stack} $\sqrt[w]{D/X}$ of order $w$ along $D$, which is functorial in $X$ and $D$. If $X = \Spec A$ and $D$ is the vanishing locus of $a \in A$ then by \cite[Ex.~2.4.1]{Cadman2007stacks} we have
\begin{equation}\label{eq:charts_root_stack}
    \sqrt[w]{D/X} \cong [\Spec A[x]/(x^w = a)/ \mu_w].
\end{equation}
Where $\mu_w$ acts by multiplication on $x$. As this is true locally on $X$ it describes the root stack on charts.

There is a divisor $\sqrt[w]{D} \subset \sqrt[w]{D/X}$ which is a $\mu_r$-gerbe over $D$, see \cite[Def.~2.4.4]{Cadman2007stacks}. In the chart \eqref{eq:charts_root_stack} it is given by $x = 0$.

\subsection{$G$-covers}\label{sec:G_cover}
Let $G$ be a finite \'etale group scheme over a base scheme $B$. The goal of this section will be to recall some of the theory relevant to the study of $G$-covers. If $G$ is constant group scheme then the notion of the cover of a smooth scheme is well-known. But in this paper $G$ will in general be non-constant, which is slightly subtler.
\begin{definition}\label{def:G_cover}
Let $X$ be a smooth and finitely presented scheme over $B$. A \emph{left $G$-cover} of $X$ is a finite map of $B$-schemes $f:Y \to X$, where $Y$ is equipped with a left $G$-action such that $f$ is a $G$-torsor when restricted to a $B$-dense open substack $U \subset X$. We call the complement of the maximal such $U$ the \emph{ramification divisor of $f$}.

Note that $f$ is completely determined by its restriction to any dense $U \subset X$ as $X$ is normal.

A morphism of $G$-covers from $f: Y \to X$ to $f': Y' \to X$ is a $G$-equivariant map $Y \to Y'$ making the obvious triangle commute\footnote{If $X$ is a stack then the diagram commuting is not just a property but a piece of data. But $X$ is assumed to have a dense open subscheme so this technicality can be ignored.}. Note that a morphism of $G$-covers is automatically an isomorphism as it is finite and birational.
\end{definition}

We will study $G$-covers of curves. It will turn out to be important to work with stacky curves, whose definition we recall.
\begin{definition}\label{def:stacky_curve}
A \emph{stacky curve}\footnote{This is also known as a twisted curve.} over $B$ is a smooth finitely presented stack $X$ over $B$ of relative dimension $1$ and with geometrically connected fibers and a $B$-dense open subscheme. If $X$ is a scheme then we call it a curve.

We call a stacky curve \emph{tame} if it is tame as a stack, i.e. the orders of automorphism group schemes are invertible.
\end{definition}
We note that the stackiness of a stacky curve is very mild, it only arrives via root stack constructions.
\begin{lemma}\label{lem:stacky_curves_are_root_stacks}
    A tame stacky curve $C$ over $B$ is canonically obtained by starting with a curve $C_{\mathrm{coarse}}$ over $B$ (its \emph{coarse moduli space}) and taking iterated root stacks of $C_{\mathrm{coarse}}$ along disjoint divisors $D_i \subset C_{\mathrm{coarse}}$, where each $D_i$ is finite \'etale over $B$.
\end{lemma}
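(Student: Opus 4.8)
The plan is to recover $C$ from its coarse space by analyzing the stacky locus \'etale-locally via the structure theory of tame Deligne-Mumford stacks, and then to glue the resulting root-stack charts into a global iterated root stack.

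First I would form the coarse moduli space $\pi\colon C\to C_{\mathrm{coarse}}$ by the Keel--Mori theorem; since $C$ is separated Deligne-Mumford this exists, $\pi$ is proper with geometrically connected fibers, and the formation of $C_{\mathrm{coarse}}$ commutes with flat base change on $B$. As $C$ contains a $B$-dense open subscheme $U$ and $\pi$ restricts to an isomorphism over the maximal open substack of $C$ that is an algebraic space, $C_{\mathrm{coarse}}$ contains a copy of $U$ as a $B$-dense open subscheme; once the local models below show that $C_{\mathrm{coarse}}$ is smooth over $B$ of relative dimension $1$ with geometrically connected fibers, it is a curve over $B$ in the sense of \Cref{def:stacky_curve}. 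The locus in $C$ where $\pi$ is not an isomorphism is closed, and its image $Z\subset C_{\mathrm{coarse}}$ is closed, disjoint from the generic point of every fiber $C_b$, and hence quasi-finite over $B$.

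Next I would apply the local structure theorem for tame Deligne-Mumford stacks (Abramovich--Olsson--Vistoli): \'etale-locally on $C_{\mathrm{coarse}}$ around a point of $Z$ one has $C\cong[\Spec R/\mu]$ with $\mu$ a finite \'etale group scheme (tame, since $C$ is) acting on $\Spec R$, the map $\Spec R\to C$ being a $\mu$-torsor and so finite \'etale; as $C$ is smooth over $B$ so is $\Spec R$, of relative dimension $1$. The stabilizer of a stacky point acts faithfully on the $1$-dimensional relative cotangent space there, so it is the group scheme $\mu_w$ with $w$ invertible on $B$, and the action on the conormal bundle of the fixed locus shows this $\mu_w$ persists over a neighborhood. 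Linearizing this tame (linearly reductive) action near the fixed locus brings $\Spec R$, after a further \'etale localization, to the form where $\mu_w$ scales a relative coordinate $x$; passing to invariants identifies $C_{\mathrm{coarse}}$ locally with $\Spec R^{\mu_w}$, which is smooth over $B$, and comparison with the chart description \eqref{eq:charts_root_stack} identifies $C$ locally with $\sqrt[w]{D/C_{\mathrm{coarse}}}$ for $D$ the image of the fixed locus $V(x)$. Thus $Z$ is the disjoint union of its open-and-closed strata $D_1,\dots,D_r$ on which the automorphism order takes a constant value $w_i$, each $D_i$ is a reduced effective Cartier divisor, smoothness of $\Spec R$ over $B$ with $x$ a relative coordinate forces $D_i\to B$ to be \'etale, and with the closedness of $Z$ one gets that each $D_i$ is finite \'etale over $B$.

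Finally I would glue. The root stack $\sqrt[w]{D/X}$ represents the functor sending $T$ to a map $T\to X$ together with a compatible $w$-th root of $\mathcal{O}_X(D)$ and its canonical section, so the local isomorphisms $C|_V\cong\sqrt[w_i]{D_i\cap V/V}$ are canonical and patch to a global isomorphism of $C$ with the iterated root stack of $C_{\mathrm{coarse}}$ along the disjoint divisors $D_1,\dots,D_r$; the order of iteration is immaterial since the $D_i$ are disjoint. Canonicity of the whole construction is then clear, as $C_{\mathrm{coarse}}$, the closed subscheme $Z$, its stratification by automorphism order, and the integers $w_i$ are intrinsically attached to $C$, while the iterated root stack is pinned down by its universal property. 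I expect the main obstacle to be the relative content of the middle step: extracting from the structure theory that $C_{\mathrm{coarse}}$, the stacky locus $Z$, and the divisors $D_i$ are well-behaved over the base $B$ --- smooth, \'etale, and finite over $B$ rather than merely fiberwise --- which is precisely where tameness of $C$ and the linearization of the cyclic stabilizers enter; the gluing step is then purely formal.
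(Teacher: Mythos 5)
Your route is genuinely different from the paper's: the paper disposes of the lemma by citing Cadman's Theorem~4.1 (the proper, Noetherian case) and asserting that the argument is local and extends, whereas you reconstruct that argument in the relative setting from scratch (Keel--Mori, the Abramovich--Olsson--Vistoli local structure of tame stacks, linearization of the cyclic stabilizers, and gluing via the universal property of root stacks). The mathematical core of your write-up --- that \'etale-locally on the coarse space one has $C\cong[\Spec R/\mu_w]$ with $\mu_w$ scaling a relative coordinate, hence a chart as in \eqref{eq:charts_root_stack}, and that the local identifications are canonical and glue --- is correct, and it makes explicit exactly the relative bookkeeping that the paper's two-line reduction leaves implicit. (One small point: for the fiberwise verifications you want coarse-space formation to commute with \emph{arbitrary} base change, not just flat base change; this is where tameness is used, via Abramovich--Olsson--Vistoli.)

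There is, however, one step that fails as written: the deduction that each $D_i$ is \emph{finite} over $B$. You argue that $Z$ is closed in $C_{\mathrm{coarse}}$ and quasi-finite over $B$, and then that ``with the closedness of $Z$ one gets that each $D_i$ is finite \'etale over $B$.'' Closed plus quasi-finite over $B$ implies finite over $B$ only when the ambient scheme is proper over $B$, which $C_{\mathrm{coarse}}$ need not be. Concretely, take $B=\A^1_x$ over a field of characteristic $\neq 2$, let $C_{\mathrm{coarse}}=\A^2_{x,y}$ viewed as a curve over $B$, and let $C=\sqrt[2]{V(xy-1)/\A^2}$. On the chart $[\Spec k[x,y,z]/(z^2-xy+1)/\mu_2]$ one checks that $C$ is smooth over $B$ of relative dimension $1$ with geometrically connected fibers and a $B$-dense open subscheme, so it is a tame stacky curve in the sense of \Cref{def:stacky_curve}; yet its stacky divisor $V(xy-1)\cong\G_m$ is \'etale over $B$ (it is an open immersion into $\A^1_x$) but not finite over $B$. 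So finiteness of the $D_i$ cannot be extracted from closedness alone; it requires an extra input, such as properness of $C_{\mathrm{coarse}}$ (or of $Z$) over $B$. To be fair, the paper's own proof has the same soft spot --- the claim that Cadman's argument ``never uses properness'' is accurate for everything except this finiteness assertion, and in the paper's applications the divisor is finite for independent reasons (it lives in $\Proj^1_B$). Still, as a proof of the lemma as stated, this step is a genuine gap you should flag rather than elide.
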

\begin{proof}
    If $C$ is proper and $B$ Noetherian then this is \cite[Thm.~4.1.]{Cadman2007stacks}. The proof is local and never uses properness, the general case is reduced to the Noetherian case in the standard way using that $f$ is finitely presented.
\end{proof}

The following proposition explains why stacky curves play a role when studying $G$-covers of curves.
\begin{proposition}\label{prop:G_covers_curves}
    Let $C$ be a curve over $B$ and $f:X \to C$ a $G$-cover. The ramification divisor $D$ of $f$ is a divisor and finite \'etale over $B$. Moreover, for any finite \'etale divisor $D \subset C$ the following three categories are equivalent, moreover the equivalences are functorial in $B$ and $G$.
    \begin{enumerate}
        \item The category of $G$-covers over $C$ whose ramification divisor is contained in $D$.
        \item The category of pairs $(\mathcal{C}, \pi: X \to \mathcal{C})$ where $\mathcal{C}$ is a stack obtained by rooting along divisors contained in $D$, $\pi$ is a $G$-torsor and $X$ is a scheme.
         \item The category of $G$-torsors over $C \setminus D$.
    \end{enumerate}

    The equivalence is given by the following maps
    \begin{enumerate}
        \item  $\to \mathrm{(2)}$: Sends a $G$-cover $X \to C$ to the pair $([X/G], X \to [X/G])$.
        \item  $\to \mathrm{(3)}$: Restrict $\pi$ to $C \setminus D$
        \item  $\to \mathrm{(1)}$: It sends $X \to C \setminus D$ to the finite map $Y \to C$ extending $X \to C$.
    \end{enumerate}
\end{proposition}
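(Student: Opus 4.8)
The plan is to prove the equivalence (1) $\simeq$ (2) $\simeq$ (3) by exhibiting the three functors above and checking that the composites in both directions are naturally isomorphic to the identity, working étale-locally on $B$ so that we may assume $G$ is constant and $B$ is affine and Noetherian (the general case follows by the usual finite-presentation limit argument, exactly as in the proof of \Cref{lem:stacky_curves_are_root_stacks}). First I would establish the two preliminary claims: that the ramification divisor $D$ of a $G$-cover $f\colon X\to C$ is indeed a Cartier divisor and finite étale over $B$. Since $X\to C$ is finite and $C$ is a smooth relative curve, $X$ is Cohen--Macaulay and the locus where $f$ fails to be étale is a closed subscheme of pure codimension $1$, hence a divisor; tameness of $G$ (its order invertible on $B$) forces the ramification to be tame, and one checks on strict henselizations that the branch locus in $C$ is étale over $B$ (the inertia at a ramification point is cyclic of order dividing $|G|$, invertible on $B$, so the local picture is $x\mapsto x^e$ and the branch point spreads out étale-locally). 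This is the step I expect to be the main obstacle, since it requires a careful local analysis at the ramification and is the place where the tameness hypothesis and the smoothness of $C/B$ are genuinely used.

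Next I would construct the functor (1) $\to$ (2). Given a $G$-cover $f\colon X\to C$ with ramification in $D$, form the quotient stack $\mathcal{C}:=[X/G]$ together with the $G$-torsor $\pi\colon X\to[X/G]$ and the induced finite map $[X/G]\to C$. One must show $[X/G]$ is a stacky curve obtained by rooting $C$ along divisors supported on $D$: using the local model at a tame ramification point (inertia $\mu_e$ acting on $X$ by $\zeta\cdot x=\zeta x$ in a suitable coordinate) together with the chart description \eqref{eq:charts_root_stack}, one identifies $[X/G]$ near the branch point $P$ with the root stack $\sqrt[e_P]{P/C}$, where $e_P$ is the ramification index; globally this gives an iterated root stack along the (disjoint, finite étale) components of $D$. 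That $X$ is a scheme is part of the data being recorded. Functoriality in $B$ and $G$ is immediate from functoriality of $[-/G]$ and of the root stack construction (\S\ref{sec:root_stacks}).

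The functor (2) $\to$ (3) is simply restriction of the $G$-torsor $\pi$ along the open immersion $C\setminus D\hookrightarrow\mathcal{C}$ (note $\mathcal{C}\setminus\sqrt[w]{D}\cong C\setminus D$ since the root stack is an isomorphism away from $D$). The functor (3) $\to$ (1) sends a $G$-torsor $Y_0\to C\setminus D$ to its normalization $Y$ in $C$, i.e. the finite map $Y\to C$ with $Y=\mathrm{Spec}$ of the integral closure of $\mathcal{O}_C$ in the total ring of fractions of $Y_0$; the $G$-action extends by normality, and $Y\to C$ is a $G$-cover since it is a $G$-torsor over the dense open $C\setminus D$ (and this $Y$ is a scheme because normalization of a scheme in a finite extension is a scheme). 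Finally I would check the round trips: (3)$\to$(1)$\to$(3) is the identity because $C\setminus D$ is normal and dense in both $C$ and $\mathcal{C}$ so both a $G$-cover over $C$ and a $G$-torsor over the stacky curve are determined by their restriction to $C\setminus D$ (the remark after \Cref{def:G_cover} and normality); (1)$\to$(2)$\to$(3)$\to$(1) is the identity by the same density and the fact that $X\to C$ is finite and birational over the generic point; and (2)$\to$(3)$\to$(2) reconstructs $(\mathcal{C},\pi)$ from $\pi|_{C\setminus D}$ because $\mathcal{C}$ is recovered as $[X/G]$ with $X$ the normalization and the root structure is pinned down by the monodromy of $Y_0$ around the points of $D$ via the local chart \eqref{eq:charts_root_stack}. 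All natural isomorphisms are canonical, giving the asserted functoriality in $B$ and $G$.
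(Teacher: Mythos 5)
Your proof follows essentially the same route as the paper's: identify $[X/G]$ as an iterated root stack of $C$ along finite \'etale divisors supported on $D$, obtain (3)$\to$(1) by the unique finite extension (normalization) with its extended $G$-action, and check the round trips using that morphisms of these separated, normal objects are determined by their restriction to the dense open $C\setminus D$. The only substantive difference is that where you re-derive the local Kummer structure by hand, the paper simply applies \Cref{lem:stacky_curves_are_root_stacks} to the stacky curve $[X/G]$, which at once gives the root-stack description and the finite \'etaleness of the ramification divisor (it is the union of the rooting divisors produced by that lemma). One caveat on the step you yourself flag as the main obstacle: \'etaleness of the branch locus over $B$ does not follow from tameness plus a fibrewise local computation alone, since branch points can collide in special fibres --- e.g.\ $y^2=x^2-t$ over $B=\Spec k[t]$ is a tame $\Z/2$-cover of $\A^1_B$ whose branch divisor $\{x^2=t\}$ is finite flat but ramified over $t=0$. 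What excludes this is that $[X/G]$ (equivalently $X$) must be smooth over $B$, i.e.\ a stacky curve in the sense of \Cref{def:stacky_curve}; that is precisely the hypothesis under which \Cref{lem:stacky_curves_are_root_stacks} applies, and the paper's proof relies on it just as implicitly as yours does. For the equivalence of categories itself this is harmless, since requiring the ramification to lie in a finite \'etale $D$ forces the good local model.
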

\begin{proof}
   Apply \Cref{lem:stacky_curves_are_root_stacks} to the stacky curve $[X/G]$. The ramification divisor of $f$ is exactly the union of divisors on which one has to root $C$ to get $[X/G]$ and is thus finite \'etale. This reasoning also implies that the first functor is well-defined.

    The second functor is clearly well-defined. The third functor is well-defined as the $G$-action as $Y \to C$ is unique up to unique isomorphism so the $G$-action extends.

    Any composition of these three functors is equivalent to the identity. Indeed all stacks are separated so their morphisms are determined by the open dense subscheme $C \setminus D$ where the functors are the identity.
\end{proof}
\subsection{Ramification type}
To study Hurwitz spaces for general group schemes we will need a notion of the monodromy of a $G$-extension at a point which has good functorial properties. This will be provided by the \emph{ramification type}. Over fields this was defined in \cite[\S7.1]{Loughran2025mallesconjecturebrauergroups}. We extend the definition to general base schemes.

\begin{construction}\label{con:inertia_type}
    Let $C \to B$ be a curve and $\varphi: X \to C$ a $G$-cover whose ramification divisor is $D$. This divisor is finite \'etale over $B$ by \Cref{prop:G_covers_curves}. Consider the stacky curve $[X/G]$. It follows from \Cref{lem:stacky_curves_are_root_stacks} that $[X/G]$ is an iterated root stack over $C$ along a decomposition of the divisors $D = \sqcup_{i \in I} D_i$. Let $w_i$ be the degree of the rooting along $D_i$. 

    Consider the $\mu_{w_i}$-gerbe $\sqrt[w_i]{D_i} \subset [X/G]$ over $D_i$. Choose an \'etale cover $S \to B$ such that the gerbe $\sqrt[w_i]{D_i}$ is neutral and choose an isomorphism $(\sqrt[w_i]{D_i})_{S} \cong (B\mu_{w_i})_{(D_i)_S}$. Note that this isomorphism is unique (up to a non-unique $2$-isomorphism). This defines a map $(D_i)_S \to I_{\mu} BG$ to the cyclotomic inertia stack of $BG$, see \cite[Def.~4.5]{Loughran2025mallesconjecturebrauergroups}. We can compose this with the map $I_{\mu} BG \to G(-1)/G =  \mathcal{C}_G$ from \cite[Prop.~4.6]{Loughran2025mallesconjecturebrauergroups}. The condition that $f$ is ramified at $D$ ensures that it factors through $\mathcal{C}_G^*$:

    Moreover, the composition $(D_i)_S \to \mathcal{C}_G^*$ descends to a map $D_i \to \mathcal{C}_G^*$ as it is a map of schemes and the chosen isomorphism $(\sqrt[w_i]{D_i})_S \cong (B\mu_{\underline{w}})_S$ is unique. We call the union of maps $D = \bigsqcup_i D_i \to \mathcal{C}_G^*$ map the \emph{ramification type} and denote it by $\rho_{G, D}(\varphi)$. More generally, for any subdivisor $D' \subset D$ we write $\rho_{G, D'}(\varphi) = \rho_{G, D}|_{D'}: D' \to \mathcal{C}_G^*$. If $B$ is the spectrum of a field $k$ and $P \in D(k)$ then $\rho_{G, P}(\psi) \in \mathcal{C}_G^*(R)$ is exactly the ramification type defined in \cite[\S7.1]{Loughran2025mallesconjecturebrauergroups}.
\end{construction}
\subsection{Twisted configuration spaces}
We will need certain twisted versions of the multicolored configuration spaces in \cite[Def.~2.2.1]{Landesman2025Homological}, which we now define.

Let $B$ be a base scheme, $C \to B$ a finite \'etale map, which we consider as a set of colors, and $\underline{n}: C \to \N$ a locally constant map. 
\begin{definition}\label{def:configuration_space}
    We define $\Conf_{C/B, \underline{n}}$ as the configuration space parametrizing $n_c$ unordered points in $\A_B^1$ for each color $c \in C$. More precisely, it represents the functor which sends $T \to B$ to the set of isomorphism classes of pairs $(D \to C_T, i: D \to \A^{1}_T)$ such that $D \to C_T$ is a finite \'etale cover of degree $n_c$ at $c \in C$ and $i$ is a closed immersion. An isomorphism $(D, i) \cong (D', i')$ consists of an isomorphism $D \cong D'$ making the obvious diagrams commute.

    We drop $B$ and $C$ from the notation if they are clear from the context.

    An important special case is when $C = B$. In this case we write $\Conf_{B, n} := \Conf_{B/B, n}$. This is the usual configuration space.
\end{definition}
\begin{remark}

    Note that if $C = \{1, \cdots, k\} \times B$ and $\underline{n}$ takes the value $n_i$ on $\{i\} \times B$ then this agrees with the usual multicolored configuration $\Conf_{B, n_1, \dots, n_k}$.

    The given functor is clearly an \'etale sheaf. Moreover, by the above it is \'etale locally on $B$ represented by a scheme affine over $B$, namely the usual multicolored configuration space. Descent implies that the functor is represented by a scheme affine over $B$.
\end{remark}

Now, consider the case in which $B = \Spec \F_q$. In this case, $\underline{n}$ corresponds to a Galois invariant function $C(\bar{\F}_q) \to \N$. We can determine the size of $\Conf_{C, \underline{n}}(\F_q)$ directly from the definition. For a closed point $P \in \A^{1}_{\F_q}$ we let $\F_q(P)/\F_q$ be its field of definition and $\deg P$ its degree.
\begin{lemma}\label{lemma:number_points_Configuration_space}
    For every connected component $c \in \pi_0(C)$ consider a variable $T_c$. 
    
    The size $\# \Conf_{C, \underline{n}}(\F_q)$ is equal to the coefficient of $\underline{T}^{\underline{n}} = \prod_{c \in \pi_0(C)} T_c^{[\F_q(c): \F_q]n_c}$ in the power series
    \[
    \prod_{P \in \A_{\F_q}^{1}}(1 + \sum_{c \in \pi_0(C)} |c(\F_q(P))|T_c^{\deg P})
    \]
\end{lemma}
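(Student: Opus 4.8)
The plan is to count pairs $(D \to C_{\F_q}, i \colon D \hookrightarrow \A^1_{\F_q})$ directly, organizing the count by the image divisor $i(D) \subset \A^1_{\F_q}$. First I would observe that giving such a pair is the same as giving, for each closed point $P \in \A^1_{\F_q}$, a way of "decorating" $P$ by a finite $\F_q$-scheme mapping to $C$, subject to the constraint that the fibre of $D \to C_{\F_q}$ over each component $c$ has degree $n_c$. More precisely, a closed immersion $i\colon D \hookrightarrow \A^1_{\F_q}$ is equivalent to choosing a finite set $S$ of closed points of $\A^1_{\F_q}$ (the image) together with, for each $P \in S$, the scheme-theoretic fibre $D_P := D \times_{\A^1_{\F_q}} \Spec \F_q(P)$, which must be a nonempty reduced $\F_q$-scheme finite over $\F_q(P)$; and the extra data of the map $D \to C_{\F_q}$ amounts to specifying, for each such $P$, a map $D_P \to C_{\F_q(P)}$. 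Over the separably closed point $\Spec \F_q(P)$, a finite reduced scheme with a map to $C$ is just a finite multiset of elements of $C(\F_q(P))$, but since we are working over $\F_q$ and $D_P$ need not be split, the correct count is: the number of $\F_q(P)$-points of $\Conf_{C_{\F_q(P)}, ?}$, i.e. the choices of a finite étale $\F_q(P)$-scheme with a map to $C_{\F_q(P)}$ — but since the fibre must be \emph{nonempty} at $P$, this contributes exactly the generating-function factor $\sum_{c \in \pi_0(C)} |c(\F_q(P))| T_c^{\deg P}$ once we track, via the variable $T_c^{\deg P}$, that a single closed point lying over the component $c$ contributes $\deg P$ to the total $\F_q(c)$-degree of $D|_c$.

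The key step is the local computation at a single closed point $P$. Here I would argue: a finite étale $\F_q(P)$-algebra with a map to $C$, pointed by a single closed point of $\A^1_{\F_q}$ equal to $P$ — no, more carefully, the possible "decorations at $P$" are in bijection with $\Hom_{\F_q(P)}(\Spec \F_q(P), C)$ for the \emph{connected} decoration (one closed point exactly at $P$ with residue field $\F_q(P)$ mapping to some component $c$), together with the option of not decorating $P$ at all. But a closed point of $\A^1_{\F_q}$ with residue field $\F_q(P)$ mapped to $C_{\F_q(P)}$: such a map lands in a single component $c \in \pi_0(C)$ and, since $\F_q(P)$ contains $\F_q(c)$ iff $[\F_q(c):\F_q] \mid \deg P$, the number of $\F_q(P)$-points of $c$ is $|c(\F_q(P))|$; each such contributes a factor $T_c^{\deg P}$ because the component $c$ of $D$ acquires $\deg P / [\F_q(c):\F_q]$ extra "$\F_q(c)$-points" worth $\deg P$ in the $[\F_q(c):\F_q]n_c$ normalization — so the bookkeeping with $\underline{T}^{\underline{n}} = \prod_c T_c^{[\F_q(c):\F_q]n_c}$ is precisely what makes this consistent. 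Thus the local factor is $1 + \sum_{c \in \pi_0(C)} |c(\F_q(P))| T_c^{\deg P}$, the "$1$" being the empty decoration.

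Finally I would assemble: since $D \to \A^1_{\F_q}$ is a closed immersion, the decorations at distinct closed points are independent, so $\Conf_{C, \underline{n}}(\F_q)$ is in bijection with the set of finitely-supported choices of a decoration at each $P$ whose total degree in each $T_c$ equals $[\F_q(c):\F_q] n_c$; the cardinality of this set is visibly the coefficient of $\underline{T}^{\underline{n}}$ in the product $\prod_{P \in \A^1_{\F_q}}(1 + \sum_{c} |c(\F_q(P))| T_c^{\deg P})$. Note the product is well-defined coefficient-by-coefficient since only finitely many $P$ have $\deg P$ at most $\max_c [\F_q(c):\F_q]n_c$. The main obstacle I anticipate is purely notational rather than mathematical: being careful that a single closed point $P$ lying over a component $c$ contributes degree $\deg P$ (not $\deg P / [\F_q(c):\F_q]$, nor $\deg P \cdot [\F_q(c):\F_q]$) to the monomial exponent, so that the condition "$D \to C_{\F_q}$ has degree $n_c$ at $c$" translates correctly into "total $T_c$-exponent $= [\F_q(c):\F_q] n_c$"; once this matching of normalizations is pinned down, the rest is a routine unwinding of the moduli description in \Cref{def:configuration_space}.
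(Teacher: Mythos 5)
Your proposal is correct and follows essentially the same route as the paper: decompose $D$ into its closed points, observe that a decoration at a closed point $P$ is either empty or a choice of one of the $|c(\F_q(P))|$ maps $\Spec\F_q(P) \to c$ for some component $c$, and match the degree bookkeeping $\sum_{P} [\F_q(P):\F_q(c)] = n_c$ with the exponent $[\F_q(c):\F_q]n_c$ of $T_c$. The only blemishes are cosmetic (e.g.\ calling $\Spec\F_q(P)$ ``separably closed''), and your self-correction that the fibre $D_P$ is either empty or exactly $\Spec\F_q(P)$ lands in the right place.
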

\begin{proof}
    A point of $\Conf_{C, \underline{n}}(\F_q)$ consists by definition for each component $c \subset C$ of a closed subscheme $D_c \subset \A^1_{\F_q}$ and a map $D_c \to c$ of degree $n_c$ such that the $D_c$ are pairwise disjoint. The closed subscheme $D_c$ is a union of the closed points $\sqcup_{i \in I_c} P_i$. It remains to determine the number of maps $D_c \to c$.

    As $P_i = \Spec \F_q(P_i)$ the number of maps $P_i \to c$ is exactly $|c(\F_q(P_i))|$. Thus, the number of maps $D_c \to c$ is $\prod_{i \in I}|c(\F_q(P_i))|$. 

    We have thus shown that $\#\Conf_{C, \underline{n}}(\F_q)$ is equal to the sum over pairwise disjoint collections of points $\{P_i: i \in I_c\}$ for each component $c \subset C$ such that $\sum_{i \in I_c} [\F_{P_i}: \F_q(c)] = n_c$ and the sum is weighted by $\prod_{c \in \pi_0(C)}\prod_{i \in I_c}|c(\F_{P_i})|$. This is exactly the coefficient of $\underline{T}^{\underline{n}}$ in the given Euler product.
\end{proof}
Define $|\underline{n}| := \sum_{c \in C(\bar{\F}_q)} n_c$.
\begin{lemma}\label{cor:points_configuration_space}
    We have $\#\Conf_{C, \underline{n}}(\F_q) \leq q^{|\underline{n}|}$.
\end{lemma}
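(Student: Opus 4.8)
The plan is to bound the size of $\Conf_{C, \underline{n}}(\F_q)$ by exhibiting an injection (or at least a surjection onto it) from a set whose cardinality is manifestly at most $q^{|\underline{n}|}$. First I would use the previous lemma, \Cref{lemma:number_points_Configuration_space}, which expresses $\#\Conf_{C, \underline{n}}(\F_q)$ as the coefficient of $\underline{T}^{\underline{n}}$ in the Euler product $\prod_{P}(1 + \sum_{c} |c(\F_q(P))| T_c^{\deg P})$. The key observation is that $|c(\F_q(P))| \leq |C(\F_q(P))| \leq |C(\bar\F_q)| $, but a cleaner route is simply to bound the whole coefficient by the corresponding count for the one-colored configuration space of $|\underline{n}|$ points, for which the standard estimate $\#\Conf_{\F_q, m}(\F_q) = q^m - q^{m-1} \leq q^m$ (for $m \geq 1$) is available. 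So the plan is to compare term by term.

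Concretely, I would argue as follows. Forgetting the coloring gives a map $\Conf_{C, \underline{n}}(\F_q) \to \Conf_{\F_q, |\underline{n}|}(\F_q)$ sending $(D \to C_T, i : D \hookrightarrow \A^1_T)$ to the underlying closed subscheme $i(D) \subset \A^1_{\F_q}$ of degree $|\underline{n}|$. This map is injective on each fiber after recording the coloring data: a point of $\Conf_{C, \underline{n}}(\F_q)$ is the datum of disjoint closed subschemes $D_c \subset \A^1_{\F_q}$ together with maps $D_c \to c$. Given the union $D = \sqcup_c D_c$ as an abstract closed subscheme of degree $|\underline{n}|$, the number of ways to color and map is bounded, but it is easier to note directly that each closed point $P$ contributes a factor $1 + \sum_c |c(\F_q(P))| T_c^{\deg P}$ which, upon substituting $T_c \mapsto T$ for all $c$, becomes $1 + (\sum_c |c(\F_q(P))|) T^{\deg P} = 1 + |C(\F_q(P))| T^{\deg P}$. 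Since all coefficients in the Euler product for $\Conf_{C, \underline{n}}$ are non-negative, the coefficient of $\underline{T}^{\underline{n}}$ in $\prod_P(1 + \sum_c |c(\F_q(P))| T_c^{\deg P})$ is at most the coefficient of $T^{|\underline{n}|}$ in $\prod_P(1 + |C(\F_q(P))| T^{\deg P})$, which is precisely $\#\Conf_{C/ C, |\underline{n}|}(\F_q)$ — the configuration space with $C$ itself as the single color set — wait, more simply it is at most the coefficient of $T^{|\underline{n}|}$ in $\prod_P (1 + q^{\deg P} T^{\deg P})$ since $|C(\F_q(P))| \le \#(\A^1)(\F_q(P)) \le q^{\deg P}$... actually the cleanest bound uses $|C(\F_q(P))| \le |C(\bar\F_q)|$, but the sharpest is to compare with $\Conf_{\A^1_{\F_q}, |\underline{n}|}$.

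So the final step: I would observe $\sum_c |c(\F_q(P))| = |C(\F_q(P))|$ is exactly the number of $\F_q$-points of $C$ over $P$, and $1 + \sum_c |c(\F_q(P))| T_c^{\deg P}$ with all $T_c$ set equal to $T$ divides (coefficientwise) the local factor of the configuration space of $|\underline{n}|$ points in the scheme $C$ (or in $\A^1$ with appropriately many colors). Since $\#\Conf_{\A^1_{\F_q}, m}(\F_q) = q^m - q^{m-1} < q^m$ for $m = |\underline{n}| \ge 1$ (and trivially $\#\Conf_{C,\underline 0}(\F_q) = 1 = q^0$ in the degenerate case $\underline n = 0$), and using that every coefficient involved is non-negative, we conclude $\#\Conf_{C, \underline{n}}(\F_q) \le q^{|\underline{n}|}$. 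The main obstacle — really the only point requiring care — is justifying the coefficientwise domination, i.e. checking that specializing $T_c \mapsto T$ and replacing $|c(\F_q(P))|$ by the larger $|C(\F_q(P))|$ can only increase the relevant coefficient; this is immediate once one notes all the power series have non-negative coefficients, so no cancellation can occur.
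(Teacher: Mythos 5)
There is a genuine gap, and it is precisely at the point you flag as "the only point requiring care." Your strategy is to specialize $T_c \mapsto T$ and then dominate the resulting single-variable coefficient by something of size $q^{|\underline{n}|}$, but none of your candidate majorants works. Comparing with the uncolored configuration space $\Conf_{\F_q, |\underline{n}|}$ requires the inequality $1 + |C(\F_q(P))|T^{\deg P} \leq 1 + T^{\deg P}$ coefficientwise, which is backwards: coloring only \emph{increases} the count, so this gives a lower bound, not an upper bound. The bound $|C(\F_q(P))| \leq q^{\deg P}$ is false in general ($C$ is an abstract finite \'etale scheme with no relation to $\A^1$; take $C$ constant with more than $q$ geometric points), and even granting it, the coefficient of $T^m$ in $\prod_P(1 + q^{\deg P}T^{\deg P})$ is $q^m(q^m - q^{m-1})$, far exceeding $q^m$. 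The bound $|C(\F_q(P))| \leq |C(\bar{\F}_q)|$ fares no better. In fact the quantity you reduce to, the coefficient of $T^{|\underline{n}|}$ in $\prod_P(1 + |C(\F_q(P))|T^{\deg P})$, equals $\sum_{|\underline{n}'| = |\underline{n}|} \#\Conf_{C, \underline{n}'}(\F_q)$, and this genuinely exceeds $q^{|\underline{n}|}$ as soon as $C$ has at least two colors (e.g.\ for $C = \{1,2\} \times \Spec\F_q$ and total degree $1$ it equals $2q$). So the specialization $T_c \mapsto T$ destroys exactly the information needed: the multi-grading must be retained.

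The paper's proof keeps the variables $T_c$ separate and instead uses the coefficientwise inequality $1 + \sum_c |c(\F_q(P))|T_c^{\deg P} \leq \prod_c (1 + T_c^{\deg P})^{|c(\F_q(P))|}$, which moves the multiplicity $|c(\F_q(P))|$ into an exponent. Regrouping, the factor attached to a fixed color $c$ is dominated by the zeta function of $\A^1_{\F_q(c)}$ in the variable $T_c^{[\F_q(c):\F_q]}$, i.e.\ the geometric series $\sum_i q^{[\F_q(c):\F_q]i} T_c^{[\F_q(c):\F_q]i}$, whose relevant coefficient is exactly $q^{[\F_q(c):\F_q]n_c}$; multiplying over $c \in \pi_0(C)$ gives $q^{|\underline{n}|}$. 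If you want to repair your argument you need some such device that bounds each color's contribution separately; no single-variable comparison will do.
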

\begin{proof}
    We have the coefficient wise inequality
    \[
    \begin{split}
    \prod_{P \in \A_{\F_q}^{1}}(1 + \sum_{c \subset C} |c(\F_q(P))|T_c^{\deg P}) &\leq \prod_{P \in \A_{\F_q}^{1}} \prod_{c \in \pi_0(C)}(1 +  T_c^{\deg P})^{|c(\F_q(P))|} \\ &\leq \prod_{c \in \pi_0(C)} \prod_{P \in \A^1_{\F_q(c)}} (1 - T_c^{-[\F_q(c): \F_q]\deg P})^{-1}.
    \end{split}
    \]

    The Euler product $\prod_{P \in \A^1_{\F_q(c)}} (1 - T_c^{-[\F_q(c): \F_q]\deg P})^{-1}$ is the zeta function of $\A^1_{\F_q(c)}$ which is equal to the geometric series $\sum_{i = 0}^\infty q^{[\F_q(c): \F_q] i } T_c^{[\F_q(c): \F_q] i }$. By \Cref{lemma:number_points_Configuration_space} we thus have 
    $\#\Conf_{C, \underline{n}}(\F_q) \leq \prod_{c \in \pi_0(C)} q^{[\F_q(c): \F_q] n_c} = q^{|n|}$.
\end{proof}
    
\subsection{Twisted Hurwitz schemes}
To count $G$-extensions of $\A^1_{\F_q}$ for $G$ a finite tame \'etale group scheme with specified local conditions at $\infty$ we will need a twisted version of Hurwitz schemes. We study these schemes in this section.

Fix a base scheme $B$ and a finite \'etale tame group scheme $G$ over $B$. 

As in \cite[Def.~2.3.1]{Landesman2025Homological}, we let $\mathcal{P}^w_T := \sqrt[w]{\infty/ \Proj^1_T}$ be the root stack \cite[Def.~2.2.1]{Cadman2007stacks} of order $w \in \N$ of $\Proj^1_T$ along $\infty$ for any scheme $T$. There is a canonical section $\tilde{\infty}_T: T \to \mathcal{P}^w_T$ lifting the section at $\infty$ given on the chart $t \neq 0$ as 
\begin{equation}\label{eq:infinity_tilde_charts}
    T \xrightarrow{z = 0} \Spec \mathcal{O}_T[t^{-1}]/(z^w = t^{-1}) \to [\Spec \mathcal{O}_T[t^{-1}]/(z^w = t^{-1})/\mu_w] \subset \mathcal{P}^w_T.
\end{equation}
See \eqref{eq:charts_root_stack} for this chart.
\begin{definition}\label{def:Hurwitz_space}
    For $n \in \N$ we define $\Hur^{G, \varphi}_{B, n}$ as the scheme over $B$ represented by the functor which sends $T \to B$ to the collection of isomorphism classes of tuples
    \[
       (D \to T, i: D \to \A^1_T, w, f: X \to \mathcal{P}^w_T , \eta:  f|_{\tilde{\infty}_T} \cong \varphi ).
    \]
    such that $D \to T$ is finite \'etale of degree $n$, $i$ is a closed immersion, $w \in \N$, $X$ is a scheme equipped with a $G$-action, $f$ is a map which is a $G$-torsor over $\mathcal{P}^w_T \setminus D$, the composition with $\mathcal{P}^w_T \to \Proj^1_T$ is a $G$-cover whose ramification divisor contains the image of $i$ and $\eta$ is an isomorphism of $G$-torsors over $T$.

    An isomorphism of tuples $(D, i, w, f, \eta) \to (D', i', w, f', \eta')$ consists of a $T$-isomorphism $D \cong D'$, an equality $w = w'$ and a $G$-equivariant morphism $X \to X'$ making all the obvious diagrams commute.

    We define $\CHur^{G, \varphi}_{B, n} \subset \Hur^{G, \varphi}_{B, n}$ as the open and closed locus where $X \to T$ has geometrically connected fibers.

    We put $\Hur^{G, \varphi}_B := \coprod_n \Hur^{G, \varphi}_{B, n}$ and $\CHur^{G, \varphi}_B := \coprod_n \CHur^{G, \varphi}_{B,n}$.

    If $\varphi$ is the trivial torsor $e_{BG}$, then it may be dropped from the notation.
\end{definition}
\begin{remark} \hfill
    \begin{enumerate}
        \item If $G$ is a constant group scheme and $\varphi$ is the trivial torsor then this definition agrees with \cite[Def.~2.3.1]{Landesman2025Homological}. This is because the section $t$ of loc.~cit.~is equivalent data to an isomorphism with the trivial torsor.

        \item We note that the pair $(D, i)$ is determined up to unique isomorphism by $f$. 
    \end{enumerate}
\end{remark}
A consequence of the following lemma is that $\Hur^{G, \varphi}_{B, n}$ is reprresented by an affine scheme.
\begin{lemma}\label{lem:Hurwitz_scheme_finite_etale}
    The map $\Hur^{G, \varphi}_{B, n} \to \Conf_{B/B, n}$ which sends $(D, i, w, f, \eta)$ to $D \to B$ is finite \'etale.
\end{lemma}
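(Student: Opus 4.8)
The plan is to prove finite étaleness \emph{étale-locally on the base} $\Conf_{B/B,n}$, reducing to a statement about the fibers. First I would observe that over $\Conf_{B/B,n}$ there is a tautological pair $(D \to \Conf_{B/B,n}, i : D \hookrightarrow \A^1_{\Conf_{B/B,n}})$, so the fiber of $\Hur^{G,\varphi}_{B,n} \to \Conf_{B/B,n}$ over a point corresponding to $(D,i)$ parametrizes the remaining data $(w, f : X \to \mathcal{P}^w, \eta)$. By \Cref{prop:G_covers_curves} the data of a $G$-cover of $\Proj^1_T$ with ramification divisor contained in $D$ (equivalently the stacky data $(w, f)$ with $X$ a scheme) is equivalent to the data of a $G$-torsor over $\Proj^1_T \setminus D$; since we also impose a trivialization $\eta$ at $\tilde\infty_T$ and $\infty \notin D$, the fiber is the scheme of $G$-torsors over $\Proj^1_T \setminus D$ rigidified at $\infty$. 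So the key point is that this rigidified torsor-parametrizing functor is represented by a finite étale scheme over $\Conf_{B/B,n}$.

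Next I would check the standard criteria. Representability by a scheme affine over $\Conf_{B/B,n}$ follows from the remark after \Cref{def:configuration_space} (the construction is étale-local on the base and one descends) together with the fact that, $G$ being finite, a $G$-torsor over the affine-over-base curve complement is a finite scheme; rigidification at $\infty$ cuts the automorphisms and keeps everything affine. For \textbf{finiteness}: the map is affine, so it suffices to show it is proper, which I would do via the valuative criterion — a $G$-torsor over the generic fiber of a DVR-family extends uniquely over $\Proj^1 \setminus D$ after allowing the root-stack structure along $D$ (this is exactly the point of passing to $\mathcal{P}^w$, and is where tameness of $G$ is used so that torsors extend across the stacky points); the rigidification at $\infty$ is in the good-reduction locus so it extends too, and uniqueness is the usual separatedness of the moduli of torsors over a regular base. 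For \textbf{étaleness}: the map is flat and unramified because, over any $T$, the deformation theory of a $G$-torsor on the étale site of $\mathcal{P}^w_T \setminus D$ (or equivalently of a tame $G$-cover) is governed by $\HH^i$ with coefficients in a constant finite group scheme, which vanish for $i \ge 1$ on the relevant smooth curve-minus-divisor; more concretely, a tame $G$-cover is classified by a homomorphism from the (tame) fundamental group, which is a discrete invariant, so the family has no infinitesimal deformations and the fibers are finite and reduced. Combining: the map is affine, proper, flat and unramified, hence finite étale.

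The main obstacle I expect is the properness step: one has to argue carefully that a family of $G$-torsors over $(\Proj^1 \setminus D)_{\mathrm{Frac}(R)}$, $R$ a DVR, extends \emph{uniquely} over the special fiber, and the cleanest way is to phrase it as: the $G$-torsor extends to a $G$-torsor over the stacky curve $\mathcal{P}^w_R \setminus D_R$ for suitable $w$ (by tameness and the structure of tame covers of curves, or by appealing to the root-stack description in \Cref{prop:G_covers_curves} applied over $R$), and then the trivialization at the section $\tilde\infty_R$, which lies away from $D$, extends because $G$ is affine and the local ring at $\infty$ is a DVR; uniqueness follows since $\mathcal{P}^w_R \setminus D_R$ is regular and the two extensions agree on a dense open. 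A clean alternative that avoids reproving this by hand is to deduce finiteness and étaleness directly from the corresponding statement for $\Hur^{G}_{B,n}$ over the configuration space in the constant-group case (as in \cite[Def.~2.3.1]{Landesman2025Homological}), by passing to an étale cover of $B$ trivializing $G$ and $\varphi$, where our space becomes a base change of the classical Hurwitz scheme, and then descending; I would likely present this descent argument as the primary proof and relegate the intrinsic valuative-criterion check to a remark.
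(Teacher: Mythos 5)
Your proposed primary argument --- pass to an \'etale cover of $B$ trivializing $G$ and $\varphi$, identify the resulting space with the classical Hurwitz scheme in the constant-group case, and descend, since finite \'etaleness is \'etale-local on the base --- is exactly the paper's proof, which consists of precisely that one-line reduction together with a citation of \cite[Rem.~2.1.4]{landesman2025cohenlenstramomentsfunctionfields}. The lengthy intrinsic valuative-criterion and deformation-theory sketch is not needed (the paper does not attempt it), so you can safely relegate or drop it.
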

\begin{proof}
    This can be checked \'etale locally so we may assume that $G$ is constant and $\varphi = e_{BG}$, which is \cite[Rem.~2.1.4]{landesman2025cohenlenstramomentsfunctionfields}.
\end{proof}
We need to recall some facts on twisting by a torsor. What follows and more can be found in \cite[pp.~20-22]{Skorobogatov2001Torsors}. Given a scheme $X$ over $B$ equipped with a left $G$-action and a right $G$-torsor $\psi: B' \to B$ we can get the twisted algebraic space $X_{\psi} = [B' \times X/G]$ which is equipped with a left $G_{\psi}$-action, where $G_{\psi}$ is the inner twist of $G$. Moreover, twisting is functorial and if $X$ was a left $G$-torsor then $X_{\psi}$ is a left $G_{\psi}$-torsor.

Recall that if $\varphi: B' \to B$ is a left $G$-torsor then the \emph{inverse torsor} $\varphi^{-1}$ is the right $G$-torsor defined by $b \cdot_{\varphi^{-1}} g := g^{-1} \cdot_{\varphi} b$. To ease notation write $G_{\varphi} := G_{\varphi^{-1}}$, then $\varphi_{\varphi^{-1}} \cong e_{BG_{\varphi}}$.
\begin{construction}\label{cons:assume_varphi_trivial}
    We construct an isomorphism $\Hur^{G, \varphi}_{B, n} \to \Hur^{G_{\varphi}}_{B, n}$. It is the identity on $D, i$. It sends $f$ to the twisted cover $f_{\varphi^{-1}}: X_{\varphi^{-1}} \to \mathcal{P}_w$ and sends $\eta$ to the composition  $f_{\varphi^{-1}}|_{\tilde{\infty}_T} \cong \varphi_{\varphi^{-1}} \cong e_{BG_{\varphi}}.$ This map is the identity if $\varphi = e_{BG}$ which is true \'etale locally on $B$, so it is an isomorphism.
\end{construction}
\begin{definition}\label{def:scheme_of_connected_components}
    We recall from \cite[Def.~2.1.1]{Romagny2011Composantes} that an \emph{open connected $T$-component} of a finitely presented $T$-scheme $X \to T$ is a flat finitely presented open $T$-subscheme $U \subset X$ such that for all geometric points $t \in T$ the open subscheme $U_t \subset X_t$ is a connected component.
    
    We define $\pi_0(\Hur^{G, \varphi}_{B, n}/B)$ as in \cite[Def.~2.1.1]{Romagny2011Composantes}. It is the functor which sends $T \to B$ to the set of open connected $T$-components of $\Hur^{G, \varphi}_{B, n}$.
    
    We claim that this functor is represented by a finite \'etale scheme. By descent we may assume that $G$ is a constant group scheme and $\varphi = e_{BG}$. In this case $\Hur^{G, \varphi}_{B, n}$ has a smooth $T$-compactification by \cite[Cor.~B.1.4]{ellenberg2025homologicalstabilitygeneralizedhurwitz}. The claim then follows by applying \cite[Prop.~3.2.5]{Romagny2011Composantes} to the compactification, where we use \cite[Cor.~2.6.2.]{Romagny2011Composantes} to deduce that $\Hur^{G, \varphi}_{B, n}$ and its compactification have the same open connected components.
    
    There is a map $\Hur^{G, \varphi}_n \to \pi_0(\Hur^{G, \varphi}_n/B)$ which sends a point to the unique open connected component containing it, see \cite[Prop.~2.2.1]{Romagny2011Composantes} for the existence of this open connected component. This map is initial among maps from $\Hur^{G, \varphi}_n$ to finite \'etale schemes since a map to a finite \'etale scheme is constant on connected components. 

    We have $\pi_0(\Hur^{G, \varphi}_{B}/B) = \bigsqcup_n \pi_0(\Hur^{G, \varphi}_{B, n}/B)$ by definition.
\end{definition}
\subsubsection{Ramification type}
The components of Hurwitz spaces are often distinguished by the number of points whose inertia is contained within a certain conjugacy class. To keep track of Galois actions and for compatibility with \cite{Loughran2025mallesconjecturebrauergroups} we use the essentially equivalent notion of ramification type which we introduced in \Cref{con:inertia_type}. For an element $f := (D, i, w, f, \eta) \in \Hur^{G, \varphi}(T)$ we will write $\rho_{G, f} := \rho_{G, D}(f): D \to \mathcal{C}_G^*$ for simplicity.

\begin{definition}\label{def:Multiplicity_Hurwitz}.
    Let $\mathfrak{R}: \Hur^{G, \varphi} \to \Hom(\mathcal{C}_G^*, \N)$ be the map which sends the tuple $(D, i, w, f, \eta)$ to the function $\mathfrak{R}(X)$ such that $\mathfrak{R}(X)(c)$ is the degree of the ramification type $\rho_{G, f}: D \to \mathcal{C}_G^*$ at $c \in \mathcal{C}_G^*$. This map factors through $\pi_0(\Hur^{G, \varphi}/B)$ as $\Hom(\mathcal{C}_G^*, \N)$ is ind-finite \'etale.

    For a conjugacy-invariant subset $C \subset G(-1) \setminus \{1\}$ with corresponding scheme of conjugacy classes $\mathcal{C} \subset \mathcal{C}_G^*$. We define $\Hur^{G, C, \varphi}, \CHur^{G, C,\varphi}$ as the union of components $Z$ of $\Hur^{G, \varphi}, \CHur^{G, \varphi}$ such that $\mathfrak{R}(Z) \in \Hom(\mathcal{C}, \N)$. In other words, these are the components corresponding to $G$-covers whose ramification type is supported on $\mathcal{C}$.

    For $\underline{n} \in \Hom_B(\mathcal{C}, \N)$ we denote by $\Hur^{G, C, \varphi}_{\underline{n}}$ the union of components $Z  \subset \Hur^{G, C, \varphi}$ such that $\mathfrak{R}(Z) = \underline{n}$.
\end{definition}
\begin{remark}
    If $B = \Spec \C$ and we choose a compatible system of roots of unity to make $G(-1) \cong G$ then the ramification type at a point is just the conjugacy class which contains the generator of the inertia subgroup, see \cite[Lem.~4.12]{Loughran2025mallesconjecturebrauergroups}. This shows that the notation is compatible with \cite[Def.~2.3.1]{Landesman2025Homological} up to twists.
\end{remark}
\begin{definition}\label{def:map_to_configuration_space}
    There is a canonical map $\Hur^{G, C, \varphi}_{\underline{n}} \to \Conf_{\mathcal{C}, \underline{n}}$ which sends the tuple $(D, i, w, f, \eta)$ to its ramification type $\rho_{G, f}: D \to \mathcal{C}$.
\end{definition}
    
\subsection{Hurwitz stacks}
The actual objects we want to count are $G$-covers up to isomorphism, which are parameterised by Hurwitz stacks.
\begin{definition}\label{def:Hurwitz_stack}
    For $n \in \N$ define $\HurStack^{G}_{B, n}$ as the stack which sends $T \to B$ the groupoid of triples
    \[
       (D \to T, i: D \to \A^1_T, f: X \to \Proj_T^1).
    \]
    such that $D \to T$ is finite \'etale of degree $n$, $i$ is a closed immersion and $f$ is a $G$-cover whose ramification divisor is the image of $i$ and potentially $\infty$. A morphism $(D, i, w, f) \to (D', i', w', f')$ in this groupoid consists of an isomorphism $D \cong D'$, an equality $w = w'$ and an isomorphism of $G$-covers $f \to f'$ making all the obvious diagrams commute.
\end{definition}
It follows from \Cref{prop:G_covers_curves} that there exists a unique $w$ such that $f$ extends to a map $\tilde{f}: X \to \mathcal{P}_T^w$ which is a torsor over $\infty$. Pulling back $f$ along $\tilde{\infty}_T$ then defines a $G$-torsor. This construction is functorial in $T$ and thus defines a map $\HurStack^{G,}_{B, n} \to BG$. The following lemma is then immediate from the definitions.
\begin{lemma}\label{lem:Hurwitz_stack_versus_spaces}
    The scheme $\Hur^{G, \varphi}_{B, n} \to B$ is the fiber of the map $\HurStack^{G}_{B, n} \to BG$ at $\varphi \in BG(B)$.
\end{lemma}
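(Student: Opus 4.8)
The plan is to unwind the two definitions and observe that the only thing that changes when one fixes a torsor $\varphi \in BG(B)$ is the value of $\eta$. Concretely, recall that an object of $\HurStack^G_{B,n}(T)$ is a triple $(D \to T, i\colon D \to \A^1_T, f\colon X \to \Proj^1_T)$ of the stated kind, together with its morphisms; and that by the discussion following \Cref{def:Hurwitz_stack} there is a canonical way to enlarge such an $f$ to a map $\tilde f\colon X \to \mathcal{P}^w_T$ which is a $G$-torsor over $\infty$, for a \emph{unique} $w$, and then to pull back along $\tilde\infty_T$ to obtain a $G$-torsor $\tilde f|_{\tilde\infty_T}$ over $T$; this is exactly the map $\HurStack^G_{B,n} \to BG$ whose fiber we must identify.

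First I would spell out what the fiber product $\HurStack^G_{B,n} \times_{BG} B$ (with $B \to BG$ the point $\varphi$) is as a functor of $T \to B$: it is the groupoid of pairs consisting of an object $(D, i, f)$ of $\HurStack^G_{B,n}(T)$ together with an isomorphism of $G$-torsors $\eta\colon \tilde f|_{\tilde\infty_T} \cong \varphi_T$ over $T$, with morphisms the isomorphisms $(D,i,f) \to (D',i',f')$ compatible with the $\eta$'s. Next I would compare this termwise with $\Hur^{G,\varphi}_{B,n}(T)$ from \Cref{def:Hurwitz_space}: a $T$-point there is a tuple $(D \to T, i\colon D \to \A^1_T, w, f\colon X \to \mathcal{P}^w_T, \eta\colon f|_{\tilde\infty_T} \cong \varphi)$. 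The passage $f \mapsto (f\text{ composed with }\mathcal{P}^w_T \to \Proj^1_T)$, which is a $G$-cover of $\Proj^1_T$ by the definition, gives a map from the Hurwitz-space data to the Hurwitz-stack-over-$BG$ data; conversely, given $(D,i,f\colon X \to \Proj^1_T, \eta)$, the uniqueness of $(w, \tilde f)$ in \Cref{prop:G_covers_curves} recovers the remaining data. One then checks these are mutually inverse and match up the notions of morphism (here there is essentially nothing to do, since a morphism of $G$-covers is automatically an isomorphism by \Cref{def:G_cover}, and likewise on the stack side). Hence the functor of points of $\Hur^{G,\varphi}_{B,n}$ agrees with that of the fiber, which is the assertion.

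The one genuine point, rather than a formality, is that the fiber of a stack over $BG$ at a point is \emph{a priori} a stack, whereas $\Hur^{G,\varphi}_{B,n}$ is a scheme; so one must observe that the groupoid above is discrete, i.e.\ has no nontrivial automorphisms. This is where the rigidifying role of $\eta$ enters: an automorphism of $(D,i,f,\eta)$ is in particular an automorphism of the $G$-cover $f$, hence an element of $\Aut$ of the torsor $f|_{\tilde\infty_T}$, which must commute with $\eta$ and the identity on $\varphi$, forcing it to be trivial — exactly the mechanism already used (for $\varphi = e_{BG}$) to see that $\Hur^{G}_{B,n}$ is a scheme. So I would isolate this as the key step: \emph{fixing the trivialization $\eta$ at $\tilde\infty$ rigidifies, so the fiber groupoid is discrete}, and then the remaining identification is the routine unwinding described above.

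I do not expect any serious obstacle; the main thing to be careful about is bookkeeping — making sure the data $(w, \tilde f)$ produced from a $G$-cover $f\colon X \to \Proj^1_T$ by \Cref{prop:G_covers_curves} is functorial in $T$ (so that it defines a genuine morphism of functors, not just a map on $T$-points for each $T$), and that the compatibility-of-morphisms checks go through. Both are immediate from the functoriality already recorded in \Cref{lem:stacky_curves_are_root_stacks}, \Cref{prop:G_covers_curves} and \Cref{con:inertia_type}. The cleanest write-up is probably: (i) describe the fiber as a functor; (ii) note it is discrete by the rigidification argument; (iii) exhibit the two mutually inverse natural transformations to and from $\Hur^{G,\varphi}_{B,n}$; (iv) conclude. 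This is short enough that "immediate from the definitions" — as the excerpt already asserts — is an accurate description, and the proof is essentially the paragraph above made precise.
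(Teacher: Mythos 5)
Your proposal is correct and matches the paper, which offers no argument beyond the assertion that the lemma is ``immediate from the definitions'' once the map $\HurStack^{G}_{B,n} \to BG$ has been constructed via \Cref{prop:G_covers_curves}; your unwinding of the two functors of points, together with the observation that the trivialization $\eta$ at $\tilde\infty_T$ rigidifies the fiber groupoid, is exactly the content of that claim.
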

\begin{remark}
    If $\varphi = e_{BG}$, this corresponds to the fact that the Hurwitz stack is $[\Hur^{G}_{B, n}/G]$, c.f.~\cite[Rem.~2.4.6]{ellenberg2025homologicalstabilitygeneralizedhurwitz}. This is also the way the Hurwitz stack is defined in  \cite{ellenberg2025homologicalstabilitygeneralizedhurwitz, Landesman2025Homological}.
\end{remark}
There is an even finer map $\HurStack^{G}_{B, n} \to [G(-1)/G]$.
\begin{construction}
    The image of the map $\tilde{\infty}_T$ is a $\mu_w$-gerbe for some $w \in \Z_{> 0}$. It is equipped with a point and thus canonically isomorphic to $(B \mu_w)_T$. Pulling back along $f$ defines a representable map $(B \mu_w)_T \to (BG)_T$. We have thus constructed a map $\HurStack^{G}_{B, n} \to I_{\mu} BG$ where $I_{\mu} BG$ is the cyclotomic inertia stack of $BG$, see \cite[Def.~4.5]{Loughran2025mallesconjecturebrauergroups}. We further compose this with the isomorphism $I_{\mu} BG \cong [G(-1)/G]$ from \cite[Prop.~4.6]{Loughran2025mallesconjecturebrauergroups}.
\end{construction}
We recall that points on the stack $[G(-1)/G]$ have the following concrete description \cite[Lemma~4.4]{Loughran2025mallesconjecturebrauergroups}.
\begin{lemma}\label{lem:description_G(-1)/G}
    For any scheme $T$ we have that $[G(-1)/G](T)$ is the groupoid of pairs $(\varphi, \gamma)$. Here, $\varphi \in BG(T)$ is the image of the projection $[G(-1)/G] \to BG$ and $\gamma \in G_{\varphi}(-1)(T)$. A morphism $(\varphi, \gamma) \to (\varphi', \gamma')$ consists of an isomorphism $\varphi \cong \varphi'$ in $BG(T)$ such that $\gamma$ maps to $\gamma'$ under the induced isomorphism $G_{\varphi}(-1) \cong G_{\varphi'}(-1)$.
\end{lemma}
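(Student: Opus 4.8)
The statement is the standard description of the groupoid of $T$-points of a quotient stack $[X/G]$, specialised to $X = G(-1)$ with the conjugation action, so I would argue as follows. Recall that for a left $G$-action on a scheme $X$ over $B$ the groupoid $[X/G](T)$ has objects the pairs $(P, u)$, where $P \to T$ is a left $G$-torsor and $u \colon P \to X_T$ is a $G$-equivariant $T$-morphism, and morphisms $(P, u) \to (P', u')$ the $G$-equivariant isomorphisms $\alpha \colon P \xrightarrow{\sim} P'$ with $u' \circ \alpha = u$. Under this description the projection $[G(-1)/G] \to BG$ is $(P, u) \mapsto P$, which produces the $\varphi$ in the statement. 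So the content is to repackage the remaining datum $u$ and the compatibility on morphisms.

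The key step is the identification of a $G$-equivariant map $u \colon P \to G(-1)_T$ (for the conjugation action on the target) with an element $\gamma \in G_{\varphi}(-1)(T)$. The associated-bundle construction applied to the torsor $\varphi = P$ and the $G$-scheme $G$ with its conjugation action yields, with the conventions fixed just before \Cref{cons:assume_varphi_trivial}, the inner twist $G_{\varphi}$; since $G(-1) = \varinjlim_n \Hom(\mu_n, G)$ is built functorially from $G$ and conjugation is compatible with all the transition maps, the same construction applied to $G(-1)$ yields $G_{\varphi}(-1)$. Now $G$-equivariant maps $P \to G(-1)_T$ are the same as sections over $T$ of this associated bundle, i.e.\ elements of $G_{\varphi}(-1)(T)$. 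One checks this étale-locally on $T$: after trivialising $\varphi$ and hence identifying $G_{\varphi} \cong G$, a $G$-equivariant map $G \to G(-1)$ is determined by (and its value at the identity can be) an arbitrary element of $G(-1)$, and this dictionary is visibly compatible with descent. This gives the bijection on objects, and it is compatible with the projection to $BG$ by construction.

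For morphisms, an isomorphism $(P, u) \to (P', u')$ is precisely an isomorphism $\varphi \cong \varphi'$ in $BG(T)$ subject to $u' \circ \alpha = u$; transporting $u, u'$ to $\gamma, \gamma'$ via the previous paragraph and the isomorphism $G_{\varphi}(-1) \cong G_{\varphi'}(-1)$ induced by $\alpha$, this compatibility becomes exactly the condition that $\gamma$ maps to $\gamma'$. This matches the morphism datum in the statement, so the two groupoids are equivalent, naturally in $T$.

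The only genuine subtlety is bookkeeping of the left/right conventions — on which side $G$ acts on $P$, whether the associated bundle for the conjugation action is $G_{\varphi}$ or $G_{\varphi^{-1}}$, and the anticyclotomic sign in $G(-1)$ — and these must be handled consistently with the conventions of \S\ref{sec:G_cover} and \Cref{cons:assume_varphi_trivial}; once fixed, every verification above is routine. Alternatively, since this is the verbatim statement of \cite[Lemma~4.4]{Loughran2025mallesconjecturebrauergroups} (the anticyclotomic twist $G(-1)$ being treated there on the same footing as $G$), one may simply cite that result.
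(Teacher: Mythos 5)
Your argument is correct, and it matches the paper's treatment: the paper gives no proof of this lemma at all, simply recalling it as \cite[Lemma~4.4]{Loughran2025mallesconjecturebrauergroups}, which is exactly the alternative you offer in your final sentence. The spelled-out version via the standard description of $[X/G](T)$ and the identification of $G$-equivariant maps $P \to G(-1)_T$ with sections of the associated bundle $G_{\varphi}(-1)$ is the right argument and is what the cited lemma itself amounts to.
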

\begin{definition}\label{def:Huritz_space_gamma}
    For $(\varphi, \gamma) \in [G(-1)/G](B)$ we let $\Hur^{G, (\varphi, \gamma)}_{B, n} \subset \Hur^{G, \varphi}_{B, n}$ be the open and closed substack which is the fiber of the map $\HurStack^{G}_{B, n} \to [G(-1)/G]$ at $(\varphi, \gamma) \in [G(-1)/G](B)$. 

    For any conjugacy-invariant subscheme $C \subset G(-1) \setminus \{1\}$ we define $\Hur^{G, C, (\varphi, \gamma)}_{B, \underline{n}} := \Hur^{G, (\varphi, \gamma)}_{B} \cap \Hur^{G, C, \varphi}_{B, \underline{n}} \subset \Hur^{G, \varphi}_{B}$ and $\CHur^{G, C, (\varphi, \gamma)}_{B, \underline{n}} := \Hur^{G, (\varphi, \gamma)}_{B} \cap \CHur^{G, C, \varphi}_{B, \underline{n}} \subset \Hur^{G, \varphi}_{B}$. The definition implies that all squares in the following diagram are cartesian
    \[\begin{tikzcd}
	{\Hur^{G, C, (\varphi, \gamma)}_{B, \underline{n}}} & {\Hur^{G, C, \varphi}_{B, \underline{n}}} & { \Hur^{G, \varphi}_{B, n}} \\
	B & {G_{\varphi}(-1)} & {[G(-1)/G]}
	\arrow[from=1-1, to=1-2]
	\arrow[from=1-1, to=2-1]
	\arrow[from=1-2, to=1-3]
	\arrow[from=1-2, to=2-2]
	\arrow[from=1-3, to=2-3]
	\arrow["\gamma", from=2-1, to=2-2]
	\arrow["{\gamma \to (\varphi, \gamma)}", from=2-2, to=2-3]
\end{tikzcd}\]
\end{definition}
Note that $\Hur^{G, C, (\varphi, \gamma)}_{B, n} \subset \Hur^{G, C, \varphi}_{B, \underline{n}}$ is an open and closed substack by \Cref{lem:Hurwitz_stack_versus_spaces} and because the map $B \xrightarrow{\gamma} G_{\varphi}(-1)$ is finite \'etale.
\subsection{Functoriality}
We will need some functoriality of Hurwitz schemes in $G$. Functoriality for isomorphisms is clear, but for general maps it is slightly subtler. Let us start with the following observation.
\begin{lemma}
    For any $T \to B$ the groupoid $\HurStack^{G}_B(T) := \bigsqcup_{n} \HurStack^{G}_{B,n}(T)$ is equivalent to the groupoid of $G$-covers of $\Proj^{1}_T$.
\end{lemma}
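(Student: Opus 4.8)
The plan is to unwind the definitions on both sides and exhibit mutually inverse functors. First I would recall that by \Cref{def:Hurwitz_stack} an object of $\HurStack^{G}_{B,n}(T)$ is a triple $(D \to T, i \colon D \to \A^1_T, f \colon X \to \Proj^1_T)$ with $D \to T$ finite \'etale of degree $n$, $i$ a closed immersion, and $f$ a $G$-cover whose ramification divisor is exactly $i(D)$ together with possibly $\infty$. Taking the disjoint union over all $n$, I would send such a triple to the underlying $G$-cover $f \colon X \to \Proj^1_T$, and a morphism of triples (which by definition consists of an isomorphism $D \cong D'$, an equality of integers, and an isomorphism of $G$-covers $f \to f'$ over the obvious diagrams) to the underlying isomorphism of $G$-covers. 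This is manifestly a functor $\HurStack^{G}_B(T) \to \{G\text{-covers of }\Proj^1_T\}$.

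For the quasi-inverse I would start with an arbitrary $G$-cover $f \colon X \to \Proj^1_T$. By \Cref{prop:G_covers_curves} the ramification divisor $D_f$ of $f$ is a divisor which is finite \'etale over $T$; write $D_f = D \sqcup (D_f \cap \{\infty\})$, so $D$ is the part of the ramification away from $\infty$. Then $D \to T$ is finite \'etale of some degree $n$ (locally constant, so after decomposing $T$ we may take it constant), and $D \hookrightarrow \A^1_T$ gives the closed immersion $i$. The triple $(D \to T, i, f)$ is then an object of $\HurStack^{G}_{B,n}(T)$, and this assignment is functorial: a morphism of $G$-covers $X \to X'$ restricts to an isomorphism of the associated ramification data, since the ramification divisor is intrinsic to the cover. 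It remains to check that the two composites are naturally isomorphic to the identities. Going one way is essentially a tautology: starting from a triple, forgetting down to $f$, and reconstructing the triple recovers $(D, i, f)$ because the ramification divisor of $f$ away from $\infty$ is exactly $i(D)$ by the defining condition in \Cref{def:Hurwitz_stack}, and $i$ is determined by its image (closed immersions into a separated scheme are monomorphisms, and $D$ is recovered up to unique isomorphism as the scheme structure on that image). Going the other way is equally immediate since we never change $f$.

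The main (and only mildly delicate) point I expect is bookkeeping around the degree index $n$ and the role of $\infty$: one must be careful that the ramification divisor in \Cref{def:Hurwitz_stack} is allowed to include $\infty$ but that $\infty$ is \emph{not} marked by $i$, so the degree $n$ counts only the finite ramification points, and that $D \to T$ is finite \'etale by \Cref{prop:G_covers_curves} rather than something one has to prove by hand. One also wants the degree $n$ to be well-defined, which is why one passes to the disjoint union $\bigsqcup_n$ on the Hurwitz side and uses that finite \'etale degree is locally constant on $T$. Once these points are noted, the equivalence is formal: both functors are fully faithful because morphisms on each side are by definition the same isomorphism-of-$G$-covers data (the extra data of an isomorphism $D \cong D'$ and an equality of integers being uniquely determined, hence contributing nothing), and essential surjectivity in both directions is what was checked above.
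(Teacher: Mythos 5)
Your proof is correct and follows the same route as the paper's (much terser) argument: the only content is that the pair $(D,i)$ is recovered, uniquely up to unique isomorphism, as the part of the ramification divisor away from $\infty$, which is finite \'etale over $T$ by \Cref{prop:G_covers_curves}. The extra bookkeeping you do around the degree $n$ and the role of $\infty$ is accurate but not needed beyond what the paper records.
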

\begin{proof}
    This will follow from \Cref{prop:G_covers_curves}. Given that proposition it remains to show that for every $G$-cover $X \to \Proj^{1}_T$ that we can find a unique up to unique isomorphism pair $(D,i)$ as in the definition. This is true because such a pair is determined uniquely up to unique isomorphism by its image which we get from \Cref{prop:G_covers_curves}.
\end{proof}
The groupoid of $G$-covers is clearly functorial in $G$. Moreover, the natural map $\HurStack^{G}_B \to BG$ of \Cref{lem:Hurwitz_stack_versus_spaces} is also functorial in $G$ so this implies that $\Hur^{G, \varphi}$ is functorial in $G$ by that lemma.
\begin{lemma}\label{lem:functoriality}
    Let $f:G \to H$ be a map of finite \'etale group schemes. Let $\varphi \in BG(B)$ and $f_*: BG \to BH$ the map induced by $B$. Let $C_G \subset G(-1) \setminus \{1\}, C_H \subset H(-1) \setminus \{1\}$ be conjugacy invariant subsets and assume that $f(C_G) \subset C_H \cup \{1\}$
    
    The natural map $\Hur^{G, \varphi} \to \Hur^{H, f_*(\varphi)}$ maps $\Hur^{G, C_G, \varphi}$ into $\Hur^{H, C_G, f_*(\varphi)}$.
\end{lemma}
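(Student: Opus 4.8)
The plan is to reduce the lemma to a single compatibility statement about ramification types. Write $\bar f\colon\mathcal{C}_G=G(-1)/G\to H(-1)/H=\mathcal{C}_H$ for the map induced by $f$ on anticyclotomic twists; the hypothesis $f(C_G)\subseteq C_H$ says that $\bar f$ restricts to a map $\psi\colon C_G/G\to C_H/H$ of the associated schemes of conjugacy classes, and we let $\psi_*\colon\Hom(C_G/G,\N)\to\Hom(C_H/H,\N)$ denote the induced fibrewise summation of $\N$-valued functions. I claim the natural map $\Phi\colon\Hur^{G,\varphi}\to\Hur^{H,f_*(\varphi)}$ satisfies $\mathfrak{R}(\Phi(x))=\psi_*\mathfrak{R}(x)$ for every $x\in\Hur^{G,C_G,\varphi}$. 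Granting this, the lemma is immediate: if $x$ lies on a component $Z\subseteq\Hur^{G,C_G,\varphi}$ then $\mathfrak{R}(Z)\in\Hom(C_G/G,\N)$ by \Cref{def:Multiplicity_Hurwitz}, so $\Phi(x)$ lies on a component with $\mathfrak{R}$-value $\psi_*\mathfrak{R}(Z)\in\Hom(C_H/H,\N)$, which is exactly a component of $\Hur^{H,C_H,f_*(\varphi)}$ (again by \Cref{def:Multiplicity_Hurwitz}, using that $\mathfrak{R}$ factors through the finite \'etale scheme of components).

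The heart of the argument is therefore the functoriality of \Cref{con:inertia_type} in $G$. Viewing $\Phi$ on the level of covers via \Cref{lem:Hurwitz_stack_versus_spaces}, it sends a $G$-cover $X\to\Proj^1$ to the $H$-cover $Y\to\Proj^1$ obtained by pushing forward: over the $G$-torsor locus one forms $X_0\times^{G}H$ and then extends to a cover using \Cref{prop:G_covers_curves}. I would show that for every point $P$ one has $\rho_{H,P}(Y)=\bar f(\rho_{G,P}(X))$, with the convention that an unramified point has type $1$. Since \Cref{con:inertia_type} is built entirely from the iterated root-stack presentation of $[X/G]$, the $\mu_{w_i}$-gerbes $\sqrt[w_i]{D_i}\subset[X/G]$ together with their \'etale-local trivialisations, and the canonical morphism $I_\mu BG\to\mathcal{C}_G$ of \cite[Prop.~4.6]{Loughran2025mallesconjecturebrauergroups}, it suffices to check that all of this is compatible with pushforward: the monodromy of $Y$ at $D_i$ is the $f$-image of that of $X$, so its order divides $w_i$, and \Cref{prop:G_covers_curves} produces a canonical morphism $[X/G]\to[Y/H]$ over $\Proj^1$ extending the identity on the torsor locus and restricting to the natural maps of root stacks and of the corresponding $\mu$-gerbes; this morphism is then compatible with the passage to the cyclotomic inertia stack and with the maps to $\mathcal{C}_G$, $\mathcal{C}_H$ by the functoriality of \cite[Prop.~4.6]{Loughran2025mallesconjecturebrauergroups}. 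With this in hand: a pushed-forward torsor is a torsor, so $Y$ is unramified wherever $X$ is; and if $x\in\Hur^{G,C_G,\varphi}$, every marked point of $X$ carries type in $C_G/G$, whose $\bar f$-image lies in $C_H\subseteq H(-1)\setminus\{1\}$ and is in particular non-trivial, so $Y$ is ramified at exactly the marked divisor of $X$. Hence $\Phi$ restricted to $\Hur^{G,C_G,\varphi}$ leaves the pair $(D,i)$ unchanged (altering only the root-stack structure at $\infty$), and taking degrees of $\rho_{H,D}(Y)=\bar f\circ\rho_{G,D}(X)$ over each conjugacy class yields $\mathfrak{R}(\Phi(x))=\psi_*\mathfrak{R}(x)$.

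The step I expect to require the most care is the functoriality of \Cref{con:inertia_type}: checking that the morphism $[X/G]\to[Y/H]$ exists and is genuinely compatible with the non-canonical gerbe trivialisations and with the map to the cyclotomic inertia stack. Conceptually this is transparent — over $\C$ it is the classical fact that pushing a cover forward along $f$ pushes local monodromy forward along $f$ — but the scheme-theoretic bookkeeping with iterated root stacks is where the verification lies. Should one wish to avoid it, one can instead use that $\mathfrak{R}$ factors through $\pi_0(\,\cdot\,/B)$ to reduce the identity $\rho_{H,P}(Y)=\bar f(\rho_{G,P}(X))$ to a single $G$-cover over an algebraically closed field and a single marked point $P$, where it follows from \cite[Lem.~4.12]{Loughran2025mallesconjecturebrauergroups} (identifying the ramification type with the conjugacy class of a generator of inertia) together with the elementary fact that the inertia of $X\times^{G}H$ at $P$ is the image under $f$ of the inertia of $X$ at $P$.
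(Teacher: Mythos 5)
Your proposal is correct and follows essentially the same route as the paper: both reduce the lemma to the single claim that the ramification type of the pushed-forward $H$-cover equals the composition of $\rho_{G,D}$ with the induced map $\mathcal{C}_G\to\mathcal{C}_H$, which follows from the functoriality of \Cref{con:inertia_type} (the paper asserts this directly, while you supply the root-stack and cyclotomic-inertia-stack bookkeeping, plus a fallback reduction to geometric points).
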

\begin{proof}
    After unfolding definitions and using the equivalences in \Cref{prop:G_covers_curves} this is reduced to the following claim. 

    Let $T \to B$ be a map and $f: X \to \mathbb{P}^1_T$ a $G$-cover whose ramification divisor is contained in $D$. Let $f_H: X_H \to \mathbb{P}^1_T$ be the induced $H$-cover, which is uniquely determined by the existence of a $G$-equivariant morphism $X \to X_H$. The claim is that the ramification type $\rho_H(f_H): D \to \mathcal{C}_H$ is equal to the composition of $\rho_G(f): D \to \mathcal{C}_G$ and the map $\mathcal{C}_G \to \mathcal{C}_H$.

    This claim follows immediately from the construction of the ramification type in \Cref{con:inertia_type}.
\end{proof}
\section{Components of Hurwitz schemes}
A crucial tool for understanding the components of Hurwitz spaces is the lifting invariant of Ellenberg--Venkatesh--Westerland \cite[\S8.5]{ellenberg2013homologicalstabilityhurwitzspaces}, the properties of which were studied in detail by Wood \cite{Wood2021Lifting}. This lifting invariant is only defined in \cite{ellenberg2013homologicalstabilityhurwitzspaces, Wood2021Lifting} for constant group schemes, we will show that it extends to general tame finite \'etale group schemes $G$.

For this section we will fix a base scheme $B$, a tame finite \'etale scheme $G$ over $B$ and a conjugacy-invariant subscheme $C \subset G(-1) \setminus \{1\}$ which generates $G$. Let $\mathcal{C} := C/G$ be the scheme of conjugacy classes in $C$.

For technical reasons it will be convenient to fix a set of primes $\mathbf{P}$ such that if $p \in \mathbf{P}$ then $p$ is invertible on $B$ and for all geometric points $b \in B$ all primes dividing the order of $G_b$ are contained in $\mathbf{P}$. If $B$ is connected then we may take $\mathbf{P}$ the set of primes dividing the order of $G_b$, which is independent of $b$.

We will need the following profinite group \'etale schemes 
$\hat{\Z}_{\mathbf{P}}:= \varprojlim_{n} (\Z/ n \Z), \hat{\Z}_{\mathbf{P}}(1) := \varprojlim_{n} \mu_n$ where the limits are over those $n$ such that if $p \mid n$ is prime then $p \in \mathbf{P}$.

We will also consider the profinite \'etale subschemes $\hat{\Z}_{\mathbf{P}}^{\times} \subset \hat{\Z}_{\mathbf{P}}, \hat{\Z}_{\mathbf{P}}^{\times}(1) \subset \hat{\Z}_{\mathbf{P}}(1)$ consisting of topological generators. An element of $\hat{\Z}_{\mathbf{P}}^{\times}(1)$ is thus a compatible system of primitive roots of unity. Note that $\hat{\Z}_{\mathbf{P}}^{\times}(1)$ is a $\hat{\Z}_{\mathbf{P}}^{\times}$-torsor.
\begin{definition}
    Let $E = \varprojlim_i E_i$ be a profinite \'etale group scheme over $B$ with $E_i$ finite \'etale. Then $E$ is \emph{$\mathbf{P}$-primary} if for all geometric points $b \in B$ and $i$ the finite group $(E_i)_b$ is $\mathbf{P}$-primary in the sense that if a prime $p$ divides the order of $(E_i)_b$ then $p \in \mathbf{P}$.

    If $E$ is $\mathbf{P}$-primary then we put $E(-1) := \Hom_{\hat{\Z}_{\mathbf{P}}^{\times}}(\hat{\Z}_{\mathbf{P}}^{\times}(1), E)$ where $\lambda \in \hat{\Z}_{\mathbf{P}}^{\times}$ acts on $E$ by $e \to e^{\lambda}$, it is isomorphic to $\varprojlim_i E_i(-1).$
\end{definition}
\subsection{Marked central extensions}
The first goal will be to construct the ind-finite \'etale scheme where the lifting invariant is valued. In \cite{ellenberg2013homologicalstabilityhurwitzspaces, Wood2021Lifting} this is done over fields by defining a group in terms of generators and relations and by writing down the Galois action in terms of these generators (but note that the Galois action does not preserve the group structure). We will take a different approach by defining it via a universal property. We will then also define the lifting invariant itself via this universal property. Our reasons for this approach are as follows: it makes functoriality clearer, it allows for a definition of the lifting invariant without any choices and it clarifies the relation between the lifting invariant and the unramified Brauer group of $BG$.

We will need a minor modification of \cite[Def.~6.11]{Loughran2025mallesconjecturebrauergroups}. 
\begin{definition}\label{def:marked_central_extension}
    A $\mathbf{P}$-primary $\mathcal{C}$-\emph{marked central extension} is $1 \to A \to E \to G \to 1$ is a central extension of $\mathbf{P}$-primary profinite \'etale group schemes over $B$ equipped with a finite subscheme $M \subset E(-1)$. The subscheme $M$ is a \emph{marking}, which means that it is invariant under conjugation by elements of $E$ and that the map $E \to G$ induces an isomorphism $M\cong C$.
    
    To ease notation we will drop $\mathbf{P}$ and $\mathcal{C}$ if they are clear from the context.

    A \emph{morphism} of $\mathcal{C}$-marked central extensions is a map of central extensions which preserves the marking $M$. 
\end{definition}

\begin{definition}\label{def:universal_marked_central_extension}
    We define $\hat{U}_{B, \mathbf{P}}(G,  \mathcal{C})(1) \to G$ as the universal $\mathbf{P}$-primary $\mathcal{C}$-marked central extension, i.e. the initial object in the category of marked central extensions.

    The kernel of $\hat{U}_{\mathbf{P}}(G,  \mathcal{C})(1) \to G$ will be denoted $\HH_{2,\mathrm{orb}}^{\mathcal{C}}(G, \hat{\Z}_{\mathbf{P}}(1))$.
\end{definition}
We have to show that this central extension exists. One approach is to define it as the limit of all finite marked central extensions. We will take a different approach by describing it \'etale locally via generators and relations as this description will be useful in what follows.
\begin{proof}[Proof that $\hat{U}_{\mathbf{P}}(G,  \mathcal{C})(1)$ exists]
    All data in the definition of a marked central extension satisfies \'etale descent and all properties can be checked \'etale locally. As initial objects are unique up to unique isomorphism it suffices by descent to construct it for $B$ the spectrum of a strictly henselian ring. This is done in the following construction.
\end{proof}

\begin{construction}\label{con:universal_marked_central_extension}
    Assume that $B = \Spec R$ for $R$ a strictly henselian ring. In this case an \'etale group scheme over $R$ is constant so may be identified with its group of $R$-elements, which we will do. 
    
    Define $\hat{U}_{R, \mathbf{P}}(G,C)(1)$ as the $\mathbf{P}$-primary profinite group generated by symbols $[\gamma]^{\zeta}$ for $\gamma \in C(R)$ and $\zeta \in \hat{\Z}_{\mathbf{P}}^{\times}(1)(R)$ subject to the following relations: 
    \begin{align}\label{eq:relations_universal_central_extension}
            &[\gamma]^{\lambda \zeta} = ([\gamma]^\zeta)^{\lambda}
         &[\xi]^{\eta} [\gamma]^{\zeta} [\xi]^{-\eta} = [\xi(\eta) \gamma \xi(\eta)^{-1}]^{\zeta}
    \end{align}
    for all $\xi, \gamma \in C(R)$, $\zeta, \eta \in \hat{\Z}(1)(R)$ and $\lambda \in \hat{\Z}^{\times}(R)$.

    There is a natural map of groups $\hat{U}_{R, \mathbf{P}}(G,C)(1) \to G(R): [\gamma]^{\zeta} \to \gamma(\zeta).$ This is a central extension by \cite[Lem.~2.1]{Wood2021Lifting}.

    We equip $\hat{U}_{R, \mathbf{P}}(G,C)(1)$ with the marking $M(G, C) \subset \hat{U}_{R, \mathbf{P}}(G,C)$ consisting of the functions $\zeta \to [\gamma]^{\zeta}$. This is conjugation invariant by the second relation of \eqref{eq:relations_universal_central_extension}. The map $M(G, C) \to C(R)$ induced by $\hat{U}_{R, \mathbf{P}}(G,C)(1) \to G(R)$ is a bijection by the first relation of \eqref{eq:relations_universal_central_extension}. This shows that $M$ is a marking.

    If $1 \to A \to E \to G$ is a $\mathbf{P}$-primary $\mathcal{C}$-marked central extension with marking $M$ and induced isomorphism $\lambda: C \cong M$ then any map $\hat{U}_{R, \mathbf{P}}(G,C)(1) \to E(R)$ which preserves the marking has to send $[\gamma]^{\zeta}$ to $\lambda(\gamma)(\zeta)$ and is thus unique. This formula always defines a map of groups as the relations \eqref{eq:relations_universal_central_extension} map to the identity by the definition of a marking. We conclude that the constant group scheme $\hat{U}_{R, \mathbf{P}}(G,C)(1) \to G$ is an initial object in the category of marked central extensions.
\end{construction}
\begin{remark} \hfill
    \begin{enumerate}
        \item It follows from the above that the functor of points of $\hat{U}_{B, \mathbf{P}}(G,C)$ is given by sheafifying \Cref{con:universal_marked_central_extension} for general rings $R$.
        \item In \cite[\S7.4]{ellenberg2013homologicalstabilityhurwitzspaces} a definition of marked central extension is given for a finite group $G$ and a conjugacy-invariant subset $C \subset G \setminus \{1\}$. If we choose a compatible family of primite roots of unity to identify $\hat{\Z}_{\mathbf{P}} \cong \hat{\Z}_{\mathbf{P}}(1)$ then it agrees with our definition except that the kernels of their central extensions are discrete groups. A discrete version of \Cref{con:universal_marked_central_extension} is then given in \cite[\S7.5]{ellenberg2013homologicalstabilityhurwitzspaces} as a construction for an initial object in their category of marked central extensions.
        \item In \cite[\S6.4]{Loughran2025mallesconjecturebrauergroups} marked central extensions are introduced as they correspond to elements of the unramified Brauer group of $BG$. The universal marked central extension over a field $k$ is then introduced as a tool to compute the unramified Brauer group, although note that the notation $\hat{U}(G, \mathcal{C})$ is used for what we denote $\hat{U}_{k, \mathbf{P}}(G, C)(1)$ with $\mathbf{P}$ all primes coprime to the characteristic of $k$.
    \end{enumerate}
\end{remark}
We will also consider the following related schemes.
\begin{definition}
    We consider the profinite scheme $\hat{U}_{B, \mathbf{P}}(G,C) := \hat{U}_{B, \mathbf{P}}(G,C)(1)(-1)$. We will also consider $\HH_{2,\mathrm{orb}}^{\mathcal{C}}(G, \hat{\Z}_{\mathbf{P}}) := \HH_{2,\mathrm{orb}}^{\mathcal{C}}(G, \hat{\Z}_{\mathbf{P}}(1))(-1)$ for the fiber of the map $\hat{U}_{B, \mathbf{P}}(G,C) \to G(-1)$ at $1$. Note that $\HH_{2,\mathrm{orb}}^{\mathcal{C}}(G, \hat{\Z}_{\mathbf{P}})$ is a commutative profinite \'etale group scheme as $\HH_{2,\mathrm{orb}}^{\mathcal{C}}(G, \hat{\Z}_{\mathbf{P}}(1))$ is commutative. But $\hat{U}_{B, \mathbf{P}}(G,C) $ will in general not be a group scheme if $G$ is non-commutative.
\end{definition}
\begin{definition}\label{def:lifting_invariant_multiplicity}
    Consider the split central extension $1 \to \Hom(\mathcal{C}, \hat{\Z}_{\mathbf{P}}(1)) \to \Hom(\mathcal{C}, \hat{\Z}_{\mathbf{P}}(1)) \times G \to G \to 1$ equipped with the marking $M \subset \Hom(\mathcal{C}, \hat{\Z}_{\mathbf{P}}) \times G(-1)$ consisting of all elements of the form $(e_{\gamma}, \gamma)$, where $\gamma \in C$ and $e_{\gamma}$ is the indicator function of the conjugacy class containing $\gamma$. This induces a map $\mathfrak{R}: \hat{U}_{B, \mathbf{P}}(G,C)(1) \to \Hom(\mathcal{C}, \hat{\Z}_{\mathbf{P}}(1))$. 
    
    We define $U_{B, \mathbf{P}}(G,C), \HH_{2,\mathrm{orb}}^{\mathcal{C}}(G, \Z)$ as the fiber products making the following diagram cartesian
    \[\begin{tikzcd}
	{\HH_{2,\mathrm{orb}}^{\mathcal{C}}(G, \Z)} & {U_{B}(G,C)} & {\Hom(\mathcal{C}, \Z)} \\
	{\HH_{2,\mathrm{orb}}^{\mathcal{C}}(G, \hat{\Z}_{\mathbf{P}})} & {\hat{U}_{B, \mathbf{P}}(G,C)} & {\Hom(\mathcal{C}, \hat{\Z}_{\mathbf{P}})}
	\arrow[from=1-1, to=1-2]
	\arrow[from=1-1, to=2-1]
	\arrow[from=1-2, to=1-3]
	\arrow[from=1-2, to=2-2]
	\arrow[from=1-3, to=2-3]
	\arrow[from=2-1, to=2-2]
	\arrow[from=2-2, to=2-3]
    \end{tikzcd}\]

    We define $\HH^2(G, \mathcal{C}) := \ker\left(\mathfrak{R}: \HH_{2,\mathrm{orb}}^{\mathcal{C}}(G, \Z) \to \Hom(\mathcal{C}, \Z)\right)$.
\end{definition}
    
The following lemma shows that this construction is well-behaved.
    \begin{lemma}\label{lem: U(G, C) ind-finite \'etale}
        The maps $\hat{U}_{B, \mathbf{P}}(G,C)(1) \to \Hom(\mathcal{C}, \hat{\Z}_{\mathbf{P}}(1))$ and $U_{B}(G, C) \to \Hom(\mathcal{C}, \Z)$ are finite. In particular $U_{B}(G, C)$ and $\HH_{2,\mathrm{orb}}^{\mathcal{C}}(G, \Z)$ are ind-finite \'etale schemes. Moreover, they are independent of the choice of $\mathbf{P}$.
    \end{lemma}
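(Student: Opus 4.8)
The plan is to reduce everything to \Cref{con:universal_marked_central_extension}, which already exhibits $\hat U_{B,\mathbf P}(G,C)(1)$ étale-locally via explicit generators and relations. Since finiteness of a morphism, and the property of being ind-finite étale, can both be checked étale-locally on the base, I would first pass to the case $B = \Spec R$ with $R$ strictly henselian, so that $G$, $C$ and $\hat U_{R,\mathbf P}(G,C)(1)$ are all constant and identified with their groups of $R$-points; the marking $\mathcal C$ becomes a finite conjugacy-invariant subset of the group $G(-1)$. In this situation I would invoke the structure theory of \cite{ellenberg2013homologicalstabilityhurwitzspaces, Wood2021Lifting}: after choosing a compatible family of primitive roots of unity to identify $\hat\Z_{\mathbf P}\cong\hat\Z_{\mathbf P}(1)$, the group $\hat U_{R,\mathbf P}(G,C)(1)$ becomes their reduced universal marked central extension, and the key input is that the kernel $\HH_{2,\mathrm{orb}}^{\mathcal C}(G,\hat\Z_{\mathbf P}(1))$ is a finitely generated $\hat\Z_{\mathbf P}$-module whose image $\HH_2(G,\mathcal C)$ in the ``reduced'' direction is finite (this is essentially the statement that a Schur-type multiplier of the finite group $G$ is finite, combined with the fact that the relations \eqref{eq:relations_universal_central_extension} force the $[\gamma]$-components lying over a fixed conjugacy class to differ by central elements). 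Concretely, the composite $\hat U_{R,\mathbf P}(G,C)(1)\xrightarrow{\mathfrak R}\Hom(\mathcal C,\hat\Z_{\mathbf P}(1))$ is surjective with kernel $\HH_2(G,\mathcal C)$ finite, hence a finite morphism; this gives the first assertion for $\hat U$, and the statement for $U_B(G,C)\to\Hom(\mathcal C,\Z)$ follows by base change along $\Hom(\mathcal C,\Z)\hookrightarrow\Hom(\mathcal C,\hat\Z_{\mathbf P})$ in \Cref{def:lifting_invariant_multiplicity}, since finiteness is stable under base change.

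Next, ``ind-finite étale'' for $U_B(G,C)$ and $\HH_{2,\mathrm{orb}}^{\mathcal C}(G,\Z)$: the target $\Hom(\mathcal C,\Z)$ is a disjoint union (over $\underline n\in\Hom_B(\mathcal C,\N)$, a countable set étale-locally) of finite étale schemes, and a finite scheme over a finite étale scheme is finite étale, so each graded piece $U_B(G,C)_{\underline n}$ is finite étale over $B$ and $U_B(G,C)=\coprod_{\underline n}U_B(G,C)_{\underline n}$ is ind-finite étale; the same argument applies to $\HH_{2,\mathrm{orb}}^{\mathcal C}(G,\Z)$, which is the fiber over the zero section in the group direction. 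One point worth spelling out is étaleness itself: $\hat U$ is a profinite étale group scheme by construction (it is a limit of finite étale group schemes, being the profinite completion of an explicitly presented group with a continuous Galois action), and passing to $(-1)$ preserves this; étaleness of $U_B(G,C)$ then comes from the cartesian square, étaleness being stable under base change.

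Finally, independence of $\mathbf P$: any two admissible choices $\mathbf P\subset\mathbf P'$ give a comparison map $\hat U_{B,\mathbf P'}(G,C)(1)\to\hat U_{B,\mathbf P}(G,C)(1)$ (and one in the other direction after taking $\mathbf P$-parts), but the point is that for \emph{each} geometric point $b\in B$ every prime dividing $|G_b|$ already lies in $\mathbf P$, so the generators $[\gamma]^\zeta$ and the relations \eqref{eq:relations_universal_central_extension} — and hence the finite group $\HH_2(G,\mathcal C)$ — only see roots of unity of order dividing a power of $|G_b|$; the extra primes in $\mathbf P'\setminus\mathbf P$ contribute nothing to the \emph{finite} quotient $U_B(G,C)$, only to the ``orbifold'' $\hat\Z_{\mathbf P}(1)$-coefficient version, which is not part of this statement. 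Thus the canonical comparison map restricts to an isomorphism on $U_B(G,C)$ and on $\HH_{2,\mathrm{orb}}^{\mathcal C}(G,\Z)$. I expect the main obstacle to be the first finiteness claim — pinning down precisely why the fibers of $\mathfrak R$ are finite, i.e.\ extracting the finiteness of the relevant Schur multiplier $\HH_2(G,\mathcal C)$ from the presentation in \cite{ellenberg2013homologicalstabilityhurwitzspaces, Wood2021Lifting} in a way that is uniform over $B$; once that is in hand, everything else is a formal consequence of descent and stability of finite/étale under base change.
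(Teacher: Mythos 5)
Your proposal is correct and follows essentially the same route as the paper: reduce by descent to a strictly henselian base where everything is constant, identify $\hat{U}_{R,\mathbf{P}}(G,C)(1)$ with the ($\mathbf{P}$-primary completion of the) explicitly presented group $U(G(R),C(R))$ of Wood, and invoke her finiteness theorem (\cite[Thm.~2.5]{Wood2021Lifting}) for the key input you flag as the main obstacle; the same identification gives independence of $\mathbf{P}$. The only slip is cosmetic: the kernel of $\mathfrak{R}$ on $\hat{U}$ is an extension of a subgroup of $G$ by the $\HH_2$-type group rather than $\HH_2(G,\mathcal{C})$ itself, which does not affect finiteness since $G$ is finite.
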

    \begin{proof}
        Finiteness is compatible with base change so it suffices to prove finiteness of the first map. By descent we may reduce to the case that $B$ is the spectrum of a strictly henselian ring $R$.
        
        We may then work with the underlying abstract group of $\hat{U}_{R, \mathbf{P}}(G, C)$ in \Cref{con:universal_marked_central_extension}. This is the same as the $\mathbf{P}$-primary profinite completion of the group denoted $U(G(R), C(R))$ in \cite[\S2]{Wood2021Lifting} as they have the same generators and relations. The finiteness statement then follows from \cite[Thm.~2.5]{Wood2021Lifting}. The above reasoning also shows that $U_{R}(G, C)(R)$ is isomorphic to $U(G(R), C(R))$ and is thus independent of $\mathbf{P}$.
    \end{proof}
    \begin{remark}\label{rem:universal_marked_extension} \hfill
        \begin{enumerate}
            \item If $\Omega$ is an algebraically closed field and $b \in B(\Omega)$ is a geometric point then the above lemma shows that $U_{B}(G, C)_b$ is the group $U(G_b, C_b)$ of \cite{Wood2021Lifting}, or equivalently the group $\tilde{G_b}$ of \cite{ellenberg2013homologicalstabilityhurwitzspaces}. If $G$ is constant then the action of the \'etale fundamental group $\pi_1^{\text{\'et}}(B, b)$ on $U_{B}(G, C)_b$ is identified with the action given by composing the cyclotomic character $\pi_1^{\text{\'et}}(B) \to \hat{\Z}^{\times}$ and the inverse of the discrete action of \cite[p.~7]{Wood2021Lifting} \cite[\S8.1.7]{ellenberg2013homologicalstabilityhurwitzspaces}.
            
            Indeed, the definitions show that $\hat{U}_{B, \mathbf{P}}(G, C)(1)_b$ is isomorphic to the maximal $\mathbf{P}$-primary quotient of $\hat{U}(G_b, C_b), \tilde{G_b}^{\wedge}$. The induced action of $\pi_1^{\text{\'et}}(B, b)$ is by \Cref{con:universal_marked_central_extension} equal to the composition of the cyclotomic character and the action denoted $\cdot$ in \cite[p.~7]{Wood2021Lifting}, \cite[\S8.1.6]{ellenberg2013homologicalstabilityhurwitzspaces}.
            \item In particular, for $B = \Spec \F_q$ we have $\HH_2(G, \mathcal{C})(\bar{\F}_q) = \HH_2(G, C(\bar{\F}_q))(-1)$ where the later group is defined in \cite[Definition]{Wood2021Lifting}, \cite[Def.~7.3]{ellenberg2013homologicalstabilityhurwitzspaces}.
        \end{enumerate}
    \end{remark}
    \begin{lemma}\label{lem:M_is_abelianization}
        The map $\mathfrak{R}: \hat{U}_{B, \mathbf{P}}(G, C)(1) \to \Hom(\mathcal{C}, \hat{\Z}_{\mathbf{P}}(1))$ is the abelianization of $\hat{U}_{B, \mathbf{P}}(G, C)(1)$.
    \end{lemma}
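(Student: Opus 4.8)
The claim is that $\mathfrak{R}\colon \hat{U}_{B, \mathbf{P}}(G, C)(1) \to \Hom(\mathcal{C}, \hat{\Z}_{\mathbf{P}}(1))$ is the abelianization map. My plan is to verify a universal property: I must show that $\Hom(\mathcal{C}, \hat{\Z}_{\mathbf{P}}(1))$ is an abelian $\mathbf{P}$-primary profinite \'etale group scheme, that $\mathfrak{R}$ is a surjective homomorphism, and that every homomorphism from $\hat{U}_{B, \mathbf{P}}(G, C)(1)$ to an abelian $\mathbf{P}$-primary profinite \'etale group scheme factors uniquely through $\mathfrak{R}$. As with all the statements in this section, everything can be checked \'etale locally on $B$ (all relevant data and properties satisfy \'etale descent and initial objects are unique up to unique isomorphism), so I reduce to the case $B = \Spec R$ with $R$ strictly henselian, where $\hat{U}_{R, \mathbf{P}}(G, C)(1)$ is the explicit profinite group of \Cref{con:universal_marked_central_extension} generated by symbols $[\gamma]^{\zeta}$ subject to the relations \eqref{eq:relations_universal_central_extension}.

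First I would identify the target concretely: over the strictly henselian base, $\Hom(\mathcal{C}, \hat{\Z}_{\mathbf{P}}(1))$ is the free abelian $\mathbf{P}$-primary profinite group on the (finite) set $\pi_0(\mathcal{C})$ of conjugacy classes, with a distinguished topological generator $e_{[\gamma]}$ for each class, namely the indicator function of that class. By \Cref{def:lifting_invariant_multiplicity}, $\mathfrak{R}$ is the map induced by the marked central extension morphism $\hat{U}_{R, \mathbf{P}}(G, C)(1) \to \Hom(\mathcal{C}, \hat{\Z}_{\mathbf{P}}(1)) \times G$ associated to the split extension with its tautological marking; concretely this sends the generator $[\gamma]^{\zeta}$ to $(\zeta \cdot e_{[\gamma]}, \gamma(\zeta))$, so composing with the projection $\mathfrak{R}$ sends $[\gamma]^{\zeta} \mapsto \zeta \cdot e_{[\gamma]}$, where I use the $\hat{\Z}_{\mathbf{P}}^{\times}$-module structure to make sense of $\zeta \cdot e_{[\gamma]}$. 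This is manifestly a surjective group homomorphism, and the second relation of \eqref{eq:relations_universal_central_extension} forces the image of the commutator subgroup into the kernel (conjugation does not change the conjugacy class), so $\mathfrak{R}$ descends to a surjection from the abelianization.

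To see it is an \emph{isomorphism} from the abelianization, I exhibit an inverse. In the abelianization of $\hat{U}_{R, \mathbf{P}}(G, C)(1)$, the first relation $[\gamma]^{\lambda\zeta} = ([\gamma]^{\zeta})^{\lambda}$ shows that the image of $[\gamma]^{\zeta}$ depends only on the conjugacy class of $\gamma$ (once we have killed commutators, the second relation identifies $[\gamma]^{\zeta}$ with $[\xi(\eta)\gamma\xi(\eta)^{-1}]^{\zeta}$, and one checks this covers the whole class) and on $\zeta$ through the module structure, so the abelianization is generated topologically by one element $\overline{[\gamma_{\mathfrak{c}}]^{\zeta_0}}$ per class $\mathfrak{c} \in \pi_0(\mathcal{C})$ for a fixed choice of $\zeta_0$, with no further relations among distinct classes; hence $\Hom(\mathcal{C}, \hat{\Z}_{\mathbf{P}}(1)) \to \hat{U}_{R, \mathbf{P}}(G, C)(1)^{\mathrm{ab}}$ sending $e_{\mathfrak{c}} \mapsto \overline{[\gamma_{\mathfrak{c}}]^{\zeta_0}}$ is a well-defined two-sided inverse to (the factored) $\mathfrak{R}$. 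Equivalently, and perhaps cleaner for the write-up, one invokes the universal property directly: a marked-central-extension morphism to the split extension $\Hom(\mathcal{C}, \hat{\Z}_{\mathbf{P}}(1)) \times G$ is the same as a group homomorphism to $\Hom(\mathcal{C}, \hat{\Z}_{\mathbf{P}}(1))$ compatible with the markings, and since $\Hom(\mathcal{C}, \hat{\Z}_{\mathbf{P}}(1))$ is exactly the free abelian profinite group on $\pi_0(\mathcal{C})$ this matches the universal property of the abelianization. The main obstacle is the bookkeeping in the reduction argument: making precise that \eqref{eq:relations_universal_central_extension} really produces \emph{no} relations between symbols attached to different conjugacy classes (so that $\mathfrak{R}$ is injective on the abelianization and not merely surjective), which is essentially the content of \cite[Lem.~2.1]{Wood2021Lifting} and the structure theory there; everything else is formal manipulation with universal properties and \'etale descent.
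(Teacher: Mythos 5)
Your proposal is correct and takes essentially the same route as the paper: reduce by \'etale descent to the strictly henselian case, where the claim becomes a statement about the explicit group of \Cref{con:universal_marked_central_extension} --- the paper simply cites \cite[Lem.~2.4]{Wood2021Lifting} at that point, whereas you reprove it. The one step you flag as the ``main obstacle'' (that the relations impose nothing further between symbols from distinct conjugacy classes) is in fact automatic once surjectivity of $\mathfrak{R}$ is established, since the composite of your candidate inverse with $\mathfrak{R}$ is visibly an automorphism of the free abelian pro-$\mathbf{P}$ group on $\pi_0(\mathcal{C})$, forcing both maps to be isomorphisms.
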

    \begin{proof}
        As in the above we reduce to the group of \Cref{con:universal_marked_central_extension}, where this is \cite[Lemma~2.4]{Wood2021Lifting}.
    \end{proof}
The main importance of the scheme $U_B(G, C)$ is because of the following theorem.
\begin{theorem}\label{thm:lifting_invariant}
    There exists a map $\mathfrak{z}: \pi_0(\Hur^{G, C, \varphi}/B) \to U_B(G_{\varphi}, C)$ called the \emph{lifting invariant} such that the composition with $\mathfrak{R}: U(G_{\varphi}, C) \to \Hom(\mathcal{C}, \Z)$ is the map $\mathfrak{R}$ of \Cref{def:Multiplicity_Hurwitz}. Moreover, there exists an $N > 0$ such that the lifting invariant is an isomorphism when restricted to $\pi_0(\CHur^{G, C, \varphi}/B)$ over $\Hom(\mathcal{C}, \Z_{> N})$.
\end{theorem}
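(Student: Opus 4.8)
The plan is to construct the lifting invariant by hand and then to deduce its bijectivity in the stable range from the work of Ellenberg--Venkatesh--Westerland and of Wood. Using \Cref{cons:assume_varphi_trivial} I may twist by $\varphi$ and thereby assume $\varphi = e_{BG}$, so $G_\varphi = G$. Since $\pi_0(\Hur^{G,C}/B)$ is initial among morphisms from $\Hur^{G,C}$ to finite \'etale $B$-schemes (by \Cref{def:scheme_of_connected_components}) and $U_B(G,C)$ is ind-finite \'etale (by \Cref{lem: U(G, C) ind-finite \'etale}), it suffices to produce a natural transformation of functors $\Hur^{G,C}_B \to U_B(G,C)$. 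I would build this \'etale-locally on $B$: over $\Spec R$ with $R$ strictly henselian the group scheme $G$ is constant and $\hat\Z_{\mathbf P}^\times(1)(R)\neq\emptyset$, and fixing such a compatible system of roots of unity the datum $(D,i,w,f,\eta)$ of a point of $\Hur^{G,C}_B$ becomes the classical Hurwitz datum of \cite{ellenberg2013homologicalstabilityhurwitzspaces, Wood2021Lifting}, namely a tuple of monodromy generators lying in $C$ together with the monodromy at $\infty$. Its lifting invariant is the product of the marked lifts of these generators inside $\hat U_{R,\mathbf P}(G,C)(1)$ from \Cref{con:universal_marked_central_extension}; since an actual cover has only finitely many branch points, this element lies in the ind-finite scheme $U_B(G,C)$ rather than in the profinite $\hat U$. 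Compatibility of $\mathfrak{z}$ with $\mathfrak{R}$ is then automatic from \Cref{lem:M_is_abelianization} and the definition of $\mathfrak{R}$ on $\hat U(G,C)(1)$ in \Cref{def:lifting_invariant_multiplicity}, since a product of marked generators is sent by the abelianization to the sum of the corresponding indicator functions, which is exactly the ramification count of \Cref{def:Multiplicity_Hurwitz}.

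Two things then need to be checked so that the \'etale-local recipe descends to a well-defined $\mathfrak{z}$ on $\pi_0$. First, the output must be independent of the chosen monodromy generators, i.e. invariant under the Hurwitz (braid-group) moves; this holds because the marking $M$ is invariant under conjugation, which is precisely what the relations \eqref{eq:relations_universal_central_extension} encode. Second, the local constructions must be compatible with \'etale descent, which amounts to the classical lifting invariant intertwining the action of $\pi_1^{\text{\'et}}(B)$ with the Galois action on $U_B(G,C)$; by \Cref{rem:universal_marked_extension} the former is the cyclotomic character followed by the discrete action of loc.\ cit., and this is exactly the action built into the anti-cyclotomic twist that defines $\hat U_{B,\mathbf P}(G,C)(1)$ and hence $U_B(G,C)$. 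This is the point where working with $G(-1)$ rather than $G$ is essential: a conjugation-invariant $C \subset G(-1)\setminus\{1\}$ is an honest \'etale subscheme and the associated Hurwitz locus is automatically Galois-stable \emph{without} any hypothesis that $C$ be closed under invertible powers, because the power-twisting that obstructs Galois-stability over $G$ is absorbed into the $(-1)$.

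For the bijectivity in the stable range I would fix $\underline n \in \Hom(\mathcal C,\Z_{>N})$ and observe that $\pi_0(\CHur^{G,C,\varphi}_{\underline n}/B)$ and the fibre of $U_B(G,C)\to\Hom(\mathcal C,\Z)$ over $\underline n$ are both finite \'etale over $B$ and that $\mathfrak{z}$ is a $B$-morphism; it is therefore enough to check that $\mathfrak{z}$ is bijective on geometric fibres, and after choosing roots of unity this becomes a purely geometric assertion about the finite group $G$ and the conjugacy-invariant subset $C$. In that form it is the statement that, for $N$ large enough, the lifting invariant classifies the connected components of the Hurwitz space of connected $G$-covers once every colour multiplicity exceeds $N$: surjectivity onto the relevant fibre of $U(G,C)$ uses that $C$ generates $G$ and that enough marked generators of each colour are available, while injectivity is the manifestation of the homological stability of Hurwitz spaces proved by Ellenberg--Venkatesh--Westerland \cite{ellenberg2013homologicalstabilityhurwitzspaces} and made precise by Wood \cite{Wood2021Lifting}.

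The step I expect to be the main obstacle is not the homological input, which can be cited essentially as a black box from \cite{ellenberg2013homologicalstabilityhurwitzspaces, Wood2021Lifting}, but rather producing a construction of $\mathfrak{z}$ that is at once choice-free, functorial in $G$, and compatible with \'etale descent while genuinely accommodating subsets $C$ that are not closed under invertible powers, together with the bookkeeping needed to transport the geometric bijection through the anti-cyclotomic twist and to identify the reachable part of $U_B(G,C)$ with all of it under the standing hypothesis that $C$ generates $G$.
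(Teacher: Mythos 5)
Your overall architecture (reduce to $\varphi=e_{BG}$, build $\mathfrak{z}$ étale-locally, verify compatibility with $\mathfrak{R}$ via \Cref{lem:M_is_abelianization}, then check bijectivity on geometric fibres using Ellenberg--Venkatesh--Westerland/Wood) matches the paper's in outline, but your construction of $\mathfrak{z}$ is exactly the ``usual'' one that the paper deliberately avoids, and the step you defer is the actual content of the theorem. The paper does \emph{not} choose inertia generators at all: it first proves (\Cref{prop:marked_central_extension_same_Hurwitz spaces}) that for any finite $\mathcal{C}$-marked central extension $E\to G$ the map $\Hur^{E,M}\to\Hur^{G,C}$ is an isomorphism, and then defines $\mathfrak{z}_E(Z)$ as (the inverse of) the monodromy at $\infty$ of the lifted $E$-cover, i.e.\ via the canonical map $\Hur^{G,C}\cong\Hur^{E,M}\to E(-1)$, passing to the limit over $E$. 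This is choice-free, so functoriality and Galois/descent compatibility are automatic, and the comparison with the classical generator description is only needed over $\C$ (\Cref{lem:lifting_compatible}) where it feeds into \cite[Thm.~3.1]{Wood2021Lifting}.

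The gap in your plan is the descent step for your local recipe. Galois-stability of the Hurwitz locus is indeed automatic once $C\subset G(-1)$ (you are right about that), but that was never the problem: the problem is that the element $[\psi(\gamma_1(\zeta))]^{\zeta}\cdots[\psi(\gamma_n(\zeta))]^{\zeta}$ depends on a simultaneous choice of a compatible system of roots of unity $\zeta$ and of inertia generators $\gamma_i$ adapted to $\zeta$, and one must show that the resulting point of $U_B(G,C)=\hat U(G,C)(1)(-1)$ is independent of these choices and transforms correctly under $\pi_1^{\text{\'et}}(B)$. When $C$ is closed under invertible powers this is what \cite[\S5--6]{Wood2021Lifting} verifies; for general $C$ it is precisely the statement the paper says ``cannot be removed with formal nonsense,'' and your proposal acknowledges it as the expected main obstacle without resolving it. (Relatedly, over a strictly henselian ring that is not a separably closed field of characteristic $0$ the ``tuple of monodromy generators'' description itself needs a specialization argument.) A second, smaller point: for the bijectivity you reduce to ``geometric fibres,'' but \cite[Thm.~3.1]{Wood2021Lifting} is a statement over $\C$; you need the intermediate spreading-out to the universal base $\Spec\Z[\mu_{|G|},\tfrac{1}{|G|}]$ so that, for a map of ind-finite étale schemes, it suffices to check one geometric point per component, which may be taken complex.
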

\begin{remark}
    Certain versions of this theorem are known. When $B = \Spec \C$ this is \cite[Thm.~3.1]{Wood2021Lifting}. Under the assumptions that $C$ is closed under taking invertible powers, $G$ is constant and $\varphi = e_{BG}$ is is known over general base schemes. For the components of $\Hur^{G, C}$ corresponding to $G$-covers unramified at $\infty$ the map is constructed over fields in \cite[\S5]{Wood2021Lifting} and it is shown in \cite[Thm~6.1]{Wood2021Lifting} that the definition can be extended to schemes. The construction is generalized to the other components of $\Hur^{G, C}$ in \cite[\S2]{liu2025imaginarycasenonabeliancohenlenstra}.

    The assumptions that $G$ is constant and $\varphi = e_{BG}$ are not crucial and can be removed with descent. The assumption that $C$ is closed under invertible powers cannot be removed with formal nonsense.

    We need a version which allows arbitrary subschemes $C \subset G(-1) \setminus \{1\}$. If $G$ is constant and $B = \Spec \F_q$ then this means that $C(\bar{\F}_q)$ is closed under powers of $q$.
    
    To remove this assumption we need to extend the construction of the lifting invariant. we will take a different approach than usual to this construction. We will directly define the lifting invariants when $B$ is a scheme instead of first defining it for the spectrum of a field. We will also use the universal property of $\hat{U}(G, C)(1)$ to define the lifting invariant instead of via well-chosen families of generators of inertia. This allows us to give a definition without any choices which makes functoriality, and thus compatibility with Galois actions, obvious.
\end{remark}

The existence of $\mathfrak{z}$ will be deduced from the following proposition.
\begin{proposition}\label{prop:marked_central_extension_same_Hurwitz spaces}
    Let $E \to G$ be a finite $\mathbf{P}$-primary $\mathcal{C}$-marked extension with marking $M \subset E(-1)$. The natural map $\Hur^{E, M}_B \to \Hur^{G, C}_B$ from \Cref{lem:functoriality} is an isomorphism.
\end{proposition}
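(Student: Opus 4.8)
I want to show that the natural map $\Hur^{E, M}_B \to \Hur^{G, C}_B$ of \Cref{lem:functoriality} is an isomorphism, where $E \to G$ is a finite $\mathbf{P}$-primary $\mathcal{C}$-marked central extension with marking $M$. Both sides are finite \'etale over $\Conf_{B/B,n}$ by \Cref{lem:Hurwitz_scheme_finite_etale} and the map is a morphism over $\Conf_{B/B,n}$, so it suffices to check it is an isomorphism on geometric fibers, i.e.\ we may assume $B = \Spec \Omega$ for $\Omega$ an algebraically closed field. By \Cref{cons:assume_varphi_trivial} (applied with $\varphi = e_{BG}$, already trivial over $\Omega$, but in general after an \'etale cover) we may further assume $G$ and $E$ are constant. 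Fixing a compatible system of primitive roots of unity identifies $G(-1) \cong G$, $E(-1) \cong E$, $M \cong C$, and reduces us to the classical setting of \cite{ellenberg2013homologicalstabilityhurwitzspaces, Wood2021Lifting}: here a $G$-cover of $\mathcal{P}^w$ ramified over $n$ prescribed points with inertia in $C$ corresponds, after choosing a geometric basepoint and a standard system of loops, to a tuple $(g_1, \dots, g_n) \in C^n$ up to simultaneous conjugation and the Hurwitz (braid) action, and similarly for $E$ with the marking $M$. The claim becomes: the map sending a tuple in $C^n$ (together with the branch locus) to the induced $E$-cover — i.e.\ lifting each $g_i$ to the marked element $[g_i] \in M$ — is a bijection on the level of the full Hurwitz space $\Hur$, i.e.\ \emph{before} taking any connected-component quotient.

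**Key steps.** The crucial point is that a marked central extension does \emph{not} change the raw set of covers: given a tuple $(g_1, \dots, g_n) \in C^n$, there is a \emph{unique} lift to $E$ along the marking, namely $([g_1], \dots, [g_n])$ where $[g_i] \in M$ is the unique element of the marking over $g_i$ (uniqueness of the marking is exactly the condition $M \cong C$ in \Cref{def:marked_central_extension}). First I would make precise that, Zariski-locally on $\Conf_{B/B,n}$, the scheme $\Hur^{G,C}_n$ is the constant scheme on the set $F_n(G,C)$ of braid-orbit-tagged tuples — this is essentially \cite[Rem.~2.1.4]{landesman2025cohenlenstramomentsfunctionfields} / the description used in \Cref{lem:Hurwitz_scheme_finite_etale} — and similarly $\Hur^{E,M}_n$ is constant on $F_n(E,M)$, where now tuples are required to have entries in the marking $M$. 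Second, I would write down the two mutually inverse maps between $F_n(G,C)$ and $F_n(E,M)$: forward, $(g_i) \mapsto ([g_i])$; backward, $(e_i) \mapsto (\bar e_i)$ where $\bar{\phantom{e}}$ is the projection $E \to G$ (which carries $M$ bijectively onto $C$). Third, I must check these are well defined and compatible with the braid action and with simultaneous conjugation — this is where the \emph{central} and \emph{conjugation-invariant marking} hypotheses are used: braiding $(e_i, e_{i+1}) \mapsto (e_i e_{i+1} e_i^{-1}, e_i)$ preserves $M^n$ because $M$ is conjugation invariant, and simultaneous conjugation by $e \in E$ acts on $M^n$ compatibly with conjugation by $\bar e \in G$ on $C^n$; conversely, the only subtlety going backward is whether distinct lifts could give the same cover, but uniqueness of the marking over each $g_i$ rules this out. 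Finally I would package this: the two maps are inverse bijections of sheaves over $\Conf_{B/B,n}$, hence an isomorphism $\Hur^{E,M}_n \xrightarrow{\sim} \Hur^{G,C}_n$; summing over $n$ and noting the map is the one from \Cref{lem:functoriality} (it is, by construction, the map induced on $G$-covers by $E \to G$, which on the combinatorial model is exactly projection along $M \cong C$) completes the proof.

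**Main obstacle.** The genuine content is not the combinatorics but justifying the reduction to the combinatorial model in the present generality: the definition of $\Hur^{G,C}$ in \Cref{def:Hurwitz_space} involves the root stack $\mathcal{P}^w_T$ and $G$-\emph{covers} (finite maps of stacks) rather than honest topological covers, and one must know the étale-local constancy statement holds for $E$-covers with entries constrained to the marking, not just for $G$-covers. I would handle this by invoking \Cref{prop:G_covers_curves} to replace $G$-covers of $\mathcal{P}^w$ by $G$-torsors over the complement of the branch locus, so that the $\pi_1$-description applies verbatim over $\Omega = \bar{\F}_q$ or $\C$, and by observing that the marking condition cuts out exactly the open-and-closed locus of $\Hur^E$ whose fiber tuples lie in $M^n$ — which is a union of connected components because $M$ is a finite conjugation-invariant subscheme, so it is detected by the ramification-type map $\mathfrak{R}$ of \Cref{def:Multiplicity_Hurwitz}. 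Once the model is in place the bijection is forced, so I expect the write-up to be short, with the descent and root-stack bookkeeping being the only place care is needed.
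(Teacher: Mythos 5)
Your proposal is correct and follows essentially the same route as the paper: reduce via the finite-\'etale structure over $\bigsqcup_n\Conf_{B/B,n}$ (\Cref{lem:Hurwitz_scheme_finite_etale}) and \'etale descent to a single geometric point, then conclude from the braid-group description of Hurwitz spaces together with the bijection $M \cong C$ forced by the definition of a marking. The paper simply states that the conclusion "follows immediately" from the combinatorial model over $\C$, whereas you spell out the mutually inverse maps on fibers; this is the content the paper leaves implicit, and your write-up of it is accurate (only note that a point of $\Hur^{G,C}_n$ over a fixed configuration is a tuple in $C^n$ with \emph{no} quotient by simultaneous conjugation, since the trivialization at $\infty$ rigidifies the cover).
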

\begin{proof}
       This is a morphism of finite \'etale schemes over $\bigsqcup_n\Conf_{B/B, n}$ by \Cref{lem:Hurwitz_scheme_finite_etale}. By working \'etale locally we may assume that $G$ is constant and $\varphi = e_{BG}$. We may then reduce by functoriality to a base $B$ such that each connected component has a $\C$-point. It then suffices to show that the map is an isomorphism when restricted to a $\C$-point of each connected component of $\bigsqcup_n\Conf_{B/B, n}$. In particular, we may assume that $B = \Spec \C$. 
       
       The fact that $M$ is a marking means that $M(\C) \to C(\C)$ is a bijection. It then follows immediately from the description of Hurwitz spaces over $\C$ in terms of braid group actions \cite[Def.~2.2.2, Ex.~2.2.3]{Landesman2025Homological} that $\Hur^{E, M}_B \to \Hur^{G, C}_B$ is an isomorphism.
\end{proof}

\begin{construction}\label{con:lifting_invariant}
    Let $E \to G$ be a finite $\mathbf{P}$-primary $\mathcal{C}$-marked extension with marking $M \subset E(-1)$. Let $T \to B$ be a map. For a component $Z \in \pi_0(\Hur^{G, C, e_{BG}}_B/B)(T)$ define $\mathfrak{z}_E(Z)  = \gamma_E^{-1} \in E(-1)(T)$ where $\gamma_E$ is the image of $Z$ under the composition
    \[
    \Hur^{G, C}_T \cong \Hur^{E, M}_T \to E(-1).
    \]
    The first isomorphism is \Cref{prop:marked_central_extension_same_Hurwitz spaces} and the second map is from \Cref{def:Huritz_space_gamma}.

    This map is functorial in $T$ and thus defines a morphism $\mathfrak{z}_E: \pi_0(\Hur^{G, C, e_{BG}}_B/B) \to E(-1)$.

    Note that $\mathfrak{z}_E(Z)$ is functorial in $E$ and can thus be extended formally to profinite $\mathbf{P}$-primary $\mathcal{C}$-marked extensions. In particular, if $E =  \hat{U}_{\mathbf{P}}(G, C)(1)$ then we will write $\mathfrak{z}: \pi_0(\Hur^{G, C}_B/B) \to \hat{U}_{\mathbf{P}}(G, C)$ for this extension.
\end{construction}
\begin{lemma}\label{lem:lifting_invariant_factors}
    For all $T \to B$, $\underline{n} \in \Hom_T(\mathcal{C}, \N)$ and components $Z \in \pi_0(\Hur^{G, C}_{B,\underline{n}}/B)(T)$ we have $\mathfrak{R}(\mathfrak{z}(Z)) = \underline{n}$.
    
    In particular, the map $\mathfrak{z}: \pi_0(\Hur^{G, C}) \to \hat{U}_{\mathbf{P}}(G, C)$ factors through $U_B(G, C)$.
\end{lemma}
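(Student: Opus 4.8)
The plan is to reduce the identity $\mathfrak{R}(\mathfrak{z}(Z)) = \underline{n}$ to one explicit computation by exploiting the functoriality of $\mathfrak{z}_E$ in the marked central extension $E$ from \Cref{con:lifting_invariant}, applied to a very simple split extension, and then to read off the ``in particular'' clause from the description of $U_B(G,C)$ as a fibre product.

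First I would take $E$ to be the split $\mathbf{P}$-primary $\mathcal{C}$-marked central extension $E_0 := \Hom(\mathcal{C},\hat{\Z}_{\mathbf{P}}(1)) \times G$ with marking $M_0 = \{(e_\gamma,\gamma) : \gamma \in C\}$ appearing in \Cref{def:lifting_invariant_multiplicity}, writing $A_0 := \Hom(\mathcal{C},\hat{\Z}_{\mathbf{P}}(1))$ so that $A_0(-1) = \Hom(\mathcal{C},\hat{\Z}_{\mathbf{P}})$. By construction, $\mathfrak{R}$ on $\hat{U}_{B,\mathbf{P}}(G,C)(1)$ is the composite of the unique morphism of marked central extensions $\hat{U}_{B,\mathbf{P}}(G,C)(1)\to E_0$ with the projection to $A_0$; twisting by $(-1)$ and using naturality of $\mathfrak{z}_E$ in $E$ gives $\mathfrak{R}(\mathfrak{z}(Z)) = \mathrm{pr}_{A_0}(\mathfrak{z}_{E_0}(Z))$, so the whole statement reduces to computing $\mathrm{pr}_{A_0}(\mathfrak{z}_{E_0}(Z)) \in \Hom(\mathcal{C},\hat{\Z}_{\mathbf{P}})$ (one may replace $A_0$ by a sufficiently large finite quotient $\Hom(\mathcal{C},\mu_N)$, so that \Cref{prop:marked_central_extension_same_Hurwitz spaces} applies to a genuinely finite extension).

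The computation itself runs as follows. Under the isomorphism $\Hur^{G,C}_T \cong \Hur^{E_0,M_0}_T$ of \Cref{prop:marked_central_extension_same_Hurwitz spaces}, the component $Z$ corresponds to an $E_0$-cover; since $E_0 = A_0 \times G$ is a direct product, this cover is a pair of a $G$-cover and an $A_0$-cover, and since $M_0 = \{(e_\gamma,\gamma)\}$, \Cref{lem:functoriality} shows that the $A_0$-cover is ramified exactly at the branch points of the $G$-cover in $\A^1$, with local monodromy $e_c \in A_0(-1)$ at each point whose ramification type is $c \in \mathcal{C}$. By definition $\mathfrak{z}_{E_0}(Z)$ is the inverse of the monodromy of the $E_0$-cover at $\infty$, hence $\mathrm{pr}_{A_0}(\mathfrak{z}_{E_0}(Z))$ is the inverse of the monodromy at $\infty$ of this $A_0$-cover. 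As $A_0$ is commutative, that monodromy is governed by the single relation $\prod_{P}\ell_P = 1$ in the abelianised tame fundamental group of $\Proj^1$ minus the branch points, which forces it to equal $-\sum_{c}\underline{n}(c)\,e_c = -\underline{n}$; therefore $\mathrm{pr}_{A_0}(\mathfrak{z}_{E_0}(Z)) = \underline{n}$ and $\mathfrak{R}(\mathfrak{z}(Z)) = \underline{n}$. To make this last step rigorous I would argue as in the proof of \Cref{prop:marked_central_extension_same_Hurwitz spaces}: an equality of two maps into the ind-finite \'etale scheme $\Hom(\mathcal{C},\Z)$ may be checked after base change to a geometric point of $\bigsqcup_n\Conf_{B/B,n}$, i.e.\ for $B=\Spec\C$, where in the braid-group model of Hurwitz spaces it amounts to the elementary observation that the braid orbit of a tuple $(g_1,\dots,g_n)\in C^n$ has abelianised boundary class $\sum_i e_{g_i}$, which records precisely how many of the $g_i$ lie in each conjugacy class; functoriality of every map in sight then transports the identity back to arbitrary $B$ and $T$.

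Finally, for the ``in particular'' clause, \Cref{def:lifting_invariant_multiplicity} presents $U_B(G,C)$ as the fibre product of $\hat{U}_{B,\mathbf{P}}(G,C)$ with $\Hom(\mathcal{C},\Z)$ over $\Hom(\mathcal{C},\hat{\Z}_{\mathbf{P}})$ along $\mathfrak{R}$; since every component of $\Hur^{G,C}$ lies in $\Hur^{G,C}_{\underline{n}}$ for a unique $\underline{n}\in\Hom(\mathcal{C},\N)$ by \Cref{def:Multiplicity_Hurwitz}, and we have just shown $\mathfrak{R}(\mathfrak{z}(Z))=\underline{n}\in\Hom(\mathcal{C},\Z)$, the morphism $\mathfrak{z}$ factors through $U_B(G,C)$, uniquely and functorially in $T$. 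The step I expect to be the real obstacle is the normalisation bookkeeping in the previous paragraph --- lining up the anti-cyclotomic twist, the relation $\prod_P\ell_P=1$ and the inverse built into the definition of $\mathfrak{z}_E$ so that the answer comes out $+\underline{n}$ rather than $-\underline{n}$; everything else is a formal consequence of the universal property of $\hat{U}_{B,\mathbf{P}}(G,C)(1)$ and naturality of $\mathfrak{z}_E$.
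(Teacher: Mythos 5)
Your proposal is correct and follows essentially the same route as the paper: both reduce via the universal property to the split extension $\Hom(\mathcal{C},\mu_k)\times G$ with marking $(e_\gamma,\gamma)$, and both then compute the monodromy at $\infty$ of the induced abelian cover as $-\underline{n}$ from the product-of-local-monodromies relation, so that the inverse built into $\mathfrak{z}_E$ yields $+\underline{n}$. The only cosmetic difference is that you verify the final identity over $\C$ via the braid-group model, whereas the paper checks it \'etale-locally over a strictly henselian base; both reductions are valid and the sign bookkeeping matches.
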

\begin{proof}
    The map $\mathfrak{R}: \hat{U}_{\mathbf{P}}(G, C)(1) \to \Hom(\mathcal{C}, \hat{\Z}_{\mathbf{P}}(1))$ is by definition induced by equipping the trivial central extension $\Hom(\mathcal{C}, \hat{\Z}_{\mathbf{P}}(1)) \times G \to G$ with the marking $M$ consisting of pairs $(e_{\gamma}, \gamma)$ with $\gamma \in G(-1)$ and $e_{\gamma}$ the indicator function of the conjugacy class containing $\gamma$, see \Cref{def:lifting_invariant_multiplicity}.

    Unfolding \Cref{con:lifting_invariant} shows that the lemma is equivalent to the following statement. Let $k \in \Z_{> 1 }$ be an integer which is a product of primes in $\mathbf{P}$. For all components $Z \in \pi_0(\Hur^{\Hom(\mathcal{C}, \mu_k) \times G, M}_{T, \underline{n}})(T)$ the composition 
    \[
        \Hur^{\Hom(\mathcal{C}, \mu_k) \times G, M}_{T, \underline{n}} \to \Hom(\mathcal{C}, \Z/k \Z) \times G(-1) \to \Hom(\mathcal{C}, \Z/k \Z).
    \]
    is $-\underline{n} \pmod k$. Here we used the implicit identification $\underline{n} \in \Hom(\mathcal{C}, \N) \cong \Hom(M/G, \N)$.

    This equality can be checked \'etale locally so we may assyme that $T =  \Spec R$ where $R$ is the spectrum of a strictly henselian ring. In that case the image of this composition is the image of the monodromy at $\infty$ in $\Hom(\mathcal{C}, \Z/k \Z)$. This is equal to the sum of the inverses of the image of the monodromy at all points in $\A^1_T$. This is $-\sum_{c \in \mathcal{C}} n_c e_c = -\underline{n}$.
\end{proof}
\begin{definition}\label{def:lifting_invariant}
    We call the map $\mathfrak{z}:\pi_0(\Hur^{G, C}_B/B) \to U_B(G, C)$ induced by \Cref{lem:lifting_invariant_factors} the \emph{lifting invariant}.
\end{definition}

To prove \Cref{thm:lifting_invariant} we need to compare the map $\mathfrak{z}$ with the lifting invariant over the complex numbers of \cite{Wood2021Lifting}. We will now recall its definition from \cite[\S 5]{Wood2021Lifting}. There is a small difference in that we use the fundamental group of the root stack $\mathcal{P}^w$ as it appears in our definition of Hurwitz schemes.
\begin{construction}\label{con:lifting_invariant_complex}
    Let $G$ be a finite group and $C \subset G \setminus \{1\}$ a conjugacy invariant subset which generates $G$. Let $U(G, C)$ be the group defined in \cite[\S2]{Wood2021Lifting}. To be explicit, it is the group generated by symbols $[g]$ for $g \in C$ and the following relations for all $g,h \in C$
    \begin{equation}\label{eq:relations_U(G, C)_over_C}
        [h g h^{-1}] = [h] \cdot [g] \cdot [h]^{-1}.
    \end{equation}

    Let $D = \{P_1, \dots, P_n\} \in \Conf_{n}(\C)$. Fix $w \in \N$ and write $U := \mathcal{P}_{\C}^w\setminus D$.

    Consider the fundamental group $\pi_1^{\text{\'et}}(U, \tilde{\infty})$, where $\tilde{\infty}$ is defined in \eqref{eq:infinity_tilde_charts}. The inclusion of $B \mu_w$ at infinity induces an element $\gamma_{\infty}: \mu_w = \pi_1^{\text{\'et}}(B \mu_w, \tilde{\infty}) \to \pi_1^{\text{\'et}}(U, \tilde{\infty})$. For each point $P_i$ there exists a conjugacy class of morphisms $\gamma_i: \hat{\Z}(1) \to \pi_1^{\text{\'et}}(U, \tilde{\infty})$ which generates the inertia subgroup of $\pi_1^{\text{\'et}}(U, \tilde{\infty})$ at $P_i$, see \cite[\S5.1]{Wood2021Lifting}.

    The fundamental group of $U$ was computed in \cite[Prop.~5.6]{Behrend2006Uniformization}. In particular, for each point $P_i$ there exists a $\zeta_i \in \hat{\Z}^{\times}(1)$ and a choice of $\gamma_i$ such that $\pi_1^{\text{\'et}}(B \mu_w, \tilde{\infty})$ is the group generated by the $\gamma_i(\zeta_i)$ modulo the relations $\gamma_1(\zeta_1) \cdots \gamma_n(\zeta_n) \gamma_{\infty}(\zeta_{\infty}) = 1$ for some $\zeta_{\infty} \in \mu_w$. If $w = 1$ then \cite[\S5.2]{Wood2021Lifting} shows that there exists $\zeta \in \hat{\Z}^{\times}(1)$ such that $\zeta = \zeta_1 = \cdots = \zeta_n = \zeta_{\infty}$ and the argument directly generalizes to general $w$.

    Let $C_{\zeta} \subset G(-1)$ be the image of $C$ under the isomorphism $G \cong G(-1)$ induced by $\zeta$. The lifting invariant in \cite[Thm.~5.2]{Wood2021Lifting} or \cite[Thm.~5.2]{liu2025imaginarycasenonabeliancohenlenstra} send an element of $\Hur^{G, C_{\zeta}}$ corresponding to a map $\psi: \pi_1^{\text{\'et}}(U, \tilde{\infty}) \to G$ to the product
    \begin{equation}\label{eq:lifting_invariant_complex}
         [\psi(\gamma_1(\zeta))] \cdots [\psi(\gamma_n(\zeta))] \in U(G, C). 
    \end{equation}

    Note that the references work over general fields and assume that $C$ is closed under invertible powers, but this assumption is not used when the base field is $\C$.
\end{construction}
\begin{lemma}\label{lem:lifting_compatible}
    If $B = \Spec \C$ then \Cref{def:lifting_invariant}, after a choice of isomorphism $\hat{\Z} \cong \hat{\Z}(1)$, agrees with the map to $U(G(\C), C(\C))$ from \Cref{con:lifting_invariant_complex} and thus the lifting invariant of \cite{Wood2021Lifting}.
\end{lemma}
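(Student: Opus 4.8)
The plan is to reduce to an explicit computation on a single $G$-cover over $\C$ and to match the two resulting formulas. Both invariants descend to maps out of $\pi_0(\Hur^{G,C}_{\C}/\C)$ — for $\mathfrak{z}$ this is literally \Cref{def:lifting_invariant}, and the braid-invariant expression \eqref{eq:lifting_invariant_complex} is constant on connected components — and $\pi_0(\Hur^{G,C}_{\C}/\C)$ over $\Spec\C$ is a disjoint union of copies of $\Spec\C$; so it suffices to fix a configuration $D=\{P_1,\dots,P_n\}$ and an order $w$ as in \Cref{con:lifting_invariant_complex}, a $G$-torsor on $U=\mathcal{P}^w_{\C}\setminus D$ classified by some $\psi:\pi_1^{\text{\'et}}(U,\tilde\infty)\to G$ with local monodromies in $C$, and verify equality on the corresponding point $Z$ of $\Hur^{G,C}_{\C}$. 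Let $\zeta\in\hat\Z^{\times}(1)(\C)$ realise the chosen isomorphism $\hat\Z\cong\hat\Z(1)$; it identifies $G(-1)\cong G$, hence $C$ with a conjugacy-invariant subset $C_0\subset G\setminus\{1\}$ — the one written $C(\C)$ in the statement and playing the role of $C$ in \Cref{con:lifting_invariant_complex}, so that $C=(C_0)_\zeta$ there and the two maps have the same source $\Hur^{G,C}_{\C}=\Hur^{G,(C_0)_\zeta}_{\C}$. Comparing the presentations \eqref{eq:relations_universal_central_extension} and \eqref{eq:relations_U(G, C)_over_C}, the assignment $[\gamma]^{\zeta}\mapsto[\gamma(\zeta)]$ identifies $\hat U_{\C,\mathbf{P}}(G,C)(1)$ with the $\mathbf{P}$-primary profinite completion of Wood's group $U(G,C_0)$, and by \Cref{rem:universal_marked_extension} this restricts to the identification $U_{\C}(G,C)(\C)\cong U(G,C_0)$ against which the two maps are to be compared.

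Next I unwind $\mathfrak{z}$. Taking $E=\hat U_{\C,\mathbf{P}}(G,C)(1)$ in \Cref{con:lifting_invariant} (so that $\mathfrak{z}=\mathfrak{z}_E$), we have $\mathfrak{z}(Z)=\gamma_E^{-1}$, where $\gamma_E\in E(-1)$ is the monodromy at $\infty$ of the $E$-cover $X_E\to\mathcal{P}^w_{\C}$ corresponding to $X$ under the isomorphism $\Hur^{E,M}_{\C}\cong\Hur^{G,C}_{\C}$ of \Cref{prop:marked_central_extension_same_Hurwitz spaces}. The essential point is to identify $X_E$ concretely from the proof of that proposition: in the braid-group description of Hurwitz spaces \cite[Def.~2.2.2]{Landesman2025Homological}, a $G$-cover with tuple of local monodromies $(c_1,\dots,c_n)$ ($c_i\in C$) corresponds to the $E$-cover with tuple $(m_1,\dots,m_n)$, where $m_i\in M$ is the unique marking element over $c_i$ under $M\cong C$. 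Equivalently, $X_E$ is classified by the lift $\tilde\psi:\pi_1^{\text{\'et}}(U,\tilde\infty)\to E$ of $\psi$ carrying each inertia generator $\gamma_i:\hat\Z(1)\to\pi_1^{\text{\'et}}(U,\tilde\infty)$ at $P_i$ to the marking element $m_i$.

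Now I compute $\gamma_E$ using the presentation of $\pi_1^{\text{\'et}}(U,\tilde\infty)$ recalled in \Cref{con:lifting_invariant_complex} (from \cite[Prop.~5.6]{Behrend2006Uniformization}): with the choice $\zeta=\zeta_1=\cdots=\zeta_n=\zeta_\infty$, the single relation $\gamma_1(\zeta)\cdots\gamma_n(\zeta)\,\gamma_\infty(\zeta_\infty)=1$ forces $\tilde\psi(\gamma_\infty(\zeta_\infty))=(m_1(\zeta)\cdots m_n(\zeta))^{-1}$ in $E$. Unwinding the construction of the map $\Hur^{E,M}_{\C}\to E(-1)$ preceding \Cref{def:Huritz_space_gamma} — pull back the $\mu_w$-gerbe along $\tilde\infty$, trivialised by the section \eqref{eq:infinity_tilde_charts}, map it to $BE$, and read off the $E(-1)$-component via $I_{\mu}BE\cong[E(-1)/E]$ — and using the same $\zeta$ to identify $E(-1)$ with $E$, this shows $\gamma_E$ is, up to the convention for the gerbe's canonical generator, the inverse of the ordered product $m_1(\zeta)\cdots m_n(\zeta)$. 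Hence $\mathfrak{z}(Z)=\gamma_E^{-1}=m_1(\zeta)\cdots m_n(\zeta)$, which under $[\gamma]^{\zeta}\mapsto[\gamma(\zeta)]$ maps to $[c_1(\zeta)]\cdots[c_n(\zeta)]=[\psi(\gamma_1(\zeta))]\cdots[\psi(\gamma_n(\zeta))]$ — exactly \eqref{eq:lifting_invariant_complex}.

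I expect the main obstacle to be the bookkeeping of the cyclotomic twists, the roots of unity $\zeta_i,\zeta_\infty,\zeta$, and the orientation conventions: in particular, checking that the map $\Hur^{E,M}_{\C}\to E(-1)$ built from the section $\tilde\infty$ of \eqref{eq:infinity_tilde_charts} and $I_{\mu}BE$ computes the classical boundary monodromy $\gamma_\infty$ of \cite[\S5]{Wood2021Lifting} compatibly with the same $\zeta$ — this is exactly where the coincidence $\zeta=\zeta_1=\cdots=\zeta_\infty$ engineered in \cite[\S5.2]{Wood2021Lifting} is used. A secondary point requiring care is to extract from the proof of \Cref{prop:marked_central_extension_same_Hurwitz spaces} the precise statement that the isomorphism there is ``lift each local monodromy to its marking element''; once that is in hand, the remaining identifications are formal.
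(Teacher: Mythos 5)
Your proposal is correct and follows essentially the same route as the paper's proof: reduce to a single cover over $\C$, identify the $E$-cover produced by \Cref{prop:marked_central_extension_same_Hurwitz spaces} as the lift of $\psi$ sending each inertia generator to its marking element, and then use the single relation $\gamma_1(\zeta)\cdots\gamma_n(\zeta)\gamma_\infty(\zeta_\infty)=1$ (with all $\zeta_i=\zeta$ as arranged in \cite[\S5.2]{Wood2021Lifting}) to equate $\gamma_E^{-1}$ with the ordered product of marking elements, matching \eqref{eq:lifting_invariant_complex} under $[\gamma(\zeta)]\mapsto[\gamma]^{\zeta}$. The only cosmetic difference is that the paper runs the computation for an arbitrary finite marked extension $E$ (with the lift denoted $\psi_E$ and the identity $\lambda(\psi\circ\gamma_i)=\psi_E\circ\gamma_i$) rather than directly for the universal one.
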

\begin{proof}
    Let $w \in \N$, $D \subset \A^1_{\C}$ be as in \Cref{con:lifting_invariant_complex} and consider a point $\Hur^{G, C}_{\C}$ corresponding to a $G$-torsor $f: X  \to \mathcal{P}^w_{\C} \setminus D$ equipped with a lift of $\tilde{\infty}$ to $X$.
    Let $\psi: \pi_1^{\text{\'et}}(\mathcal{P}^w_{\C} \setminus D, \tilde{\infty}) \to G$ be the morphism corresponding to $f$. 

    Let $E \to G$ be a finite $\mathcal{C}$-marked extension with marking $M \subset E(-1)$. Denote the induced isomorphism $\lambda: C \cong M$. Consider the image of $f$ under the inverse of the isomorphism of \Cref{prop:marked_central_extension_same_Hurwitz spaces}. Using the same reasoning as above this corresponds to a $w  \mid w'$ and a morphism $\psi_E: \pi_1^{\text{\'et}}(\mathcal{P}^{w'}_{\C} \setminus D, \tilde{\infty}) \to E$ which lifts $\psi$. The element $\gamma_{\infty} \in  \pi_1^{\text{\'et}}(\mathcal{P}^{w'}_{\C} \setminus D, \tilde{\infty})(-1)$ maps to $\gamma_{\infty} \in \pi_1^{\text{\'et}}(\mathcal{P}^{w'}_{\C} \setminus D, \tilde{\infty})(-1)$ by construction so we may identify them.

    The assumption on the inertia type of the covers ensures that $\psi_E \circ \gamma_i \in C \subset G(-1)$ for all $i = 1, \dots n$.

    Let $C(\C)^{\zeta} \subset G$ be the image of $C(\C)$ under the isomorphism $G(-1) \cong G: \gamma \to \gamma(\zeta)$. There is a natural map $U(G(\C), C(\C)^{\zeta}) \to \hat{U}(G, C)(1)$ which sends $[\gamma(\zeta)]$ to $[\gamma]^{\zeta}$ for all $\gamma \in G(-1)$. 

    We thus have to show that $(\gamma_{\infty} \circ \psi_E)(\zeta)^{-1}$ agrees with the image of \eqref{eq:lifting_invariant_complex} under the composition $U(G(\C), C(\C)^{\zeta}) \to \hat{U}(G, C)(1) \to E$. The last map is induced by the marking so this composition is
    \[
        [\psi(\gamma_1(\zeta))] \cdots [\psi(\gamma_n(\zeta))] \to [\psi \circ \gamma_1]^{\zeta} \cdots [\psi \circ \gamma_n]^{\zeta} \to \lambda(\psi \circ \gamma_1)(\zeta) \cdots\lambda(\psi \circ \gamma_n)(\zeta).
    \]

    The assumption that $\psi_E$ corresponds to a point of $\Hur^{E, M}$ implies that $\psi_E \circ \gamma_i \in M \subset E(-1)$. Moreover, the image of $\psi_E \circ \gamma_i$ in $G(-1)$ is $\psi \circ \gamma_i$ so the assumption that the induced map $\lambda^{-1}: M \to C$ is an isomorphism implies that $\lambda(\psi \circ \gamma_i) = \psi_E \circ \gamma_i$. We are then done by the computation
    \[
    \lambda(\psi \circ \gamma_1)(\zeta) \cdots\lambda(\psi \circ \gamma_n)(\zeta) = \psi_E(\gamma_1(\zeta)) \cdots  \psi_E(\gamma_n(\zeta)) = \psi_E(\gamma_{\infty}(\zeta)^{-1}). 
    \]
\end{proof}
\begin{proof}[Proof of \Cref{thm:lifting_invariant}]
        We may assume that $\varphi$ is trivial by \Cref{cons:assume_varphi_trivial}. The desired map is then given by \Cref{def:lifting_invariant}. We may work \'etale locally to assume that $G$ is a constant group and $\mu_{|G|}$ is a constant finite \'etale scheme. This then reduces to the universal base $B = \Spec \Z[\mu_{|G|}, \frac{1}{|G|}]$. The lifting invariant is then a map of ind-finite \'etale schemes. Whether it is an isomorphism can thus be checked over a single geometric point $\Spec \C$ in each component. This case follows from \Cref{lem:lifting_compatible} and \cite[Thm.~3.1]{Wood2021Lifting}.
    \end{proof}
Let us also note the following.

\begin{definition}\label{def:U(G,C) n gamma}
    For $(\varphi, \gamma) \in BG(B)$ and $\underline{n} \in \Hom_B(\mathcal{C}, \N)$ we let $U(G_{\varphi}, \mathcal{C})_{\underline{n}, \gamma} \subset U(G_{\varphi}, C)$ be the subscheme of elements which map to $\underline{n}$ under $\mathfrak{R}$ and to $\gamma^{-1}$ under $G(-1)$.
\end{definition}
\begin{lemma}\label{lem:components_varphi_gamma}
    If $\underline{n} \in \Hom_B(\mathcal{C}, \N)$ then the components of $\Hur^{G, C, \varphi}_{B}$ included in $\Hur^{G, C, (\varphi, \gamma)}_{B, \underline{n}}$ are exactly those that map to $U(G_{\varphi}, \mathcal{C})_{\underline{n}, \gamma}$ under the lifting invariant $\mathfrak{z}$.
\end{lemma}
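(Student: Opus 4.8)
The plan is to unwind \Cref{def:Huritz_space_gamma}, \Cref{def:Multiplicity_Hurwitz} and \Cref{def:U(G,C) n gamma}, and to reduce the assertion to the construction of the lifting invariant in \Cref{con:lifting_invariant} together with its functoriality in the marked central extension. By definition a component $Z \subset \Hur^{G, C, \varphi}_B$ lies in $\Hur^{G, C, (\varphi, \gamma)}_{B, \underline{n}}$ exactly when $\mathfrak{R}(Z) = \underline{n}$ and the image of $Z$ under the composite $\Hur^{G, \varphi}_B \to \HurStack^{G}_B \to [G(-1)/G]$ is the point $(\varphi, \gamma)$; by \Cref{lem:description_G(-1)/G} the second condition says precisely that the induced element of $G_\varphi(-1)$ equals $\gamma$. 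Since $\mathfrak{R}(Z) = \underline{n}$ is equivalent to $\mathfrak{R}(\mathfrak{z}(Z)) = \underline{n}$ by \Cref{lem:lifting_invariant_factors}, in view of \Cref{def:U(G,C) n gamma} it remains to show that this ``monodromy at $\infty$'' map $\Hur^{G, \varphi}_B \to G_\varphi(-1)$ equals the composition of $\mathfrak{z}$ with the structural map $U_B(G_\varphi, C) \to G_\varphi(-1)$ of \Cref{def:lifting_invariant_multiplicity}, followed by inversion on $G_\varphi(-1)$.

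To prove this I would first reduce to $\varphi = e_{BG}$ via \Cref{cons:assume_varphi_trivial}, so that it is equivalent to work with $G_\varphi$ and the trivial torsor. Recall from \Cref{con:lifting_invariant} that $\mathfrak{z}$ is assembled from the invariants $\mathfrak{z}_E$ attached to finite $\mathbf{P}$-primary $\mathcal{C}$-marked central extensions $E \to G_\varphi$, and that these are functorial in $E$: for a morphism $E \to E'$ of such extensions, the square built from the isomorphisms $\Hur^{G_\varphi, C} \cong \Hur^{E, M} \to \Hur^{E', M'}$ of \Cref{prop:marked_central_extension_same_Hurwitz spaces} and \Cref{lem:functoriality} and from the monodromy-at-$\infty$ maps $E(-1) \to E'(-1)$ commutes, whence $\mathfrak{z}_{E'}(Z)$ is the image of $\mathfrak{z}_E(Z)$. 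Applying this to the structural morphism $\hat{U}_{\mathbf{P}}(G_\varphi, C)(1) \to G_\varphi$, where $G_\varphi$ carries the tautological marking $C \subset G_\varphi(-1)$ and the identity $G_\varphi \to G_\varphi$: on the one hand $\mathfrak{z}_{\hat{U}_{\mathbf{P}}(G_\varphi, C)(1)}$ restricts on $U_B(G_\varphi, C)$ to the lifting invariant $\mathfrak{z}$ of \Cref{def:lifting_invariant}, and composing it with the map $\hat{U}_{\mathbf{P}}(G_\varphi, C) \to G_\varphi(-1)$ induced by $\hat{U}_{\mathbf{P}}(G_\varphi, C)(1) \to G_\varphi$ yields exactly the structural map $U_B(G_\varphi, C) \to G_\varphi(-1)$; on the other hand $\mathfrak{z}_{G_\varphi}(Z)$ is by \Cref{con:lifting_invariant} the inverse of the image of $Z$ under $\Hur^{G_\varphi, C}_B \to G_\varphi(-1)$, that is, of the monodromy at $\infty$ of \Cref{def:Huritz_space_gamma}. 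Comparing these two expressions for $\mathfrak{z}_{G_\varphi}(Z)$ gives the required identity, and hence the lemma.

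The main obstacle --- and really the only step that is not a formal diagram chase --- will be the bookkeeping around the twist \Cref{cons:assume_varphi_trivial}: one has to verify that under the isomorphism $\Hur^{G, \varphi}_{B, n} \cong \Hur^{G_\varphi}_{B, n}$ the ramification type, the monodromy at $\infty$, and the subscheme $\Hur^{G, C, \varphi}$ all match up with their counterparts for $G_\varphi$ --- so that, concretely, the composite $\Hur^{G, \varphi}_B \to \HurStack^{G}_B \to [G(-1)/G]$, viewed over $\varphi$ as a map to $G_\varphi(-1)$ through \Cref{lem:description_G(-1)/G}, agrees with the monodromy at $\infty$ of the twisted $G_\varphi$-cover. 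I expect this to be routine: the map $\HurStack^{G}_B \to [G(-1)/G]$ is built from the $\mu_w$-gerbe at $\tilde{\infty}$, which is unchanged by twisting the $G$-torsor away from $\infty$, and the fibrewise description of $[G(-1)/G] \to BG$ in \Cref{lem:description_G(-1)/G} is compatible with inner forms; since all of this may be checked \'etale-locally on $B$, where $\varphi$ becomes trivial, there is nothing substantive to do.
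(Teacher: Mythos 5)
Your proposal is correct and follows the same route as the paper: the $\mathfrak{R}$-condition is handled by \Cref{lem:lifting_invariant_factors}, and the condition on the image in $G_{\varphi}(-1)$ is exactly what the paper dismisses as ``immediate from the definition'' of the lifting invariant. Your version merely spells out that last step (functoriality of $\mathfrak{z}_E$ applied to the tautological marked extension $G_{\varphi}\to G_{\varphi}$, plus the compatibility of the twist in \Cref{cons:assume_varphi_trivial}), which is a correct expansion of the paper's one-line justification.
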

\begin{proof}
Let $Z \in \pi_0(\Hur^{G, C, (\varphi, \gamma)}_{B, \underline{n}})(B)$ be a components and $\mathfrak{z}(Z)$ its lifting invariant. \Cref{lem:lifting_invariant_factors} implies that $\mathfrak{R}(\mathfrak{z}(Z)) = \underline{n}$. It is immediate from the definition that the image of $\mathfrak{z}(Z)$ in $G(-1)$ is $\gamma^{-1}$.
\end{proof}
\subsection{Stable components in all directions}
We also need a description of $\pi_0(\Hur^{G, C, \varphi}/B)$ when only some of the $n_c$ go to infinity.

\begin{theorem}\label{thm:components_partial_stability}
    Let $C_{\mathrm{st}} \subseteq C \subset G(-1) \setminus \{1\}$ be conjugacy-invariant subsets. Assume that $C$ generates $G$ and that $C_{\mathrm{st}}$ generates a subgroup scheme $H \subset G$ which surjects onto $G/Z(G)$. Let $\mathcal{C}_{\mathrm{st}} \subset \mathcal{C} \subset \mathcal{C}_G^*$ be the corresponding conjugacy classes. 
    
    There exists $N > 0$ such that the lifting invariant 
    $\pi_0(\Hur^{G, C, \varphi}/B) \to U_B(G_{\varphi}, C)$ is an isomorphism over all $\underline{n} \in \Hom(\mathcal{C}, \Z_{> 0})$ with $n_c \geq N$ for all $c \in \mathcal{C}_{\mathrm{st}}$.
\end{theorem}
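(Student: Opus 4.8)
\subsection*{Proof proposal}

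The plan is to reduce to the totally stable case already quoted in \Cref{thm:lifting_invariant} by a partial-stabilisation argument, in the spirit of its proof. Since $\mathfrak{z}$ is a morphism of ind-finite \'etale schemes over $\Hom(\mathcal{C},\Z)$ and all the relevant data descends, one may assume by \'etale descent that $G$ is constant with $\mu_{|G|}$ constant, and hence work over the universal base $B=\Spec\Z[\mu_{|G|},1/|G|]$; over this base, being an isomorphism above a prescribed locus of $\Hom(\mathcal{C},\Z)$ can be checked on geometric fibres, which one takes to be $\C$-points, reducing by \Cref{cons:assume_varphi_trivial} to $\varphi = e_{BG}$. Using the braid-group description of complex Hurwitz spaces from \Cref{con:lifting_invariant_complex} and the compatibility worked out in \Cref{lem:lifting_compatible}, the claim becomes the combinatorial assertion that for $\underline{n}$ with $n_c \geq N$ on $\mathcal{C}_{\mathrm{st}}$ and $n_c > 0$ elsewhere, the word map from braid orbits of $C$-coloured tuples of colour type $\underline{n}$ to $U_B(G,C)$ is a bijection onto the fibre over $\underline{n}$; the orbits of tuples whose entries generate a proper subgroup $G' \subsetneq G$ are handled by applying \Cref{thm:lifting_invariant} also to these monodromy subgroups, which are finite in number.

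Next I would introduce stabilisation maps. Fix $\underline{m}$ supported on $\mathcal{C}\setminus\mathcal{C}_{\mathrm{st}}$, large enough that $\underline{n}+\underline{m}$ is totally stable for all $\underline{n}$ in range, together with a fixed $C$-coloured tuple of colour type $\underline{m}$; gluing it in near $\infty$ gives a map $s \colon \pi_0(\Hur^{G,C,\varphi}_{\underline{n}}) \to \pi_0(\Hur^{G,C,\varphi}_{\underline{n}+\underline{m}})$ satisfying $\mathfrak{z}\circ s = (\text{left translation by }u_{\underline{m}})\circ\mathfrak{z}$, where $u_{\underline{m}}$ is the lifting invariant of the glued piece, and correspondingly $U_B(G,C)_{\underline{n}}$ maps into $U_B(G,C)_{\underline{n}+\underline{m}}$ by left multiplication by $u_{\underline{m}}$. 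By \Cref{thm:lifting_invariant} the map $\mathfrak{z}$ is bijective at $\underline{n}+\underline{m}$, so it suffices to show that $s$ is injective with image exactly $\mathfrak{z}^{-1}\bigl(u_{\underline{m}}\cdot U_B(G,C)_{\underline{n}}\bigr)$; this is a degree-$0$ homological stability statement for Hurwitz spaces in the non-stable directions.

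To prove that I would cluster the non-stable ramification points into a fixed disc $\Delta\subset\A^1$ disjoint from the stable points and from $\infty$, so that $\pi_0(\Hur^{G,C,\varphi}_{\underline{n}})$ is computed on the clustered locus; this locus fibres over the finite set of braid classes of $(\mathcal{C}\setminus\mathcal{C}_{\mathrm{st}})$-coloured tuples of type $\underline{n}|_{\mathcal{C}\setminus\mathcal{C}_{\mathrm{st}}}$ together with a boundary monodromy $\delta$, the fibre being a Hurwitz space of $G$-covers carrying the $\mathcal{C}_{\mathrm{st}}$-ramification and one extra pinned ramification point with monodromy $\delta^{-1}$. The hypothesis that $H=\langle C_{\mathrm{st}}\rangle$ surjects onto $G/Z(G)$ enters exactly here: for each $\delta$ the subgroup generated by the stable ramification together with $\delta$ contains $H$, hence is normal in $G$ of central index, so that each such fibre differs from a totally stable Hurwitz space only by the single pinned point and a translation by a coset of reduced lifting invariants; the $H_0$-level statements of EVW together with their non-splitting refinements, in the partially stabilised (Hurwitz-module) setting of Landesman--Levy, then apply uniformly over the bounded non-stable data. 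Matching this with the corresponding combinatorics in $U_B(G,C)$, where the same central-index quotient makes left multiplication by $u_{\underline{m}}$ transparent, yields the injectivity of $s$ and the description of its image. I expect this last step --- the controlled degree-$0$ stabilisation in the non-stable directions and its compatibility with the quotient $G \to G/Z(G)$ --- to be the main obstacle: the reduction to $\C$ is formal and the totally stable case is a black box, but checking that the bounded colours produce neither extra components nor any failure of surjectivity, uniformly in $\underline{n}$, is where the real combinatorial work lies.
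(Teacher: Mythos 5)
Your formal reductions (descent to $G$ constant, passage to the universal base, checking on a $\C$-point, and translating into braid orbits of $C$-coloured tuples via \Cref{lem:lifting_compatible}) match the paper. But the core of your argument has a genuine gap. You propose to stabilise in the \emph{unstable} directions: glue in a large tuple $\underline{m}$ supported on $\mathcal{C}\setminus\mathcal{C}_{\mathrm{st}}$ to land in the totally stable range of \Cref{thm:lifting_invariant}, and then deduce the result from the claim that $s$ is injective with image exactly $\mathfrak{z}^{-1}(u_{\underline{m}}\cdot U_B(G,C)_{\underline{n}})$. Neither half of that claim follows from the totally stable case or from the homological stability results you cite: homological stability only controls stabilisation maps in directions where the multiplicity is already large, whereas here $n_c$ is bounded (possibly equal to $1$) for $c\in\mathcal{C}\setminus\mathcal{C}_{\mathrm{st}}$. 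Injectivity of concatenation in a low-multiplicity direction, and the identification of the image of $s$ inside the stable range, are precisely the statements that need proof; you acknowledge this is ``where the real combinatorial work lies'' but do not supply the work. The clustering argument in your last paragraph also quietly introduces Hurwitz spaces with a pinned point of arbitrary boundary monodromy $\delta^{-1}$ (not necessarily in $C$), which are not covered by \Cref{thm:lifting_invariant}.

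The paper goes the other way: it stabilises only in the $\mathcal{C}_{\mathrm{st}}$ directions, where the multiplicities are already $\geq N$, so the transition maps are bijections and the set of components is identified with the colimit $M_{\underline{n}}$. The unstable letters are then handled purely group-theoretically. The key new ingredient, with no counterpart in your proposal, is \Cref{lem:relation_universal_marked_central_extensions}: an explicit presentation of the kernel of $U(H,C_{\mathrm{st}})\to U(G,C)$ by the commutator-type relations \eqref{eq:commutators}. Given that presentation, one checks directly that left concatenation gives a transitive action of $K_{\mathrm{st}}=\ker(U(G,C_{\mathrm{st}})\to\Hom(\mathcal{C}_{\mathrm{st}},\Z))$ on $M_{\underline{n}}$ whose stabilisers contain those relations; the hypothesis that $C_{\mathrm{st}}$ generates $G/Z(G)$ is used to conjugate the unstable letters into fixed representatives and to verify the relations. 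To repair your proof you would need either to establish this presentation (or an equivalent computation of the relevant kernel/coset structure) or to prove the low-multiplicity injectivity and image statements directly, neither of which is formal.
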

    This section will be devoted to the proof of this theorem.

    By working \'etale locally on $B$ we may assume that $G$ is constant, $\varphi$ is trivial and $\mu_{|G|}$ is a constant finite \'etale scheme. This case reduces by functoriality to the universal case $B = \Spec \Z[\mu_{|G|}, \frac{1}{|G|}]$. In this case it is a map of ind-finite \'etale schemes so it suffices to prove it over one geometric point in each connected component, so we may assume that $B = \Spec \C$.

    We may then choose an isomorphism $\hat{\Z} \cong \hat{\Z}(1)$ to make the identification $G \cong G(-1)$. The image of the lifting invariant may then be identified with $U(G(\C), C(\C))$ by \Cref{lem:lifting_compatible}, recall that this group was defined in \Cref{con:lifting_invariant_complex}. We will abuse notation and write $G = G(\C), H = H(\C)$ and $C = C(\C), C_{\mathrm{st}} = C_{\mathrm{st}}(\C)$ to keep the notation light. We need the following lemma to compare the lifting invariants.
\begin{lemma}\label{lem:relation_universal_marked_central_extensions}
    Let $G$ be a finite group and $C_{\mathrm{st}} \subset C \subset G \setminus \{1\}$ conjugacy invariant subsets. Assume that $C$ generates $G$ and that $C_{\mathrm{st}}$ generates a subgroup $H \subset G$ which surjects onto $G/Z(G)$.
    
    Ihe natural map $U(H, C_{\mathrm{st}}) \to U(G, C)$ surjects onto the elements $u \in U(G, C)$ such that $\mathfrak{R}(u)(c) = 0$ for all $c \in \mathcal{C} \setminus \mathcal{C}_{\mathrm{st}}$.

    Moreover, the kernel is generated by the following relations. For any $g \in C \setminus C_{\mathrm{st}}$  and sequences $h_1, \dots, h_n \in C_{\mathrm{st}}, \epsilon_1, \dots, \epsilon_n \in \{1 , -1\}$ such that $h_1 \cdots h_n \in H$ commutes with $g$ we get a relation
    \begin{equation}\label{eq:elements_quotient_different_universal_central_extension}
        [g h_1 g^{-1}]^{\varepsilon_1} \cdots [g h_n g^{-1}]^{\varepsilon_n} \sim [h_1]^{ \epsilon_1} \cdots [h_n]^{\epsilon_n}.
    \end{equation}
\end{lemma}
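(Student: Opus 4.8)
The plan is to work entirely with the explicit generators-and-relations presentations of $U(H, C_{\mathrm{st}})$ and $U(G, C)$ from \Cref{con:lifting_invariant_complex}, i.e. generators $[g]$ for $g$ in the relevant conjugacy-invariant set and the conjugation relations \eqref{eq:relations_U(G, C)_over_C}. The natural map sends $[h] \mapsto [h]$ for $h \in C_{\mathrm{st}}$. For surjectivity onto the subset $\{u : \mathfrak R(u)(c) = 0 \text{ for } c \in \mathcal C \setminus \mathcal C_{\mathrm{st}}\}$, I would first recall (from \Cref{lem:M_is_abelianization}, applied over $\C$, or directly \cite[Lemma~2.4]{Wood2021Lifting}) that $\mathfrak R$ is the abelianization of $U(G,C)$; hence an element $u$ with $\mathfrak R(u)$ supported on $\mathcal C_{\mathrm{st}}$ is, up to a commutator, a product of generators $[h]$ with $h \in C_{\mathrm{st}}$. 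The point is then to absorb commutators: since $C$ generates $G$ and $C_{\mathrm{st}}$ generates $H$ with $H \twoheadrightarrow G/Z(G)$, every $g \in C \setminus C_{\mathrm{st}}$ can be written modulo $Z(G)$ as a word in $C_{\mathrm{st}}$, and the commutator relations in $U(G,C)$ let one rewrite any commutator $[[g],[h]]$ (for $g \in C$, $h \in C_{\mathrm{st}}$) using only generators indexed by $C_{\mathrm{st}}$ — concretely $[g][h][g]^{-1} = [ghg^{-1}]$ and, when $g = z \cdot w$ with $z$ central and $w$ a word in $C_{\mathrm{st}}$, conjugation by $[g]$ has the same effect on elements of the kernel of $U(G,C) \to G$ as conjugation by the corresponding lift of $w$. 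Iterating this straightening shows any such $u$ lies in the image.

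For the description of the kernel, I would argue as follows. Let $K$ be the normal subgroup of $U(H, C_{\mathrm{st}})$ generated by the relators in \eqref{eq:elements_quotient_different_universal_central_extension}; one checks directly from the presentation of $U(G,C)$ that each such relator maps to $1$ (the left side is $[g]\big([h_1]^{\epsilon_1}\cdots[h_n]^{\epsilon_n}\big)[g]^{-1}$ in $U(G,C)$, and since $h_1\cdots h_n$ commutes with $g$ the conjugation-by-$[g]$ automorphism fixes this element of the center-modulo-$G$ — more precisely $[g]$ lifts $g \in G$ and the claimed equality is an instance of \eqref{eq:relations_U(G, C)_over_C} combined with the fact that $[g]$ centralizes the preimage of $\langle h_1\cdots h_n\rangle$), so $K$ is contained in the kernel. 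For the reverse inclusion I would show $U(H, C_{\mathrm{st}})/K$ maps \emph{injectively} to $U(G,C)$ by exhibiting an inverse on the image: define a map from the image back to $U(H, C_{\mathrm{st}})/K$ on generators and check it is well-defined precisely because the relators \eqref{eq:elements_quotient_different_universal_central_extension} are killed. Equivalently, and perhaps more cleanly, present $U(G,C)$ as an amalgam — adjoin to $U(H, C_{\mathrm{st}})$ the extra generators $[g]$ for $g \in C \setminus C_{\mathrm{st}}$ together with the conjugation relations — and observe that the conjugation relations involving an extra generator $[g]$ and a word in $C_{\mathrm{st}}$ returning into $H$ are exactly relations \eqref{eq:elements_quotient_different_universal_central_extension}; a normal-form / rewriting argument then identifies the kernel of $U(H,C_{\mathrm{st}}) \to U(G,C)$ with $K$.

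The main obstacle I anticipate is the reverse kernel inclusion: making rigorous that \emph{no further} relations are needed. The cleanest route is the amalgam/rewriting viewpoint — treat $U(G,C)$ as built from $U(H,C_{\mathrm{st}})$ by iterated HNN-type extensions, one for each conjugacy class in $C \setminus C_{\mathrm{st}}$, where the "edge relations" of each extension are the centralizer relations \eqref{eq:elements_quotient_different_universal_central_extension}; Britton's lemma (or a direct normal-form argument in the style of \cite[\S2]{Wood2021Lifting}) then gives that an element of $U(H,C_{\mathrm{st}})$ dies in $U(G,C)$ iff it already dies modulo $K$. One should be a little careful that $C \setminus C_{\mathrm{st}}$ may contain several conjugacy classes and that conjugation by $[g]$ can move one extra generator to another; but since the relations \eqref{eq:relations_U(G, C)_over_C} are purely conjugation relations, the subgroup generated by $C_{\mathrm{st}}$-indexed generators is preserved under all such moves by the surjectivity argument above, which is exactly what is needed to run the normal-form argument. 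Finally, I would record that this lemma will feed into the proof of \Cref{thm:components_partial_stability} by translating the partial-stabilization statement for Hurwitz components into the statement that the lifting invariant, restricted to components with $n_c$ fixed for $c \notin \mathcal C_{\mathrm{st}}$, lands in and is a bijection onto this explicitly described image.
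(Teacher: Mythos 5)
Your overall strategy coincides with the paper's: show the relators \eqref{eq:elements_quotient_different_universal_central_extension} die in $U(G,C)$, prove surjectivity onto $V:=\{u : \mathfrak R(u)(c)=0 \text{ for } c\in\mathcal C\setminus\mathcal C_{\mathrm{st}}\}$ by rewriting each leftover pair $[g'][g]^{-1}$ (with $g,g'\in C\setminus C_{\mathrm{st}}$ conjugate) as $[h_1]\cdots[h_k][gh_kg^{-1}]^{-1}\cdots[gh_1g^{-1}]^{-1}$ using that $C_{\mathrm{st}}$ generates $G/Z(G)$, and then identify the kernel by constructing an inverse $V\to U(H,C_{\mathrm{st}})/K$. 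However, two points are genuine gaps rather than routine omissions. First, your justification that the relators map to $1$ is circular: you invoke that ``$[g]$ centralizes the preimage of $\langle h_1\cdots h_n\rangle$,'' which is precisely what must be shown (in a central extension, lifts of commuting elements need not commute). The correct short argument is that inner automorphisms of $U(G,C)$ factor through $G$ (conjugation by any $u$ sends $[h]$ to $[\bar u h\bar u^{-1}]$), so conjugation by $w=[h_1]^{\epsilon_1}\cdots[h_n]^{\epsilon_n}$ fixes $[g]$ because $\bar w$ centralizes $g$; hence $[g]$ and $w$ commute and $[g]w[g]^{-1}=w$.

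Second, and more seriously, the well-definedness of the inverse map is the entire content of the lemma and you leave it as an assertion. One must verify (i) that the image of $[g'][g]^{-1}$ is independent of the chosen conjugating word $h_1,\dots,h_k\in C_{\mathrm{st}}$ --- this is exactly where the relations \eqref{eq:elements_quotient_different_universal_central_extension} are consumed --- and (ii) that the assignment is compatible with every application of the braiding relations \eqref{eq:relations_U(G, C)_over_C} used to bring an element of $V$ into the standard form; the paper checks this in three separate cases according to whether the generators being commuted past each other lie in $C_{\mathrm{st}}$ or in $C\setminus C_{\mathrm{st}}$, and case (3) (all four generators outside $C_{\mathrm{st}}$) is not a formal consequence of case (1). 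Your fallback via Britton's lemma does not repair this: $U(G,C)$ is not an HNN extension of $U(H,C_{\mathrm{st}})$ in any standard sense, since the relation $t x t^{-1}=\alpha(x)$ holds for \emph{every} $x$ in the base (the associated subgroups are everything), the extra generators are permuted among themselves by conjugation, and there are additional relations between distinct stable letters; no normal form theorem applies off the shelf. So the plan needs the explicit case-by-case verification to become a proof.
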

If $H = G$ then this can be deduced from \cite[Thm.~2.5]{Wood2021Lifting}, but we were unable to utilize that theorem for the general case and instead work with generators and relations.
\begin{proof}
    The elements \eqref{eq:elements_quotient_different_universal_central_extension} are contained in the kernel by a direct computation using the defining relations \eqref{eq:relations_U(G, C)_over_C}.

    Let $V \subset U(G, C)$ consist of all elements $u$ such that $\mathfrak{R}(u)(c) = 0$ for all $c \in C_{\mathrm{st}}$. The map $U(H, C_{\mathrm{st}}) \to U(G, C)$ clearly factors through $V$.
    
    We will prove the lemma by constructing a section from $V$ to $U(H, C_{\mathrm{st}})$ modulo the given relations. Let $v \in V$, by using the relations \eqref{eq:relations_U(G, C)_over_C} and using that $\mathfrak{R}(v)(c) = 0$ for all $c \in \mathcal{C} \setminus \mathcal{C}_{\mathrm{st}}$ we may write $v$ as a product of generators of $U(H, C_{\mathrm{st}})$ and elements of the form $[g'] [g]^{-1}$, where $g \in C \setminus C_{\mathrm{st}}$ and $g'$ is conjugate to $g$. We will construct the section by describing where each of these elements goes.

    A generator of $U(H, C_{\mathrm{st}})$ gets sent to itself, this ensures that the map will be a section. For an element of the form $[g'] \cdot [g]^{-1}$ we proceed as follows. The set $C_{\mathrm{st}} \subset G$ generates the inner automorphism group $G/Z(G)$ of $G$ by assumption. There thus exist $h_1, \dots, h_k \in C_{\mathrm{st}}$ such that $g' = h_1 \cdots h_k \cdot g \cdot h_k^{-1} \cdots h_1^{-1}$. We then send $[g'] \cdot [g]^{-1}$ to $[h_1] \cdots [h_k] \cdot [g h_k g^{-1}]^{-1} \cdots [g h_1 g^{-1}]^{-1}$.
    
    If $f_1, \dots, f_{\ell} \in C_{\mathrm{st}}$ are also such that $g' = f_{1} \cdots f_{\ell} \cdot g \cdot f_{\ell}^{-1} \cdots f_{1}^{-1}$ then \eqref{eq:relations_U(G, C)_over_C} implies that 
    \[
    [g f_{\ell} g^{-1}]^{-1} \cdots [g f_1 g^{-1}]^{-1} [g h_1 g^{-1}]\cdots [g h_k g^{-1}] \sim [f_{\ell}]^{-1} \cdots [f_{1}]^{-1} [h_1] \cdots [h_k] 
    \]
    This implies that 
    \[
    [h_1] \cdots [h_k] \cdot [g h_k g^{-1}]^{-1} \cdots [g h_1 g^{-1}]^{-1} = [f_1] \cdots [f_{\ell}] \cdot [g f_{\ell} g^{-1}]^{-1} \cdots [g f_1 g^{-1}]^{-1}
    \]
    so the image of $[g'] \cdot [g]^{-1}$ is well-defined.

    To show that the section is well-defined we have to show that it is compatible when changing the order of the generators of $V$ using the relations \eqref{eq:relations_U(G, C)_over_C}. The case when both $g,h  \in C_{\mathrm{st}}$ is trivial. There remain three cases.
    \begin{enumerate}
        \item If $h \in C_{\mathrm{st}}$ then we need that the image of $[h] [g] [g']^{-1}$ and $[h g h^{-1}] [h g' h^{-1}]^{-1} [h]$ are equal, this is immediate from the construction.
        \item If $h \in C \setminus C_{\mathrm{st}}$ and $h'$ is conjugate to $h$ then we need that the image of $[h] [h']^{-1} [g] [g']^{-1}$ is equal to the image of $[h h'^{-1} g h  h^{-1}] [h h'^{-1} g' h' h^{-1}]^{-1} [h] [h^{'}]^{-1}$. Here we use that $C_{\mathrm{st}}$ generates the inner automorphism group $G/Z(G)$ of $G$ so there exists $f_1, \dots, f_k \in C_{\mathrm{st}}$ such that conjugating by $h h^{'-1}$ is equal to conjugating by $f_1 \cdots f_k$. The desired equality is then immediate from the construction and the relations \eqref{eq:relations_U(G, C)_over_C} for $f_1, \cdots, f_k$ in $U(H, C_{\mathrm{st}})$.
        \item The last case is when $g,g',h,h' \in C \setminus C_{\mathrm{st}}$ are all conjugate. In this case we need that the image of $[g] [g']^{-1} [h] [h']^{-1}$ is equal to the image of $[g] [g^{'-1} h' h^{-1} g']^{-1} [g' hg^{'-1}] [g']^{-1}$. In this case we do the same as in the previous case, we write the conjugation by $g'$ and $h$ as conjugation by a product of elements in $C_{\mathrm{st}}$, use the construction and then use the relations \eqref{eq:relations_universal_central_extension} in $U(H, C_{\mathrm{st}})$.
    \end{enumerate}
\end{proof}

\begin{proof}[Proof of \Cref{thm:components_partial_stability}]
    We will closely follow the proof of \cite[Thm.~3.1]{Wood2021Lifting} until the end. As in loc.~cit.~ we will use following description of the components of Hurwitz spaces.

    For all $n \in \N$ let $V_n = C^n$. We will describe an element of $V_n$ as a formal product of $n$ terms of the form $(g)$ for $g \in C$. Let $V_n^G \subset V_n$ be the subset of formal products whose terms generate $G$. We let $V_n^G/B_n$ be the quotient of $V_n^G$ modulo the \emph{braiding relations} $(g) (h) \sim (g h g^{-1})(g)$ and $(g) (h) \sim (h) (h^{-1} g h)$. The notion of braiding relation comes from an action of the \emph{Braid group} $B_n$, which explains the notation.
    
    As explained in \cite[\S3]{Wood2021Lifting} there is a natural bijection between the components of $\CHur^{G, C}_n$ and $V_n^G/B_n$ such that the lifting invariant has the simple description of turning $(g)$ into $[g] \in U(G, C)$. Moreover, a formal product $v \in \sqcup_n V_n^G$ corresponds to a connected component of $\CHur^{G, C}_{\underline{n}}$ if and only if for all conjugacy classes $c \subset C$ the number of $(g)$ with $g \in c$ appearing in the formal product is $n_c$. We will denote this subset of formal products by $V_{\underline{n}}^G$ and its quotient by the braiding relations by $V_{\underline{n}}^G/B$.

    It follows from the first part of the proof of \cite[Thm.~3.1]{Wood2021Lifting} that if $g \in c \in \mathcal{C}$ then the natural map $V_{\underline{n}}^G/B \to V_{\underline{n} + \mathbf{1}_c}^G/B$ given by concatenating with $(g)$ is surjective as long as $n_c$ is sufficiently large. And similar to loc.~cit.~this implies that there exists an $N > 0$ such that the map $V_{\underline{n}}^G/B \to V_{\underline{n} + \mathbf{1}_c}^G/B$ is a bijection for all $g \in c \in \mathcal{C}_{\mathrm{st}}$ as long $n_c \geq N$ for all $c \in C_{\mathrm{st}}$.
    
    Let $\mathcal{C}_{\text{unst}} = \mathcal{C} \setminus \mathcal{C}_{\mathrm{st}}$. Consider now the stable value $M_{\underline{n}} = \varinjlim_{\underline{m} \in \Hom(\mathcal{C}_{\mathrm{st}}, \N)} V_{\underline{n}}^G/B_n \to V_{\underline{n} + \underline{m}}^G$, where the transition maps are concatenation with $(g)$ for all $g \in C_{\mathrm{st}}$. The elements of $M_{\underline{n}}$ can be thought of as formal products of symbols $(g)$ for $g \in C$ and $(h)^{-1}$ for $h \in C_{\mathrm{st}}$ modulo the relation $(h) (h)^{-1} = 1$ and the braiding relation, such that the number of times that $(g)$ appears minus the times that $(h)^{-1}$ appears for $g, h \in c \in \mathcal{C}$ is $n_c$.

    Note that left concatenation with elements of $C_{\mathrm{st}}$ induces a left action of $K_{\mathrm{st}} := \ker(U(g,C_{\mathrm{st}}) \to \Hom(\mathcal{C}_{\mathrm{st}}, \Z))$ on $M_{\underline{n}}$ which is compatible with the lifting invariant via the natural map $U(G,C_{\mathrm{st}}) \to U(G, C)$. 

    Recall that \Cref{lem:relation_universal_marked_central_extensions} says that the map $H_{\mathrm{st}} \to \ker(U(G, C) \to \Hom(\mathcal{C}, \Z))$ is surjective and that the kernel is generated by the following relations 
    For all $g \in C \setminus C_{\mathrm{st}}$ and all sequences $h_1, \dots, h_k \in C_{\mathrm{st}}$, $\varepsilon_1, \dots, \varepsilon_k \in \{1, -1\}$ such that $h := h_1^{\varepsilon_1} \cdots h_k^{\varepsilon_k}$ commutes with $g$ we have a relation
    \begin{equation}\label{eq:commutators}
     [g^{-1} h_1 g]^{\epsilon_1} \cdots [g^{-1} h_{k} g]^{\varepsilon_k} \sim [h_1]^{\varepsilon_1} \cdots [h_k]^{\varepsilon_k} 
    \end{equation}

   By the compatibility of the $K_{\mathrm{st}}$-action on $M_{\underline{n}}$ with the lifting invariant we are thus reduced to the following claim: the action of $K_{\mathrm{st}}$ on $M_{\underline{n}}$ is transitive and the stabilizers contain all elements of the form \eqref{eq:commutators}. Note that $K_{\mathrm{st}}$ is finite so that all of these facts actually already hold at some finite level of the colimit and not just in $M_{\underline{n}}$ itself. The definition of $M$ then ensures that the lifting invariant is a bijection already at $V_{\underline{n}}^G/B \cong \pi_0(\CHur^{G, C}_{\underline{n}})$.

   The transitivity follows from a standard argument. By multiplying by $(h) \cdot (h)^{-1}$ for $h \in C_{\mathrm{st}}$ and braiding you can conjugate the $(g)$ for $g \not \in C_{\mathrm{st}}$ so that the only $(g)$ remaining are such that $g$ is one of a choice of fixed representatives of the conjugacy classes in $C_{\mathrm{st}}$. Combining this with braiding all those $(g)$ to the right you can thus write every element of $M_{\underline{n}}$ as a product of symbols $(h)$ for $h \in C_{\mathrm{st}}$ times a product of symbols $(g)$ for $g \not \in C_{\mathrm{st}}$ which was fixed in advance.

   That elements of the form \eqref{eq:commutators} are contained in the stabilizers is also straightforward. It follows from $\underline{n} \in \Hom(\mathcal{C}, \N_{> 0})$ that every element of $M_{\underline{n}}$ contains a symbol $(g')$ with $g'$ conjugate to $g$. By a similar argument as before you can make every element in $M_{\underline{n}}$ of the form $(g) \cdot x$ for some $x$. If you multiply this with the $ [h_1] \cdots [h_{\ell}] $ then by braiding $(g)$ to the right and then to left you get
   \begin{align*}
       [h_1]^{\epsilon_1} \cdots [h_{k}]^{\varepsilon_k} \cdot (g) \cdot x = (g) \cdot  [g^{-1} h_1 g]^{\varepsilon_1} \cdots [g^{-1} h_{k} g]^{\varepsilon_k} \cdot x = [g^{-1} h_1 g]^{\varepsilon_1} \cdots [g^{-1} h_{k} g]^{\varepsilon_k} \cdot  (g) \cdot x.  
   \end{align*}
   Where the second equality is because $g^{-1} h_1^{\varepsilon_1} g \cdots g^{-1} h_{k}^{\varepsilon_k} g = g^{-1} h g$ commutes with $g$.
\end{proof}

We summarize the main result we will need from this section.
\begin{lemma}\label{lem:summary_hurwitz_spaces}
    Let $C_{\mathrm{st}} \subset C$ be a conjugacy-invariant subscheme which generates $G/Z(G)$ and let $\mathcal{C}_{\mathrm{st}} \subset \mathcal{C}$ be the corresponding conjugacy classes. 
    
    There exists $N > 0$ such that if $n_c \geq N$ for all $c \in \mathcal{C}_{\mathrm{st}}$ and $n_c > 0$ for $c \not \in \mathcal{C}_{\mathrm{st}}$ then the map $\pi_0(\CHur^{G, C, (\varphi, \gamma)}_{B, \underline{n}}/B) \to U(G_{\varphi}, \mathcal{C})_{\underline{n}, \gamma}$ is an isomorphism.

    Moreover, $U(G_{\varphi})_{\underline{n}, \gamma}$ is either empty or a $\HH_2(G, \mathcal{C})$-torsor.
\end{lemma}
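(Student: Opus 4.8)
The plan is to obtain the first assertion by specialising \Cref{thm:components_partial_stability} and then applying \Cref{lem:components_varphi_gamma}, and to obtain the torsor assertion by a direct analysis of $U_B(G_\varphi,C)$ as the $(-1)$-twist of a central extension, a statement independent of $N$.

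First I would reduce to $\varphi=e_{BG}$ by \Cref{cons:assume_varphi_trivial}, replacing $G$ by the inner twist $G_\varphi$; since this leaves $\mathcal{C}$, the scheme $U_B(\cdot,C)$ and the group scheme $\HH_2(\cdot,\mathcal{C})$ canonically unchanged, I simply write $G$ for $G_\varphi$. Then I would apply \Cref{thm:components_partial_stability} with the given $C_{\mathrm{st}}$ — the hypothesis ``$C_{\mathrm{st}}$ generates $G/Z(G)$'' being exactly that $\langle C_{\mathrm{st}}\rangle$ surjects onto $G/Z(G)$ — to produce $N>0$ such that $\mathfrak{z}$ restricts to an isomorphism $\pi_0(\CHur^{G,C,\varphi}_{B,\underline{n}}/B)\xrightarrow{\ \sim\ }\{u\in U_B(G,C):\mathfrak{R}(u)=\underline{n}\}$ whenever $n_c\ge N$ for all $c\in\mathcal{C}_{\mathrm{st}}$ and $n_c>0$ for $c\notin\mathcal{C}_{\mathrm{st}}$. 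One point worth flagging: \Cref{thm:components_partial_stability} is phrased for $\Hur$, but its proof identifies the braid orbits $V^G_{\underline{n}}/B$ of \emph{generating} tuples with $\pi_0(\CHur^{G,C}_{\underline{n}})$, so the statement one actually gets is about the geometrically connected locus, which is what is needed here. Finally I would intersect with the open and closed substack $\CHur^{G,C,(\varphi,\gamma)}_{B,\underline{n}}$ and invoke \Cref{lem:components_varphi_gamma}, which singles out the components contained in this substack as exactly those whose lifting invariant lands in $U(G,\mathcal{C})_{\underline{n},\gamma}$; combining these gives the asserted isomorphism $\pi_0(\CHur^{G,(\varphi,\gamma)}_{B,\underline{n}}/B)\xrightarrow{\ \sim\ }U(G,\mathcal{C})_{\underline{n},\gamma}$, where I use $\CHur^{G,(\varphi,\gamma)}_{B,\underline{n}}=\CHur^{G,C,(\varphi,\gamma)}_{B,\underline{n}}$ since $\underline{n}\in\Hom_B(\mathcal{C},\N)$ is supported on $\mathcal{C}$.

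For the torsor statement I would work over a strictly henselian base (harmless, as $U_B(G,C)$ is ind-finite \'etale and $\HH_2(G,\mathcal{C})$ finite \'etale over $B$), so that $E:=\hat{U}_{B,\mathbf{P}}(G,C)(1)$ and its central — hence abelian — kernel $A:=\ker(E\to G)$ are constant and $(-1)$ is literally passage to $\hat{\Z}_{\mathbf{P}}^{\times}$-fixed points of the twisted action. By \Cref{lem:M_is_abelianization} the map $\mathfrak{R}\colon E\to\Hom(\mathcal{C},\hat{\Z}_{\mathbf{P}}(1))$ is a group homomorphism, and unwinding \Cref{def:lifting_invariant_multiplicity} shows that $\HH_2(G,\mathcal{C})$ is the group of $\hat{\Z}_{\mathbf{P}}^{\times}$-equivariant maps $a\colon\hat{\Z}_{\mathbf{P}}^{\times}(1)\to A$ with $\mathfrak{R}\circ a=0$. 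I would let such $a$ act on $U_B(G,C)$ by the pointwise product $(a\cdot u)(\zeta)=a(\zeta)u(\zeta)$ and then check the following routine points: the product is again equivariant because $a(\zeta)$ is central, so $(a(\zeta)u(\zeta))^{\lambda}=a(\zeta)^{\lambda}u(\zeta)^{\lambda}$; it still lies in $U_B(G,C)$ since $\mathfrak{R}(a\cdot u)=\mathfrak{R}(a)+\mathfrak{R}(u)=\mathfrak{R}(u)$; the action is free; it preserves the fibres of $U_B(G,C)\to\Hom(\mathcal{C},\Z)\times G(-1)$ because $A$ lies in both kernels; and it is transitive on each nonempty fibre, since for $u,u'$ mapping to the same $(\underline{n},\gamma^{-1})$ the assignment $\zeta\mapsto u'(\zeta)u(\zeta)^{-1}$ takes values in $A$, is equivariant by centrality, kills $\mathfrak{R}$, and hence is an element of $\HH_2(G,\mathcal{C})$ carrying $u$ to $u'$. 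This yields ``empty or a $\HH_2(G,\mathcal{C})$-torsor'', the word ``empty'' being genuinely necessary since a prescribed pair $(\underline{n},\gamma)$ need not admit an equivariant lift to $E$.

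The part I expect to need the most care is the one flagged in the second paragraph: confirming that \Cref{thm:components_partial_stability}, although stated for $\Hur$, supplies the isomorphism on the connected locus $\CHur$ with the lifting invariants matched up correctly. Everything else is either a direct citation or the $\hat{\Z}_{\mathbf{P}}^{\times}$-equivariance bookkeeping inside a central extension, which is fiddly to write but involves no new idea.
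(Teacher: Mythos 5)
Your proposal is correct and follows essentially the same route as the paper: the first assertion is deduced from \Cref{thm:components_partial_stability} (whose proof indeed works with the geometrically connected locus) combined with \Cref{lem:components_varphi_gamma}, and the torsor assertion is the observation that $\HH_2(G,\mathcal{C})$ is by definition the fibre of $U(G,C)\to\Hom(\mathcal{C},\Z)\times G(-1)$ over $(0,1)$. Your explicit equivariance check for the $\HH_2(G,\mathcal{C})$-action is simply a fleshed-out version of what the paper treats as immediate from the definitions.
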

\begin{proof}
    The statement about $N$ is a corollary of \Cref{thm:components_partial_stability}. The fact that $U(G_{\varphi})_{\underline{n}, \gamma}$ is empty or a $\HH_2(G, \mathcal{C})$-torsor is because $\HH_2(G, \mathcal{C})$ is by definition the scheme of elements in $U(G, C)$ mapping to $(0,1)$ in $\Hom(\mathcal{C}, \Z) \times G(-1)$.
\end{proof}
\subsection{Brauer group computations}\label{sec:Brauer_group}
We now specialize the objects of the previous section to the base scheme $B = \Spec \F_q$, we let $p$ be the characteristic of $\F_q$. As in the previous section, we fix a tame finite \'etale group scheme $G$ over $\F_q$ a conjugacy invariant subset $C \subset G(-1) \setminus \{1\}$ which generates $G$ and we let $\mathcal{C} \subset \mathcal{C}_G^*$ be the corresponding conjugacy classes. We may take $\mathbf{P}$ the set of all primes not equal to $p$. To ease notation we will leave this choice of $\mathbf{P}$ implicit everywhere and identify the group schemes of the previous section with their Galois set of $\bar{\F}_q$-points.

Recall that if $k$ is a field and $G$ a finite \'etale group scheme over $k$ then the Brauer group is $\Br BG_k := \HH^2(BG_k, \mathbb{G}_m)$. To understand which components of Hurwitz spaces are fixed by Frobenius, we need a certain subgroup of the Brauer group.

We need to recall some notation from \cite{Loughran2025mallesconjecturebrauergroups} to define this subgroup of the Brauer group. We have provided an alternative description of this subgroup in terms of certain central extensions in \S\ref{sec:description_Brauer_group} for the reader unfamiliar with \cite{Loughran2025mallesconjecturebrauergroups}. Such a reader may take this alternative description as a definition.

Given $c \in \pi_0(\mathcal{C}_{G}^*)$ we let $\mathcal{S}_c \subset [G(-1)/G]$ be the corresponding sector, whose points are given by the groupoid of pairs $(\varphi, \gamma)$ with $\gamma \in c$. We let $\partial_c: \Br BG_{\F_q} \to \HH^1(\mathcal{S}_c, \Q/\Z)$ be the residue defined in \cite[Def.~5.25]{Loughran2025mallesconjecturebrauergroups}.

Recall \cite[Cor.~4.7]{Loughran2025mallesconjecturebrauergroups} that the geometric components of $\mathcal{S}_c$ are in bijection with $\Spec \F_q(c)$ so there is an inclusion $\HH^1(\F_q(c), \Q/\Z) \subset \HH^1(\mathcal{S}_c, \Q/\Z)$. The residue of $\beta \in \Br BG_{\F_q}$ is \emph{algebraic} if $\partial_c(\beta) \in \HH^1(\F_q(c), \Q/\Z)$. 

Recall \cite[Def.~5.21]{Loughran2025mallesconjecturebrauergroups} that $\Br_{\mathcal{\mathcal{C}}} BG_{\F_q}$ consists of those $\beta \in \Br BG_{\F_q}$ such that $\partial_c(\beta) = 0$ for all $c \in \mathcal{C}$.
\begin{definition}\label{def:Brauer_group_C_bar}
    Given a subset $\mathcal{C} \subset \mathcal{C}_G^*$ we let $\Br_{\bar{\mathcal{C}}} BG_{\F_q} \subset \Br BG_{\F_q}$ be the subgroup of Brauer elements $\beta \in \Br BG_{\F_q}$ such that $\beta_{\bar{\F}_q} \in \Br_{\mathcal{C}} BG_{\bar{\F}_q}$. In other words, it consists of those Brauer elements such that for all $c \in \mathcal{C}$ the residue is algebraic.
\end{definition}
Recall that the \emph{algebraic Brauer group} is $\Br_1 BG_{\F_q} := \ker(\Br BG_{\F_q} \to \Br BG_{\bar{\F}_q})$.
\begin{lemma}\label{lem:exact_sequence_brauer_group}
    We have an exact sequence
    \[
    0 \to \Br_1 BG_{\F_q} \to \Br_{\bar{\mathcal{C}}} BG_{\F_q} \to (\Br_{\mathcal{C}} BG_{\bar{\F}_q})^{\Frob_q} \to 0
    \]

    In particular, $\Br_{\bar{\mathcal{C}}} BG_{\F_q} $ is $|G|^2$-torsion.
\end{lemma}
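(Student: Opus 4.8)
The plan is to build the exact sequence by combining the standard filtration of the Brauer group by the algebraic Brauer group with control on the residue maps. First, by definition $\Br_1 BG_{\F_q} = \ker(\Br BG_{\F_q} \to \Br BG_{\bar{\F}_q})$, so every element of $\Br_1 BG_{\F_q}$ becomes trivial, hence in particular lies in $\Br_{\mathcal{C}} BG_{\bar{\F}_q}$ geometrically; thus $\Br_1 BG_{\F_q} \subseteq \Br_{\bar{\mathcal{C}}} BG_{\F_q}$ and the sequence makes sense with the first map being the inclusion. Exactness at $\Br_1 BG_{\F_q}$ is immediate since it is a kernel. For exactness at $\Br_{\bar{\mathcal{C}}} BG_{\F_q}$: the image of $\Br_{\bar{\mathcal{C}}} BG_{\F_q}$ under restriction to $\bar{\F}_q$ lands in $(\Br_{\mathcal{C}} BG_{\bar{\F}_q})^{\Frob_q}$ by the definition of $\Br_{\bar{\mathcal{C}}}$ (the residues are algebraic, i.e.\ Frobenius-fixed in the relevant sense) together with Galois-equivariance of the residue maps $\partial_c$, and the kernel of this restriction is exactly $\Br_1 BG_{\F_q} \cap \Br_{\bar{\mathcal{C}}} BG_{\F_q} = \Br_1 BG_{\F_q}$ by the previous paragraph.

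The nontrivial part is surjectivity onto $(\Br_{\mathcal{C}} BG_{\bar{\F}_q})^{\Frob_q}$. Here I would use the Hochschild--Serre spectral sequence for $BG_{\bar{\F}_q} \to BG_{\F_q}$ with coefficients in $\G_m$. Since $\F_q$ has cohomological dimension $1$ and $\HH^3$ of the absolute Galois group of a finite field vanishes, the obstruction to lifting a $\Frob_q$-invariant geometric Brauer class to $\Br BG_{\F_q}$ lives in $\HH^2(\Gal(\bar{\F}_q/\F_q), \HH^1(BG_{\bar{\F}_q}, \G_m)) = \HH^2(\F_q, \Pic BG_{\bar{\F}_q})$ (after identifying $\HH^1(BG_{\bar\F_q},\G_m) = \Pic BG_{\bar\F_q}$), and this group vanishes again because $\F_q$ has cohomological dimension $1$ for torsion modules (and $\Pic BG = \Hom(G,\G_m)$ is finite, hence torsion). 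Therefore the restriction $\Br BG_{\F_q} \to (\Br BG_{\bar{\F}_q})^{\Frob_q}$ is already surjective; one then only needs to check that a preimage of a class in $\Br_{\mathcal{C}} BG_{\bar{\F}_q}$ can be chosen inside $\Br_{\bar{\mathcal{C}}} BG_{\F_q}$, which is automatic since membership in $\Br_{\bar{\mathcal{C}}}$ is defined purely by a condition after base change to $\bar{\F}_q$.

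For the final assertion that $\Br_{\bar{\mathcal{C}}} BG_{\F_q}$ is $|G|^2$-torsion: both outer terms are $|G|^2$-torsion, so the middle one is too. Indeed $\Br_1 BG_{\F_q}$ is a subquotient of $\HH^2(\F_q, \Hom(G,\G_m))$ via the exact sequence $0 \to \HH^1(\F_q,\Pic BG_{\bar\F_q}) \to \Br_1 BG_{\F_q} \to \HH^0(\F_q, \Br BG_{\bar\F_q})$-type analysis, but more simply $\Br BG_{\bar{\F}_q}$ is $|G|^2$-torsion by \cite[Lem.~6.7]{Loughran2025mallesconjecturebrauergroups} (it is a quotient of $\HH^2(G,\Q/\Z)$, hence annihilated by $|G|$, and the algebraic part contributes another factor bounded by $|G|$), and $\Br_1 BG_{\F_q}$ is a subgroup of $\HH^2(\F_q,\Q/\Z)$-type data also annihilated by $|G|^2$; taking the extension bounds the exponent by the product.

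The main obstacle I expect is the surjectivity step: one must be careful that the residue maps $\partial_c$ are genuinely compatible with base change and Galois action as set up in \cite[\S5]{Loughran2025mallesconjecturebrauergroups}, so that "algebraic residue" is precisely the condition cut out by $\Frob_q$-invariance at the level of $\HH^1$ of the sectors. Once that compatibility is in hand, the cohomological-dimension-one vanishing makes the lifting formal; the risk is purely in matching the definitions of $\partial_c$, $\mathcal{S}_c$, and the identification of geometric components of $\mathcal{S}_c$ with $\Spec \F_q(c)$ used to phrase algebraicity.
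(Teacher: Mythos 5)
Your argument for the exact sequence is correct and is essentially the paper's proof: exactness at the first two terms follows from the definitions, and surjectivity reduces (because membership in $\Br_{\bar{\mathcal{C}}} BG_{\F_q}$ is a condition checked after base change to $\bar{\F}_q$) to surjectivity of $\Br BG_{\F_q} \to (\Br BG_{\bar{\F}_q})^{\Frob_q}$, which follows from the Hochschild--Serre spectral sequence and the fact that $\F_q$ has cohomological dimension $1$. Your extra care about the vanishing of the relevant differentials (into $\HH^2(\F_q,\Pic BG_{\bar{\F}_q})$ and $\HH^3(\F_q,\bar{\F}_q^\times)$) is a correct elaboration of what the paper leaves implicit.

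The torsion assertion, however, is garbled. You open with ``both outer terms are $|G|^2$-torsion, so the middle one is too'': even granting the premise, an extension of an exponent-$m$ group by an exponent-$n$ group only has exponent dividing $mn$, so this would give $|G|^4$-torsion, not $|G|^2$. The correct statement (and the one the paper uses) is that each outer term is $|G|$-torsion --- $\Br BG_{\bar{\F}_q}$ because it is a quotient of $\HH^2(G,\Q/\Z)$, and $\Br_1 BG_{\F_q} \cong \HH^1(\F_q,\Hom(G^{\mathrm{ab}},\Q/\Z(1)))$ because $G^{\mathrm{ab}}$ is $|G|$-torsion --- and then the extension gives exponent dividing $|G|^2$. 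Your later parenthetical gestures at this, but as written the paragraph contradicts itself and does not establish the stated bound.
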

\begin{proof}
    The only non-obvious part is the surjectivity. By the definition of $\Br_{\bar{\mathcal{C}}} BG_{\F_q}$ it suffices to show that $\Br BG_{\F_q} \to (\Br  BG_{\bar{\F}_q})^{\Frob_q}$ is surjective.

    This follows from the Hochschild-Serre spectral sequence $\HH^p(\F_q, \HH^q(BG_{\bar{\F}_q}, \G_m)) \implies \HH^{p + q}(BG_{\F_q}, \G_m)$ and the fact that $\F_q$ has cohomological dimension $1$. 
    
    The statement about torsion follows as $\Br_1 BG_{\F_q}$ and $(\Br_{\mathcal{C}} BG_{\bar{\F}_q})^{\Frob_q}$ are $|G|$-torsion.
\end{proof}

Another subset of the Brauer group which will play a role is the following.
\begin{definition}\label{def:Br_C_ell}
    For $\ell \in \HH^1(\F_q, \Q/\Z)$ define $\Br_{\mathcal{C}, \ell} BG_{\F_q} \subset \Br BG_{\F_q}$ as consisting of those $\beta$ such that the residue $\partial_c(\beta)$ is algebraic and equal to $\ell|_{\F_q(c)} \in \HH^1(\F_q(c), \Q/\Z)$ for all $c \in \pi_0(\mathcal{C})$.
\end{definition}
\begin{remark}
    Note that $\Br_{\mathcal{C}, \ell} BG_{\F_q} \subset \Br_{\bar{\mathcal{C}}} BG_{\F_q}$ by definition.
\end{remark}

The following theorem provides a Brauer-theoretic description of the Frobenius fixed components of Hurwitz spaces.

If $(\varphi, \gamma) \in BG[\F_q((t^{-1}))]$ and $c_{\gamma} \in \mathcal{C}_G^*$ is the conjugacy class containing $\gamma$ then we write $\partial_\gamma(\beta)(\varphi) := \partial_{c_{\gamma}}(\beta)((\varphi, \gamma))$.
\begin{theorem}\label{prop:Brauer_group_controls_components}
    The scheme $U(G_{\varphi}, \mathcal{C})_{\underline{n}, \gamma}$ has a $\F_q$-point if and only if for all $\beta \in \Br_{\bar{\mathcal{C}}} BG_{\F_q}$ we have
    \[
    \partial_\gamma(\beta)(\varphi) + \sum_{c \in \pi_0(\mathcal{C}_G^*)} n_c \mathrm{cor}_{\F_q(c)/\F_q}(\partial_c(\beta)) = 0 \in \HH(\F_q, \Q/\Z).
    \]
\end{theorem}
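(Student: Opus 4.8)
The plan is to present $U(G_\varphi,\mathcal{C})_{\underline n,\gamma}$ as a torsor (or as empty), translate the existence of an $\F_q$-point into a cohomological vanishing, and then dualize that vanishing against the Brauer group. By \Cref{lem:summary_hurwitz_spaces} the scheme $U(G_\varphi,\mathcal{C})_{\underline n,\gamma}$ is either empty or a torsor under the finite commutative \'etale group scheme $\HH_2(G,\mathcal{C})$ over $\F_q$, and such a torsor has an $\F_q$-point precisely when it is non-empty over $\bar\F_q$ and its class in $\HH^1(\F_q,\HH_2(G,\mathcal{C}))$ vanishes. First I would dispose of non-emptiness over $\bar\F_q$: by \Cref{lem:M_is_abelianization} the composite $U(G_\varphi,C)\to G_\varphi(-1)\to G^{ab}(-1)$ factors through $\mathfrak{R}$, so a necessary condition is that $\gamma^{-1}$ have image $\sum_c n_c\bar c$ in $G^{ab}(-1)$; for $\underline n$ in the stable range of \Cref{thm:components_partial_stability} this constraint is also sufficient, and one reduces to that case. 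This abelianization constraint will reappear in the second step as exactly the part of the Brauer condition carried by $\Br_1 BG_{\F_q}$, via the duality $\Pic BG_{\bar\F_q}=\widehat{G^{ab}}$, so that the theorem reduces to computing and vanishing the torsor class.

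Next I would set up the duality over the finite field. For a finite Galois module $M$ one has $\HH^1(\F_q,M)\cong M_{\Frob_q}$ together with a perfect cup-product pairing $M_{\Frob_q}\times (M^{\vee})^{\Frob_q}\to\Q/\Z\cong\HH^1(\F_q,\Q/\Z)$; applied to $M=\HH_2(G,\mathcal{C})$, the torsor class vanishes iff it pairs to zero with every $\Frob_q$-invariant character of $\HH_2(G,\mathcal{C})$. The structural input here, taken from \cite{Loughran2025mallesconjecturebrauergroups}, is the Galois-equivariant identification of $\HH_2(G,\mathcal{C})^{\vee}$ with $\Br_{\mathcal{C}} BG_{\bar\F_q}$, coming from the correspondence between $\mathcal{C}$-marked central extensions with cyclic kernel and Brauer classes of $BG$ with trivial residue along $\mathcal{C}$, together with the description of $\HH_2(G,\mathcal{C})$ as $\ker(\HH_{2,\mathrm{orb}}^{\mathcal{C}}(G,\Z)\to\Hom(\mathcal{C},\Z))$. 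Taking $\Frob_q$-invariants and using the exact sequence of \Cref{lem:exact_sequence_brauer_group}, the group $(\HH_2(G,\mathcal{C})^{\vee})^{\Frob_q}$ is identified with the image of $\Br_{\bar{\mathcal{C}}} BG_{\F_q}$ in $(\Br_{\mathcal{C}} BG_{\bar\F_q})^{\Frob_q}$, which is why the condition is naturally phrased over all of $\Br_{\bar{\mathcal{C}}} BG_{\F_q}$.

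The heart of the argument is the cocycle computation. Over $\bar\F_q$, fixing a compatible system of primitive roots of unity $\zeta$, \Cref{con:universal_marked_central_extension} presents $\hat U(G,C)(1)(\bar\F_q)$ by generators $[\gamma]^{\zeta}$, $\gamma\in C$; a $\bar\F_q$-point of $U(G_\varphi,\mathcal{C})_{\underline n,\gamma}$ is then represented, via the lifting invariant of \Cref{thm:lifting_invariant}, by a word $w=\prod_{P}[\gamma_P]^{\zeta}$ over the geometric ramification points $P\in\A^1_{\bar\F_q}$ with $\gamma_P\in C$ the local monodromy, with product $\gamma^{-1}$ in $G_\varphi(-1)$ and $\mathfrak{R}(w)=\underline n$. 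I would then compute the obstruction, i.e.\ the unique $\delta_w\in\HH_2(G,\mathcal{C})(\bar\F_q)$ with $\Frob_q(w)=\delta_w\cdot w$, from the relations of \Cref{con:universal_marked_central_extension} and from $\Frob_q([\gamma]^{\zeta})=[\gamma^{q^{-1}}]^{\zeta^q}=([\gamma^{q^{-1}}]^{\zeta})^{q}$, grouping the factors according to the $\Frob_q$-orbits of the $P$. Pairing $\delta_w$ against $\beta\in\Br_{\bar{\mathcal{C}}} BG_{\F_q}$ and using the dictionary of \cite{Loughran2025mallesconjecturebrauergroups} between this pairing and residues: each orbit of degree $\deg P$ and type $c$ contributes $n_c\,\mathrm{cor}_{\F_q(c)/\F_q}(\partial_c(\beta))$, and the factor coming from the monodromy $\gamma$ at $\infty$ contributes $\partial_\gamma(\beta)(\varphi)$, the dependence on $\varphi$ being exactly what carrying the inner twist $G_\varphi$ through the computation produces. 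This yields $\langle\delta_w,\beta\rangle=\partial_\gamma(\beta)(\varphi)+\sum_c n_c\,\mathrm{cor}_{\F_q(c)/\F_q}(\partial_c(\beta))$, so the torsor class vanishes iff this is $0$ for all $\beta$; combined with the first step, for which the same formula restricted to $\beta\in\Br_1 BG_{\F_q}$ collapses under $\widehat{G^{ab}}=G^{ab}(-1)^{\vee}$ to the abelianization constraint, this is the theorem.

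I expect the main obstacle to be making this last matching rigorous: proving that ``pair $\delta_w$ with $\beta$'' really computes ``sum the residues of $\beta$ over the local monodromies, with corestriction over Frobenius orbits, plus the term at $\infty$''. This requires compatible explicit descriptions of the residue maps $\partial_c$ of \cite[Def.~5.25]{Loughran2025mallesconjecturebrauergroups}, of the pairing between $\hat U(G,C)(1)$ and $\Br BG$, and of the $\Frob_q$-action on both, together with careful bookkeeping of the inner twist by $\varphi$, of the various $(-1)$-twists, and of the inverse/sign conventions built into the lifting invariant. A secondary delicate point is checking that the algebraic classes in $\Br_{\bar{\mathcal{C}}} BG_{\F_q}$ genuinely account for non-emptiness over $\bar\F_q$, so that the statement is a true equivalence; this is where one uses that the relevant $\underline n$ lie in the stable range.
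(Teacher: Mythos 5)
Your plan follows essentially the same route as the paper: translate the existence of an $\F_q$-point into a duality statement over $\F_q$, identify the dual group with a piece of $\Br BG_{\F_q}$ via $\mathcal{C}$-marked central extensions, and compute the resulting pairing as a sum of residues by an explicit Frobenius-cocycle calculation. The only real difference is organizational: you split the problem along the exact sequence of \Cref{lem:exact_sequence_brauer_group} (geometric non-emptiness matched with $\Br_1 BG_{\F_q}$, the torsor class in $\HH^1(\F_q,\HH_2(G,\mathcal{C}))$ matched with $(\Br_{\mathcal{C}} BG_{\bar{\F}_q})^{\Frob_q}$), whereas the paper works with the two-term complex $K_G = [\HH_{2,\mathrm{orb}}^{\mathcal{C}}(G,\Z) \to \Hom(\mathcal{C},\Z)]$, whose $\HH^1$ carries the full obstruction class and whose derived dual is identified with all of $\Br_{\bar{\mathcal{C}}} BG_{\F_q}$ at once in \Cref{lem:cocycles_geometric_unramified}. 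The paper's packaging avoids the lift-ambiguity you would face when pairing against elements of the quotient $(\Br_{\mathcal{C}} BG_{\bar{\F}_q})^{\Frob_q}$ (that pairing is only well-defined once the $\Br_1$ conditions are already imposed) and dispenses with a separate non-emptiness step; your version is workable but must track these points by hand.

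Two concrete remarks. First, your appeal to the stable range of \Cref{thm:components_partial_stability} for geometric non-emptiness is both unnecessary and unusable: the theorem concerns a fixed $\underline{n}$, so there is nothing to ``reduce'' to. The correct argument is purely group-theoretic: by \Cref{lem:M_is_abelianization} the map $\mathfrak{R}$ is the abelianization of $\hat{U}(G,C)(1)$, so the image of $\HH_{2,\mathrm{orb}}^{\mathcal{C}}(G,\Z)$ under $\mathfrak{R}$ is exactly $\ker(\Hom(\mathcal{C},\Z) \to G^{\mathrm{ab}}(-1))$, and the abelianization criterion holds for every $\underline{n}$. (For the same reason, the word you manipulate in the cocycle step should just be an arbitrary word in the generators $[\gamma]^{\zeta}$ with the prescribed $\mathfrak{R}$ and image in $G(-1)$; invoking the lifting invariant and ramification points is a detour.) Second, the two items you defer as ``the main obstacle'' — the Galois-equivariant identification of $\Hom(\HH_2(G,\mathcal{C}),\Q/\Z)$ with $\Br_{\mathcal{C}} BG_{\bar{\F}_q}$ compatibly with residues, and the evaluation of the obstruction pairing as $\partial_\gamma(\beta)(\varphi) + \sum_c n_c\,\mathrm{cor}_{\F_q(c)/\F_q}(\partial_c(\beta))$ — are not bookkeeping but the actual content of the proof (\Cref{lem:cocycles_geometric_unramified} and \Cref{lem:cocycle_residue_formula}, via the explicit extensions $\tilde{G}_{(\alpha,\psi)}$ and \Cref{lem:computation_central_extension_residue}), so a complete write-up must carry them out.
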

A first reduction is that we may assume that $\varphi = e_{BG}$ and $G_{\varphi} = G$ as $BG_{\varphi} \cong BG$ by \cite[Lemma~2.10]{Loughran2025mallesconjecturebrauergroups}.
\subsubsection{Description of the Brauer group} \label{sec:description_Brauer_group}
We will use the non-standard notation $\Q/\Z(1) := \lim_{p \nmid n} \mu_n$ to keep the notation light.
\begin{construction}\label{con:central_extensions_Brauer_group}
    Consider a central extension of group schemes $1 \to \Q/\Z(1) \to E \to G \to 1$ of group schemes over $\F_q$. This defines an element $[E] \in \Br BG_{\F_q}$ by \cite[Def.~6.5]{Loughran2025mallesconjecturebrauergroups}. This induces an isomorphism between $BG_{\bar{\F}_q}$ and the group of $\Q/\Z(1)$ central extensions up to isomorphism under the Baer sum. This follows by taking the colimit over all $p \nmid n$ of \cite[Lem.~6.6,6.7]{Loughran2025mallesconjecturebrauergroups} and the fact that $\Br \F_q = 0$.
\end{construction}
\begin{lemma}\label{lem:central_extension_C_bar}
    A central extension $1 \to \Q/\Z(1) \to E \to G \to 1$ is such that $[E] \in \Br_{\bar{\mathcal{C}}} BG_{\F_q}$ if and only if there exists a conjugacy invariant subset $M \subset E(-1)(\bar{\F}_q)$ such that the map $E \to G$ induces a bijection $M \to C(\bar{\F}_q)$.
\end{lemma}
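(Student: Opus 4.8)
The plan is to translate the condition ``$[E] \in \Br_{\bar{\mathcal C}} BG_{\F_q}$'' — i.e. that every residue $\partial_c([E])$ is algebraic for $c \in \mathcal C$ — into the existence of a marking $M \subset E(-1)(\bar\F_q)$. The key observation is that, over $\bar\F_q$, a $\Q/\Z(1)$-central extension $1 \to \Q/\Z(1) \to E \to G \to 1$ has all residues vanishing precisely when it becomes $\mathcal C$-marked in the sense of \Cref{def:marked_central_extension}: this is essentially the content of \cite[Lem.~6.9]{Loughran2025mallesconjecturebrauergroups} (or its limit over $n$), which identifies $\partial_c([E])$ with the obstruction to lifting a generator $\gamma \in c$ of $G(-1)$ to an element of $E(-1)$ that is compatible with the cyclotomic/conjugation structure. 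So the geometric statement ``$[E]_{\bar\F_q} \in \Br_{\mathcal C} BG_{\bar\F_q}$'' is equivalent to the existence of a conjugation-invariant $M_0 \subset E(-1)(\bar\F_q)$ mapping bijectively to $C(\bar\F_q)$ — but a priori $M_0$ need not be $\Frob_q$-stable, which is exactly the subtlety the word ``$\bar{\mathcal C}$'' (algebraic rather than trivial residues) is meant to capture.

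First I would fix the reduction $\varphi = e_{BG}$, $G_\varphi = G$ already noted in the excerpt, and work with the description of $\Br BG_{\F_q}$ by $\Q/\Z(1)$-central extensions from \Cref{con:central_extensions_Brauer_group}. For the ``only if'' direction, suppose $[E] \in \Br_{\bar{\mathcal C}} BG_{\F_q}$. Over $\bar\F_q$ we get a marking $M_0$ as above; the problem is to produce a $\Frob_q$-stable one. Here I would use that the residue $\partial_c([E])$ being \emph{algebraic} (valued in $\HH^1(\F_q(c),\Q/\Z)$) rather than merely in $\HH^1(\mathcal S_c,\Q/\Z)$ means the ambiguity of the lift over $\F_q(c)$ is controlled by an element of $\HH^1(\bar\F_q, \Q/\Z(1)) = 0$ up to the part already accounted for over $\F_q(c)$ — concretely, for each conjugacy class $c$ one can choose a lift of a chosen representative $\gamma_c$ to $E(-1)$ over $\F_q(c)$, and algebraicity guarantees such a choice exists compatibly with the Galois action on the set of representatives. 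Spreading this out over the $G(\bar\F_q)$-conjugacy orbit and taking the union over $c \in \pi_0(\mathcal C)$ produces a $\Frob_q$-invariant, conjugation-invariant $M$ with $M \to C(\bar\F_q)$ bijective. For the ``if'' direction, given such an $M$, the very existence of a Galois- and conjugation-stable set of lifts of generators forces each $\partial_c([E])$ to be algebraic — indeed $\partial_c([E])$ is computed from the failure of a lift of $\gamma_c$ to exist over $\bar\F_q$ with the right properties, and $M$ supplies one, so $\partial_c([E])_{\bar\F_q} = 0$, i.e. $\partial_c([E])$ is algebraic.

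The main obstacle I expect is the passage from a geometric marking $M_0$ to a \emph{Frobenius-stable} marking $M$: one must check that ``all residues algebraic'' is exactly the right hypothesis, no more and no less. The danger is a Galois-descent gap — a priori $\Frob_q$ might permute the components of $\mathcal S_c$ nontrivially, and one needs \cite[Cor.~4.7]{Loughran2025mallesconjecturebrauergroups} (geometric components of $\mathcal S_c$ $\leftrightarrow$ $\Spec \F_q(c)$) to see that algebraicity of $\partial_c([E])$ is precisely the statement that the obstruction class, after base change to $\F_q(c)$, lives in the ``arithmetic'' summand and hence can be killed by a suitable choice of lift defined over $\F_q(c)$. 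Once that identification is in hand, assembling $M$ as a disjoint union over $c \in \pi_0(\mathcal C)$ of the $G(\bar\F_q)$-conjugacy closures of these lifts, and verifying it is conjugation-invariant and maps bijectively to $C(\bar\F_q)$, is routine. I would also remark that this is the natural ``$\bar{\mathcal C}$-analogue'' of \cite[Lem.~6.9]{Loughran2025mallesconjecturebrauergroups}, and the proof is the cocycle-level translation of that lemma twisted by the cyclotomic character.
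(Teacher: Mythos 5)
There is a genuine misreading here that derails the ``only if'' direction. The subset $M$ in the statement is \emph{not} required to be Galois-stable: it is merely a conjugation-invariant subset of $E(-1)(\bar{\F}_q)$ mapping bijectively to $C(\bar{\F}_q)$. Since $\Br_{\bar{\mathcal{C}}} BG_{\F_q}$ is \emph{defined} (\Cref{def:Brauer_group_C_bar}) as the set of classes whose base change to $\bar{\F}_q$ lies in $\Br_{\mathcal{C}} BG_{\bar{\F}_q}$, both sides of the equivalence are conditions on $E_{\bar{\F}_q}$ alone, and the lemma is exactly the geometric marking criterion of \cite[Lem.~6.10]{Loughran2025mallesconjecturebrauergroups} applied over the algebraically closed field $\bar{\F}_q$ --- which is the one-line proof the paper gives, and which you correctly identify in your opening paragraph before deciding that more is needed.

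The extra work you then undertake --- producing a \emph{Frobenius-stable} $M$ from the algebraicity of the residues --- is not only unnecessary but would fail. By \Cref{lem:computation_central_extension_residue}(2), for $\sigma \in \Gal(\bar{\F}_q/\F_q(c))$ one has $\sigma(M) = M + \partial_c([E])(\sigma)$, so a Galois-invariant $M$ exists precisely when the residues vanish, i.e.\ when $[E]$ lies in the strictly smaller group $\Br_{\mathcal{C}} BG_{\F_q}$; this is the content of the remark immediately following the lemma, and it is the whole point of working with $\Br_{\bar{\mathcal{C}}}$ rather than $\Br_{\mathcal{C}}$ (nonzero algebraic residues are what produce the periodicity phenomena in the main theorems). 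Your claim that ``algebraicity guarantees such a choice exists compatibly with the Galois action'' is therefore false in general. The ``if'' direction suffers from the same confusion (you assume $M$ is ``Galois- and conjugation-stable''), though there the conclusion you reach --- $\partial_c([E])_{\bar{\F}_q} = 0$, hence the residue is algebraic --- is correct and needs only the geometric $M$. The fix is simply to delete the Galois-descent discussion entirely and observe that the claim is the base change to $\bar{\F}_q$ of the marking criterion.
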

\begin{proof}
    Immediate from \cite[Lem.~6.10]{Loughran2025mallesconjecturebrauergroups} applied over $\bar{\F}_q$.
\end{proof}
\begin{remark}
    If we also assume that $M$ is Galois invariant then it defines a marking in the sense of \Cref{def:marked_central_extension}. In this case we have $[E] \in \Br_{\mathcal{C}} BG_{\F_q}$ by \cite[Lem.~6.10]{Loughran2025mallesconjecturebrauergroups} applied over $\F_q$.
\end{remark}
We can also give a description of the residues in terms of central extensions.
\begin{lemma}\label{lem:computation_central_extension_residue}
    Let $1 \to \Q/\Z(1) \to E \to G \to 1$ be a central extension of group schemes over $\F_q$.
    \begin{enumerate}
        \item Let $\gamma \in G(-1)(\F_q)$ and choose a lift $\tilde{\gamma} \in E(-1)(\bar{\F}_q)$ of $\gamma$. For all $\sigma \in \Gal(\bar{\F}_q/\F_q)$ we have $\sigma(\tilde{\gamma}) - \tilde{\gamma} = \partial_{\gamma}([E])(\sigma) \in \Q/\Z$.
        \item Assume that $[E] \in \Br_{\bar{\mathcal{C}}} BG_{\F_q}$ and choose a subset $M \subset E(-1)(\bar{\F}_q)$ as in \Cref{lem:central_extension_C_bar}. Let $c \in \mathcal{C}(\bar{\F}_q)$ and let $M_c \subset M$ be the subset of elements lying above $c$. For all $\sigma \in \Gal(\bar{\F}_q/\F_q(c))$ we then have $\sigma(M) = M + \partial_{c}([E])(\sigma)$
    \end{enumerate}
\end{lemma}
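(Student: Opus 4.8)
The plan is to read off both formulas directly from the definition of the residue $\partial_c$ in \cite[Def.~5.25]{Loughran2025mallesconjecturebrauergroups}: the point is that, for a Brauer class $[E]$ coming from a central extension by $\Q/\Z(1)$, the residue along a cyclotomic inertia direction is nothing but the $1$-cocycle recording the Galois variance of a set-theoretic lift.

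For part (1): the element $\gamma \in G(-1)(\F_q)$ defines a point $\Spec \F_q \to \mathcal{S}_c$, and $\partial_\gamma([E])$ is by definition the pullback of $\partial_c([E]) \in \HH^1(\mathcal{S}_c, \Q/\Z)$ along it. I would choose any lift $\tilde\gamma \in E(-1)(\bar{\F}_q)$ of $\gamma$ and check, by unwinding \cite[Def.~5.25]{Loughran2025mallesconjecturebrauergroups}, that this pullback is represented by $\sigma \mapsto \sigma(\tilde\gamma) - \tilde\gamma$. This difference lies in $\ker(E(-1) \to G(-1))(\bar{\F}_q) = \Q/\Z(1)(-1)(\bar{\F}_q) = \Q/\Z$ precisely because $\gamma$ is Galois-fixed; it is independent of the choice of $\tilde\gamma$ since two lifts differ by an element of this kernel, on which $\Gal(\bar{\F}_q/\F_q)$ acts trivially; and for the same reason $\sigma \mapsto \sigma(\tilde\gamma)-\tilde\gamma$ is a homomorphism $\Gal(\bar{\F}_q/\F_q) \to \Q/\Z$, i.e.\ an element of $\HH^1(\F_q, \Q/\Z)$.

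For part (2): here the displayed equality should be understood as $\sigma(M_c) = M_c + \partial_c([E])(\sigma)$, with $+$ the action of $\Q/\Z = \Q/\Z(1)(-1)$ on $E(-1)$. Fix $\gamma_0 \in c(\bar{\F}_q)$ and its unique lift $\tilde\gamma_0 \in M_c$. For $\sigma \in \Gal(\bar{\F}_q/\F_q(c))$ the orbit $c(\bar{\F}_q)$ is $\sigma$-stable, so there is $h_\sigma \in G(\bar{\F}_q)$ with $h_\sigma \gamma_0 h_\sigma^{-1} = \sigma(\gamma_0)$; since $M_c \to c(\bar{\F}_q)$ is bijective, $h_\sigma \tilde\gamma_0 h_\sigma^{-1} \in M_c$ is the unique lift of $\sigma(\gamma_0)$ in $M_c$ and is independent of $h_\sigma$ (two choices differ by an element of the centraliser $Z_G(\gamma_0)$, which fixes $\tilde\gamma_0$ precisely because $M_c \to c(\bar{\F}_q)$ is injective). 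Hence $\sigma(\tilde\gamma_0) = h_\sigma\tilde\gamma_0 h_\sigma^{-1} + t_\sigma$ for a unique $t_\sigma \in \Q/\Z$, and using that $M$ is conjugation-invariant, that $\Q/\Z(1)$ is central in $E$, and that $t_\sigma$ is Galois-fixed one obtains $\sigma(M_c) = h_\sigma M_c h_\sigma^{-1} + t_\sigma = M_c + t_\sigma$; a one-line computation gives $t_{\sigma\tau} = t_\sigma + t_\tau$, so $\sigma \mapsto t_\sigma$ lies in $\HH^1(\F_q(c), \Q/\Z)$. To identify it with the algebraic class $\partial_c([E])$ I would unwind \cite[Def.~5.25]{Loughran2025mallesconjecturebrauergroups} once more: the residue at the sector $\mathcal{S}_c$ is, by construction, the obstruction to an admissible subset $M_c$ being Galois-stable, i.e.\ exactly the cocycle $\sigma \mapsto t_\sigma$; base changing to the field of definition $L$ of $\gamma_0$, where $\gamma_0$ becomes rational and part (1) applies, gives the consistency check $t_\sigma = \partial_{\gamma_0}([E]_L)(\sigma)$ for $\sigma \in \Gal(\bar{\F}_q/L)$, using \cite[Cor.~4.7]{Loughran2025mallesconjecturebrauergroups} to match the algebraic part of $\HH^1(\mathcal{S}_c, \Q/\Z)$ with $\HH^1(\F_q(c), \Q/\Z)$.

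The step I expect to be the real obstacle is bookkeeping rather than conceptual: pinning down the precise cocycle representing $\partial_c$ in \cite[Def.~5.25]{Loughran2025mallesconjecturebrauergroups}, in particular its sign and its compatibility with the component decomposition of $\HH^1(\mathcal{S}_c, \Q/\Z)$, so that it matches the elementary lift-difference formulas above exactly. Once that normalisation is fixed, (1) is immediate and (2) follows as sketched.
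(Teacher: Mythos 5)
Your proposal is correct in substance and follows essentially the same route as the paper: the paper's entire proof is to cite \cite[Lem.~6.9]{Loughran2025mallesconjecturebrauergroups}, take the colimit in $n$, and rewrite the torsors appearing there as cocycles -- and the cocycles one gets are exactly your lift-difference formulas $\sigma \mapsto \sigma(\tilde\gamma)-\tilde\gamma$ (resp.\ $\sigma \mapsto t_\sigma$). Your elementary verifications (that the difference lands in $\Q/\Z$ because $\gamma$ is Galois-fixed, independence of the lift, the cocycle condition, and in part (2) the independence of the conjugator $h_\sigma$ via injectivity of $M_c \to c(\bar{\F}_q)$) are all sound. The one step you defer -- pinning down the normalisation of $\partial_c$ against \cite[Def.~5.25]{Loughran2025mallesconjecturebrauergroups}, which you rightly identify as the real obstacle -- is precisely the content of \cite[Lem.~6.9]{Loughran2025mallesconjecturebrauergroups}: that lemma already identifies the residue of the class of a $\mu_n$-central extension with the torsor of lifts along the relevant cyclotomic inertia sector, so citing it (and passing to the colimit over $n$) closes your remaining gap without any further computation. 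The only practical difference is that you propose to re-derive this identification from the definition of the residue, which is more self-contained but duplicates work already done in the reference.
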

\begin{proof}
    Both statements follows immediately from \cite[Lem.~6.9]{Loughran2025mallesconjecturebrauergroups} by taking the colimit over all $p \nmid n$ and writing down the torsors in loc.~cit.~in terms of cocycles.
\end{proof}
\subsubsection{Computations}
To prove this theorem we will essentially need to do some computations in the derived category $\mathcal{D}(\Spec \F_q)$ of $\Gal(\bar{\F}_q/\F_q)$-modules. But everything is simple enough that we can give cocycle-theoretic descriptions of all involved objects and will not actually need any facts about derived categories. We leave the argument that the cocycle-theoretic definitions agree with the analogous definitions in the derived category to the interested reader, it follows from a standard bar complex argument.

We will use cohomological duality over finite fields, the statement of which we now recall.
\begin{lemma}\label{lem:cohomological_duality_finite_fields}
    For any finite $\Gal(\bar{\F}_q/\F_q)$-module $A$ the cup product pairing
    \[
    \HH^1(\F_q, A) \times \HH^0(\F_q, \Hom(A, \Q/\Z)) \xrightarrow{ \cdot \cup \cdot } \HH^1(\F_q, \Q/\Z) \cong \Q/\Z: (\varphi, f) \to f_*(\varphi)(\Frob_q).
    \]
    is a perfect pairing.
\end{lemma}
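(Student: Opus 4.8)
The plan is to reduce the statement to Pontryagin duality for finite abelian groups by making everything explicit at the level of cocycles. Since $\Gal(\bar{\F}_q/\F_q)$ is procyclic, topologically generated by $\Frob_q$, for a finite Galois module $A$ one has $\HH^0(\F_q, A) = A^{\Frob_q}$, $\HH^1(\F_q, A) = A/(\Frob_q - 1)A$, and $\HH^i(\F_q, A) = 0$ for $i \geq 2$ (all standard, e.g.\ from $\mathrm{cd}(\F_q) = 1$), and the normalisation $\HH^1(\F_q, \Q/\Z) \cong \Q/\Z$ in the statement is the isomorphism $\chi \mapsto \chi(\Frob_q)$. First I would record these facts, fixing the convention in which a class in $\HH^1(\F_q, A)$ is represented by a $1$-cocycle, which for the procyclic group is determined by its value $a \in A$ at $\Frob_q$ modulo the coboundaries $a \mapsto a + (\Frob_q - 1)b$, and the convention $(g \cdot \phi)(a) = \phi(g^{-1} a)$ for the Galois action on $\Hom(A, \Q/\Z)$.

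Next I would compute the cup product at the cochain level: if $a \in A$ represents a class in $\HH^1(\F_q, A)$ and $f \in \Hom(A, \Q/\Z)^{\Frob_q} = \HH^0(\F_q, \Hom(A, \Q/\Z))$, then the cup product followed by the evaluation map $A \otimes \Hom(A, \Q/\Z) \to \Q/\Z$ is represented by the $1$-cocycle $\Frob_q \mapsto f(a)$ (the translation term in the cup product formula disappears because $f$ is $\Frob_q$-invariant), so under the chosen normalisation the pairing is simply $(a, f) \mapsto f(a) \in \Q/\Z$. Well-definedness in $a$ is the computation $f((\Frob_q - 1)b) = f(\Frob_q b) - f(b) = 0$, which holds precisely because $f$ is $\Frob_q$-invariant.

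The key observation is then that $\Frob_q$-invariance of $f \in \Hom(A, \Q/\Z)$ is exactly the condition that $f$ vanish on $(\Frob_q - 1)A$, giving a canonical identification
\[
\HH^0(\F_q, \Hom(A, \Q/\Z)) = \Hom(A, \Q/\Z)^{\Frob_q} = \Hom\big(A/(\Frob_q - 1)A,\, \Q/\Z\big) = \Hom\big(\HH^1(\F_q, A),\, \Q/\Z\big),
\]
under which the cup product pairing becomes the tautological evaluation pairing $B \times \Hom(B, \Q/\Z) \to \Q/\Z$ for the finite abelian group $B := \HH^1(\F_q, A)$. That this evaluation pairing is perfect is Pontryagin duality for finite abelian groups, which finishes the proof.

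I do not expect a serious obstacle; the only delicate points are bookkeeping ones — keeping the sign convention for the Galois action on $\Hom(A, \Q/\Z)$ consistent so that the cup product cocycle comes out as $f(a)$ and matches the stated normalisation of $\HH^1(\F_q, \Q/\Z) \cong \Q/\Z$, and checking that the displayed chain of identifications is compatible with the pairing. Alternatively one could simply invoke this as the special case over a finite field of local Tate duality, but the cocycle argument keeps the section self-contained, in the spirit of the rest of \S\ref{sec:Brauer_group}.
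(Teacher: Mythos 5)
Your proof is correct and is essentially the paper's argument: the paper likewise identifies $\HH^1(\F_q,A)$ with the coinvariants $A_{\Frob_q}$ via evaluation at $\Frob_q$ and then invokes the standard perfect pairing between coinvariants and the $\Frob_q$-invariants of the Pontryagin dual. You merely spell out the cocycle-level cup product computation that the paper leaves implicit.
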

\begin{proof}
    The group $\Gal(\bar{\F}_q/\F_q)$ is profinite and freely generated by $\Frob_q$. This implies that the map $\HH^1(\F_q, A) \to A_{\Frob_q}: \varphi \to \varphi(\Frob_q)$ is an isomorphism, where $A_{\Frob_q}$ denotes the coinvariants of $\Frob_q$. The pairing is then the standard perfect pairing between the coinvariants and the invariants of the dual.
\end{proof}

Let $K_{G} \in \mathcal{D}(\F_q)$ be the complex $\HH_{2, \mathrm{orb}}^{\mathcal{C}}(G, \Z) \to \Hom(\mathcal{C}, \Z)$, where $\HH_{2, \mathrm{orb}}^{\mathcal{C}}(G, \Z)$ has cohomological degree $0$. 

\begin{definition}
    We define $\Hom_{\mathcal{D}(\F_q)}(K_{G}, \Q/\Z)$ as follows.
    
    We will consider pairs $(\alpha, \psi)$ where $\alpha: \HH_{2, \mathrm{orb}}^{\mathcal{C}}(G, \Z) \to \Q/\Z$ and $\psi:\Gal(\bar{\F}_q/ \F_q) \to \Hom(\Hom(\mathcal{C}, \Z), \Q/\Z)$ is a cocycle such that for all $\sigma \in \Gal(\bar{\F}_q/ \F_q)$ we have
    \begin{equation}\label{eq:cocycle_condition_Hom_derived}
       \alpha - \sigma(\alpha) = \psi(\sigma)|_{\HH_{2, \mathrm{orb}}^{\mathcal{C}}(G, \Z)}. 
    \end{equation}

    For all $f \in \Hom(\Hom(\mathcal{C}, \Z),  \Q/\Z)$ we have a pair of the form $(f|_{\HH_{2, \mathrm{orb}}^{\mathcal{C}}(G, \Z)}, d f)$, where $df$ is the coboundary defined by $f$. Then $\Hom_{\mathcal{D}(\F_q)}(K_{G}, \Q/\Z)$ is the group of pairs $(\alpha, \psi)$ as above modulo pairs of the form $(f|_{\HH_{2, \mathrm{orb}}^{\mathcal{C}}(G, \Z)}, d f)$.
\end{definition}
\begin{remark}
    It will often be convenient to think of $\alpha$ as a map $\HH_{2, \mathrm{orb}}^{\mathcal{C}}(G, \hat{\Z}(1)) \to \Q/\Z(1)$ and of $\psi$ as a cocycle $\Gal(\bar{\F}_q/ \F_q) \to \Hom(\Hom(\mathcal{C}, \hat{\Z}(1)), \Q/\Z(1)).$ This is possible because the isomorphism $\Hom(\Hom(\mathcal{C}, \hat{\Z}(1)), \Q/\Z(1)) \cong \Hom(\Hom(\mathcal{C}, \Z), \Q/\Z)$ is natural.
\end{remark}
This group provides obstructions to $U(G, C)_{\underline{n}, \gamma}$ having a $\F_q$-point as follows.

Let $\tilde{\gamma} \in U(G, C)$ be any lift of $\gamma \in G(-1)$. This defines a cocycle $\eta: \Gal(\bar{\F}_q/\F_q) \to \HH_{2, \mathrm{orb}}^{\mathcal{C}}(G, \Z)$ via $\eta(\sigma) := \tilde{\gamma} - \sigma(\tilde{\gamma})$. Note that there exists $u \in U(G, C)_{\underline{n}, \gamma}(\F_q)$ if and only if $v :=  u + \tilde{\gamma} \in \HH_{2, \mathrm{orb}}^{\mathcal{C}}(G, \Z)$ is such that $dv = \eta$ and $\mathfrak{R}(v) = \underline{n} + \mathfrak{R}(\tilde{\gamma})$. We call the pair $(\eta, \underline{n} + \mathfrak{R}(\tilde{\gamma}))$ the \emph{obstruction class}. 

\begin{remark}
    In terms of derived categories the obstruction class lives in the hypercohomology group $\HH^1(\F_q, K_G)$ and the below lemma is implied by the fact that the Yoneda pairing $\HH^1(\F_q, K_G) \times \Hom_{\mathcal{D}(\F_q)}(K_G, \Q/\Z) \to \HH^1(\F_q, \Q/\Z) \cong \Q/\Z$ is perfect.
\end{remark}
\begin{lemma}\label{lem:cohomological duality}
    There exists a $v$ as above if and only if $\psi(\Frob_q)(\underline{n} + \mathfrak{R}(\tilde{\gamma})) - \alpha_*(\eta)(\Frob_q^{-1}) = 0$ for all $(\alpha, \psi) \in \Hom_{\mathcal{D}(\F_q)}(K_{G}, \Q/\Z)$.
\end{lemma}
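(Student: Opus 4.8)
The statement is essentially a cohomological duality statement for the two-term complex $K_G$ over $\F_q$, packaged in cocycle-theoretic language. The plan is to make explicit the duality between the ``obstruction class'' $(\eta, \underline{n} + \mathfrak{R}(\tilde\gamma))$ living in degree-$1$ hypercohomology of $K_G$ and the group $\Hom_{\mathcal{D}(\F_q)}(K_G, \Q/\Z)$. First I would recall that the vanishing we want is precisely the statement that the image of the obstruction class under the canonical pairing with every element of $\Hom_{\mathcal{D}(\F_q)}(K_G, \Q/\Z)$ is zero, and that this pairing is perfect; so one direction (existence of $v$ $\Rightarrow$ vanishing of the pairing) is formal, while the converse requires the perfectness.

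\textbf{Key steps.}
Step 1: Reformulate ``there exists $v$'' concretely. By the discussion preceding the lemma, $v \in \HH_{2,\mathrm{orb}}^{\mathcal{C}}(G,\Z)$ must satisfy $dv = \eta$ (i.e.\ $v - \sigma(v) = \eta(\sigma)$ for all $\sigma$, equivalently $v$ is $\Frob_q$-fixed modulo the relation $v - \Frob_q(v) = \eta(\Frob_q)$) and $\mathfrak{R}(v) = \underline{n} + \mathfrak{R}(\tilde\gamma)$. Since $\Gal(\bar\F_q/\F_q)$ is topologically generated by $\Frob_q$, the cocycle condition is entirely captured by its value at $\Frob_q$, so this becomes a finite linear-algebra problem over $\Q/\Z$: find $v$ in the fiber of $\mathfrak{R}$ over $\underline{n} + \mathfrak{R}(\tilde\gamma)$ with $v - \Frob_q(v) = \eta(\Frob_q)$.

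Step 2: Set up the pairing. Given $(\alpha, \psi) \in \Hom_{\mathcal{D}(\F_q)}(K_G, \Q/\Z)$, define its pairing against the obstruction class by $\langle (\alpha,\psi), (\eta, \underline{m})\rangle := \psi(\Frob_q)(\underline{m}) - \alpha_*(\eta)(\Frob_q^{-1})$, where $\underline{m} = \underline{n} + \mathfrak{R}(\tilde\gamma)$; check it is well-defined, i.e.\ that replacing $(\alpha,\psi)$ by $(\alpha + f|, \psi + df)$ for $f \in \Hom(\Hom(\mathcal{C},\Z),\Q/\Z)$ does not change the value (this is the cocycle/coboundary bookkeeping, using $df(\sigma) = f - \sigma(f)$ and that $\eta$ is a coboundary of $\tilde\gamma$), and that replacing $\tilde\gamma$ by another lift, or $v$-candidates differing by $\HH_2(G,\mathcal{C})$, changes $(\eta,\underline{m})$ only by a hypercoboundary.

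Step 3: Prove the equivalence. If $v$ exists, then $\psi(\Frob_q)(\underline{m}) = \psi(\Frob_q)(\mathfrak{R}(v))$; using the cocycle condition \eqref{eq:cocycle_condition_Hom_derived} that $\alpha - \Frob_q(\alpha) = \psi(\Frob_q)|_{\HH_{2,\mathrm{orb}}}$ and that $v - \Frob_q(v) = \eta(\Frob_q)$, one computes directly that $\psi(\Frob_q)(\mathfrak{R}(v)) - \alpha_*(\eta)(\Frob_q^{-1})$ telescopes to $0$ (all terms evaluated at $\Frob_q$ via the identification $\HH^1(\F_q,-) \cong (-)_{\Frob_q}$ of \Cref{lem:cohomological_duality_finite_fields}). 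Conversely, if the pairing vanishes for all $(\alpha,\psi)$, apply perfectness of the duality pairing of \Cref{lem:cohomological_duality_finite_fields} levelwise: the complex $K_G$ is a two-term complex of finitely generated (after quotienting by divisible parts, finite) Galois modules, so $\HH^1(\F_q, K_G)$ is dual to $\HH^0(\F_q, R\Hom(K_G, \Q/\Z)) = \Hom_{\mathcal{D}(\F_q)}(K_G,\Q/\Z)$; vanishing against the whole dual forces the obstruction class to vanish in $\HH^1(\F_q, K_G)$, which by the long exact sequence of the stupid truncation of $K_G$ and the explicit description of $\HH^1(\F_q,-)$ as $\Frob_q$-coinvariants means exactly that $v$ exists.

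\textbf{Main obstacle.} The delicate point is Step 3's converse direction: making the duality pairing $\HH^1(\F_q, K_G) \times \Hom_{\mathcal{D}(\F_q)}(K_G,\Q/\Z) \to \Q/\Z$ literally perfect at the level of explicit cocycles, rather than citing it as abstract Pontryagin/Artin--Verdier-type duality over a finite field. One must be careful that $\HH_{2,\mathrm{orb}}^{\mathcal{C}}(G,\Z)$ and $\Hom(\mathcal{C},\Z)$ are not themselves finite, only finitely generated, so the naive pairing $A \times \Hom(A,\Q/\Z) \to \Q/\Z$ is not perfect; however, the obstruction class $\eta(\Frob_q)$ and the relevant relations land in the torsion, and $K_G$ is quasi-isomorphic to a complex of finite modules (since $\HH_2(G,\mathcal{C})$ is finite by \Cref{lem: U(G, C) ind-finite \'etale} and $\mathfrak{R}$ has image a finite-index subgroup), so \Cref{lem:cohomological_duality_finite_fields} applies after this reduction. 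I would therefore first replace $K_G$ by a quasi-isomorphic bounded complex of finite $\Gal$-modules, verify the obstruction class and the pairing are compatible with this replacement, and only then invoke the levelwise perfect pairing, assembling the two-term answer via the (short) spectral sequence / long exact sequence for the complex.
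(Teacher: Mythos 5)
Your proposal is correct and rests on the same duality over $\F_q$, but the converse direction is executed by a genuinely different mechanism than the paper's. Where you invoke perfectness of the hypercohomology/Yoneda pairing $\HH^1(\F_q, K_G) \times \Hom_{\mathcal{D}(\F_q)}(K_G,\Q/\Z) \to \Q/\Z$ — a fact the paper states only in the remark preceding the lemma and deliberately does not use — the paper runs the d\'evissage by hand in two steps: it first pairs against the sub-family $(\alpha, 0)$ with $\alpha$ Frobenius-invariant to conclude, via \Cref{lem:cohomological_duality_finite_fields}, that $\eta$ is a coboundary, and normalizes $\eta = 0$; it then observes that once $\eta=0$ the admissible values $\psi(\Frob_q)$ are exactly the functionals on $\Hom(\mathcal{C},\Z)$ vanishing on $\mathfrak{R}\bigl(\HH_{2,\mathrm{orb}}^{\mathcal{C}}(G,\Z)^{\Frob_q}\bigr)$, so their joint vanishing on $\underline{n}+\mathfrak{R}(\tilde{\gamma})$ is equivalent to that element lying in this image, i.e.\ to the existence of $v$. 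Your route buys conceptual transparency (the lemma \emph{is} a perfect-pairing statement, and the vanishing of the class in $\HH^1(\F_q,K_G)$ is tautologically the existence of $v$ — you do not even need the truncation long exact sequence for that identification), at the cost of having to justify duality for a two-term complex whose terms are only finitely generated; your proposed fix — pass to a quasi-isomorphic complex with finite terms, using that $H^0(K_G)=\HH_2(G,\mathcal{C})$ and $H^1(K_G)=\mathrm{coker}(\mathfrak{R})$ are finite, and check that the obstruction class and the pairing transport along the quasi-isomorphism — is sound, but it is precisely the bookkeeping the paper's two-step argument is designed to avoid. The forward direction and the well-definedness checks in your Steps 2 and 3 coincide with the paper's computations.
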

The expression is well defined as for $(\alpha, \psi) = (f|_{\HH_{2, \mathrm{orb}}^{\mathcal{C}}(G, \Z)}, d f)$ it is 
\[
f(\underline{n} + \mathfrak{R}(\tilde{\gamma})) - f(\Frob_q^{-1}(\underline{n}) + \Frob_q^{-1}(\mathfrak{R}(\tilde{\gamma}))) - f(\mathfrak{R}(\tilde{\gamma} -\Frob_q^{-1}(\tilde{\gamma}))).
\]
This expression is $0$ as $f$ is a homomorphism and $\Frob_q(\underline{n}) = \underline{n}$.
\begin{proof}
    If $v$ exists then the expression is by \eqref{eq:cocycle_condition_Hom_derived} equal to
    \[
    \psi(\Frob_q)(\mathfrak{R} v) - \alpha(dv)(\Frob_q^{-1}) =   (\alpha  - \Frob_q^{-1}(\alpha))(v) - \alpha(v) + \alpha(\Frob_q^{-1}(v)) = 0.
    \]
    First consider pairs of the form $(\alpha , 0)$ with $\alpha \in \HH_{2, \mathrm{orb}}^{\mathcal{C}}(G, \Z)^{\Frob_q}$. The expression is then $\alpha_*(\eta)(\Frob_q^{-1})  = 0$. \Cref{lem:cohomological_duality_finite_fields} implies that this equality for all $\alpha$ implies that $\eta = 0 \in \HH^1(\F_q, \HH_{2, \mathrm{orb}}^{\mathcal{C}}(G, \Z))$. By adding a coboundary we may thus assume that $\eta = 0$ as a cocycle.

    Now consider general pairs $(\alpha, \psi)$. Note that since $\psi$ is a cocycle  and since $\Gal(\overline{\F}_q/\F_q)$ is topologically generated by $\Frob_q$ it suffices for condition \eqref{eq:cocycle_condition_Hom_derived} to hold for $\psi(\Frob_q)$. The possible $\psi(\Frob_q)$ are by \eqref{eq:cocycle_condition_Hom_derived} exactly those $\psi(\Frob_q)$ which map to $0$ in the Galois coinvariants 
    \[
    \Hom(\HH_{2, \mathrm{orb}}^{\mathcal{C}}(G, \Z), \Q/\Z)_{\Frob_q} \cong \Hom(\HH_{2, \mathrm{orb}}^{\mathcal{C}}(G, \Z)^{\Frob_q}, \Q/\Z).
    \]
    Or in other words, the functions which are trivial on the image of $\HH_{2,\mathrm{orb}}^{\mathcal{C}}(G, \Z)^{\Frob_q}$. 
    It follows that $\psi(\Frob_q)(\underline{n} + \mathfrak{R}(\tilde{\gamma})) = 0$ for all such $\psi$ if and only if there exists $v \in \HH_{2,\mathrm{orb}}^{\mathcal{C}}(G, \Z)^{\Frob_q}$ with $\mathfrak{R}(v) = \underline{n}+\mathfrak{R}(\tilde{\gamma})$, as desired.
\end{proof}

The next goal is to relate this condition to the Brauer group.
\begin{construction}
    Given $(\alpha, \psi) \in \Hom_{\mathcal{D}(\Z)}(K_{G_{\varphi}}, \Q/\Z)$ we will construct a central extension of $G$ by $\Q/\Z(1)$. 

    Consider the $\Gal(\bar{\F}_{q}/\F_q)$-group $E_{\psi}$ whose underlying group is $\hat{U}(G, C)(1) \times \Q/\Z(1)$ and such that $\sigma \in \Gal(\bar{\F}_q/\F_q)$ acts as 
    \[
        (u, \zeta) \to (\sigma(u), \sigma(\zeta) \cdot \varphi(\sigma)(\mathfrak{R}(\sigma(u))).
    \]
    
    Now define $\tilde{G}_{(\alpha, \psi)}$ as the quotient of $E_{\psi}$ modulo the subgroup of elements $(v, \alpha(v))$ for $v \in \HH_{2, \mathrm{orb}}^{\mathcal{C}}(G, \hat{\Z}(1))$. This subgroup is Galois invariant as by \eqref{eq:cocycle_condition_Hom_derived}
    \[
    \sigma (v, \alpha(v)) = (\sigma(v), \sigma(\alpha)(\sigma(v)) \cdot \varphi(\sigma)(\mathfrak{R}(\sigma(v))) = (\sigma(v), \alpha(\sigma(v)).
    \]
    
    The sequence $\HH_{2, \mathrm{orb}}^{\mathcal{C}}(G, \hat{\Z}(1)) \times \Q/\Z(1) \to E_{\psi} \to G$ is a central extension because $\HH_{2, \mathrm{orb}}^{\mathcal{C}}(G, \hat{\Z}(1)) \to \hat{U}(G, C)(1) \to G$ is a central extension. So $\Q/\Z(1) \to \tilde{G}_{(\alpha, \psi)} \to G$ is a central extension. Let $[\tilde{G}_{(\alpha, \psi)}] \in \Br BG_{\F_q}$ be its class from \Cref{con:central_extensions_Brauer_group}.

    It is straightforward but tedious to check that the map $(\alpha, \psi) \to [\tilde{G}_{(\alpha, \psi)}]$ is additive using that the addition of extensions is given by the Baer sum. Moreover, if $(\alpha, \psi) = (f|_{\HH_{2, \mathrm{orb}}^{\mathcal{C}}(G, \Z)}, d f)$ then the map $(u, \zeta) \to (u, \zeta \cdot f(\mathfrak{R}(u))$
    clearly defines a Galois equivariant map $\hat{U}(G, C)(1) \times \Q/\Z(1) \to E_{df}$. We have thus constructed a well-defined map $\Hom_{\mathcal{D}(\Z)}(K_{G_{\varphi}}, \Q/\Z) \to \Br BG_{\F_q}$.
\end{construction}
\begin{lemma}\label{lem:cocycles_geometric_unramified}
    The map $\Hom_{\mathcal{D}(\Z)}(K_{G_{\varphi}}, \Q/\Z) \to \Br BG_{\F_q}$ is injective and its image is given by $ \Br_{\bar{\mathcal{C}}} BG_{\F_q}$.
\end{lemma}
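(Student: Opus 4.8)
\emph{Overview.} The plan is to establish injectivity and the identification of the image separately, in both cases working directly with the cocycle descriptions and using three inputs: that $\Br BG_{\F_q}$ classifies $\Q/\Z(1)$-central extensions of $G$ over $\F_q$ up to isomorphism (\Cref{con:central_extensions_Brauer_group}, using $\Br \F_q = 0$), the universal property of $\hat U(G,C)(1)$, and the identification of its abelianization with $\mathfrak{R}$ (\Cref{lem:M_is_abelianization}).

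\emph{Injectivity.} Suppose $[\tilde G_{(\alpha,\psi)}] = 0$. By \Cref{con:central_extensions_Brauer_group} the extension $\Q/\Z(1) \to \tilde G_{(\alpha,\psi)} \to G$ is split over $\F_q$, so there is a $\Gal(\bar\F_q/\F_q)$-equivariant homomorphism $t\colon \tilde G_{(\alpha,\psi)} \to \Q/\Z(1)$ restricting to the identity on the central $\Q/\Z(1)$. Since the quotient map $E_\psi \twoheadrightarrow \tilde G_{(\alpha,\psi)}$ is equivariant (the subgroup quotiented out is Galois-stable by construction), pulling $t$ back yields an equivariant homomorphism $E_\psi \to \Q/\Z(1)$ of the form $(u,\zeta) \mapsto g(u) + \zeta$ with $g\colon \hat U(G,C)(1) \to \Q/\Z(1)$. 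As the target is abelian, \Cref{lem:M_is_abelianization} forces $g = f \circ \mathfrak{R}$ for a unique $f \in \Hom(\Hom(\mathcal{C},\hat\Z(1)),\Q/\Z(1))$. Triviality of $t$ on $\{(v,\alpha(v))\}$ reads $\alpha = -f|_{\HH_{2,\mathrm{orb}}^{\mathcal{C}}(G,\hat\Z(1))}$, and unwinding the equivariance of $t$ against the $\psi$-twisted Galois action on $E_\psi$ (using that $\mathfrak{R}$ is defined over $\F_q$) gives $\psi(\sigma) \circ \mathfrak{R} = (d(-f))(\sigma) \circ \mathfrak{R}$ for all $\sigma$. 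Hence $(\alpha,\psi)$ is the class of $((-f)|, d(-f))$, which is $0$; since the construction already shows such pairs map to $0$, the map is injective.

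\emph{The image.} For the inclusion into $\Br_{\bar{\mathcal{C}}} BG_{\F_q}$: the image of the marking $M(G,C) \subset \hat U(G,C)(1)(-1)$ under $\hat U(G,C)(1)\times\{0\} \to \tilde G_{(\alpha,\psi)}$ is a conjugacy-invariant subset of $\tilde G_{(\alpha,\psi)}(-1)(\bar\F_q)$ mapping bijectively onto $C(\bar\F_q)$, so \Cref{lem:central_extension_C_bar} gives $[\tilde G_{(\alpha,\psi)}] \in \Br_{\bar{\mathcal{C}}} BG_{\F_q}$. For the reverse inclusion, given $\beta \in \Br_{\bar{\mathcal{C}}} BG_{\F_q}$ represent it by a central extension $1 \to \Q/\Z(1) \to E \to G \to 1$ over $\F_q$ and choose, by \Cref{lem:central_extension_C_bar}, a conjugacy-invariant $M_E \subset E(-1)(\bar\F_q)$ with $M_E \xrightarrow{\sim} C(\bar\F_q)$. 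Over $\bar\F_q$ the pair $(E,M_E)$ is a marked central extension, so the universal property of $\hat U(G,C)(1)$ gives a unique marking-preserving $\phi\colon \hat U(G,C)(1)_{\bar\F_q} \to E_{\bar\F_q}$; put $\alpha := -\phi|_{\HH_{2,\mathrm{orb}}^{\mathcal{C}}(G,\hat\Z(1))}$. Because $M(G,C)$ is Galois-stable (permuted according to the Galois action on $C$), the conjugate $\sigma\phi\sigma^{-1}$ is the unique map attached to the marking $\sigma(M_E)$, so $\delta_\sigma(u) := (\sigma\phi\sigma^{-1})(u)\cdot\phi(u)^{-1}$ is a homomorphism into the central $\Q/\Z(1)$; by \Cref{lem:M_is_abelianization} it factors as $\delta_\sigma = \psi(\sigma)\circ\mathfrak{R}$, which defines a cochain $\psi$. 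A direct computation shows $\psi$ is a cocycle and that $\alpha,\psi$ satisfy \eqref{eq:cocycle_condition_Hom_derived} (evaluate $\delta_\sigma$ on $\HH_{2,\mathrm{orb}}^{\mathcal{C}}(G,\hat\Z(1))$), so $(\alpha,\psi) \in \Hom_{\mathcal{D}(\F_q)}(K_G,\Q/\Z)$; finally $(u,\zeta) \mapsto \phi(u)\cdot\zeta$ descends to a Galois-equivariant isomorphism $\tilde G_{(\alpha,\psi)} \xrightarrow{\sim} E$ of central extensions over $\F_q$, so $[\tilde G_{(\alpha,\psi)}] = \beta$.

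\emph{Main obstacle.} The real work is in the reverse inclusion: one must keep the $\hat\Z^\times$- and Galois-actions and the Tate twist $(1)$ consistent while extracting $\psi$ from the failure of $\phi$ to be equivariant, verify the cocycle identity and \eqref{eq:cocycle_condition_Hom_derived} with the correct signs, and confirm that the reconstructed extension is isomorphic to $E$ over $\F_q$ rather than merely over $\bar\F_q$. As a consistency check one may note that both sides fit into compatible exact sequences — the left side from the evident filtration of the two-term complex $K_G$, and the right side from \Cref{lem:exact_sequence_brauer_group} — with $\Br_1 BG_{\F_q}$ matching the part coming from $\Hom(\mathcal{C},\Z)[-1]$ and $(\Br_{\mathcal{C}} BG_{\bar\F_q})^{\Frob_q}$ matching the part coming from $\HH_{2,\mathrm{orb}}^{\mathcal{C}}(G,\Z)$.
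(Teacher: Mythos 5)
Your proof is correct and follows essentially the same route as the paper: injectivity via a splitting, the retract factoring through $\mathfrak{R}$ by \Cref{lem:M_is_abelianization}, the image of the marking for the inclusion into $\Br_{\bar{\mathcal{C}}} BG_{\F_q}$, and surjectivity by extracting $(\alpha,\psi)$ from a marked extension. Your $\delta_\sigma$, measuring the equivariance defect of the universal map $\phi$, is just a repackaging of the paper's generator-by-generator quantities $h_{\sigma,\xi}$, so the arguments coincide.
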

\begin{proof}
    We start with injectivity. If $[\tilde{G}_{(\alpha, \psi)}] = 0$ then the central extension splits as \Cref{con:central_extensions_Brauer_group} is an isomorphism. Such a splitting induces a retract $E_{\psi} \to \Q/\Z(1)$. This retract factors as $E_{\psi} \xrightarrow{\mathfrak{R}} \Hom(\mathcal{C}, \hat{\Z}(1)) \xrightarrow{f} \Q/\Z(1)$ by \Cref{lem:M_is_abelianization}.

    By the definition of a retract it induces an Galois equivariant isomorphism
    \[
    E_{\psi} \cong \hat{U}(G_{\varphi}, C)(1) \times \Q/\Z(1): (u, \zeta) \to (u, \zeta \cdot f(\mathfrak{R}(u))
    \]
    which preserves the kernels of the maps to  $\tilde{G}_{(\alpha, \psi)}$ and $G \times \Q/\Z(1)$, i.e.~ it sends $\{(v, \alpha(v) : v \in \HH_{2, \mathrm{orb}}^{\mathcal{C}}(G, \hat{\Z}(1))\}$ to $\HH_{2, \mathrm{orb}}^{\mathcal{C}}(G, \hat{\Z}(1)) \times 0$.
    
    That $\psi$ preserves the Galois action means that for all $\sigma \in \Gal(\bar{\F}_q/\F_q)$ we have
    \[
    (\sigma(u), \sigma(\zeta) \cdot \psi(\sigma)(\mathfrak{R}(\sigma(u))) \cdot f(\sigma(u))) = (\sigma(u), \sigma(\zeta) \cdot \sigma(f)(\mathfrak{R}(\sigma(u)))).
    \]
    Which means that $\psi = - df$. The condition that the map sends the kernels to each other is that exactly that $f|_{\HH_{2, \mathrm{orb}}^{\mathcal{C}}(G, \hat{\Z}(1))} = \alpha$.  We have thus shown that $(\alpha, \psi) = - (f|_{\HH_{2, \mathrm{orb}}^{\mathcal{C}}(G, \Z)}, df)$ is trivial in $\Hom_{\mathcal{D}(\Z)}(K_{G_{\varphi}}, \Q/\Z)$.
    
    We now show $[\tilde{G}_{(\alpha, \psi)}] \in \Br_{\bar{\mathcal{C}}} BG_{\F_q}$. We will apply \Cref{lem:central_extension_C_bar}. The image of $\bigcup_{\gamma \in C}([\gamma], 0) \subset \hat{U}(G, C)(\bar{\F}_q) \times \Q/\Z$ in $\tilde{G}_{(\alpha, \psi)}(-1)(\bar{\F}_q)$ is a set satisfying the conditions of $M$ in \Cref{lem:central_extension_C_bar} by construction.

    It remains to show surjectivity. As \Cref{con:central_extensions_Brauer_group} induces an isomorphism we have to show that every $\Q/\Z(1)$-central extension $E \to G$ with a subset $M \subset E(-1)$ as in \Cref{lem:central_extension_C_bar} is isomorphic to some $\tilde{G}_{(\alpha, \psi)}$. For each $\xi \in C(\bar{\F}_q)$ let $\widehat{\gamma} \in M$ be the corresponding element.

    For all $\sigma \in \Gal(\bar{\F}_q/\F_q)$ the image of $\sigma(\hat{\xi})$ in $G(-1)(\bar{\F}_q)$ is $\sigma(\xi)$ so there exists $h_{\sigma, \xi} \in \Q/\Z$ such that $\sigma(\widehat{\xi}) = h_{\sigma, \xi} \widehat{\sigma(\xi)}$. The assumption that $M$ is invariant under conjugation by elements of $G$ implies that the element $h_{\sigma, \xi}$ only depends on the conjugacy class $c_{\xi} \in \mathcal{C}(\bar{\F}_q)$ containing $\xi$ and we write $h_{\sigma, c_{\xi}} := h_{\sigma, \xi}$.

    Note that for $\sigma, \tau \in \Gal(\bar{\F}_q/\F_q)$ we have
    \[
    \widehat{\sigma(\tau(\xi))}h_{\sigma \circ \tau, c_{\xi}} =\sigma (\tau(\widehat{\xi})) = \sigma(\widehat{\tau(\xi)} h_{\tau, c_{\xi}} ) =  \widehat{\sigma(\tau(\xi))}h_{\sigma, \tau(c_{\xi})} \sigma(h_{\tau, \tau(c_{\xi})} ).
    \]
    It follows that the formula $\sigma \to (c \to h_{\sigma, \sigma^{-1}(c)})$ defines a cocycle $\psi: \Gal(\bar{\F}_q/\F_q) \to \Hom(\mathcal{C}, \Q/\Z)$.

    Consider the map $E_{\psi} \to \tilde{G}$ which sends $\Q/\Z(1) \to \Q/\Z(1)$ and $[\xi]^{\zeta} \in \hat{U}(G, C)(1)$ to $\hat{\xi}(\zeta)$. This map sends the relations defining $\hat{U}(G, C)(1)$ to $0$ by the definition of a marking, and is Galois invariant by construction. 
    
    The map is compatible with the maps to $G$ by construction. The kernel is thus of the form $\{(v, \alpha(v)): v \in \HH_{2, \mathrm{orb}}^{\mathcal{C}}(G, \hat{\Z}(1)) \}$ for some function $\alpha: \HH_{2, \mathrm{orb}}^{\mathcal{C}}(G, \hat{\Z}(1)) \to \Q/\Z(1)$. For this kernel to be a group $\alpha$ needs to be a morphism. The condition that the kernel is Galois invariant is then exactly \eqref{eq:cocycle_condition_Hom_derived}. We then have $E \cong \tilde{G}_{(\alpha, \psi)}$ by construction.
\end{proof} 
To prove \Cref{prop:Brauer_group_controls_components} it suffices by \Cref{lem:cohomological duality} to show the following.
\begin{lemma}\label{lem:cocycle_residue_formula}
    The expression $\psi(\Frob_q)(\underline{n} + \mathfrak{R}(\tilde{\gamma})) - \alpha_*(\eta)(\Frob_q^{-1})) \in \Q/\Z$ is equal to
    \[
    \partial_\gamma(\beta)(e_{BG})(\Frob_q) + \sum_{c \in \pi_0(\mathcal{C}_G^*)} n_c \mathrm{cor}_{\F_q(c)/\F_q}(\partial_c(\beta))(\Frob_q)
    \]
    for $\beta = [\tilde{G}_{(\alpha, \psi)}]$.
\end{lemma}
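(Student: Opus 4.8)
The strategy is to evaluate both residue terms on the right-hand side of \Cref{lem:cocycle_residue_formula} directly in terms of the cocycle data $(\alpha,\psi)$, using the explicit model of $\tilde{G}_{(\alpha,\psi)}$ from the preceding construction together with the residue formulas of \Cref{lem:computation_central_extension_residue}, and then to match the outcome with the left-hand side by the cocycle identities for $\psi$ and $\eta$ and the cocycle condition \eqref{eq:cocycle_condition_Hom_derived}. All computations are carried out after applying the $(-1)$-twist, so that $\psi(\sigma) \in \Hom(\Hom(\mathcal{C},\Z),\Q/\Z)$, $\mathfrak{R}$ is the abelianization homomorphism into $\Hom(\mathcal{C},\Z)$, and the central term of $\tilde{G}_{(\alpha,\psi)}$ is $\Q/\Z$ with trivial Galois action; I fix the lift $\tilde{\gamma} \in U(G,C)$ of $\gamma$ and recall $\eta(\sigma) = \tilde{\gamma} - \sigma(\tilde{\gamma}) \in \HH_{2,\mathrm{orb}}^{\mathcal{C}}(G,\Z)$. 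For the term $\partial_\gamma(\beta)(e_{BG})$, \Cref{lem:computation_central_extension_residue}(1) says that for any lift $\widehat{\gamma} \in \tilde{G}_{(\alpha,\psi)}(-1)(\bar{\F}_q)$ of $\gamma$ one has $\partial_\gamma(\beta)(e_{BG})(\sigma) = \sigma(\widehat{\gamma}) - \widehat{\gamma}$. Taking $\widehat{\gamma}$ to be the image of $(\tilde{\gamma},0) \in \hat{U}(G,C)\times\Q/\Z = E_\psi(-1)$, the Galois action on $E_\psi$ gives $\sigma(\tilde{\gamma},0) = (\sigma(\tilde{\gamma}),\psi(\sigma)(\mathfrak{R}(\sigma(\tilde{\gamma}))))$; since $\sigma(\tilde{\gamma}) = \tilde{\gamma}\,\eta(\sigma)^{-1}$ with $\eta(\sigma)$ central, the defining relation $(v,\alpha(v))\sim 0$ of $\tilde{G}_{(\alpha,\psi)}$ identifies the image of this element with $\widehat{\gamma}$ shifted in the central $\Q/\Z$-direction by $\alpha(\eta(\sigma)) + \psi(\sigma)(\mathfrak{R}(\sigma(\tilde{\gamma})))$. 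Hence $\partial_\gamma(\beta)(e_{BG})(\Frob_q) = \alpha(\eta(\Frob_q)) + \psi(\Frob_q)(\mathfrak{R}(\Frob_q(\tilde{\gamma})))$.

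For the corestricted residues, note first that $\underline{n} \in \Hom(\mathcal{C},\N)$ forces $n_c = 0$ for $c \notin \pi_0(\mathcal{C})$, so only $c \in \pi_0(\mathcal{C})$ contributes, and for those $\partial_c(\beta)$ is algebraic because $\beta = [\tilde{G}_{(\alpha,\psi)}] \in \Br_{\bar{\mathcal{C}}}BG_{\F_q}$ by \Cref{lem:cocycles_geometric_unramified}. I apply \Cref{lem:computation_central_extension_residue}(2) to the marking $M$ of $\tilde{G}_{(\alpha,\psi)}$ consisting of the images of the $([\xi],0)$, $\xi \in C(\bar{\F}_q)$ (as in the proof of \Cref{lem:cocycles_geometric_unramified}): the Galois action on $E_\psi$ and $\mathfrak{R}([\xi]) = e_{c_\xi}$ give $\sigma(\,\overline{([\xi],0)}\,) = \overline{([\sigma\xi],0)} + \psi(\sigma)(e_{\sigma(c_\xi)})$, so for $\sigma \in \Gal(\bar{\F}_q/\F_q(c))$ and $\xi$ above $c$ this is $\widehat{\sigma\xi} + \psi(\sigma)(e_c)$, whence $\partial_c(\beta) = \bigl(\sigma \mapsto \psi(\sigma)(e_c)\bigr) \in \HH^1(\F_q(c),\Q/\Z)$. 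Under the identifications $\HH^1(\F_q(c),\Q/\Z)\cong\Q/\Z$ via $\Frob_q^{\deg c}$ and $\HH^1(\F_q,\Q/\Z)\cong\Q/\Z$ via $\Frob_q$, corestriction is the identity map, so $\mathrm{cor}_{\F_q(c)/\F_q}(\partial_c(\beta))(\Frob_q) = \psi(\Frob_q^{\deg c})(e_c)$. Expanding via the cocycle relation $\psi(\Frob_q^{\deg c}) = \sum_{i=0}^{\deg c - 1}\Frob_q^i\psi(\Frob_q)$ and using $\Frob_q^{-i}e_c = e_{\Frob_q^{-i}c}$ together with the fact that $\{\Frob_q^{-i}c\}_i$ is the full Frobenius orbit of $c$, this equals $\psi(\Frob_q)(\mathbf{1}_c)$ with $\mathbf{1}_c \in \Hom(\mathcal{C},\Z)$ the indicator of the component $c$. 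Summing against $n_c$ and writing $\underline{n} = \sum_c n_c\mathbf{1}_c$ gives $\sum_{c}n_c\,\mathrm{cor}_{\F_q(c)/\F_q}(\partial_c(\beta))(\Frob_q) = \psi(\Frob_q)(\underline{n})$.

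Adding the two contributions, the right-hand side of \Cref{lem:cocycle_residue_formula} equals $\psi(\Frob_q)(\underline{n} + \mathfrak{R}(\Frob_q(\tilde{\gamma}))) + \alpha(\eta(\Frob_q))$, while the left-hand side is $\psi(\Frob_q)(\underline{n} + \mathfrak{R}(\tilde{\gamma})) - \alpha(\eta(\Frob_q^{-1}))$. Since $\mathfrak{R}$ is a homomorphism and $\Frob_q(\tilde{\gamma}) = \tilde{\gamma}\,\eta(\Frob_q)^{-1}$, one has $\mathfrak{R}(\Frob_q(\tilde{\gamma})) - \mathfrak{R}(\tilde{\gamma}) = -\mathfrak{R}(\eta(\Frob_q))$, so the required identity reduces to $\psi(\Frob_q)(\mathfrak{R}(\eta(\Frob_q))) = \alpha(\eta(\Frob_q)) + \alpha(\eta(\Frob_q^{-1}))$. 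Now \eqref{eq:cocycle_condition_Hom_derived} applied to $v = \eta(\Frob_q)$ and $\sigma = \Frob_q$ gives $\psi(\Frob_q)(\mathfrak{R}(\eta(\Frob_q))) = \alpha(\eta(\Frob_q)) - \alpha(\Frob_q^{-1}\eta(\Frob_q))$, and the cocycle identity for $\eta$ gives $\eta(\Frob_q^{-1}) = -\Frob_q^{-1}\eta(\Frob_q)$; by additivity of $\alpha$ the two sides agree, completing the proof.

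The main obstacle I expect is not conceptual but bookkeeping: pinning down once and for all the Tate twists ($\hat{\Z}(1)$ versus $\hat{\Z}$, $\Q/\Z(1)$ versus $\Q/\Z$) and the corresponding cyclotomic characters, the left/right and inverse conventions in the Galois action on $E_\psi$ and in the residue formulas of \Cref{lem:computation_central_extension_residue}, and the normalisation of $\mathrm{cor}_{\F_q(c)/\F_q}$ as the identity on $\Q/\Z$ in Frobenius coordinates with the right component degrees. None of the individual steps is deep, but a sign or twist error in any one of them propagates through the whole calculation, so I would fix all conventions at the outset.
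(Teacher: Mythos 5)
Your proof is correct and takes essentially the same approach as the paper: both compute the two residues directly from the explicit cocycle model of $\tilde{G}_{(\alpha,\psi)}$ via \Cref{lem:computation_central_extension_residue}, and both reduce the corestriction terms to $\psi(\Frob_q)(\underline{n})$ by summing over the Frobenius orbit of each conjugacy class. The only (cosmetic) difference is that the paper takes $\Frob_q^{-1}(\tilde{\gamma})$ rather than $\tilde{\gamma}$ as the lift of $\gamma$, which makes $\partial_\gamma(\beta)(e_{BG})(\Frob_q) = \psi(\Frob_q)(\mathfrak{R}(\tilde{\gamma})) - \alpha_*(\eta)(\Frob_q^{-1})$ come out immediately and renders your final reconciliation step (which is correct) unnecessary.
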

\begin{proof}
    We first do the case $\underline{n} = 0$. Let us first describe $\partial_{\gamma} := \partial_\gamma([\tilde{G}_{\alpha, \psi}])(\Frob_q)$. Note that $\gamma \in G(-1)(\F_q)$.

    Note that because $\gamma \in G(-1)(\F_q)$ the element $\Frob_q^{-1} (\tilde{\gamma}) \in U(G, C)(\bar{\F}_q)$ also maps to $\gamma$. By construction we have the following equality in $E_{\psi}(-1)(\bar{\F}_q)$.
    \begin{equation*}
        \begin{split}
         &\Frob_q(\Frob_q^{-1}(\tilde{\gamma}), 0) = (\tilde{\gamma}, \psi(\Frob_q)(\mathfrak{R}(\tilde{\gamma})))) =\\ &(\Frob_q^{-1}(\tilde{\gamma}), \psi(\Frob_q(\mathfrak{R}(\tilde{\gamma}))  - \alpha(\eta(\Frob_q^{-1})) + (\eta(\Frob_q^{-1}), \alpha(\eta(\Frob_q^{-1}))).
         \end{split}
    \end{equation*}
    The last term maps to $0$ in $\tilde{G}_{\alpha, \psi}$ by construction. So $\partial_{\gamma} = \psi(\Frob_q)(\mathfrak{R}(\tilde{\gamma})) - \alpha_*(\eta)(\Frob_q^{-1})$ by \Cref{lem:computation_central_extension_residue}

    Fix $c \in \mathcal{C}(\bar{\F}_q)$ and $\xi \in c(\bar{\F}_q) \subset G(-1)(\bar{\F}_q)$. Let $\mathbf{1}_c \in \Hom(\mathcal{C}, \N)$ be the function which takes the value $1$ on $c$ and the value $0$ on all other conjugacy classes. Let us abbreviate $\partial_c := \mathrm{cor}_{\F_q(c)/\F_q}(\partial_c(\beta))(\Frob_q) = \partial_c(\beta)(\Frob_q^n)$. Let $m := [\F_q(c)/\F_q]$ By linearity and since $\underline{n}$ is assumed to be Galois-fixed it suffices to show that $\psi(\Frob_q)(\sum_{i = 1}^m\mathbf{1}_{\Frob_q^i c}) =  \partial_c$.

    Let us first describe the residue $\partial_c$. The choice of $m$ means that $\Frob_q^m(\xi)$ is conjugate to $\xi$. Let $h \in G$ be such that $\Frob_q^m(\xi) = h \xi h^{-1}$. Let $\hat{\xi} \in \tilde{G}_{(\alpha, \psi)}(-1)$ be a lift of $\xi$. The fact that $[\tilde{G}_{(\alpha, \psi)}] \in \Br_{\bar{\mathcal{C}}} BG_{\F_q}$ implies by \Cref{lem:computation_central_extension_residue} that $\Frob_q^m(\hat{\xi}) = \partial_c \hat{\xi} h^{-1}$.

    Now consider the symbol $[\xi] \in U(G, C)(\bar{\F}_q)$. By construction the element $\Frob_q^n([\xi], 0)$ in $E_{\psi}(-1)$ is equal to
    \[
    \left([\Frob_q^m\xi], \sum_{i = 1}^m \varphi(\Frob_q)( \mathbf{1}_{\Frob_q^i(c)})\right) = \left(h [\xi] h^{-1},  \varphi(\Frob_q)\left(\sum_{i = 1}^m \mathbf{1}_{\Frob_q^i(c)}\right)\right).
    \]
    We conclude that $\partial_c = \varphi(\Frob_q)(\sum_{i = 1}^m \mathbf{1}_{\Frob_q^i(c)})$.
\end{proof}
We also need the following lemma.
\begin{lemma}\label{lem:galois_module_Br_C}
    There is an isomorphism of Galois modules $\Br_{\mathcal{C}} BG_{\bar{\F}_q} \cong \Hom(\HH_2(G, \mathcal{C}), \Q/\Z))$.
\end{lemma}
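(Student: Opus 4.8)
The plan is to run the constructions of \Cref{sec:description_Brauer_group} over $\bar{\F}_q$ instead of over $\F_q$: since $\Gal(\bar{\F}_q/\bar{\F}_q)$ is trivial, the Galois-cocycle data in the description of $\Hom_{\mathcal{D}(\F_q)}(K_{G},\Q/\Z)$ collapses, and one is left with a purely group-theoretic group which is exactly the $\Q/\Z$-dual of the defining exact sequence of $\HH_2(G,\mathcal{C})$. First I would observe that $\HH^1(\bar{\F}_q,\Q/\Z)=0$, so over $\bar{\F}_q$ the condition ``algebraic residue'' in \Cref{def:Brauer_group_C_bar} just means ``vanishing residue''; hence $\Br_{\bar{\mathcal{C}}}BG_{\bar{\F}_q}=\Br_{\mathcal{C}}BG_{\bar{\F}_q}$. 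The construction $(\alpha,\psi)\mapsto[\tilde{G}_{(\alpha,\psi)}]$ preceding \Cref{lem:cocycles_geometric_unramified} uses only that the base field has trivial Brauer group (through \Cref{con:central_extensions_Brauer_group}) together with the group-theoretic \Cref{lem:central_extension_C_bar} and \Cref{lem:M_is_abelianization}, so the proof of \Cref{lem:cocycles_geometric_unramified} applies verbatim over $\bar{\F}_q$ and yields an isomorphism $\Hom_{\mathcal{D}(\bar{\F}_q)}(K_{G_{\bar{\F}_q}},\Q/\Z)\xrightarrow{\ \sim\ }\Br_{\mathcal{C}}BG_{\bar{\F}_q}$.

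Next I would compute the left-hand group. Over $\bar{\F}_q$ every cocycle $\psi$ and every coboundary $df$ in the definition of $\Hom_{\mathcal{D}}(K_G,\Q/\Z)$ vanishes, so this group is the cokernel of the restriction map $\mathfrak{R}^{*}\colon \Hom(\Hom(\mathcal{C},\Z),\Q/\Z)\to\Hom(\HH_{2,\mathrm{orb}}^{\mathcal{C}}(G,\Z),\Q/\Z)$. Writing $\mathfrak{R}$ as $\HH_{2,\mathrm{orb}}^{\mathcal{C}}(G,\Z)\twoheadrightarrow I\hookrightarrow\Hom(\mathcal{C},\Z)$ with image $I$ and kernel $\HH_2(G,\mathcal{C})$ (by \Cref{def:lifting_invariant_multiplicity}), and applying the exact functor $\Hom(-,\Q/\Z)$ (here $\Q/\Z$ is injective) to $0\to\HH_2(G,\mathcal{C})\to\HH_{2,\mathrm{orb}}^{\mathcal{C}}(G,\Z)\to I\to 0$ and to $I\hookrightarrow\Hom(\mathcal{C},\Z)$, one sees that the image of $\mathfrak{R}^{*}$ coincides with the image of $\Hom(I,\Q/\Z)\hookrightarrow\Hom(\HH_{2,\mathrm{orb}}^{\mathcal{C}}(G,\Z),\Q/\Z)$, whose cokernel is $\Hom(\HH_2(G,\mathcal{C}),\Q/\Z)$. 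This gives the asserted isomorphism of abelian groups.

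The remaining point, which I expect to be the only real subtlety (bookkeeping rather than a genuine obstacle), is Galois equivariance. Over $\bar{\F}_q$ the isomorphism above sends a homomorphism $\alpha$ to the class of the pushout of the universal marked central extension $\hat{U}(G,C)(1)$ along $\alpha$; since $\hat{U}(G,C)(1)$, $\HH_{2,\mathrm{orb}}^{\mathcal{C}}(G,\hat{\Z}_{\mathbf{P}}(1))$ and $\Q/\Z(1)$ are all \'etale group schemes over $\F_q$ and pushout is functorial, this description is manifestly compatible with the natural $\Gal(\bar{\F}_q/\F_q)$-actions, and likewise the identification of the previous paragraph (restriction of $\alpha$ to $\HH_2(G,\mathcal{C})$) is equivariant; so the composite is an isomorphism of Galois modules. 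The one thing to verify carefully is that the Tate twist matches: $\Br_{\mathcal{C}}BG_{\bar{\F}_q}$ pairs in the Schur-multiplier fashion with the $\hat{\Z}_{\mathbf{P}}(1)$-normalised group $\HH_{2,\mathrm{orb}}^{\mathcal{C}}(G,\hat{\Z}_{\mathbf{P}}(1))$ into $\Q/\Z(1)$, and rewriting via the $(-1)$-normalisation used to define $\HH_2(G,\mathcal{C})$ in \Cref{def:lifting_invariant_multiplicity} turns the coefficients into the untwisted $\Q/\Z$ of the statement. One could also bypass \Cref{lem:cocycles_geometric_unramified} and argue directly: by \Cref{con:central_extensions_Brauer_group} and \Cref{lem:central_extension_C_bar}, $\Br_{\mathcal{C}}BG_{\bar{\F}_q}$ is the group of $\mathcal{C}$-marked $\Q/\Z(1)$-central extensions of $G$ modulo change of marking, which by the universal property of $\hat{U}(G,C)(1)$ and \Cref{lem:M_is_abelianization} equals $\Hom(\HH_{2,\mathrm{orb}}^{\mathcal{C}}(G,\hat{\Z}_{\mathbf{P}}(1)),\Q/\Z(1))$ modulo homomorphisms factoring through $\mathfrak{R}$, i.e.\ $\Hom(\HH_2(G,\mathcal{C}),\Q/\Z)$, again compatibly with Galois.
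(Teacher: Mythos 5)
Your proof is correct and follows essentially the same route as the paper: the paper writes $\Br_{\mathcal{C}} BG_{\bar{\F}_q} = \varinjlim_k \Br_{\bar{\mathcal{C}}} BG_{\F_{q^k}}$ and applies \Cref{lem:cocycles_geometric_unramified} at each finite level, whereas you apply that lemma directly over $\bar{\F}_q$, where the cocycle data collapses for the same reason; the identification of the resulting quotient with $\Hom(\HH_2(G,\mathcal{C}),\Q/\Z)$ and the equivariance bookkeeping are as you describe. Your write-up is in fact more explicit than the paper's about the cokernel computation and the Galois/Tate-twist compatibility, which the paper leaves implicit.
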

\begin{proof}
    By definition, we have $\Br_{\mathcal{C}} BG_{\bar{\F}_q} = \varinjlim_k \Br_{\bar{\mathcal{C}}} BG_{\F_q^k}$. Applying \Cref{lem:cocycles_geometric_unramified} for all $k$ shows that this is the Galois module $\Hom(\HH_{2, \mathrm{orb}}^{\mathcal{C}}(G, \Z), \Q/\Z)$ modulo the image of $\Hom(\Hom(\mathcal{C}, \Z), \Q/\Z)$. This is $\Hom(\HH_2(G, \mathcal{C}), \Q/\Z)$ by the definition of $\HH_2(G, \mathcal{C})$.
\end{proof} 
\begin{corollary}\label{cor:size_Brauer_group}
    We have an equality $|\Br_{\bar{\mathcal{C}}} BG_{\F_q}| = |G^{\mathrm{ab}}(-1)(\F_q)| \cdot |\HH_2(G, \mathcal{C})(\F_q)|$.
\end{corollary}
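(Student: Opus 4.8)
The plan is to read off both sides of the short exact sequence of \Cref{lem:exact_sequence_brauer_group},
\[
0 \to \Br_1 BG_{\F_q} \to \Br_{\bar{\mathcal{C}}} BG_{\F_q} \to (\Br_{\mathcal{C}} BG_{\bar{\F}_q})^{\Frob_q} \to 0,
\]
so that it suffices to prove the two cardinality statements $|(\Br_{\mathcal{C}} BG_{\bar{\F}_q})^{\Frob_q}| = |\HH_2(G,\mathcal{C})(\F_q)|$ and $|\Br_1 BG_{\F_q}| = |G^{\mathrm{ab}}(-1)(\F_q)|$ and then multiply. Throughout I will use the elementary fact that for a finite abelian group $A$ with an automorphism $\sigma$ the four-term exact sequence $0 \to A^{\sigma} \to A \xrightarrow{\sigma-1} A \to A_{\sigma} \to 0$ forces $|A^{\sigma}| = |A_{\sigma}|$, and the fact that for finite abelian $A$ one has $|\Hom(A,\Q/\Z)| = |A|$ and a natural $\Frob_q$-equivariant identification $\Hom(A,\Q/\Z)^{\Frob_q} = \Hom(A_{\Frob_q},\Q/\Z)$.

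For the first cardinality, \Cref{lem:galois_module_Br_C} gives a Galois isomorphism $\Br_{\mathcal{C}} BG_{\bar{\F}_q} \cong \Hom(\HH_2(G,\mathcal{C}), \Q/\Z)$, whence $(\Br_{\mathcal{C}} BG_{\bar{\F}_q})^{\Frob_q} \cong \Hom(\HH_2(G,\mathcal{C})_{\Frob_q}, \Q/\Z)$, a group of order $|\HH_2(G,\mathcal{C})_{\Frob_q}| = |\HH_2(G,\mathcal{C})^{\Frob_q}| = |\HH_2(G,\mathcal{C})(\F_q)|$.

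For the second cardinality I would first identify $\Br_1 BG_{\F_q}$ with $\HH^1(\F_q,\Pic BG_{\bar{\F}_q})$: the Hochschild--Serre spectral sequence $\HH^p(\F_q, \HH^q(BG_{\bar{\F}_q},\G_m)) \Rightarrow \HH^{p+q}(BG_{\F_q},\G_m)$ together with the vanishing $\HH^i(\F_q,-)=0$ for $i \ge 2$ (cohomological dimension one, already invoked in the proof of \Cref{lem:exact_sequence_brauer_group}) shows that the kernel of $\Br BG_{\F_q} \to \Br BG_{\bar{\F}_q}$ is exactly $\HH^1(\F_q,\Pic BG_{\bar{\F}_q})$; alternatively this can be quoted from \cite{Loughran2025mallesconjecturebrauergroups}. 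Next, a line bundle on $BG_{\bar{\F}_q}$ is a character of $G_{\bar{\F}_q}$, hence factors through $G^{\mathrm{ab}}$ and takes values in roots of unity, so $\Pic BG_{\bar{\F}_q} = \Hom(G^{\mathrm{ab}}, \mu_n)$ for $n = |G|$; the composition pairing
\[
\Hom(G^{\mathrm{ab}}, \mu_n) \times \Hom(\mu_n, G^{\mathrm{ab}}) \to \Hom(\mu_n,\mu_n) = \Z/n
\]
is perfect and Galois-equivariant for the trivial action on the target, which identifies $\Pic BG_{\bar{\F}_q}$ with the Pontryagin dual $\Hom(G^{\mathrm{ab}}(-1), \Q/\Z)$ as Galois modules (using $G^{\mathrm{ab}}(-1)(\bar{\F}_q) = \Hom(\mu_n, G^{\mathrm{ab}})$). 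Applying the cohomological duality of \Cref{lem:cohomological_duality_finite_fields} with $A = \Pic BG_{\bar{\F}_q}$, so that $\Hom(A,\Q/\Z) = G^{\mathrm{ab}}(-1)$ by double duality, gives $|\Br_1 BG_{\F_q}| = |\HH^1(\F_q, \Pic BG_{\bar{\F}_q})| = |\HH^0(\F_q, G^{\mathrm{ab}}(-1))| = |G^{\mathrm{ab}}(-1)(\F_q)|$. Multiplying the two cardinalities through the exact sequence of \Cref{lem:exact_sequence_brauer_group} yields the corollary.

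I expect the only delicate points to be the two structural inputs in the last paragraph: the identification $\Br_1 BG_{\F_q} \cong \HH^1(\F_q,\Pic BG_{\bar{\F}_q})$, where one must check there is no residual contribution from $\HH^{\ge 2}(\F_q,\bar{\F}_q^{\times})$ (precisely the vanishing already used for \Cref{lem:exact_sequence_brauer_group}), and the Cartier-duality description of $\Pic BG_{\bar{\F}_q}$ as a Galois module, where one must verify that the composition pairing with $G^{\mathrm{ab}}(-1)$ is perfect and that the Galois action on the coefficient group $\Z/n \subset \Q/\Z$ is trivial. Everything else is bookkeeping with the identity $|A^{\sigma}| = |A_{\sigma}|$ over $\F_q$.
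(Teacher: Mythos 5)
Your proposal is correct and follows essentially the same route as the paper: split $\Br_{\bar{\mathcal{C}}} BG_{\F_q}$ via the exact sequence of \Cref{lem:exact_sequence_brauer_group}, identify $(\Br_{\mathcal{C}} BG_{\bar{\F}_q})^{\Frob_q}$ through \Cref{lem:galois_module_Br_C} together with $|A^{\Frob_q}| = |A_{\Frob_q}|$, and compute $|\Br_1 BG_{\F_q}|$ by dualizing against $G^{\mathrm{ab}}(-1)$ via \Cref{lem:cohomological_duality_finite_fields}. The only difference is that where the paper simply cites \cite[Lem.~6.2]{Loughran2025mallesconjecturebrauergroups} for the identification $\Br_1 BG_{\F_q} \cong \HH^1(\F_q, \Hom(G^{\mathrm{ab}}, \Q/\Z(1)))$, you re-derive it from Hochschild--Serre and the description of $\Pic BG_{\bar{\F}_q}$, which is exactly the content of that cited lemma.
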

\begin{proof}
    By \Cref{lem:exact_sequence_brauer_group} this follows from the following two facts.

    We have $\Br_1 BG_{\F_q} \cong \HH^1(\F_q, \Hom(G^{\mathrm{ab}}, \Q/\Z(1)))$ by \cite[Lem.~6.2]{Loughran2025mallesconjecturebrauergroups}. This later group is dual to $\HH^0(\F_q, G^{\mathrm{ab}}(-1))$ by \Cref{lem:cohomological_duality_finite_fields}. So $|\Br_1 BG_{\F_q}| = |G^{\mathrm{ab}}(-1)(\F_q)|$.

    Moreover by \Cref{lem:galois_module_Br_C} we have $|(\Br_{\mathcal{C}} BG_{\bar{\F}_q})^{\Frob_q}| = |\Hom(\HH_2(G, \mathcal{C}), \Q/\Z))^{\Frob_q}| = |\HH_2(G, \mathcal{C})(\bar{\F}_q)_{\Frob_q}|$. This has the same size as $|\HH_2(G, \mathcal{C})(\F_q)|$ by the exact sequence 
    \[
        0 \to \HH_2(G, \mathcal{C})(\F_q) \to \HH_2(G, \mathcal{C})(\bar{\F}_q) \xrightarrow{\Frob_q - 1} \HH_2(G, \mathcal{C})(\bar{\F}_q) \to \HH_2(G, \mathcal{C})(\bar{\F}_q)_{\Frob_q} \to 0
    \]
\end{proof}
\section{Counting}
\subsection{Point-counting on Hurwitz spaces}
Let $\F_q$ be a finite field and $G$ a finite tame \'etale group scheme over $\F_q$. Recall that for a subscheme $\mathcal{C} \subset \mathcal{C}_G^*$ and element $\underline{n} \in \Hom_{\F_q}(\mathcal{C}, \N)$ we  write $|\underline{n}| = \sum_{c \in \mathcal{C}_G^*(\bar{\F}_q)} n_c$.

We isolate what we need from Landesman--Levy \cite{landesman2025cohenlenstramomentsfunctionfields, Landesman2025Homological, landesman2025stablehomologyhurwitzmodules} in the following lemma.
\begin{lemma} \label{lem:application_landesman_levy}
    Assume that $q$ and $|G(\bar{\F}_q)|$ are coprime and that $q$ is sufficiently large in terms of $|G(\bar{\F}_q)|$.
    \begin{enumerate}\label{eq:bound_hurwitz_spaces_points}
        \item We have uniformly in $(\varphi, \gamma) \in BG(\F_q((t^{-1})))$ and $\underline{n} \in \Hom_{\F_q}(\mathcal{C}_G^*, \N)$ that 
    \begin{equation}
        \# \CHur^{G, (\varphi, \gamma)}_{\underline{n}}(\F_q) = O(q^{|\underline{n}|})
    \end{equation}
    \item Consider a chain of subsets $\mathcal{C}_{\mathrm{st}} \subseteq \mathcal{C} \subseteq \mathcal{C}_G^*$ such that the conjugacy classes in $\mathcal{C}_{\mathrm{st}}$, resp.~$\mathcal{C}$, generate $G/Z(G)$, resp.~$G$. Let $C \subset G(-1)$ be the subscheme which is the union of elements of $\mathcal{C}$.

    There exists $\delta > 0$ such that uniformly in $(\varphi, \gamma) \in BG(\F_q((t^{-1}))$ and $\underline{n} \in \Hom_{\F_q}(\mathcal{C}, \Z_{> 0})$ we have that 
    \begin{equation}\label{eq:asymptotic_hurwitz_spaces_points}
        \# \CHur^{G, (\varphi, C, \gamma)}_{\underline{n}}(\F_q) = \#U(G_{\varphi}, \mathcal{C})_{\underline{n}, \gamma}(\F_q) \# \Conf_{\mathcal{C}, \underline{n}}(\F_q) + O\left(q^{|\underline{n}| - \delta \min_{c \in \mathcal{C}_{\mathrm{st}}(\bar{\F}_q)}(n_c)}\right).
    \end{equation}
    \end{enumerate}
    Here $\Conf_{\mathcal{C}, \underline{n}}$ denotes the multicolored configuration space on the colors $\mathcal{C}$.
\end{lemma}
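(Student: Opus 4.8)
The plan is to convert both estimates into the Grothendieck--Lefschetz trace formula applied to the Hurwitz space and to feed in, on the geometric side, the homological stability theorems of Landesman--Levy, and on the combinatorial side \Cref{thm:components_partial_stability} and \Cref{lem:summary_hurwitz_spaces}. After \Cref{cons:assume_varphi_trivial} I may assume $\varphi = e_{BG}$. By \Cref{lem:Hurwitz_scheme_finite_etale} and \Cref{def:scheme_of_connected_components} the scheme $X := \CHur^{G,(\varphi,\gamma)}_{\underline{n}}$ is smooth and affine over $\F_q$ of pure dimension $d := |\underline{n}|$, and so is each geometrically connected component of $X_{\bar{\F}_q}$. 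Artin vanishing gives $\HH^i(X_{\bar{\F}_q},\Q_\ell)=0$ for $i>d$, hence by Poincar\'e duality $\HH^i_c(X_{\bar{\F}_q},\Q_\ell)=0$ for $i<d$, so $\#X(\F_q)=\sum_{i=d}^{2d}(-1)^i\,\mathrm{tr}\bigl(\Frob_q\mid \HH^i_c(X_{\bar{\F}_q},\Q_\ell)\bigr)$; the same holds for each Frobenius-stable geometric component $Z$, and since the components not fixed by $\Frob_q$ carry no $\F_q$-point one has $\#X(\F_q)=\sum_{Z:\,\Frob_q Z = Z}\#Z(\F_q)$.

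The two inputs I would isolate from Landesman--Levy are: (i) a bound on $\dim \HH^j(Z_{\bar{\F}_q},\Q_\ell)$ and on the number of Frobenius-fixed components, uniform in $\underline{n}$ and depending only on $|G|$, which combined with Deligne's estimate $|\mathrm{tr}(\Frob_q\mid \HH^i_c)|\le q^{i/2}\dim\HH^i_c$ and $\dim\HH^i_c=\dim\HH^{2d-i}$ forces the alternating sum above to be $O(q^d)$ as soon as $q$ is large in terms of $|G|$ --- this is precisely assertion (1); and (ii) the per-component, \emph{Galois-equivariant} homological stability statement that the ramification-type map $Z\to \Conf_{\mathcal{C},\underline{n},\bar{\F}_q}$ induces an isomorphism on $\HH^j(-,\Q_\ell)$ for all $j\le \delta_0\min_{c\in\mathcal{C}_{\mathrm{st}}}n_c - c_0$, with $\delta_0,c_0$ depending only on $|G|$. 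Since only the colours in $\mathcal{C}_{\mathrm{st}}$ are being stabilised here, (ii) is the partial-stabilisation form of their theorem, which I expect to extract from the Hurwitz-module framework of \cite{landesman2025stablehomologyhurwitzmodules}.

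For (2) I would first dispose of the range $\min_{c\in\mathcal{C}_{\mathrm{st}}}n_c < N$, where $N$ is the constant of \Cref{thm:components_partial_stability}: there both $\#X(\F_q)=O(q^d)$ by (1) and $\#U(G_\varphi,\mathcal{C})_{\underline{n},\gamma}(\F_q)\,\#\Conf_{\mathcal{C},\underline{n}}(\F_q)=O(q^d)$ (using $\#U(G_\varphi,\mathcal{C})_{\underline{n},\gamma}(\F_q)\le|\HH_2(G,\mathcal{C})|$ from \Cref{lem:summary_hurwitz_spaces} and $\#\Conf_{\mathcal{C},\underline{n}}(\F_q)\le q^d$ from \Cref{cor:points_configuration_space}), so the difference is absorbed into $O(q^{d-\delta\min n_c})$ after enlarging the implied constant by the bounded factor $q^{\delta(N-1)}$. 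When $\min_{c\in\mathcal{C}_{\mathrm{st}}}n_c\ge N$, \Cref{lem:summary_hurwitz_spaces} identifies $\pi_0(X_{\bar{\F}_q})$ with $U(G_\varphi,\mathcal{C})_{\underline{n},\gamma}(\bar{\F}_q)$ Frobenius-equivariantly, so $\#X(\F_q)=\sum_{Z:\,\Frob_q Z=Z}\#Z(\F_q)$ has exactly $\#U(G_\varphi,\mathcal{C})_{\underline{n},\gamma}(\F_q)$ terms; for each such $Z$, input (ii) and the trace formula give $\#Z(\F_q)=\#\Conf_{\mathcal{C},\underline{n}}(\F_q)+O(q^{d-\delta\min n_c})$, since the traces in degrees $2d-j$ with $j$ in the stable range cancel against those of $\Conf_{\mathcal{C},\underline{n}}$ and the remaining degrees contribute $O(q^{d-\delta\min n_c})$ by Deligne's bound and (i), after shrinking $\delta_0$ slightly using $q\gg 1$. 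Summing over the $\le|\HH_2(G,\mathcal{C})|$ fixed components yields (2).

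The real obstacle is not this bookkeeping but checking that Landesman--Levy's results actually supply (i) and (ii) in exactly the strength needed: uniformity in $\underline{n}$ (so the $O$-constants do not grow), Galois-equivariance of the stable comparison (so the main term of each component is honestly $\#\Conf_{\mathcal{C},\underline{n}}(\F_q)$ and not merely $\sim q^d$), a stable range that is \emph{linear} in $\min_{c\in\mathcal{C}_{\mathrm{st}}}n_c$, and validity when one stabilises only in the colours of $\mathcal{C}_{\mathrm{st}}$. Assembling these precise consequences out of \cite{landesman2025cohenlenstramomentsfunctionfields,Landesman2025Homological,landesman2025stablehomologyhurwitzmodules} --- rather than reproving anything --- is what ``isolate what we need'' refers to.
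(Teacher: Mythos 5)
Your architecture is the same as the paper's: reduce to $\varphi=e_{BG}$, bound the number of geometric components uniformly in $\underline{n}$, get (1) from per-component point bounds, and get (2) by identifying the Frobenius-fixed components with $U(G_\varphi,\mathcal{C})_{\underline{n},\gamma}(\F_q)$ via \Cref{lem:summary_hurwitz_spaces} and then comparing each fixed component with $\Conf_{\mathcal{C},\underline{n}}$ through the Lefschetz trace formula and Deligne's bounds. The paper black-boxes the trace-formula bookkeeping into citations of \cite[Lem.~5.2.2]{landesman2025cohenlenstramomentsfunctionfields} and \cite[Lem.~8.4.2, Lem.~8.4.4]{Landesman2025Homological}, which you spell out; that part is fine.

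The one substantive point you leave unresolved is exactly your input (ii): \emph{why} the ramification-type map $Z\to\Conf_{\mathcal{C},\underline{n}}$ is a cohomology isomorphism in a range linear in $\min_{c\in\mathcal{C}_{\mathrm{st}}}n_c$ when one stabilises only in the colours of $\mathcal{C}_{\mathrm{st}}$. You flag this as ``the real obstacle'' but do not extract it, and it is the actual content of the lemma rather than bookkeeping. The paper's resolution is concrete: work over $\C$, view $C$ as a rack, and apply \cite[Thm.~1.4.6]{landesman2025stablehomologyhurwitzmodules} once for each conjugacy class $c_1,\dots,c_k$ in $\mathcal{C}_{\mathrm{st}}$ to pass from $\CHur^{C}_{\underline{n}}$ to $\CHur^{(C/c_1)\cdots/c_k}_{\underline{n}}$, with a stable range controlled by $\min_{c\in\mathcal{C}_{\mathrm{st}}}n_c$ at each step. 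The hypothesis that $\mathcal{C}_{\mathrm{st}}$ generates $G/Z(G)$ is then used precisely to show that every component of the quotient rack $(C/c_1)\cdots/c_k$ is a singleton, so the target Hurwitz space \emph{is} the multicoloured configuration space by definition; this is where the main term $\#\Conf_{\mathcal{C},\underline{n}}(\F_q)$ comes from, and without this observation your comparison has no identified target. So: right approach, but the key extraction from Landesman--Levy that you defer is supplied in the paper by this rack-quotient argument, and your write-up is incomplete without it.
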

\begin{proof}
    If $G$ is constant and $\varphi = e_{BG}$ then this is a somewhat standard consequence of the homological stability  result \cite[Thm.~1.4.9]{landesman2025stablehomologyhurwitzmodules}; see ~\cite[Proof of Thm.~10.0.13]{landesman2025stablehomologyhurwitzmodules} for the case $G = S_n$. The proof generalises essentially without change. Let us provide more details.

    We claim that the number of geometric components of $\CHur^{G, (\varphi, \gamma)}_{\underline{n}} \subset \CHur^{G, \varphi}_{\underline{n}}$ is bounded uniformly in $\underline{n}$. Indeed, this is a purely geometric statement so it reduces to the field $\C$ where it follows from the statement about $\HH^0$ in \cite[Thm.~1.4.8]{landesman2025stablehomologyhurwitzmodules}.

    For (1) it thus suffices to give upper bounds on the number of points on each component of the Hurwitz scheme. The case that $G$ is constant and $\varphi = e_{BG}$ is \cite[Lem.~8.4.4]{Landesman2025Homological}. The proof generalizes without changes to the more general setting.

    Let us now consider (2). By part (1) and Lemma \ref{lem:summary_hurwitz_spaces} we may assume that the Frobenius fixed components of $\CHur^{G, C, (\varphi, \gamma)}_{\underline{n}}$ are in bijection with $U(G_{\varphi}, \mathcal{C})_{\underline{n}, \gamma}(\F_q)$. It thus suffices to show that if $Z \subset \CHur^{G, \varphi}_{\underline{n}}$ is a Frobenius fixed
    component of a Hurwitz scheme then
    
    \begin{equation}\label{eq:components_hurwitz_space_point_count}
        Z(\F_q) = \Conf_{\mathcal{C}, \underline{n}}(\F_q) + O\left(q^{|\underline{n}| - \delta \min_{c \in \mathcal{C}_{\mathrm{st}}(\bar{\F}_q)}(n_c)}\right)
    \end{equation}

    To prove this we use the comparison map $Z \to \Conf_{\mathcal{C}, \underline{n}}$ induced by \Cref{def:map_to_configuration_space}. We claim that this map induces an isomorphism on $\ell$-adic cohomology groups of degree $\gg \min_{c \in \mathcal{C}_{\mathrm{st}}(\bar{\F}_q)}(n_c)$. Indeed, this is a geometric statement so it reduces to a statement over $\C$ where it follows from repeated applications of \cite[Thm.~1.4.6]{landesman2025stablehomologyhurwitzmodules}. 
    
    The precise application of this theorem is as follows. We work over $\C$ to remove the anticyclotomic twists, this is possible as the cohomology groups over $\bar{\F}_q$ and $\C$ are isomorphic by \cite[Rem.~8.4.3]{Landesman2025Homological}. Write $\mathcal{C}_{\mathrm{st}}(\C) = \{c_1, \dots, c_k\}$ and $\mathcal{C}(\C) = \{c_1, \dots, c_m\}$. The union $C := \bigcup_{i = 1}^m c_i$ defines a rack as in \cite[Ex.~2.1.3]{Landesman2025Homological}. Iterated applications of \cite[Thm.~1.4.6]{landesman2025stablehomologyhurwitzmodules} show that the natural map $\CHur^{C}_{n_{c_1}, \dots, n_{c_m}} \to \CHur^{(C/c_1) \cdots /c_{k}}_{n_{c_1}, \dots, n_{c_m}}$ induces isomorphisms on the rational cohomology groups of degree $\gg \min_{c \in \mathcal{C}_{\mathrm{st}}}(n_c)$. However, the condition that $\mathcal{C}_{\mathrm{st}}$ generates the inner automorphism group $G/Z(G)$ of $G$ ensures that each component of the rack $(C/c_1) \cdots /c_{k}$ has only a single element, which means that the corresponding Hurwitz space is the corresponding multicoloured configuration space by definition (see \cite[Def.~2.2.2, Rem.~6.0.3]{Landesman2025Homological}).

    The statement about the isomorphism of cohomology groups then implies \eqref{eq:components_hurwitz_space_point_count} by \cite[Lem.~5.2.2]{landesman2025cohenlenstramomentsfunctionfields} and \cite[Lem.~8.4.2]{Landesman2025Homological}.
    \end{proof}

We now describe the number of points on Hurwitz spaces in terms of configuration spaces and the Brauer group.
\begin{lemma}\label{lem:Hurwitz_spaces}
    Let $\mathcal{C}_{\mathrm{st}} \subset \mathcal{C} \subset \mathcal{C}_G^*, \underline{n}$ be as in \Cref{lem:application_landesman_levy}(2). Then $\# \CHur_{\underline{n}}^{G, C, (\varphi, \gamma)}(\F_q)$ is equal to        
        \[
         \frac{\# \Conf_{\mathcal{C}, \underline{n}}(\F_q)}{|G^{\mathrm{ab}}(-1)(\F_q)|}\sum_{\beta \in \Br_{\bar{\mathcal{C}}} BG_{\F_q}} e(\partial_{\gamma}(\beta)(\varphi)) \prod_{c \in \pi_0(\mathcal{C})} e(n_c \mathrm{cor}_{\F_q(c)/\F_q}(\partial_c(\beta)) + O(q^{|\underline{n}| - \delta \min_{c \in \mathcal{C}_{\mathrm{st}}(\bar{\F}_q)}(n_c)}).
        \]
\end{lemma}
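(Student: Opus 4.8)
The plan is to combine the two main outputs of the preceding sections: the point-count asymptotic of \Cref{lem:application_landesman_levy}(2), which reduces $\#\CHur^{G,(\varphi,\gamma)}_{\underline n}(\F_q)$ to $\#U(G_\varphi,\mathcal{C})_{\underline n,\gamma}(\F_q)\cdot\#\Conf_{\mathcal{C},\underline n}(\F_q)$ up to an error of size $O(q^{|\underline n|-\delta\min_{c\in\mathcal{C}_{\mathrm{st}}}(n_c)})$, and the Brauer-theoretic description of the torsor $U(G_\varphi,\mathcal{C})_{\underline n,\gamma}$ from \Cref{prop:Brauer_group_controls_components}. So the only real content is to rewrite $\#U(G_\varphi,\mathcal{C})_{\underline n,\gamma}(\F_q)$ as the displayed character sum over $\Br_{\bar{\mathcal C}}BG_{\F_q}$.

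The key step is a character-sum identity. Recall from \Cref{lem:summary_hurwitz_spaces} that $U(G_\varphi,\mathcal{C})_{\underline n,\gamma}$ is either empty or a torsor under $\HH_2(G,\mathcal{C})$, and by \Cref{lem:cocycles_geometric_unramified} combined with \Cref{lem:cocycle_residue_formula} the obstruction to it having an $\F_q$-point is computed by the pairing with $\Br_{\bar{\mathcal C}}BG_{\F_q}\cong\Hom_{\mathcal{D}(\F_q)}(K_G,\Q/\Z)$. First I would invoke Fourier/orthogonality on the finite group $\HH_2(G,\mathcal{C})(\F_q)$: when the torsor is nonempty it has exactly $|\HH_2(G,\mathcal{C})(\F_q)|$ points, and in general one writes the indicator of "the obstruction vanishes" as an average of characters. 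Concretely, using \Cref{lem:cohomological duality} and \Cref{lem:galois_module_Br_C}, one gets
\[
\#U(G_\varphi,\mathcal{C})_{\underline n,\gamma}(\F_q)=\sum_{\beta\in\Br_{\bar{\mathcal C}}BG_{\F_q}}e\Bigl(\partial_\gamma(\beta)(\varphi)+\sum_{c\in\pi_0(\mathcal{C}_G^*)}n_c\,\mathrm{cor}_{\F_q(c)/\F_q}(\partial_c(\beta))\Bigr),
\]
where the sum over $\beta$ in $\Br_1BG_{\F_q}$ (whose residues at all $c\in\mathcal{C}$ vanish) detects nonemptiness — picking out the $|\Br_1BG_{\F_q}|=|G^{\mathrm{ab}}(-1)(\F_q)|$ factor — while the sum over the $(\Br_{\mathcal C}BG_{\bar{\F}_q})^{\Frob_q}$ part, paired against the torsor, contributes the values $e$ of the residue expression. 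One has to check that the terms for $c\notin\mathcal{C}$ in the displayed sum are harmless: for $\beta\in\Br_{\bar{\mathcal C}}BG_{\F_q}$ the residue $\partial_c(\beta)$ for $c\notin\mathcal C$ is algebraic but can be nonzero, so these terms genuinely appear and match the shape of \Cref{prop:Brauer_group_controls_components}; dividing by $|G^{\mathrm{ab}}(-1)(\F_q)|$ accounts for the overcounting by $\Br_1$. Finally, substituting into \Cref{lem:application_landesman_levy}(2) and absorbing $\#\Conf_{\mathcal{C},\underline n}(\F_q)\le q^{|\underline n|}$ (by \Cref{cor:points_configuration_space}) times the bounded error yields the stated formula.

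The main obstacle I expect is bookkeeping the exact normalization: making precise that summing $e$ of the obstruction expression over all of $\Br_{\bar{\mathcal C}}BG_{\F_q}$ — rather than just over a set of representatives of $(\Br_{\mathcal C}BG_{\bar{\F}_q})^{\Frob_q}$ — produces exactly $\#U(G_\varphi,\mathcal{C})_{\underline n,\gamma}(\F_q)$ and not a rescaled version, which is why the factor $|G^{\mathrm{ab}}(-1)(\F_q)|^{-1}=|\Br_1 BG_{\F_q}|^{-1}$ is forced. This is precisely the content of \Cref{lem:cohomological duality} once one identifies $\Hom_{\mathcal{D}(\F_q)}(K_G,\Q/\Z)$ with $\Br_{\bar{\mathcal C}}BG_{\F_q}$ via \Cref{lem:cocycles_geometric_unramified} and translates the pairing expression $\psi(\Frob_q)(\underline n+\mathfrak{R}(\tilde\gamma))-\alpha_*(\eta)(\Frob_q^{-1})$ into residues via \Cref{lem:cocycle_residue_formula}; the only care needed is that $\mathfrak R(\tilde\gamma)$-dependent terms cancel between the two factors, which is exactly the well-definedness already verified after the statement of \Cref{lem:cohomological duality}.
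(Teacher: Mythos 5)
Your proposal is correct and follows the paper's proof exactly: \Cref{lem:application_landesman_levy}(2) plus character orthogonality applied to the obstruction of \Cref{prop:Brauer_group_controls_components}, with the normalization $|\HH_2(G,\mathcal{C})(\F_q)|/|\Br_{\bar{\mathcal{C}}}BG_{\F_q}| = |G^{\mathrm{ab}}(-1)(\F_q)|^{-1}$ supplied by \Cref{cor:size_Brauer_group}. Two small slips worth fixing: your displayed identity for $\#U(G_{\varphi},\mathcal{C})_{\underline{n},\gamma}(\F_q)$ is missing the factor $|G^{\mathrm{ab}}(-1)(\F_q)|^{-1}$, since the character sum equals $|\Br_{\bar{\mathcal{C}}}BG_{\F_q}|$ times the indicator of vanishing obstruction while the torsor has only $|\HH_2(G,\mathcal{C})(\F_q)|$ points when nonempty (your surrounding prose shows you intend the corrected version); and the terms with $c\notin\mathcal{C}$ drop out simply because $n_c=0$ there, not because those residues are algebraic --- for $\beta\in\Br_{\bar{\mathcal{C}}}BG_{\F_q}$ the residues outside $\mathcal{C}$ need not be algebraic at all.
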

\begin{proof}
    By character orthogonality, \Cref{lem:application_landesman_levy} and \Cref{prop:Brauer_group_controls_components} we have that it is up to an error term $O(q^{|\underline{n}| - \delta \min_{c \in \mathcal{C}_{\mathrm{st}}(\bar{\F}_q)}(n_c)})$ equal to 
    \[\sum_{\beta \in \Br_{\bar{\mathcal{C}}} BG_{\F_q}} e\left(\partial_{\gamma}(\beta)(\varphi) + \sum_{c \in \pi_0(\mathcal{C})} n_c \mathrm{cor}_{\F_q(c)/\F_q}(\partial_c(\beta)))\right)\frac{\# \Conf_{\mathcal{C}, \underline{n}}(\F_q)|\HH_2(G, \mathcal{C})(\F_q)|}{|\Br_{\bar{\mathcal{C}}} BG_{\F_q}|}.\]

    The lemma then follows from \Cref{cor:size_Brauer_group}.
\end{proof}
\subsection{Tamagawa volumes}
\subsubsection{Tamagawa measures}\label{sec:Tamagawa_measures}
Consider the global field $K := \F_q(t)$. For each closed point $P \in \Proj^1_{\F_q}$ let $K_P$ be the corresponding local field. Let $\Adele_K := \prod'_{P \in \Proj^1_{\F_q}} K_P, \Adele_{K, \infty} := \prod'_{P \in \A^1_{\F_q}} K_P$ be the ring of adeles of $K$, resp.~the ring of adeles outside $\infty$. We used the notation $\prod'$ for the restricted product.

Let $\tau_{BG, \F_q} := \prod_{P \in \Proj^1_{\F_q}} \tau_P, \tau_{BG, \Adele_{K, \infty}} := \prod_{P \in \A^1_{\F_q}} \tau_P$ be the Tamagawa measure defined on $BG[\Adele_{K}]$, resp.~$BG[\Adele_{K, \infty}]$. These are defined analogously as in \cite[\S8.2, 8.4]{Loughran2025mallesconjecturebrauergroups} but with the constant heigth $1$.

To be precise, $\tau_P$ is the local Tamagawa measures defined on $BG[K_P]$ by the groupoid counting formula, i.e.~for $\Omega_P \subset BG[K_P]$ we have 
\[
\tau_P(\Omega_P) := \sum_{\varphi_P \in \Omega_P} \frac{1}{|\Aut(\varphi_P)(K_P)|}.
\]
The $\tau_{BG, \F_q}, \tau_{BG, \Adele_{K, \infty}}$ are then defined as the corresponding product measures, which converge absolutely by \cite[Lem.~8.9]{Loughran2025mallesconjecturebrauergroups}.
\subsubsection{Brauer-Manin pairing}\label{sec:Brauer_Manin_pairing}
We recall \cite[\S5.6.1]{Loughran2025mallesconjecturebrauergroups} that there is a global Brauer-Manin pairing 
\[
\langle \cdot , \cdot \rangle_{\mathrm{BM}} : BG[\Adele_{\F_q(t)}] \times \Br BG_{\F_q} \to \Q/\Z
\]
which is the sum $\langle \cdot , \cdot \rangle_{\mathrm{BM}} = \sum_{P \in \Proj^1_{\F_q}} \langle \cdot , \cdot \rangle_{\mathrm{BM}, P}$ of local Brauer-Manin pairings
\[
\langle \cdot , \cdot \rangle_{\mathrm{BM}, P} : BG[K_P] \times \Br BG_{\F_q} \to \Q/\Z: (\varphi_P, \beta) \to \text{inv}_P(\beta(\varphi_P)).
\]
Here $\text{inv}_P: \Br K_P \cong \Q/\Z$ is the local invariant map coming from local class field theory.

For any subset $\Omega \subset BG[\Adele_{\F_q(t)}]$ we define $\Omega^{\Br} \subset \Omega$ as the subset of elements which are orthogonal to $\Br BG_{\F_q}$ with respect to the Brauer-Manin pairing. More generally, for any subgroup $B \subset \Br BG_{\F_q}$ we define $\Omega^{B} \subset \Omega$ as the subset of elements orthogonal to $B$ under the Brauer-Manin pairing.

For $\beta \in \Br BG_{\F_q}$ and any Borel subset $\Omega \subset BG[\Adele_{\F_q(t)}]$ we consider the Brauer transform
\[
\hat{\tau}(\beta ; \Omega) := \int_{\Omega} e^{2 i \pi \langle \varphi, \beta \rangle_{\mathrm{BM}}} d\tau_{BG, \F_q}(\varphi).
\]
This is an important quantity because of the following lemma, c.f.~\cite[Lem.~8.21]{Loughran2025mallesconjecturebrauergroups}.
\begin{lemma}\label{lem:character_orthogonality}
    Let $\Omega \subset BG[\Adele_{\F_q(t)}]$ be such that $\tau_{BG, \F_q}(\Omega) < \infty$ and $B \subset  \Br BG_{\F_q}$ a subgroup. Then 
    \[
    |B| \tau_{BG, \F_q}(\Omega^B) = \sum_{\beta \in B} \hat{\tau}(\beta ; \Omega).
    \]
\end{lemma}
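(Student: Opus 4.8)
The plan is to prove this by a straightforward application of character orthogonality on the compact abelian group obtained from the local Brauer-Manin pairings. First I would note that by the definition of the local pairings $\langle \cdot, \cdot \rangle_{\mathrm{BM}, P}$ and their sum, the global pairing $\langle \cdot, \beta \rangle_{\mathrm{BM}}$ is, for fixed $\beta$, a $\Q/\Z$-valued function on $BG[\Adele_{\F_q(t)}]$, and $\Omega^B$ is precisely the set of $\varphi \in \Omega$ on which $\langle \varphi, \beta \rangle_{\mathrm{BM}} = 0$ for every $\beta \in B$. The key elementary fact is finite-group character orthogonality: for the finite abelian group $B$ (finite because $\Br_{\mathrm{un}}$ and more generally the relevant Brauer groups are torsion, or simply because we may restrict to a finitely generated, hence finite, subgroup), one has
\[
\sum_{\beta \in B} e^{2 i \pi \langle \varphi, \beta \rangle_{\mathrm{BM}}} = \begin{cases} |B| & \text{if } \langle \varphi, \beta \rangle_{\mathrm{BM}} = 0 \text{ for all } \beta \in B, \\ 0 & \text{otherwise}, \end{cases}
\]
which is valid because $\beta \mapsto \langle \varphi, \beta \rangle_{\mathrm{BM}}$ is a homomorphism $B \to \Q/\Z$ and the sum of a non-trivial character of a finite abelian group over that group vanishes.

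Next I would integrate this identity over $\Omega$ against $\tau_{BG, \F_q}$. Since $\tau_{BG, \F_q}(\Omega) < \infty$ by hypothesis and $B$ is finite, Fubini (or simply linearity of the integral over a finite sum) lets me interchange the finite sum over $\beta \in B$ with the integral:
\[
\sum_{\beta \in B} \hat{\tau}(\beta ; \Omega) = \sum_{\beta \in B} \int_{\Omega} e^{2 i \pi \langle \varphi, \beta \rangle_{\mathrm{BM}}} \, d\tau_{BG, \F_q}(\varphi) = \int_{\Omega} \Big( \sum_{\beta \in B} e^{2 i \pi \langle \varphi, \beta \rangle_{\mathrm{BM}}} \Big) \, d\tau_{BG, \F_q}(\varphi).
\]
By the orthogonality identity the integrand equals $|B| \cdot \mathbf{1}_{\Omega^B}(\varphi)$, so the right-hand side is $|B| \tau_{BG, \F_q}(\Omega^B)$, which is exactly the claimed equality. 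One should check measurability of $\Omega^B$: since $\varphi \mapsto \langle \varphi, \beta \rangle_{\mathrm{BM}}$ is continuous (it is a finite sum of the continuous local pairings, all but finitely many of which vanish on a given compact-open neighborhood) with values in the discrete group $\frac{1}{|G|^2}\Z/\Z$, each set $\{\langle \cdot, \beta \rangle_{\mathrm{BM}} = 0\}$ is open and closed, hence Borel, and $\Omega^B$ is the intersection of $\Omega$ with finitely many such sets.

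The main obstacle, if there is one, is purely bookkeeping: verifying that the global pairing is well-defined and continuous on $BG[\Adele_{\F_q(t)}]$ — i.e. that for $\varphi = (\varphi_P)_P$ in the restricted product, $\langle \varphi_P, \beta \rangle_{\mathrm{BM}, P} = 0$ for all but finitely many $P$ — but this is already built into the setup of \cite[\S5.6.1]{Loughran2025mallesconjecturebrauergroups} which we are allowed to cite, and it is also the content of the fact (used implicitly in \S\ref{sec:Brauer_Manin_pairing}) that $\hat{\tau}(\beta; \Omega)$ is a well-defined absolutely convergent integral. With continuity in hand the argument is entirely formal, mirroring \cite[Lem.~8.21]{Loughran2025mallesconjecturebrauergroups} as the reference in the statement already indicates; indeed this lemma is the exact analogue of that one with trivial height, so no genuinely new input is required beyond transcribing that proof.
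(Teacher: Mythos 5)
Your proposal is correct and follows essentially the same argument as the paper: the paper's proof is exactly the pointwise character-orthogonality identity for the finite abelian group $B$, integrated over $\Omega$ with the finite sum and the integral interchanged. The extra remarks on measurability and continuity are fine but not needed beyond what the paper already takes as given.
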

\begin{proof}
    This is an immediate consequence of character orthogonality, as for $\varphi \in \Omega$ we have
    \[
    \sum_{\beta \in B} e^{2 i \pi \langle \varphi, \beta \rangle_{\mathrm{BM}}} = \begin{cases}
        |B| \text{ if } \varphi \in \Omega^B \\
        0 \text{ otherwise}.
    \end{cases}
    \]
\end{proof}
For any point $P \in \Proj^{1}_{\F_q}$ and subset $\Omega_P \subset BG[K_P]$ we will also consider the local Brauer transform
\[
\hat{\tau}_P(\beta ; \Omega_P) := \int_{\Omega_P} e^{2 i \pi \langle \varphi_P, \beta \rangle_{\mathrm{BM}, P}} d\tau_{P}(\varphi_P).
\]

If $P = \infty$ then we have the following explicit formula, where we recall from the abbreviation $\partial_{\gamma}(\beta) := \partial_{c_{\gamma}}(\beta)((\varphi, \gamma))$ from just before \Cref{prop:Brauer_group_controls_components}.
\begin{equation}\label{eq:formula_local_Brauer_transform}
   \hat{\tau}_P(\beta ; \Omega_P) = \sum_{(\varphi, \gamma) \in BG[\F_q((t^{-1}))]} \frac{e^{2 i \pi \text{inv}_{\infty}(\beta(\varphi, \gamma))}}{|\mathrm{Aut}_{BG}(\varphi, \gamma)(\F_q)|} = \sum_{(\varphi, \gamma) \in BG[\F_q((t^{-1}))]} \frac{e(\partial_{\gamma}(\varphi)(\beta))}{|\mathrm{Aut}_{BG}(\varphi, \gamma)(\F_q)|}.
\end{equation}
The first equality is just the definition of the integral and the second equality is by \cite[Lem.~7.3]{Loughran2025mallesconjecturebrauergroups}.

\subsubsection{Volume computations}
For all $c \in \pi_0(\mathcal{C}^*_G)$ we introduce a formal variable $T_c$ and use the multi-index notation $\underline{T}^{\underline{n}} = \prod_{c} T_c^{n_c}$. For $\varphi_P \in BG[K_P]$, we write $T(\varphi_P) := T_c^{\deg P}$ such that $c$ is the ramification type of $\varphi_P$. For $(\varphi_P)_P \in BG[\Adele_{K, \infty}]$, we write that $T(\varphi) = \prod_P T(\varphi_P)$.

We require the following computation of a global Brauer transform. One should think of the left-hand side as a `Brauer-twisted adelic Igusa integral' in analogue with \cite[\S4.3]{Chambert-Loir2010Igusa}.
\begin{lemma} \label{lem:Brauer_twisted_Igusa_integral}
    Let $\beta \in \Br BG_{\F_q}$ and let $\mathcal{C}_{\beta} \subset \mathcal{C}_G^*$ be the set of conjugacy classes $c$ where the residue of $\beta$ at $c$ is algebraic. For $c \in \pi_0(\mathcal{C}_{\beta})$ with field of definition $\F_q(c)$ we thus have $\partial_c(\beta) \in \HH^1(\F_q(c), \Q/\Z)$. We then have the equality
    \begin{equation}\label{eq:formula_global_Brauer_transform}
       \int_{BG[\Adele_{K, \infty}]} e^{2 i \pi \langle \beta, \varphi \rangle_{\mathrm{BM}}} T(\varphi) d\tau_{BG, \Adele_{K, \infty}}(\varphi) = \prod_{P \in \A^1_{\F_q}} \left( 1 + \sum_{c \in \mathcal{C}_{\beta}(\F_q(P))} e(\mathrm{cor}_{\F_q(P)/\F_q}(\partial_c(\beta))T_c^{\deg P} \right). 
    \end{equation}
\end{lemma}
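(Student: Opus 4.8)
The plan is to read the lemma as an identity of formal power series in the variables $T_c$, $c \in \pi_0(\mathcal{C}_G^*)$, and to factor the left-hand side over the closed points of $\A^1_{\F_q}$. Unwinding the definition of $\tau_{BG,\Adele_{K,\infty}}$, the left-hand side equals
\[
\sum_{\varphi\in BG[\Adele_{K,\infty}]}\frac{e^{2\pi i\langle\beta,\varphi\rangle_{\mathrm{BM}}}}{|\Aut_{BG}(\varphi)(\Adele_{K,\infty})|}\,T(\varphi),
\]
whose coefficient at each monomial $\underline T^{\underline n}$ is a finite sum. Since $\tau_{BG,\Adele_{K,\infty}}=\prod_{P\in\A^1_{\F_q}}\tau_P$ converges absolutely by \cite[Lem.~8.9]{Loughran2025mallesconjecturebrauergroups}, while $\langle\beta,\varphi\rangle_{\mathrm{BM}}=\sum_P\langle\beta,\varphi_P\rangle_{\mathrm{BM},P}$ and $T(\varphi)=\prod_P T(\varphi_P)$ with $T(\varphi_P)=1$ for unramified $\varphi_P$, a Fubini argument rewrites this as $\prod_{P\in\A^1_{\F_q}}I_P$ with
\[
I_P:=\sum_{[\varphi_P]\in BG[K_P]}\frac{e^{2\pi i\,\mathrm{inv}_P(\varphi_P^*\beta)}}{|\Aut_{BG}(\varphi_P)(K_P)|}\,T(\varphi_P).
\]
Thus it suffices to prove, for every $P\in\A^1_{\F_q}$, the local identity $I_P=1+\sum_{c\in\mathcal{C}_\beta(\F_q(P))}e(\mathrm{cor}_{\F_q(P)/\F_q}(\partial_c(\beta)))\,T_c^{\deg P}$.

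To compute $I_P$ I would partition $BG[K_P]$ according to the ramification type $\rho_{G,P}(\varphi_P)\in\mathcal{C}_G(\F_q(P))$, using the local form of \Cref{con:inertia_type}: a $G$-torsor over $K_P$ extends uniquely to a $G$-cover of $\Spec\mathcal{O}_{K_P}$ ramified only over the closed point, and the $\mu_w$-gerbe it induces there yields an $\F_q(P)$-point of $I_\mu BG=[G(-1)/G]$, hence of $\mathcal{C}_G$. On the unramified part the ramification type is trivial, $T(\varphi_P)=1$, and $\varphi_P^*\beta$ lies in the image of $\Br\mathcal{O}_{K_P}=\Br\F_q(P)=0$, so $\mathrm{inv}_P(\varphi_P^*\beta)=0$; as the groupoid of unramified $G$-torsors over $K_P$ is $[\Hom(\hat{\Z},G)/G]$ with centralizer automorphisms, it has mass $1$, so this part contributes exactly $1$. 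For a nontrivial rational conjugacy class $\bar c\in\mathcal{C}_G^*(\F_q(P))$, lying in a component $c\in\pi_0(\mathcal{C}_G^*)$ (so $\F_q(c)\subseteq\F_q(P)$), the block with $\rho_{G,P}(\varphi_P)=\bar c$ contributes $T_c^{\deg P}$ times $S_{\bar c}:=\sum_{[\varphi_P]:\rho_{G,P}(\varphi_P)=\bar c}|\Aut_{BG}(\varphi_P)(K_P)|^{-1}e^{2\pi i\,\mathrm{inv}_P(\varphi_P^*\beta)}$. A direct count of $\Hom(\pi_1^{\mathrm{t}}(\Spec K_P),G)$ shows this block is a groupoid of mass $1$: it is naturally identified, compatibly with automorphism groups, with the groupoid of $\F_q(P)$-points of the sector $\mathcal{S}_c$ lying over $\bar c$.

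The heart of the matter is the value of $S_{\bar c}$. Here I would invoke the place-by-place analogue of \eqref{eq:formula_local_Brauer_transform}, i.e.\ of \cite[Lem.~7.3]{Loughran2025mallesconjecturebrauergroups}: under the identification above, $\mathrm{inv}_P(\varphi_P^*\beta)=\bigl(\partial_c(\beta)(\varphi_P)\bigr)(\Frob_{\F_q(P)})$, the residue at the sector evaluated at the point of $\mathcal{S}_c(\F_q(P))$ attached to $\varphi_P$ and then at Frobenius. If $c\in\mathcal{C}_\beta$, so that $\partial_c(\beta)\in\HH^1(\F_q(c),\Q/\Z)$ is algebraic, this value is independent of $\varphi_P$ and equals $\partial_c(\beta)(\Frob_{\F_q(P)})=\mathrm{cor}_{\F_q(P)/\F_q}(\partial_c(\beta))(\Frob_q)$, so $S_{\bar c}=e(\mathrm{cor}_{\F_q(P)/\F_q}(\partial_c(\beta)))$ times the mass $1$. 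If $c\notin\mathcal{C}_\beta$, then $\partial_c(\beta)$ has a nonzero geometric component, which stays nonzero after restriction to $\F_q(P)$, so $\varphi_P\mapsto e^{2\pi i\,\mathrm{inv}_P(\varphi_P^*\beta)}$ is a nontrivial character of the block (a $BH$-type groupoid over $\F_q(P)$) and $S_{\bar c}=0$ by character orthogonality over $\F_q(P)$, in the form of \Cref{lem:cohomological_duality_finite_fields}. Summing the blocks and noting that for each component $c$ with $\F_q(c)\subseteq\F_q(P)$ there are exactly $|c(\F_q(P))|$ choices of $\bar c$, each contributing the same $e(\mathrm{cor}_{\F_q(P)/\F_q}(\partial_c(\beta)))\,T_c^{\deg P}$ — which is precisely the sum over $\mathcal{C}_\beta(\F_q(P))$ — gives the claimed formula for $I_P$, hence the lemma.

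The step I expect to be the main obstacle is the input used in the third paragraph: setting up, compatibly with automorphism groups, the identification of the ramification-type blocks of $BG[K_P]$ with the $\F_q(P)$-points of the sectors $\mathcal{S}_c$, and checking that $\mathrm{inv}_P$ matches the residue $\partial_c$ in the correct normalization (which is where the corestriction enters). Both are the evident local, place-by-place versions of constructions already present in \cite{Loughran2025mallesconjecturebrauergroups} — the residue map, the cyclotomic-inertia description underlying \Cref{con:inertia_type}, and formula \eqref{eq:formula_local_Brauer_transform} — so once these are granted, the remaining ingredients are only the mass formula and character orthogonality.
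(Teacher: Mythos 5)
Your proof is correct and follows the same route as the paper: both factor the global Brauer transform into a product of local integrals using $\tau_{BG,\Adele_{K,\infty}}=\prod_P\tau_P$ and the locality of the Brauer--Manin pairing, and then evaluate each local factor. The only difference is that where the paper simply cites Denef's formula \cite[Thm.~8.23]{Loughran2025mallesconjecturebrauergroups} for the local integral, you re-derive that formula by hand (mass-one count of the ramification-type blocks, matching $\inv_P$ with $\partial_c$, and character orthogonality for non-algebraic residues); your sketch of that local computation is a correct outline of exactly what the cited theorem contains, so nothing essential is missing.
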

\begin{proof}
    As $\tau_{BG, \Adele_{K, \infty}} = \prod_{P \in \A^1_{\F_q}}\tau_P$ and $\langle\varphi, \beta \rangle_{\mathrm{BM}} = \sum_{P \in \A^1_{\F_q}} \langle\varphi_P, \beta \rangle_{\mathrm{BM}, P}$ we have
    \[
     \int_{BG[\Adele_{K, \infty}]} e^{2 i \pi \langle \varphi, \beta \rangle_{\mathrm{BM}}} d\tau_{BG, \Adele_{K, \infty}} = \prod_{P \in \A^1_{\F_q}} \int_{BG(K_P)}  e^{2 i \pi \langle \varphi_P, \beta \rangle_{\mathrm{BM}, P}} d\tau_P(\varphi_P).
    \]

    The local integrals for $P$ are by Denef's formula \cite[Thm.~8.23]{Loughran2025mallesconjecturebrauergroups} equal to 
    \[
    \left( 1 + \sum_{c \in \mathcal{C}_{\beta}(\F_q(P))} e(\mathrm{cor}_{\F_q(P)/\F_q}(\partial_c(\beta))T_c^{\deg P} \right).
    \]
    Here we used that $\mathrm{cor}_{\F_q(P)/\F_q}(\partial_c(\beta))(\Frob_q) = \partial_c(\beta)(\Frob_{\F_q(P)})$.
\end{proof}

\begin{definition}
    For $\underline{n} \in \Hom_{\F_q}(\mathcal{C}_G^{*}, \N)$ let $BG[\Adele_{K, \infty}]_{\underline{n}} \subset BG[\Adele_{K, \infty}]$ be the subset consisting of elements $(\varphi_P)_{P \in \A^1_{\F_q}}$ such that there are exactly $n_c$ points $P$, counted with multiplicity $[\F_q(P): \F_q(c)]$, such that $P$ has ramification type $c \in \mathcal{C}_G^*(\bar{\F}_q)$.

    For any subset $\Omega_{\infty} \subset BG[\F_q((t^{-1}))]$ we will write 
    \[BG[\Adele_{\F_{q}(t)}]_{\underline{n}, \Omega_{\infty}} := \Omega_{\infty} \times BG[\Adele_{K, \infty}]_{\underline{n}} \subset BG[\Adele_{\F_q(t)}].\]
\end{definition}

The Brauer transform of the subsets $BG[\Adele_{\F_{q}(t)}]_{\underline{n}, \Omega_{\infty}}$ turns out to be related to the number of points on configuration spaces. 
\begin{proposition}\label{prop:hurwitz_spaces_Tamagawa_measures} 
    Let $\beta \in \Br BG_{\F_q}$ and let $\mathcal{C}_{\beta} \subset \mathcal{C}_G^*$ be the set of conjugacy classes $c$ where the residue $\partial_c(\beta)$ is algebraic.

    For any subset $\Omega_{\infty} \subset BG[\F_q((t^{-1}))]$ and $\underline{n} \in \Hom(\mathcal{C}_G^*, \N)$ we have the following:
    \begin{enumerate}
        \item If $\underline{n}$ is not supported on $\mathcal{C}_{\beta}$ then $\hat{\tau}(\beta ; \Omega_\infty \times BG[\Adele_{K, \infty}]_{\underline{n}}) = 0$.
        \item If $\underline{n}$ is  supported on $\mathcal{C}_{\beta}$ then 
    \[
    \hat{\tau}(\beta ; BG[\Adele_{\F_{q}(t)}
    ]_{\underline{n}, \Omega_{\infty}}) = \hat{\tau}_\infty(\beta ; \Omega_\infty) \# \Conf_{\mathcal{C}_{\beta}, \underline{n}}(\F_q)\prod_{c \in \pi_0(\mathcal{C}_{\beta})}e(n_c \mathrm{cor}_{\F_q(c)/\F_q}(\partial_c(\beta))) .
    \]
    \end{enumerate}
\end{proposition}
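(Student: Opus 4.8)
The plan is to reduce the statement to the generating-series identity \eqref{eq:formula_global_Brauer_transform} of \Cref{lem:Brauer_twisted_Igusa_integral} together with the configuration-space point count of \Cref{lemma:number_points_Configuration_space}, by extracting a single monomial coefficient. First I would split off the place $\infty$: since $\tau_{BG, \F_q} = \tau_\infty \times \tau_{BG, \Adele_{K, \infty}}$, the Brauer--Manin pairing is the sum of its local components, and $BG[\Adele_{\F_q(t)}]_{\underline{n}, \Omega_\infty} = \Omega_\infty \times BG[\Adele_{K, \infty}]_{\underline{n}}$ by definition, so the Brauer transform factors as
\[
\hat{\tau}(\beta ; BG[\Adele_{\F_q(t)}]_{\underline{n}, \Omega_\infty}) = \hat{\tau}_\infty(\beta ; \Omega_\infty) \cdot \int_{BG[\Adele_{K, \infty}]_{\underline{n}}} e^{2 i \pi \langle \varphi, \beta \rangle_{\mathrm{BM}}} \, d\tau_{BG, \Adele_{K, \infty}}(\varphi),
\]
all factors being finite since the relevant Tamagawa measures converge absolutely. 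It then remains to evaluate the integral on the right.

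The key observation is that $BG[\Adele_{K, \infty}]_{\underline{n}}$ is exactly the level set of the monomial $T(\varphi)$ at $\underline{T}^{\underline{n}} := \prod_{c \in \pi_0(\mathcal{C}_G^*)} T_c^{[\F_q(c) : \F_q] n_c}$: for a closed point $P \in \A^1_{\F_q}$ whose local torsor is ramified with type in the component $c$, the base change of that type to $\bar{\F}_q$ meets $c$ with multiplicity $[\F_q(P) : \F_q(c)]$, and $\deg P = [\F_q(c) : \F_q][\F_q(P) : \F_q(c)]$, so the defining condition of $BG[\Adele_{K, \infty}]_{\underline{n}}$ is precisely the exponent bookkeeping of $T(\varphi)$. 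Therefore, expanding the left-hand side of \eqref{eq:formula_global_Brauer_transform} as a power series in the $T_c$, the integral in question is its coefficient of $\underline{T}^{\underline{n}}$. If $\underline{n}$ is not supported on $\mathcal{C}_\beta$, that coefficient requires a variable $T_c$ with $c \notin \pi_0(\mathcal{C}_\beta)$, which does not occur in the Euler product of \Cref{lem:Brauer_twisted_Igusa_integral}, so it vanishes; this is part (1).

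For part (2), assume $\underline{n}$ is supported on $\mathcal{C}_\beta$ and carry out the extraction explicitly. Expanding $\prod_{P \in \A^1_{\F_q}}\bigl(1 + \sum_{c \in \mathcal{C}_\beta(\F_q(P))} e(\mathrm{cor}_{\F_q(P)/\F_q}(\partial_c(\beta))) T_c^{\deg P}\bigr)$ and collecting the terms contributing $\underline{T}^{\underline{n}}$ amounts to choosing, for each $P$ in a finite set of closed points, an $\F_q(P)$-point $c_P$ of $\mathcal{C}_\beta$ so that the total degree used up over each component $c$ of $\mathcal{C}_\beta$ equals $[\F_q(c):\F_q] n_c$; by \Cref{lemma:number_points_Configuration_space} such data is exactly an $\F_q$-point of $\Conf_{\underline{n}}$. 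The exponential weight of such a point is $\prod_P e(\mathrm{cor}_{\F_q(P)/\F_q}(\partial_{c_P}(\beta)))$, and grouping by component and using $\mathrm{cor}_{\F_q(P)/\F_q}(\partial_c(\beta)) = [\F_q(P):\F_q(c)] \, \mathrm{cor}_{\F_q(c)/\F_q}(\partial_c(\beta))$ — with $\partial_c(\beta)$ depending only on the component, as the $\Gal(\F_q(c)/\F_q)$-action on $\HH^1(\F_q(c), \Q/\Z)$ is trivial — this weight equals $\prod_{c \in \pi_0(\mathcal{C}_\beta)} e(n_c \, \mathrm{cor}_{\F_q(c)/\F_q}(\partial_c(\beta)))$, independently of the point chosen. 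Pulling this constant out of the sum and applying \Cref{lemma:number_points_Configuration_space} to the remainder gives
\[
\int_{BG[\Adele_{K, \infty}]_{\underline{n}}} e^{2 i \pi \langle \varphi, \beta \rangle_{\mathrm{BM}}} \, d\tau_{BG, \Adele_{K, \infty}}(\varphi) = \# \Conf_{\underline{n}}(\F_q) \prod_{c \in \pi_0(\mathcal{C}_\beta)} e(n_c \, \mathrm{cor}_{\F_q(c)/\F_q}(\partial_c(\beta))),
\]
and substituting into the factorization above yields part (2).

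The main obstacle I anticipate is purely bookkeeping: aligning the three indexing conventions — the geometric ramification types $c \in \mathcal{C}_G^*(\bar{\F}_q)$ weighted by $[\F_q(P):\F_q(c)]$ in the definition of $BG[\Adele_{K, \infty}]_{\underline{n}}$, the colors $\pi_0(\mathcal{C}_\beta)$ of $\Conf_{\underline{n}}$ each carrying $|c(\F_q(P))|$ many $\F_q(P)$-points, and the variables $T_c$ in \Cref{lem:Brauer_twisted_Igusa_integral} — and checking that the per-place exponential twist really depends only on $\underline{n}$ rather than on the effective divisor realizing it. Once these are matched, everything else is a formal manipulation of absolutely convergent Euler products.
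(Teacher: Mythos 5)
Your proposal is correct and follows essentially the same route as the paper: factor off the place $\infty$, identify the remaining integral as the coefficient of $\underline{T}^{\underline{n}}$ in the Euler product of \Cref{lem:Brauer_twisted_Igusa_integral}, and match it against the configuration-space generating series of \Cref{lemma:number_points_Configuration_space}. The only cosmetic difference is that the paper extracts the exponential factor by the substitution $\hat{T}_c = e(\partial_c(\beta))T_c$, whereas you expand term by term and verify the weight via the corestriction identity $\mathrm{cor}_{\F_q(P)/\F_q} = [\F_q(P):\F_q(c)]\,\mathrm{cor}_{\F_q(c)/\F_q}$ — which is, if anything, a more explicit justification of the same bookkeeping.
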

\begin{proof}
    As $\tau_{BG, \F_q} = \tau_{\infty} \times \tau_{BG, \A^{1}_{\F_q}}$ and since the Brauer-Manin pairing is a sum of local pairings we have $\hat{\tau}(\beta ; BG[\Adele_{\F_{q}(t)}
    ]_{\underline{n}, \Omega_{\infty}}) = \hat{\tau}_\infty(\beta ; \Omega_\infty) \cdot \hat{\tau}(\beta ; \underline{n})$ with
    \[
        \hat{\tau}(\beta ; \underline{n}) := \int_{BG[\Adele_{K, \infty}]_{\underline{n}}} e^{2 i \pi \langle \beta , \varphi\rangle_{\mathrm{BM}}} d\tau_{\A^{1}_{\F_q}}(\varphi).
    \]

    For all $c \in \pi_0(\mathcal{C}_G)$ we introduce a formal variable $T_c$ and use the multi-index notation $\underline{T}^{\underline{n}} = \prod_{c} T_c^{n_c}$.
    
    Consider the generating series
    \[
    F(\underline{T}; \beta) := \sum_{\underline{n}} \hat{\tau}(\beta ; \underline{n}) \underline{T}^{\underline{n}}.
    \]
    Note that if $\varphi = (\varphi_P)_P \in BG[\Adele_{K, \infty}]_{\underline{n}}$ then by definition we have that $T(\varphi) := \prod_{P} T(\varphi_P) = \underline{T}^{\underline{n}}$. We can thus rewrite the generating series $F(\underline{T}; \beta)$ as the integral
    \[
    \sum_{\underline{n}} \int_{BG[\Adele_{K, \infty}]_{\underline{n}}} e^{2 i \pi \langle \beta , \varphi\rangle_{\mathrm{BM}}} T(\varphi) d\tau_{\A^{1}_{\F_q}}(\varphi) = \int_{BG[\Adele_{K, \infty}]} e^{2 i \pi \langle \beta , \varphi\rangle_{\mathrm{BM}}}T(\varphi)d\tau_{\A^{1}_{\F_q}}(\varphi).
    \]

    This integral is computed in \Cref{lem:Brauer_twisted_Igusa_integral}.
    In particular, we see that the coefficient of $\underline{T}^{\underline{n}}$ is zero unless $\underline{n}$ is supported on $\mathcal{C}_{\beta}$ as the only $T_c$ appearing in the Euler factors are those with $c \in \mathcal{C}_{\beta}$.
    
   After the substitution $\hat{T}_c := e(\partial_c(\beta))T_c$ the Euler product in \Cref{lem:Brauer_twisted_Igusa_integral} agrees with the Euler product in Lemma \ref{lemma:number_points_Configuration_space} for the generating series of configuration spaces on $\mathcal{C}_{\beta}$. It follows from the uniqueness of generating series that the $\underline{T}^{\underline{n}}$ coefficient of $F(\underline{T} ; \beta)$ is
   \[
   \hat{\tau}(\beta ; \underline{n}) \underline{T}^{\underline{n}} = \# \Conf_{\mathcal{C}_{\beta}, \underline{n}}(\F_q) \hat{\underline{T}}^{\underline{n}} = \# \Conf_{\mathcal{C}_{\beta}, \underline{n}}(\F_q)\prod_{c \in \pi_0(\mathcal{C}_{\beta})} e(\partial_c(\beta)) \underline{T}^{\underline{n}}
   \]
   which proves the proposition.
\end{proof}
\begin{corollary}\label{cor:brauer_manin_volume_same}
    For all $\underline{n} \in \Hom(\mathcal{C}_G^*, \N)$ with support $\mathcal{C}$ and all subsets $\Omega_{\infty} \subset BG[\F_q((t^{-1}))]$ we have
    \[
    |\Br BG_{\F_q}| \tau_{BG,  \F_q}( BG[\Adele_{\F_{q}(t)}
    ]_{\underline{n}, \Omega_{\infty}}^{\Br}) = |\Br_{\bar{\mathcal{C}}} BG_{\F_q}| \tau_{BG,  \F_q}( BG[\Adele_{\F_{q}(t)}
    ]_{\underline{n}, \Omega_{\infty}}^{\Br_{\bar{\mathcal{C}}} BG_{\F_q}})
    \]
\end{corollary}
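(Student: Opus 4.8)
The plan is to apply character orthogonality (\Cref{lem:character_orthogonality}) to the two subgroups $\Br_{\bar{\mathcal{C}}} BG_{\F_q} \subseteq \Br BG_{\F_q}$ of $\Br BG_{\F_q}$ and reduce the asserted identity to the vanishing of the Brauer transform $\hat{\tau}(\beta ; \Omega)$ for $\beta$ lying outside $\Br_{\bar{\mathcal{C}}} BG_{\F_q}$, which will in turn follow directly from part (1) of \Cref{prop:hurwitz_spaces_Tamagawa_measures}.

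First I would set $\Omega := BG[\Adele_{\F_{q}(t)}]_{\underline{n}, \Omega_{\infty}}$. Since $\tau_{BG, \F_q}$ converges absolutely on $BG[\Adele_{\F_q(t)}]$ we have $\tau_{BG, \F_q}(\Omega) < \infty$, so the hypotheses of \Cref{lem:character_orthogonality} are met for any subgroup. Applying it to $B = \Br BG_{\F_q}$ and to $B = \Br_{\bar{\mathcal{C}}} BG_{\F_q}$ gives
\[
|\Br BG_{\F_q}| \, \tau_{BG, \F_q}(\Omega^{\Br}) = \sum_{\beta \in \Br BG_{\F_q}} \hat{\tau}(\beta ; \Omega), \qquad |\Br_{\bar{\mathcal{C}}} BG_{\F_q}| \, \tau_{BG, \F_q}(\Omega^{\Br_{\bar{\mathcal{C}}} BG_{\F_q}}) = \sum_{\beta \in \Br_{\bar{\mathcal{C}}} BG_{\F_q}} \hat{\tau}(\beta ; \Omega).
\]
Thus the corollary is equivalent to the statement that $\hat{\tau}(\beta ; \Omega) = 0$ for every $\beta \in \Br BG_{\F_q} \setminus \Br_{\bar{\mathcal{C}}} BG_{\F_q}$.

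To obtain this vanishing I would unwind the definitions. By hypothesis $\underline{n}$ has support exactly $\mathcal{C}$. If $\beta \notin \Br_{\bar{\mathcal{C}}} BG_{\F_q}$, then by \Cref{def:Brauer_group_C_bar} there is some $c \in \mathcal{C}$ at which $\beta$ does not have an algebraic residue, i.e.\ $c \notin \mathcal{C}_\beta$ in the notation of \Cref{prop:hurwitz_spaces_Tamagawa_measures}. Since $n_c > 0$, the function $\underline{n}$ is not supported on $\mathcal{C}_\beta$, so \Cref{prop:hurwitz_spaces_Tamagawa_measures}(1) yields $\hat{\tau}(\beta ; \Omega) = \hat{\tau}(\beta ; \Omega_\infty \times BG[\Adele_{K, \infty}]_{\underline{n}}) = 0$, completing the argument.

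I do not expect a serious obstacle here: the content is entirely a bookkeeping combination of \Cref{lem:character_orthogonality} and \Cref{prop:hurwitz_spaces_Tamagawa_measures}. The only points requiring minor care are verifying the finiteness hypothesis of \Cref{lem:character_orthogonality} and correctly matching the notion of "$\underline{n}$ supported on $\mathcal{C}_\beta$" used in \Cref{prop:hurwitz_spaces_Tamagawa_measures} with the hypothesis that $\underline{n}$ has support $\mathcal{C}$. Equivalently, one may phrase the whole argument as: only $\beta \in \Br_{\bar{\mathcal{C}}} BG_{\F_q}$ contribute to the averaged volume of $\Omega$, so passing from $\Br BG_{\F_q}$ to $\Br_{\bar{\mathcal{C}}} BG_{\F_q}$ in the orthogonality identity alters neither side.
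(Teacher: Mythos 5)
Your proof is correct and follows essentially the same route as the paper: apply \Cref{lem:character_orthogonality} to both subgroups and observe via \Cref{prop:hurwitz_spaces_Tamagawa_measures}(1) and \Cref{def:Brauer_group_C_bar} that $\hat{\tau}(\beta;\Omega)=0$ for $\beta\notin\Br_{\bar{\mathcal{C}}}BG_{\F_q}$. Your unwinding of why a non-algebraic residue at some $c\in\mathcal{C}$ forces $\underline{n}$ to fail to be supported on $\mathcal{C}_\beta$ is exactly the detail the paper leaves implicit.
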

\begin{proof}
    By applying \Cref{lem:character_orthogonality} to $B = \Br BG_{\F_q}$ and $B =  \Br_{\bar{\mathcal{C}}} BG_{\F_q}$ it suffices to show that for $\beta \not \in \Br_{\bar{\mathcal{C}}} BG_{\F_q}$ we have $\hat{\tau}(\beta ; BG[\Adele_{\F_{q}(t)}
    ]_{\underline{n}, \Omega_{\infty}} ) = 0$.

    This is an immediate corollary of \Cref{prop:hurwitz_spaces_Tamagawa_measures}(1) and the definition of $\Br_{\bar{\mathcal{C}}} BG_{\F_q}$.
\end{proof}
\subsection{Multi-height counting} 
We are now ready to prove one of the main theorems of the paper. Let us first define the counting function.
\begin{definition}
    Given $\Omega_{\infty} \subset BG[\F_q((t^{-1}))]$ and $\underline{n} \in \Hom_{\F_q}(\mathcal{C}_G^*, \N)$ we define $N(G,\underline{n}, \Omega_{\infty})$ as the number of isomorphism classes of geometrically connected $G$-covers $\varphi: X \to \Proj^1_{\F_q}$ such that the corresponding local extension at $\infty$ is contained in $\Omega_{\infty}$ and such that for all $c \in  \mathcal{C}_G^*(\bar{\F}_q)$ we have 
    \[
    \sum_{\substack{P \in \A^1_{\F_q} \\ \rho_{G, P}(\varphi) = c}} [\F_q(P): \F_q(c)] = n_c.
    \]
    Where $\rho_{G, P}$ denotes the ramification type as in \Cref{con:inertia_type}. 
    
    Informally, this is the number of points $P \in \A^1_{\F_q}$, counted with multiplicity, whose ramification type is $c$.
\end{definition}
\begin{lemma}\label{lem:decomposition_counting_function}
    We have
    \[
    N(G,\underline{n}, \Omega_{\infty}) = |Z(G(\F_q))| \sum_{(\varphi, \gamma) \in \Omega_{\infty}} \frac{1}{|\mathrm{Aut}_{BG}(\varphi, \gamma)(\F_q)|}
 \# \CHur_{\underline{n}}^{G, (\varphi, \gamma)}(\F_q).
    \]
\end{lemma}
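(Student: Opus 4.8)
The plan is to unwind the definitions on both sides, matching $G$-covers with a choice of local trivialization at $\infty$ to points of the twisted Hurwitz spaces, and then account for the automorphisms that appear on each side. First I would observe that, by Lemma~\ref{lem:Hurwitz_stack_versus_spaces} together with the finer map $\HurStack^{G}_{B,n}\to[G(-1)/G]$, a geometrically connected $G$-cover $\varphi:X\to\Proj^1_{\F_q}$ ramified only over $\A^1_{\F_q}$ and $\infty$ is the same as an $\F_q$-point of $\CHurStack^{G}$ together with a description of the monodromy datum at $\infty$, i.e.\ a pair $(\varphi_\infty,\gamma)\in[G(-1)/G](\F_q)$, where $\varphi_\infty\in BG[\F_q((t^{-1}))]$ is the local $G$-torsor and $\gamma$ the local anticyclotomic monodromy. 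The ramification-type condition defining $N(G,\underline n,\Omega_\infty)$ is exactly the condition $\mathfrak{R}(\varphi)=\underline n$ of \Cref{def:Multiplicity_Hurwitz}, so the $G$-covers being counted are precisely the $\F_q$-points of $\coprod_{(\varphi_\infty,\gamma)\in\Omega_\infty}\CHur^{G,(\varphi_\infty,\gamma)}_{B,\underline n}$, \emph{before} quotienting by automorphisms.

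Next I would turn the groupoid-cardinality count into an honest count. The Hurwitz \emph{scheme} $\CHur^{G,(\varphi_\infty,\gamma)}_{B,\underline n}$ rigidifies the cover by the choice of isomorphism $\eta:f|_{\tilde\infty_T}\cong\varphi_\infty$ (\Cref{def:Hurwitz_space}); two such rigidified covers are isomorphic as $G$-covers iff they differ by an automorphism of the $G$-cover $X\to\Proj^1_{\F_q}$, and such an automorphism acts on the trivialization $\eta$ through its effect at $\infty$. For a geometrically connected $G$-cover the automorphism group of $X\to\Proj^1_{\F_q}$ is $Z(G)(\F_q)$ (the center, since $X$ is connected the $G$-equivariant automorphisms are exactly the central elements, and we take $\F_q$-rational ones), acting on the fibre at $\infty$ compatibly with the $\Aut_{BG}(\varphi_\infty,\gamma)(\F_q)$-action on the set of trivializations $\eta$. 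Hence the fibre of the forgetful map $\CHur^{G,(\varphi_\infty,\gamma)}_{B,\underline n}(\F_q)\to\{\text{iso.\ classes of }G\text{-covers with datum }(\varphi_\infty,\gamma)\}$ over a given cover has size $|\Aut_{BG}(\varphi_\infty,\gamma)(\F_q)|/|Z(G)(\F_q)|$ — equivalently, each isomorphism class of cover contributes $|\Aut_{BG}(\varphi_\infty,\gamma)(\F_q)|/|Z(G(\F_q))|$ points of the Hurwitz scheme. Summing over $(\varphi_\infty,\gamma)\in\Omega_\infty$ and rearranging gives exactly
\[
N(G,\underline n,\Omega_\infty)=|Z(G(\F_q))|\sum_{(\varphi,\gamma)\in\Omega_\infty}\frac{1}{|\mathrm{Aut}_{BG}(\varphi,\gamma)(\F_q)|}\,\#\CHur^{G,(\varphi,\gamma)}_{\underline n}(\F_q).
\]

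The main obstacle I anticipate is the careful bookkeeping of automorphisms: one must check that the automorphism group of a geometrically connected $G$-cover $X\to\Proj^1_{\F_q}$ really is $Z(G)$ (using connectedness of $X$ to kill non-central automorphisms, and checking the scheme-theoretic statement $Z(G)$ rather than $Z(G(\bar\F_q))$, then passing to $\F_q$-points), and that the two relevant groups — this automorphism group and $\Aut_{BG}(\varphi_\infty,\gamma)$ — act compatibly on the torsor of trivializations $\eta$ at $\infty$, so that the orbit-stabilizer count over each isomorphism class of cover yields precisely the factor $|\Aut_{BG}(\varphi_\infty,\gamma)(\F_q)|/|Z(G(\F_q))|$ uniformly. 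Everything else is a formal rewriting via \Cref{def:Multiplicity_Hurwitz}, \Cref{def:Huritz_space_gamma} and \Cref{lem:Hurwitz_stack_versus_spaces}; no homological input is needed for this lemma.
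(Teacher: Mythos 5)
Your proposal is correct and is essentially the paper's argument: the paper phrases it as comparing groupoid cardinalities along the fibration $\HurStack^{G}_{\underline{n}}\to[G(-1)/G]$, using that a geometrically connected $G$-cover has automorphism group scheme $Z(G)$, which is exactly your orbit--stabilizer count on the torsor of trivializations $\eta$ at $\infty$ unwound. No gap; the only point to make explicit is the freeness of the $Z(G)(\F_q)$-action on trivializations, which you correctly flag.
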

\begin{proof}
The set counted by $N(G,\underline{n}, \Omega_{\infty})$ is by definition the set of isomorphism classes of the groupoid $\HurStack^G_{\underline{n}}(\F_q)$ such that the corresponding curve is geometrically connected and whose image in $[G(-1)/G](\F_q) \cong BG(\F_q((t^{-1})))$ is contained in $\Omega_{\infty}$.

The lemma then follows from the definition of $\CHur_{\underline{n}}^{G, (\varphi, \gamma)}(\F_q)$ and comparing the groupoid counts, where we use that any element $\HurStack^G_{\underline{n}}(\F_q) $ whose corresponding curve is geometrically connected has automorphism group scheme $Z(G)$ (which is just by the definition of a morphism of a $G$-cover, c.f.~\cite[Lem.~2.9]{Loughran2025mallesconjecturebrauergroups}).
\end{proof}

\begin{theorem}\label{thm:main_theorem_multiheight}
    Assume that $q$ is sufficiently large in terms of $|G(\bar{\F}_q)|$. Let $\mathcal{C}_{\mathrm{st}} \subset \mathcal{C}_G^*$ be a subset of conjugacy classes which generates $G/Z(G)$. Let $\Omega_{\infty} \subset BG[\F_q((t^{-1})]$ be a non-empty subset. There exists $\delta > 0$ such that we for all in $\underline{n} \in \Hom(\mathcal{C}_G^*, \N)$ whose support generates $G$ that
    \[
    N(G, \underline{n} ,\Omega_{\infty}) = \frac{|Z(G(\F_q))| \cdot |\Br BG_{\F_q}| }{|G^{\mathrm{ab}}(-1)(\F_q)|} \tau_{BG,  \F_q}( BG[\Adele_{\F_{q}(t)}
    ]_{\underline{n}, \Omega_{\infty}}^{\Br}) + O(q^{|\underline{n}|- \delta \min_{c \in \mathcal{C}_{\mathrm{st}}(\bar{\F}_q)}(n_c)}).
    \]
\end{theorem}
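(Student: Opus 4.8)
The plan is to assemble Theorem~\ref{thm:main_theorem_multiheight} directly from the three main intermediate results: the point-count on Hurwitz spaces in terms of the Brauer group (Lemma~\ref{lem:Hurwitz_spaces}), the interpretation of Brauer transforms of the sets $BG[\Adele_{\F_q(t)}]_{\underline{n},\Omega_\infty}$ as configuration-space counts (Proposition~\ref{prop:hurwitz_spaces_Tamagawa_measures}), and character orthogonality (Lemma~\ref{lem:character_orthogonality}) together with Corollary~\ref{cor:brauer_manin_volume_same} which lets one pass freely between $\Br$ and $\Br_{\bar{\mathcal{C}}}$. The strategy is purely a bookkeeping exercise once these inputs are in place: express $N(G,\underline{n},\Omega_\infty)$ via Lemma~\ref{lem:decomposition_counting_function}, substitute the asymptotic of Lemma~\ref{lem:Hurwitz_spaces}, recognise the resulting sum over $\beta$ as a sum of local Brauer transforms, and identify it with $\tau_{BG,\F_q}$ of the orthogonal subset.

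First I would fix $\mathcal{C}$ to be the support of $\underline{n}$, so $\mathcal{C}$ generates $G$ and $\mathcal{C}_{\mathrm{st}}\subseteq\mathcal{C}$; this puts us in the setting of Lemma~\ref{lem:application_landesman_levy}(2) and hence of Lemma~\ref{lem:Hurwitz_spaces}. Applying Lemma~\ref{lem:decomposition_counting_function} and then Lemma~\ref{lem:Hurwitz_spaces} gives
\[
N(G,\underline{n},\Omega_\infty) = \frac{|Z(G(\F_q))|\,\#\Conf_{\underline{n}}(\F_q)}{|G^{\mathrm{ab}}(-1)(\F_q)|}\sum_{(\varphi,\gamma)\in\Omega_\infty}\frac{1}{|\Aut_{BG}(\varphi,\gamma)(\F_q)|}\sum_{\beta\in\Br_{\bar{\mathcal{C}}}BG_{\F_q}} e(\partial_\gamma(\beta)(\varphi))\prod_{c\in\pi_0(\mathcal{C})}e(n_c\,\mathrm{cor}_{\F_q(c)/\F_q}(\partial_c(\beta)))
\]
up to an error $O(q^{|\underline{n}|-\delta\min_{c\in\mathcal{C}_{\mathrm{st}}(\bar{\F}_q)}(n_c)})$, where here I must track that the error term from Lemma~\ref{lem:Hurwitz_spaces} is uniform over $(\varphi,\gamma)$ and that $\Omega_\infty$ contributes only a bounded factor (at most $|BG[\F_q((t^{-1}))]|$, a constant depending on $|G|$) so the error survives the outer sum. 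Next I swap the order of summation, fixing $\beta$ first; the inner sum over $(\varphi,\gamma)\in\Omega_\infty$ of $e(\partial_\gamma(\beta)(\varphi))/|\Aut_{BG}(\varphi,\gamma)(\F_q)|$ is exactly the local Brauer transform $\hat{\tau}_\infty(\beta;\Omega_\infty)$ by formula~\eqref{eq:formula_local_Brauer_transform}. So the main term becomes
\[
\frac{|Z(G(\F_q))|}{|G^{\mathrm{ab}}(-1)(\F_q)|}\sum_{\beta\in\Br_{\bar{\mathcal{C}}}BG_{\F_q}}\hat{\tau}_\infty(\beta;\Omega_\infty)\,\#\Conf_{\underline{n}}(\F_q)\prod_{c\in\pi_0(\mathcal{C})}e(n_c\,\mathrm{cor}_{\F_q(c)/\F_q}(\partial_c(\beta))).
\]
By Proposition~\ref{prop:hurwitz_spaces_Tamagawa_measures}(2) — valid since for $\beta\in\Br_{\bar{\mathcal{C}}}BG_{\F_q}$ the support $\mathcal{C}$ of $\underline{n}$ lies inside $\mathcal{C}_\beta$ — each summand equals $\hat{\tau}(\beta;BG[\Adele_{\F_q(t)}]_{\underline{n},\Omega_\infty})$; moreover by Proposition~\ref{prop:hurwitz_spaces_Tamagawa_measures}(1) the Brauer transform vanishes for $\beta\notin\Br_{\bar{\mathcal{C}}}BG_{\F_q}$, so I may harmlessly extend the sum over $\beta$ to all of $\Br BG_{\F_q}$. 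Then Lemma~\ref{lem:character_orthogonality} with $B=\Br BG_{\F_q}$ (note $\tau_{BG,\F_q}$ of this set is finite, being bounded by $\#\Conf_{\underline{n}}(\F_q)\le q^{|\underline{n}|}$ via Corollary~\ref{cor:points_configuration_space} and Proposition~\ref{prop:hurwitz_spaces_Tamagawa_measures}) converts $\sum_\beta\hat{\tau}(\beta;\cdot)$ into $|\Br BG_{\F_q}|\tau_{BG,\F_q}(\,\cdot^{\Br})$, yielding the claimed formula.

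**The main obstacle** I anticipate is not any single deep step — all the hard content is in Lemmas~\ref{lem:application_landesman_levy}--\ref{lem:Hurwitz_spaces} and Proposition~\ref{prop:hurwitz_spaces_Tamagawa_measures} — but rather the careful reconciliation of the two different normalisations of ``counting with multiplicity'' and the precise matching of characters. Specifically, Lemma~\ref{lem:Hurwitz_spaces} produces $\#\CHur^{G,(\varphi,\gamma)}_{\underline{n}}(\F_q)$ with $\underline{n}\in\Hom_{\F_q}(\mathcal{C},\N)$ indexed by Galois-fixed functions, while the definition of $BG[\Adele_{K,\infty}]_{\underline{n}}$ counts points $P$ with multiplicity $[\F_q(P):\F_q(c)]$; one must check these conventions agree, which is exactly the content of the generating-function identification in the proof of Proposition~\ref{prop:hurwitz_spaces_Tamagawa_measures} comparing with Lemma~\ref{lemma:number_points_Configuration_space}. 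The other delicate point is ensuring the error term in Lemma~\ref{lem:Hurwitz_spaces} is genuinely uniform in $(\varphi,\gamma)\in BG[\F_q((t^{-1}))]$ so that summing over the (finitely many, but $q$-independent in count only after fixing $|G|$) local behaviours at $\infty$ does not degrade it; this is guaranteed by the uniformity clause in Lemma~\ref{lem:application_landesman_levy}, but should be stated explicitly. Once these are handled the proof is a three-line substitution.
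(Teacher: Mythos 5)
Your argument for the main case is essentially identical to the paper's: decompose via \Cref{lem:decomposition_counting_function}, insert \Cref{lem:Hurwitz_spaces}, recognise the sum over $\Omega_\infty$ as the local Brauer transform $\hat{\tau}_\infty(\beta;\Omega_\infty)$ via \eqref{eq:formula_local_Brauer_transform}, identify each term with a global Brauer transform via \Cref{prop:hurwitz_spaces_Tamagawa_measures}, and finish with character orthogonality; your direct use of \Cref{prop:hurwitz_spaces_Tamagawa_measures}(1) to extend the sum from $\Br_{\bar{\mathcal{C}}}BG_{\F_q}$ to all of $\Br BG_{\F_q}$ is the same content as the paper's \Cref{cor:brauer_manin_volume_same}. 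Your attention to the uniformity in $(\varphi,\gamma)$ and the finiteness of $BG[\F_q((t^{-1}))]$ is also correct.

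There is, however, one genuine gap: you assert that taking $\mathcal{C}$ to be the support of $\underline{n}$ gives $\mathcal{C}_{\mathrm{st}}\subseteq\mathcal{C}$. This does not follow from the hypotheses. The theorem only assumes that the support of $\underline{n}$ generates $G$ and that $\mathcal{C}_{\mathrm{st}}$ (fixed in advance, independently of $\underline{n}$) generates $G/Z(G)$; nothing forces the support to contain $\mathcal{C}_{\mathrm{st}}$. When some $c\in\mathcal{C}_{\mathrm{st}}$ has $n_c=0$, the hypotheses of \Cref{lem:application_landesman_levy}(2) and hence of \Cref{lem:Hurwitz_spaces} (which require $\mathcal{C}_{\mathrm{st}}\subseteq\mathcal{C}$ and $\underline{n}\in\Hom(\mathcal{C},\Z_{>0})$) fail, and your main computation does not apply. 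The fix is short and is exactly the first step of the paper's proof: in that case $\min_{c\in\mathcal{C}_{\mathrm{st}}(\bar{\F}_q)}(n_c)=0$, so the claimed error term is $O(q^{|\underline{n}|})$, and both $N(G,\underline{n},\Omega_\infty)$ (by \Cref{lem:application_landesman_levy}(1) together with \Cref{lem:decomposition_counting_function}) and the main term (bounded by $\tau_\infty(\Omega_\infty)\,\#\Conf_{\underline{n}}(\F_q)\le\tau_\infty(\Omega_\infty)\,q^{|\underline{n}|}$ using \Cref{prop:hurwitz_spaces_Tamagawa_measures} and \Cref{cor:points_configuration_space}) are absorbed into it, so the statement holds trivially. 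You need to add this case distinction before running your computation.
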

\begin{proof}
    We first do the case that the support of $\underline{n}$ does not contain $\mathcal{C}_{\mathrm{st}}$. It then suffices to show that the error term dominates the right-hand side by \Cref{lem:application_landesman_levy} and \Cref{lem:decomposition_counting_function}.
    
    The error term dominates as 
    \[
    \tau_{BG,  \F_q}( BG[\Adele_{\F_{q}(t)}
    ]_{\underline{n}, \Omega_{\infty}}^{\Br}) \leq \tau_{BG,  \F_q}( BG[\Adele_{\F_{q}(t)}
    ]_{\underline{n}, \Omega_{\infty}}) = \tau_{\infty}(\Omega_{\infty})\Conf_{\mathcal{C}_G^*, \underline{n}}(\F_q) \leq \tau_{\infty}(\Omega_{\infty})q^{|n|}
    \]from \Cref{prop:hurwitz_spaces_Tamagawa_measures} and \Cref{cor:points_configuration_space}.

    Assume now that the support $\mathcal{C}$ of $\underline{n}$ contains $\mathcal{C}_{\mathrm{st}}$. By combining \Cref{lem:Hurwitz_spaces}, \Cref{lem:decomposition_counting_function}, \eqref{eq:formula_local_Brauer_transform} and \Cref{prop:hurwitz_spaces_Tamagawa_measures} we find that
    \[
    N(G, \underline{n} ,\Omega_{\infty})  = \frac{|Z(G(\F_q))|}{|G^{\mathrm{ab}}(-1)(\F_q)|} \sum_{\beta \in \Br_{\bar{\mathcal{C}}} BG_{\bar{\F}_q}} \hat{\tau}(\beta ;BG[\Adele_{\F_{q}(t)}
    ]_{\underline{n}, \Omega_{\infty}}) + O(q^{|\underline{n}|- \delta \min_{c \in \mathcal{C}_{\mathrm{st}}(\bar{\F}_q)}(n_c)}).
    \]
    The sum is equal to $|\Br BG_{\F_q}|  \tau_{BG,  \F_q}( BG[\Adele_{\F_{q}(t)}
    ]_{\underline{n}, \Omega_{\infty}}^{\Br})$ by \Cref{lem:character_orthogonality} and \Cref{cor:brauer_manin_volume_same}.
\end{proof}
\begin{remark} \hfill
\begin{enumerate}
    \item This theorem can be understood as a multi-height variant of Malle's conjecture over function fields, as introduced for number fields in \cite{gundlach2022mallesconjecturemultipleinvariants}.

    \item We will apply \Cref{thm:main_theorem_multiheight} to count extensions via the usual notion of a height function. But it seems plausible that one can also apply it to count $G$-extensions of $\F_q(t)$ for more exotic orderings.

    \item One can consider the adelic set $ BG[\Adele_{\F_{q}(t)}]_{\underline{n}, \Omega_{\infty}}$ as an \emph{adelic height ball} for the stack $BG$ in the sense of \cite[\S1.3]{Chambert-Loir2010Igusa}. Such adelic height balls were introduced and studied to provide a heuristic justification for Manin's conjecture and Peyre's constant. We will study the asymptotic properties of such height balls with the same techniques as in that paper.
    \end{enumerate}
\end{remark}
\subsection{Analytic number theory}
The goal of this section is to deduce \Cref{thm:main_theorem} from \Cref{thm:main_theorem_multiheight}. Let $f \in \Hom_{\F_q}(\mathcal{C}_G^*, \Z_{> 0})$. This is a weight function in the sense of \cite[Def.~3.8]{Loughran2025mallesconjecturebrauergroups}. 
We define $a(f) = \min_{c \in \mathcal{C}_G^*} (f(c))^{-1}$, $b(\F_q, f) = |\pi_0(\{ c \in \mathcal{C}_G^* : f(c) = a(f)^{-1}\})|$ as in \cite[Lem.~3.26]{Loughran2025mallesconjecturebrauergroups}. For $\underline{n} \in \Hom(\mathcal{C}_G^*, \N)$ we write $f(\underline{n}) = \sum_{c \in \mathcal{C}_G^*(\bar{\F}_q)} f(n_c)$.

We will first deal with the error term.
\begin{lemma}\label{lem:error_term}
    Let $\delta > 0$. Then for all $d \in \N$ we have
    \[
    \sum_{\substack{\underline{n} \in \Hom_{\F_q}(\mathcal{C}_G^*, \N) \\ f(\underline{n}) = d}} q^{|\underline{n}|  - \min(\underline{n}_{\min}) \delta} = O_f\left(d^{b(\F_q, f) - 2}\frac{q^{a(f)d}}{1 - q^{-\delta}}\right).
    \]
\end{lemma}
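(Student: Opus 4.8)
\textit{Setup and interpretation.} Write $\mu := a(f)^{-1} = \min_{c \in \mathcal{C}_G^*} f(c) \in \Z_{>0}$, so that $\mathcal{C}_f = \{c : f(c) = \mu\}$ and $b := b(\F_q,f)$ is the number of its Galois orbits; I read $\min(\underline{n}_{\min})$ as $\min_{c \in \mathcal{C}_f(\bar{\F}_q)} n_c$. The plan is to recognise the sum in question,
$S := \sum_{f(\underline{n}) = d} q^{|\underline{n}| - \delta\,\min(\underline{n}_{\min})}$,
as the $d$-th coefficient of an explicit rational generating function and then apply singularity analysis. Partition the Galois orbits of $\mathcal{C}_G^*$ into those contained in $\mathcal{C}_f$, of sizes $w_1,\dots,w_b$ (all with weight $\mu$) and total size $W := \sum_i w_i = |\mathcal{C}_f(\bar{\F}_q)|$, and those outside $\mathcal{C}_f$, of sizes $w'_1,\dots,w'_r$ and weights $f'_j \geq \mu+1$; a Galois-invariant $\underline{n}$ is a tuple $(n_1,\dots,n_b,m_1,\dots,m_r)\in\N^{b+r}$ with $|\underline{n}| = \sum_i w_i n_i + \sum_j w'_j m_j$, $f(\underline{n}) = \mu\sum_i w_i n_i + \sum_j f'_j w'_j m_j$, and $\min(\underline{n}_{\min}) = \min_i n_i$.

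\textit{The generating function.} First I would compute $\Phi(x) := \sum_{\underline{n}} q^{|\underline{n}| - \delta\min_i n_i}\,x^{f(\underline{n})}$. The orbits outside $\mathcal{C}_f$ contribute the factor $\prod_j (1 - q^{w'_j} x^{f'_j w'_j})^{-1}$. For the orbits inside $\mathcal{C}_f$, conditioning on $k := \min_i n_i$, substituting $n_i = k + p_i$, and using the inclusion--exclusion identity $\sum_{\underline{p}\in\N^b,\ \min_i p_i = 0} u^{\sum_i w_i p_i} = (1 - u^W)\big/\prod_i(1 - u^{w_i})$ yields
\[
\Phi(x) \;=\; \frac{1 - (q x^{\mu})^{W}}{\bigl(1 - q^{W-\delta} x^{\mu W}\bigr)\prod_{i=1}^{b}\bigl(1 - (q x^{\mu})^{w_i}\bigr)}\;\cdot\;\prod_{j=1}^{r}\frac{1}{1 - q^{w'_j} x^{f'_j w'_j}},
\]
a rational function whose radius of convergence is at least $q^{-a(f)}$.

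\textit{Singularity analysis for $b\geq 2$.} I would then locate the poles. The factors $1 - (q x^{\mu})^{w_i}$ have poles of modulus $q^{-1/\mu} = q^{-a(f)}$; the factor $1 - q^{W-\delta} x^{\mu W}$ a pole of modulus $q^{-a(f)} q^{\delta/(\mu W)} > q^{-a(f)}$; the factors $1 - q^{w'_j} x^{f'_j w'_j}$ poles of modulus $q^{-1/f'_j} \geq q^{-1/(\mu+1)} > q^{-a(f)}$. Hence for $b \geq 2$ the dominant singularities lie on $|x| = q^{-a(f)}$ and come only from $\prod_i(1 - (q x^{\mu})^{w_i})$; at each such point this product vanishes to some order $\leq b$ and, whenever it vanishes to order exactly $b$, the numerator $1 - (q x^{\mu})^{W}$ vanishes to order $1$; since moreover the other factors are holomorphic and nonzero there with $\bigl(1 - q^{W-\delta} x^{\mu W}\bigr)^{-1}$ equal to $(1-q^{-\delta})^{-1}$ at $x = q^{-a(f)}$, the pole order of $\Phi$ on that circle is at most $b-1$ with leading coefficients bounded by $C_f/(1-q^{-\delta})$, with $C_f$ depending only on $f$ and $q$. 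A partial-fraction expansion of $\Phi$ then gives $S = [x^d]\Phi(x) = O_f\bigl(d^{b-2}(1-q^{-\delta})^{-1} q^{a(f)d}\bigr)$.

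\textit{The case $b = 1$ and the main obstacle.} It remains to handle $b = 1$, where the numerator cancels $\prod_i(1 - (q x^{\mu})^{w_i}) = 1 - (q x^{\mu})^{w_1}$, so that $\Phi(x) = (1 - q^{w_1-\delta} x^{\mu w_1})^{-1}\prod_j(1 - q^{w'_j} x^{f'_j w'_j})^{-1}$ has all its poles of modulus $\rho > q^{-a(f)}$; thus $[x^d]\Phi(x)$ is bounded by a fixed power of $d$ times $\rho_{\min}^{-d}$, and since $\rho_{\min}^{-d} \leq \max\bigl(q^{a(f)d}\,q^{-\delta a(f)d/w_1},\, q^{a(f)d}\,q^{-d/(\mu(\mu+1))}\bigr)$, the elementary estimates $q^{-\delta a(f)d/w_1} \leq C'_f\big/(d(1-q^{-\delta}))$ (coming from $\sup_{t>0} t\,e^{-ct} < \infty$) and $q^{-d/(\mu(\mu+1))}\cdot(\text{power of }d) = O_f(d^{-1})$ give $S = O_f\bigl(d^{-1}(1-q^{-\delta})^{-1} q^{a(f)d}\bigr)$, as required. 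The delicate point, and the step I expect to be the main obstacle, is uniformity in $\delta$: when $b = 1$ the $\delta$-dependent pole can coincide in modulus with one of the fixed poles $q^{-1/f'_j}$, momentarily raising the order to $2$; but this occurs only for $\delta \geq w_1/(\mu+1)$, where $1-q^{-\delta}$ is bounded below, so there it suffices to prove the $\delta$-free bound $S = O_f(d^{-1}q^{a(f)d})$, which still follows from the surviving gap $q^{-d/(\mu(\mu+1))}$ between that pole's modulus and $q^{-a(f)}$. Once the explicit factor $(1-q^{-\delta})^{-1}$ has been pulled out, all residues and pole orders are controlled purely in terms of $f$ and $q$, so every implied constant is of the form $O_f$.
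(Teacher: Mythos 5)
Your route is genuinely different from the paper's: you package the sum as the $d$-th coefficient of an explicit rational generating function $\Phi(x)$ and do singularity analysis, whereas the paper conditions on which orbit $\mathfrak{c}\in\pi_0(\mathcal{C}_f)$ attains the minimum and on its value $k$, pulls out $q^{-\delta k}$, bounds the remaining count by lattice points in a slice of a simplex ($\ll d^{b(\F_q,f)-2}$), and sums the geometric series in $k$ to produce the $(1-q^{-\delta})^{-1}$. Your computation of $\Phi$ is correct (the inclusion--exclusion identity, the location of the poles, the cancellation of one order of the critical poles by the numerator $1-(qx^\mu)^W$, and the reduction of the maximal pole order on $|x|=q^{-a(f)}$ to $b-1$ all check out), and the $b=1$ case is handled carefully.

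However, there is a genuine gap in the sentence ``a partial-fraction expansion of $\Phi$ then gives $S=O_f(\cdots)$'' for $b\geq 3$, precisely on the uniformity in $\delta$ that the $O_f$ and the explicit factor $(1-q^{-\delta})^{-1}$ demand (and which the paper's proof delivers). The partial-fraction expansion contains terms you do not control by the leading coefficients alone: (i) the coefficients of $(1-x/x_0)^{-m}$ for $m<b-1$ at the critical poles involve derivatives of $(1-q^{W-\delta}x^{\mu W})^{-1}$ and are of size $(1-q^{-\delta})^{-(b-m)}$, and (ii) the simple pole at $x_\delta$ with $x_\delta^{\mu W}=q^{\delta-W}$ has residue of size $\asymp (1-q^{-\delta})^{1-b}$, since $\prod_i(1-u^{w_i})\asymp\delta^{\,b}$ there while the numerator contributes only one factor of $\delta$. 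For $b\geq 3$ and $\delta\lesssim 1/d$ these individual contributions exceed the target: e.g.\ with $b=3$ and $\delta=d^{-2}$ the term from $x_\delta$ is $\asymp d^{4}q^{a(f)d}$ while the claimed bound is $\asymp d^{3}q^{a(f)d}$. The requisite cancellation between the colliding poles is real but is not visible term by term. The fix is exactly the kind of case split you already carry out for $b=1$: for $\delta d\leq C$ drop the factor $q^{-\delta\min}$ entirely and use that the number of $\underline{n}$ supported on $\mathcal{C}_f$ with $f(\underline{n})=d$ is $\ll d^{b-1}\ll d^{b-2}(1-q^{-\delta})^{-1}$ in that range, and for $\delta d\geq C$ verify that all the terms above are dominated. (Alternatively, expand $\Phi$ as the product of the three nonnegative power series $\sum_k q^{-\delta k}u^{kW}$, $\sum_{\min p_i=0}u^{\sum w_ip_i}$ and the outside-$\mathcal{C}_f$ factor and sum over $k$ directly --- which is essentially the paper's argument.)
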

\begin{proof}
    Let $\mathfrak{c} \in \pi_0(\mathcal{C}_f)$ and let $a(f)^{-1} < A \in \Z$ be such that $f(c) \geq A$ for all $c \in \mathcal{C}_G^* \setminus \mathcal{C}_f$. The contribution to this sum with $n_\mathfrak{c} = \min_{c \in \pi_0(\mathcal{C}_f)} n_c$ is then bounded by
    \[
    \sum_{\substack{\underline{m} \in \Hom_{\F_q}(\mathcal{C}_{G}^* \setminus \mathcal{C}_f, \N) \\ f(\underline{m}) \leq d }} q^{|\underline{m}|}
    \sum_{k = 0}^{\frac{d - f(\underline{m})}{[\F_q(\mathfrak{c}): \F_q]}} q^{[\F_q(\mathfrak{c}): \F_q]k - \delta k}
     \sum_{\substack{\underline{n} \in \Hom_{\F_q}(\mathcal{C}_G^* \setminus \mathfrak{c}, \N) \\\ f(\underline{n}) = d - f(\underline{m}) -  k[\F_q(\mathfrak{c}): \F_q]f(\mathfrak{c})}} q^{|\underline{n}|}.
    \]

    For the inner sum $f(\underline{n}) = a(f)^{-1} |\underline{n}|$. The inner sum is thus constant so standard estimates for lattice points in a simplex show that it is
        \[
    \ll  d^{b(\F_q, f) - 2}q^{a(f)(d - f(\underline{m}) -[\F_q(\mathfrak{c}): \F_q]k)} \ll d^{b(\F_q, f) - 2}q^{a(f)d} q^{ - a(f) A|\underline{m}|} q^{[\F_q(\mathfrak{c}): \F_q]k}.
    \]

    The outer sums are geometric series which converge when extended to $\infty$. The total contribution is thus $\ll d^{b(\F_q, f) - 2} q^{a(f) d}(1 - q^{- \delta})^{-1}$. The lemma follows by summing over $\mathfrak{c}$.
\end{proof}

We will now consider the main term. Let $\Omega_{\infty} \subset BG[\F_q((t^{-1}))]$. For each $d$ we will consider the following approximate main term
\[
N_{f, \text{main}}(G, q^d, \Omega_{\infty}) := \sum_{\substack{\underline{n} \in \Hom(\mathcal{C}_G^*, \N) \\ f(\underline{n}) = d}} |\Br BG_{\F_q}| \tau_{BG,  \F_q}( BG[\A_{\F_{q}(t)}]_{\underline{n}, \Omega_{\infty}}^{\Br}).
\]

Let $F(X) := \sum_{d = 0}^{\infty} N_{f, \text{main}}(G, q^d, \Omega_{\infty}) X^d$ be the generating series, this can be explicitly computed as a sum of Euler products.
\begin{lemma}\label{lem:decompose_main_term_Brauer}
    We have $F(X) = \sum_{\beta \in \Br BG_{\F_q}} \hat{\tau}_{\infty}(\beta ; \Omega_{\infty})  F_{\beta}(X)$ with
    \[
    F_{\beta}(X) := \prod_{P \in \A^1_{\F_q}}  \left( 1 + \sum_{c \in \mathcal{C}_{\beta}(\F_q(P))} e(\mathrm{cor}_{\F_q(P)/\F_q}(\partial_c(\beta)))X^{(\deg P) f(c)} \right).
    \]
\end{lemma}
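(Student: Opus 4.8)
The plan is to expand the definition of $N_{f,\text{main}}(G,q^d,\Omega_\infty)$ by applying \Cref{lem:character_orthogonality} to rewrite the Brauer--Manin-restricted Tamagawa volume as a sum over Brauer elements, then assemble the generating series. Concretely, for each $\underline n$ with $f(\underline n) = d$ and support equal to some $\mathcal{C}$ generating $G$, \Cref{lem:character_orthogonality} with $B = \Br BG_{\F_q}$ gives
\[
|\Br BG_{\F_q}|\,\tau_{BG,\F_q}\bigl(BG[\Adele_{\F_q(t)}]_{\underline n,\Omega_\infty}^{\Br}\bigr) = \sum_{\beta \in \Br BG_{\F_q}} \hat\tau\bigl(\beta\,;\,BG[\Adele_{\F_q(t)}]_{\underline n,\Omega_\infty}\bigr).
\]
Then I would substitute the factorization $\hat\tau(\beta\,;\,BG[\Adele_{\F_q(t)}]_{\underline n,\Omega_\infty}) = \hat\tau_\infty(\beta\,;\,\Omega_\infty)\cdot\hat\tau(\beta\,;\,\underline n)$ from the proof of \Cref{prop:hurwitz_spaces_Tamagawa_measures}, where $\hat\tau(\beta\,;\,\underline n)$ is the coefficient of $\underline T^{\underline n}$ in the Brauer-twisted Igusa integral of \Cref{lem:Brauer_twisted_Igusa_integral}.

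Next I would interchange the finite sum over $\beta$ with the sum over $\underline n$ and package the $\underline n$-sum as a one-variable generating series. The key bookkeeping point is the substitution $X^d = X^{f(\underline n)} = \prod_{c\in\pi_0(\mathcal{C}_G^*)} (X^{f(c)})^{n_c}$, i.e.\ specializing the formal variable $T_c$ in \Cref{lem:Brauer_twisted_Igusa_integral} to $T_c = X^{f(c)}$ (using that $f$ is Galois-invariant so this is well-defined on $\pi_0$). Under this specialization the Euler product $\prod_{P\in\A^1_{\F_q}}\bigl(1 + \sum_{c\in\mathcal{C}_\beta(\F_q(P))} e(\mathrm{cor}_{\F_q(P)/\F_q}(\partial_c(\beta)))\,T_c^{\deg P}\bigr)$ becomes exactly $F_\beta(X)$ as in the statement, since $T_c^{\deg P} \mapsto X^{(\deg P)f(c)}$. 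One needs to note that only $\underline n$ supported on $\mathcal{C}_\beta$ contribute to $F_\beta$ (the others give zero coefficient by \Cref{prop:hurwitz_spaces_Tamagawa_measures}(1)), so no extra restriction on the support of $\underline n$ is needed in the final formula; and for $\underline n$ whose support does not generate $G$ the term $N(G,\underline n,\Omega_\infty)$ is not what we are computing here, but $N_{f,\text{main}}$ is defined purely adelically so that subtlety does not arise. Summing over $d$ then yields $F(X) = \sum_{d=0}^\infty\bigl(\sum_{\beta}\hat\tau_\infty(\beta\,;\,\Omega_\infty)\,[\underline T^{\underline n}\!:\,f(\underline n)=d]\bigr)X^d = \sum_{\beta\in\Br BG_{\F_q}}\hat\tau_\infty(\beta\,;\,\Omega_\infty)\,F_\beta(X)$.

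I do not anticipate a serious obstacle: everything is a formal manipulation of absolutely/conditionally convergent Euler products together with the two previously established identities (\Cref{lem:character_orthogonality} and \Cref{lem:Brauer_twisted_Igusa_integral}). The one place that needs a little care is justifying the interchange of the (finite) sum over $\beta$ with the infinite sum over $d$ and the rearrangement into a product over $P$ --- this is legitimate because for each fixed $d$ only finitely many $\underline n$ with $f(\underline n)=d$ occur (as $f$ takes positive integer values, $f(\underline n)=d$ forces $|\underline n| \le d/a(f)$), so each coefficient $N_{f,\text{main}}(G,q^d,\Omega_\infty)$ is a finite sum and the generating-series identity is an identity of formal power series, hence can be checked coefficient-by-coefficient. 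The convergence of $F_\beta(X)$ as an actual analytic function on a suitable disc is a separate matter handled later; here we only need the formal identity. A brief remark that $\hat\tau_\infty(\beta\,;\,\Omega_\infty)$ is independent of $\underline n$, so it pulls out of the $\underline n$-sum, completes the argument.
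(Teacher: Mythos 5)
Your proposal is correct and follows essentially the same route as the paper: apply \Cref{lem:character_orthogonality}, factor off the local term at $\infty$, and identify the remaining generating series with the Brauer-twisted Igusa integral of \Cref{lem:Brauer_twisted_Igusa_integral} under the specialization $T_c = X^{f(c)}$. The paper justifies the final interchange by countable additivity of the measure rather than by a formal power-series argument, but this is an inessential difference.
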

\begin{proof}
    By \Cref{lem:character_orthogonality} we have 
    \[
     |\Br BG_{\F_q}|\tau_{BG,  \F_q}( BG[\Adele_{\F_{q}(t)}]_{\underline{n}, \Omega_{\infty}}^{\Br}] = \sum_{\beta \in \Br BG} \hat{\tau}(\beta ; BG[\Adele_{\F_{q}(t)}]_{\underline{n}, \Omega_{\infty}}).  \]

     We thus find that $F(X)$ is a sum over $\beta$ with terms
     \[
     \hat{\tau}_{\infty}(\beta ; \Omega_{\infty})\sum_{\underline{n} \in \Hom(\mathcal{C}_G^*, \N)}  X^{f(\underline{n})} \int_{BG[\Adele_{K, \infty}]_{\underline{n}}} e^{2 i \pi \langle \varphi, \beta \rangle_{\mathrm{BM}}} d\tau_{BG, \Adele_{K, \infty}} .
     \]

     As measures are countably additive this is equal to the left-hand side of \eqref{eq:formula_global_Brauer_transform} with $T_c = X^{f(c)}$. We are then done by Lemma \ref{lem:Brauer_twisted_Igusa_integral}.
\end{proof}
We recall that the zeta function $Z_{\A^1_{\F_q}}(X) := \prod_{P \in \A^1_{\F_q}}(1 - X^{- \deg P})^{-1}$ of $\A^1_{\F_q}$ converges absolutely for $|X| < q^{-1}$ and is equal to $(1 - q X)^{-1}$. We need the following analytic fact.
\begin{lemma}\label{lem:conditional_convergence}
    The Euler product $ \prod_{P \in \A^1_{\F_q}}(1 - X^{- \deg P})^{-1}$ converges conditionally to $Z_{\A^1_{\F_q}}(X)$ for all $|X| = q^{-1}$ with $X \neq q^{-1}$
\end{lemma}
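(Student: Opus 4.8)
The plan is to take logarithms, group the Euler factors by the degree of the closed point, and reduce the statement to the conditional convergence of $\sum_{n\ge 1}(qX)^n/n$ plus an absolutely convergent remainder. Fix $X$ with $|X|=q^{-1}$ and $X\ne q^{-1}$, and set $u:=qX$, so $|u|=1$ and $u\ne 1$; let $\pi_q(n)$ denote the number of closed points of $\A^1_{\F_q}$ of degree $n$ and let $\log$ be the principal branch. Since $|X^n|=q^{-n}<1$, the truncated product $P_{\le N}(X):=\prod_{\deg P\le N}(1-X^{\deg P})^{-1}$ equals $\exp(\mathcal L_N)$ with $\mathcal L_N:=-\sum_{n=1}^{N}\pi_q(n)\log(1-X^n)$, and by continuity of $\exp$ it suffices to show $\mathcal L_N\to-\log(1-u)$.

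First I would feed in the prime polynomial theorem $\pi_q(n)=\frac1n\sum_{d\mid n}\mu(d)q^{n/d}=\frac{q^n}{n}+O(q^{n/2})$ together with $-\log(1-X^n)=\sum_{k\ge1}X^{nk}/k$ to write
\[
-\pi_q(n)\log(1-X^n)=\frac{u^n}{n}+a_n+b_n,
\]
where $a_n:=\big(\pi_q(n)-q^n/n\big)X^n$ and $b_n:=\pi_q(n)\sum_{k\ge2}X^{nk}/k$. On $|X|=q^{-1}$ one gets $|a_n|\ll q^{-n/2}$ and $|b_n|\ll q^{-n}/n$, so $\sum_n a_n$ and $\sum_n b_n$ are absolutely convergent, while $\sum_{n\ge1}u^n/n$ converges by Dirichlet's test (as $u\ne1$) and sums to $-\log(1-u)$ by Abel's theorem. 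Hence $\mathcal L_N$ converges, to $-\log(1-u)+A+B$ with $A:=\sum_n a_n$ and $B:=\sum_n b_n$; the same computation shows the convergence is only conditional, since $\sum_P|X^{\deg P}|=\sum_n\pi_q(n)q^{-n}$ diverges like $\sum_n 1/n$.

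It then remains to show $A+B=0$. For this I would use the fact recalled just above that for $|X|<q^{-1}$ the Euler product converges absolutely to $Z_{\A^1_{\F_q}}(X)=(1-qX)^{-1}$, so that $-\sum_{n\ge1}\pi_q(n)\log(1-X^n)=-\log(1-qX)$ on the open disc; subtracting $\sum_{n\ge1}(qX)^n/n=-\log(1-qX)$ gives $\sum_{n\ge1}\big(a_n(X)+b_n(X)\big)=0$ there, where $a_n,b_n$ are now regarded as functions of $X$. Because the bounds on $a_n$ and $b_n$ are uniform for $|X|\le q^{-1}$, this series converges uniformly on the closed disc, so its sum is continuous and, being identically $0$ on the open disc, is $0$ on the boundary; evaluating at $X=q^{-1}u$ yields $A+B=0$. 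Therefore $\mathcal L_N\to-\log(1-u)$ and $P_{\le N}(X)\to e^{-\log(1-u)}=(1-u)^{-1}=(1-qX)^{-1}=Z_{\A^1_{\F_q}}(X)$.

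The routine ingredients are the prime polynomial theorem and Abel's theorem for $\sum u^n/n$; the one delicate point I expect is propagating the identity $A(X)+B(X)=0$ from the open disc of absolute convergence to its boundary, where the uniformity of the remainder estimates up to $|X|=q^{-1}$ is exactly what makes the boundary value well defined. Keeping track of the principal branch of the logarithm is only minor bookkeeping, harmless because each $1-X^n$ lies in the right half-plane.
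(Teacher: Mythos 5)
Your proof is correct and follows the same route the paper takes (and that it borrows from Titchmarsh): take logarithms, invoke the prime polynomial theorem to split off the main term $\sum_n (qX)^n/n$ from an absolutely convergent remainder, and use Abel summation plus continuity up to the boundary to identify the limit as $(1-qX)^{-1}$. You have simply filled in the details the paper leaves implicit, including the correct reading of the exponent as $X^{\deg P}$, so nothing further is needed.
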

\begin{proof}
    The analogous statement for the Riemann zeta function is well-known \cite[\S3.15]{Titchmarsh1986Zeta} and the same argument works (easier) over function fields. To spell it out, taking logarithms it suffices to show that the following sum converges conditionally
    \[
        \sum_{P \in \A^1_{\F_q}} \sum_{m = 1}^{\infty} \frac{X^{m\deg P}} {m} =  \sum_{n} \sum_{m = 1}^{\infty} \frac{X^{nm}}{m}\#\{P \in \A^1_{\F_q}: \deg P = n\} 
    \]
    This is an immediate consequence of the fact that $\#\{P \in    \A^1_{\F_q}: \deg P = n\} = \frac{q^{n}}{n} + O(q^{\frac{n}{2}})$ (the prime number theorem over function fields), partial summation, and the formula for a geometric series.
\end{proof}
Recall that the map $e: \HH^1(\F_q, \Q/\Z) \to \C^{\times}$ is injective.
\begin{lemma}\label{lem:analytic_properties_F_beta}
Let $\beta \in \Br BG_{\F_q}$. The power series $F_{\beta}(X)$ converges absolutely for $|X| < q^{-a(f)}$. Moreover, there exists $\delta > 0$ such that it has a meromorphic continuation to the disc $|X| < q^{- a(f) + \delta}$. All poles of $F_{\beta}(X)$ on the circle $|X| = q^{-a(f)}$ have order at most $b(f, \F_q)$ and the only poles on this circle with order equal to $b(f, \F_q)$ are of the form $e(-\alpha)q^{-a(f)}$ with $\alpha \in \HH^1(\F_q, \Q/\Z)$ such that $\beta \in \Br_{\mathcal{C}, a(f)^{-1} \cdot \alpha} BG_{\F_q}$. 

If $\beta \in \Br_{\mathcal{C}, \alpha} BG_{\F_q}$ then $\lim_{X \to e(-\alpha) q^{-a(f)}} (1 - e(\alpha)q^{a(f)}X)^{b(f, \F_q)} F_{\beta}(X)$ is equal to the following conditionally convergent Euler product
\[
a(f)^{b(f, \F_q)}\prod_{P \in \mathbf{A}^1_{\F_q}}(1 - q^{-\deg P})^{b(\F_q, f)}\left( 1 + \sum_{c \in \mathcal{C}_{\beta}(\F_q(P))} e(\mathrm{cor}_{\F_q(P)/\F_q}(\partial_c(\beta))) (e(\alpha)q^{a(f)})^{-(\deg P) f(c)} \right).
\]
\end{lemma}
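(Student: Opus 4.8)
The plan is to analyze $F_\beta(X)$ by comparing it to a product of shifted $\mathbb{A}^1$-zeta functions, exactly as one does for $L$-functions of Artin type over function fields. First I would rewrite the Euler product: each local factor is $1 + \sum_{c \in \mathcal{C}_\beta(\mathbb{F}_q(P))} e(\mathrm{cor}_{\mathbb{F}_q(P)/\mathbb{F}_q}(\partial_c(\beta)))X^{(\deg P)f(c)}$, and since $f(c) \geq a(f)^{-1}$ with equality precisely on $\mathcal{C}_f$, the ``dominant'' part of this factor comes from the conjugacy classes $c \in \pi_0(\mathcal{C}_f)$ with $f(c) = a(f)^{-1}$. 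So I would factor out, for each such $c$, a geometric-series-type contribution $(1 - \chi_c(\mathrm{Frob}_P) X^{(\deg P)a(f)^{-1}})^{-1}$ where $\chi_c$ is built from $e(\mathrm{cor}(\partial_c(\beta)))$ when $\partial_c(\beta)$ is algebraic, and simply $1$ (i.e. no pole contribution) when $\partial_c(\beta)$ is not algebraic. The key point is that $\beta \in \Br_{\mathcal{C}, \alpha} BG_{\mathbb{F}_q}$ means exactly that $\partial_c(\beta) = \alpha|_{\mathbb{F}_q(c)}$ for all $c \in \pi_0(\mathcal{C})$, so on $\mathcal{C}_f \subset \mathcal{C}$ the character $\chi_c$ is the restriction of the character $X \mapsto e(\alpha) q^{a(f)} X$ (up to the substitution $X \to X^{a(f)^{-1}}$). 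This gives $b(f,\mathbb{F}_q)$ factors each contributing a copy of the $\mathbb{A}^1$-zeta function of $\mathbb{F}_q(c)$ evaluated at a twisted argument, hence the pole of order $b(f,\mathbb{F}_q)$ at $X = e(-\alpha)q^{-a(f)}$.

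Concretely I would write
\[
F_\beta(X) = \left(\prod_{c \in \pi_0(\mathcal{C}_f),\ \partial_c(\beta) \text{ algebraic}} Z_{\mathbb{A}^1_{\mathbb{F}_q(c)}}\big((e(\partial_c(\beta))^{-1} X^{a(f)^{-1}})^{[\mathbb{F}_q(c):\mathbb{F}_q]}\big)\right) G_\beta(X),
\]
where $G_\beta(X)$ is the ``corrected'' Euler product whose local factors are the original ones divided by the extracted geometric factors. Using $Z_{\mathbb{A}^1_{\mathbb{F}_q}}(X) = (1-qX)^{-1}$ and its analogue over $\mathbb{F}_q(c)$, each extracted factor is $(1 - q^{[\mathbb{F}_q(c):\mathbb{F}_q]} e(\partial_c(\beta))^{-[\mathbb{F}_q(c):\mathbb{F}_q]} X^{[\mathbb{F}_q(c):\mathbb{F}_q]a(f)^{-1}})^{-1}$; since $\mathrm{cor}_{\mathbb{F}_q(c)/\mathbb{F}_q}(\partial_c(\beta))(\mathrm{Frob}_q) = \partial_c(\beta)(\mathrm{Frob}_q^{[\mathbb{F}_q(c):\mathbb{F}_q]})$, when $\beta \in \Br_{\mathcal{C},\alpha}BG$ this becomes $(1 - e(\alpha)q^{a(f)}X^{a(f)^{-1}} \cdot \text{(root of unity)})$-type expressions whose product over the $[\mathbb{F}_q(c):\mathbb{F}_q]$ conjugates yields precisely a pole at $X = e(-\alpha)q^{-a(f)}$ (after the $a(f)^{-1}$-th power substitution, one tracks which $a(f)^{-1}$-th roots of $q^{-a(f)}$ survive; the periodicity statement $\alpha \in \HH^1(\mathbb{F}_q, a(f)|G|^{-2}\mathbb{Z}/\mathbb{Z})$ comes from $\Br_{\bar{\mathcal{C}}}BG$ being $|G|^2$-torsion by \Cref{lem:exact_sequence_brauer_group}). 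The remaining factor $G_\beta(X)$: its local factor at $P$ is $1 + O(X^{(\deg P)\cdot A a(f)^{-1}})$ with $A > 1$ an integer bounding $f$ on $\mathcal{C}_G^* \setminus \mathcal{C}_f$ from below (for the non-dominant classes) and $1 + O(X^{2(\deg P)a(f)^{-1}})$-type corrections from expanding the extracted geometric series — so by the prime number theorem over function fields ($\#\{P : \deg P = n\} = q^n/n + O(q^{n/2})$) and the standard estimate comparing such an Euler product to a power of $Z_{\mathbb{A}^1}$, $G_\beta(X)$ converges absolutely on $|X| < q^{-a(f)+\delta}$ for some $\delta > 0$. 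This simultaneously gives the absolute convergence of $F_\beta$ on $|X| < q^{-a(f)}$, the meromorphic continuation, and the claim that the only order-$b(f,\mathbb{F}_q)$ poles on $|X| = q^{-a(f)}$ are of the stated form (for other $\beta$, either $\mathcal{C}_\beta \not\supseteq \pi_0(\mathcal{C}_f)$ and the pole order drops, or the residues on $\mathcal{C}_f$ are not all restrictions of a single $\alpha$ and again the extracted factors don't all have a common pole on the circle).

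For the residue formula, I would compute $\lim_{X \to e(-\alpha)q^{-a(f)}}(1 - e(\alpha)q^{a(f)}X)^{b(f,\mathbb{F}_q)} F_\beta(X)$ by substituting the factorization above: the $b(f,\mathbb{F}_q)$ extracted factors contribute, at the limit point, exactly the renormalization $a(f)^{b(f,\mathbb{F}_q)}$ (the $a(f)$ appearing from the chain rule / the $X \to X^{a(f)^{-1}}$ substitution's derivative, one power per factor — this is the source of the $a(f)^{b(f,\mathbb{F}_q)}$ versus $a(f)^{b(f,\mathbb{F}_q(t))-1}$ discrepancy flagged in the paper), while $G_\beta(e(-\alpha)q^{-a(f)})$ is the value of the corrected Euler product. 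To put this in the closed form stated — the product of $(1-q^{-\deg P})^{b(\mathbb{F}_q,f)}$ times the full original local factor evaluated at $X = e(-\alpha)q^{-a(f)}$ — I would instead multiply and divide $F_\beta(X)$ by $\prod_P (1 - X^{(\deg P)a(f)^{-1}})^{b(\mathbb{F}_q,f)}$, recognize $\prod_P(1-X^{(\deg P)a(f)^{-1}})^{-b(\mathbb{F}_q,f)} = Z_{\mathbb{A}^1_{\mathbb{F}_q}}(X^{a(f)^{-1}})^{b(\mathbb{F}_q,f)} = (1 - qX^{a(f)^{-1}})^{-b(\mathbb{F}_q,f)}$, which has a pole of order $b(\mathbb{F}_q,f)$ at $X = e(-\alpha)q^{-a(f)}$ with the residue factor $a(f)^{b(f,\mathbb{F}_q)}$, and argue that the complementary product $\prod_P (1-X^{(\deg P)a(f)^{-1}})^{b(\mathbb{F}_q,f)}\big(1 + \sum_{c}\cdots X^{(\deg P)f(c)}\big)$ converges \emph{conditionally} on $|X| = q^{-a(f)}$, $X \neq q^{-a(f)}$, by the same argument as \Cref{lem:conditional_convergence} (the leading terms cancel: on $\mathcal{C}_f$ the $X^{(\deg P)f(c)} = X^{(\deg P)a(f)^{-1}}$ terms are matched, with the character $e(\mathrm{cor}(\partial_c\beta))$ versus the trivial character contributing the twist by $e(\alpha)$, which is why the argument in the final Euler product is $(e(\alpha)q^{a(f)})^{-(\deg P)f(c)}$). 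The main obstacle I anticipate is bookkeeping the $a(f)^{-1}$-th-power substitution cleanly — in particular verifying that exactly the poles at $e(-\alpha)q^{-a(f)}$ with $\alpha$ as specified survive after raising to the $a(f)^{-1}$ power (the spurious roots of unity must be shown not to give $\Br_{\mathcal{C},\alpha}BG \neq 0$), and correctly matching the conditional-convergence argument of \Cref{lem:conditional_convergence} to the twisted Euler product here rather than re-deriving it; the underlying complex analysis (Tauberian-free, purely a factorization of Euler products into zeta functions times a convergent remainder) is otherwise routine.
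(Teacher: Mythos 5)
Your proposal is correct and takes essentially the same route as the paper: you extract twisted zeta factors for the classes in $\pi_0(\mathcal{C}_f)$ (your single $Z_{\A^1_{\F_q(c)}}$ evaluated at the $[\F_q(c):\F_q]$-th power of the twisted variable is the same thing as the paper's product $\prod_{k} Z_{\A^1_{\F_q}}(Y_{C,k})$ over the characters of $\Gal(\F_q(c)/\F_q)$), show the complementary Euler product converges absolutely on a slightly larger disc, identify the order-$b(f,\F_q)$ poles with the condition $\partial_c(\beta) = (a(f)^{-1}\cdot\alpha)|_{\F_q(c)}$ for all $c \in \pi_0(\mathcal{C}_f)$, and evaluate the limit via the conditional convergence of Lemma~\ref{lem:conditional_convergence}, picking up one factor of $a(f)$ per extracted pole. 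The only point to fix is in your last paragraph: $\prod_P(1-X^{(\deg P)a(f)^{-1}})^{-b(f,\F_q)} = (1-qX^{a(f)^{-1}})^{-b(f,\F_q)}$ does \emph{not} have a pole at $X = e(-\alpha)q^{-a(f)}$ unless $a(f)^{-1}\cdot\alpha = 0$, so the renormalizing product must carry the $\alpha$-twist (replace $qX^{a(f)^{-1}}$ by $(e(\alpha)q^{a(f)}X)^{a(f)^{-1}}$, consistent with the twisted factors you already wrote in your main factorization); after that substitution the local factors $(1-q^{-\deg P})^{b(f,\F_q)}$ in the statement come out correctly.
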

\begin{proof}
    Absolute convergence follows by comparing the Euler factors with $|Z_{\A^{1}_{\F_q}}(X)|^{|\mathcal{C}_{\beta}(\bar{\F}_q)|}$.

    Let us now analyse the Euler factors. Choose $\frac{a(f)}{4} > \delta > 0$ such that the terms in the Euler factor coming from $c \not \in \mathcal{C}_f$ contribute at most $O(q^{- 1 - \delta})$ for $|X| < q^{-a(f) + \delta}$. They are then irrelevant for absolute convergence by comparison with the Euler factors of $Z_{\A^{1}_{\F_q}}(X)$. 
    
    For $C \in \pi_0(\mathcal{C}_f)$ let $\chi_C \in \HH^1(\F_q, [\F_q(c): \F_q]^{-1} \Z/\Z) \cong \Z/[\F_q(C): \F_q] \Z$ be a generator. We can write the number of points on $C(\F_q(P))$ using character orthogonality as $\sum_{k = 1}^{[\F_q(C): \F_q]} e((\deg P) \cdot k \cdot \chi_C)$. The restriction map $\Q/\Z \cong \HH^1(\F_q, \Q/\Z) \to \HH^1(\F_q(C), \Q/\Z) \cong \Q/\Z$ is surjective by a direct computation. It followss that there exists a (non)-unique element $d_C \in \HH^1(\F_q, \Q/\Z)$ whose restriction to $\F_q(C)$ is $\partial_C(\beta)$
    
    The remaining terms in the Euler factor thus contribute
    \begin{align*}
        &1 + \sum_{c \in \mathcal{C}_f(\F_q(P))}e(\mathrm{cor}_{\F_q(P)/\F_q} (\partial_c(\beta))) X^{-(\deg P) a(f)^{-1}} = \\
        &1 + \sum_{C \in \pi_0(\mathcal{C}_f)}\sum_{k = 1}^{[\F_q(C): \F_q]} e((\deg P) \cdot k \cdot \chi_C) e( (\deg P) d_C) X^{-(\deg P) a(f)^{-1}} = \\
        &  \prod_{C \in \pi_0(\mathcal{C}_f)} \prod_{k = 1}^{[\F_q(c): \F_q]}(1 - Y_{C, k}^{-\deg P})^{-1} + O(X^{2 (\deg P) a(f)^{-1}}).
    \end{align*}
    Where $Y_{C, k} := e(- k \cdot \chi_C) e(- d_C) X^{ a(f)^{-1}}$ and we used that if $C(\F_q(P)) \neq \emptyset$ then
    \[
    \mathrm{cor}_{\F_q(P)/\F_q} (\partial_C(\beta)) =  \mathrm{cor}_{\F_q(P)/\F_q}(d_C|_{\F_q(P)})= (\deg P)d_C. 
    \]

    In the region $|X| < q^{-a(f) + \delta}$ the termS $O(X^{2 (\deg P) a(f)^{-1}})$ is irrelevant for absolute convergence by comparison with $Z_{\A^1_{\F_q}}(X)$. We have thus shown that
    \begin{equation}\label{eq:decomposition_F_beta}
        F_{\beta}(X) = G(X) \prod_{C \in \pi_0(\mathcal{C}_f)} \prod_{k = 1}^{[\F_q(c): \F_q]}  Z_{\mathbf{A}^1_{\F_{q}}}(Y_{C, k})
    \end{equation}
    where $G(X)$ is an Euler product that is absolutely convergent in the disc $|X| < q^{-a(f) + \delta}$.

    To determine the poles we thus need to determine when $Y_{C, k} = q^{-1}$, which by definition and the fact that $a(f)^{-1} \in \N$ happens when $X = e(-\alpha) q^{-a(f)}$ with
    \begin{equation}\label{eq:existence_of_k}
        a(f)^{-1} \cdot \alpha = k \chi_C + d_C.
    \end{equation}

    The group $\HH^1(\F_q, [\F_q(c): \F_q]^{-1} \Z/\Z)$ is exactly the kernel of the restriction $\HH^1(\F_q, \Q/\Z) \to \HH^1(\F_q(c), \Q/\Z)$. It follows that $k$ exists as in \eqref{eq:existence_of_k} if and only if $(a(f)^{-1} \cdot \alpha)|_{\F_q(c)} = \partial_C(\beta)$ and if it exists, it is unique.

    It follows that $F(X)$ has a pole at $e(-\alpha) q^{-a(f)}$ of order at most the number of $C \in \pi_0(\mathcal{C}_f)$ for which $\alpha|_{\F_q(c)} = \partial_C(\beta)$. This is at most $b(f, \F_q)$ with equality if and only if $\beta \in \Br_{\mathcal{C}_f, a(f)^{-1} \alpha} BG_{\F_q}$ by definition, see \Cref{def:Br_C_ell}.

    It remains to compute the limit in the case that $\beta \in \Br_{\mathcal{C}_f, a(f)^{-1} \alpha} BG_{\F_q}$. For each $C \in \pi_0(\mathcal{C}_f)$ let $k_C$ be the $k$ satisfying \eqref{eq:existence_of_k}. The formula in the limit is then by \eqref{eq:decomposition_F_beta} equal to
    \begin{equation}\label{eq:residues_F_beta}
       G(X) \prod_{C \in \pi_0(\mathcal{C}_f(\F_q(P))} a(f) \prod_{\substack{k = 1 \\ k \neq k_C}}^{[\F_q(c): \F_q]}  Z_{\mathbf{A}^1_{\F_{q}}}(Y_{C, k}). 
    \end{equation}
    The Euler product of $G(X)$ is absolutely convergent and the Euler products of the zeta factors are conditionally convergent and equal to the analytic continuation for $X = e(-\alpha) q^{-a(f)}$ by Lemma \ref{lem:conditional_convergence}. The Euler factors of \eqref{eq:residues_F_beta} are exactly those in the statement of the lemma.
\end{proof}
\subsubsection{Heights}\label{sec:heights}
To state our main theorem we recall the notion of a height from \cite[\S8.1]{Loughran2025mallesconjecturebrauergroups}.

For $P \in \A^1_{\F_q}$ the local height coming from $f$ is the function $H_{P, f}: BG[K_P] \to q^{\N}$ defined by the formula $H_{P, f}(\varphi_P) := q^{[\F_q(P): \F_q] f(\rho_{G, P}(\varphi))}$. For $P = \infty$ we let $h_{\infty}: BG[\F_q((t^{-1}))] \to \N$ be any function, which one may think of as a logarithmic local height, the corresponding exponential local height is defined as $H_{\infty} := q^{h_{\infty}}$.

We then define the global height $H: BG[\F_q(t)] \to q^{\N}$ as the product of the local heights $H := H_{\infty}\prod_{P \in \A^1_{\F_q}} H_{P, f}$.

\begin{definition}
    Given $\Omega_{\infty} \subset BG[\F_q((t^{-1}))]$ we define $N_H(G, q^{d}, \Omega_{\infty})$ as the number of isomorphism classes of geometrically connected $G$-covers of $\varphi: X \to \Proj^{1}_{\F_q}$ such that $H(\varphi_{\F_q(t)}) = q^{d}$ and such that the corresponding local extension at $\infty$ is contained in $\Omega_{\infty}$.
\end{definition}
\subsubsection{The main theorem}
\begin{theorem}\label{thm:main_theorem_height_etale}
    Let $\F_q$ be a finite field, $G$ a tame finite \'etale group scheme over $\F_q$, $f \in \Hom(\mathcal{C}_G^*, \Z_{> 0})$ a function such that $\mathcal{C}_f$ generates $G$ and $\Omega_{\infty} \subset BG[\F_q((t^{-1}))]$.
    
    If $q$ is sufficiently large in terms of $|G(\bar{\F}_q)|$ then we have
    \begin{equation*}
      N_H(G, q^{d}, \Omega_{\infty}) = c_H( G, q^{d}, \Omega_{\infty}) d^{b(f, \F_q) - 1}q^{a(f) d} + O(d^{b(f, \F_q) - 2}q^{a(f) d}).  
    \end{equation*}
    With leading constant $c_H( G, q^{d}, \Omega_{\infty})$ given by
    \begin{equation*}
    \frac{a(f)^{b(f, \F_q)} |Z(G)(\F_q)|}{|G^{\mathrm{ab}}(-1)(\F_q)| (b(f, \F_q) - 1)!}\sum_{\alpha \in \HH^1(\F_q, \Q/\Z)} e(d \cdot \alpha) \sum_{\beta \in \Br_{\mathcal{C}_f, a(f)^{-1} \cdot \alpha} BG_{\F_q}} \hat{\tau}_{H, \alpha}(\beta)
    \end{equation*}
    Here $\hat{\tau}_{H, \alpha}(\beta)$ is the conditionally convergent Euler product
    \begin{equation}\label{eq:Euler_products_main_theorem}
        \hat{\tau}_{h_{\infty}, \alpha; \Omega_{\infty}}(\beta) \prod_{P \in \A^1_{\F_q}}(1 - q^{-\deg P})^{b(f, \F_q)}\hat{\tau}_{f, \alpha, P}(\beta).
    \end{equation}
    Where $\hat{\tau}_{f, \alpha, P}(\beta)$ is equal to
    \[ 
     1 + \sum_{\substack{c \in \mathcal{C}_\beta(\F_q(P)))}} e( \mathrm{cor}_{\F_q(P)/\F_q}(\partial_c(\beta)))e(-\deg P \cdot f(c) \cdot \alpha)q^{-(\deg P) a(f) f(c)}.
    \]
    And the local factor at $\infty$ is given by the integral
    \[
    \hat{\tau}_{h_{\infty}, \alpha, \infty}(\beta ; \Omega_{\infty}) = \int_{\Omega_{\infty}} e^{2 i \pi\langle \beta, (\varphi_{\infty}, \gamma) \rangle_{\mathrm{BM}}} e(-h_{\infty}(\varphi_{\infty}) \cdot \alpha)q^{- a(f) h_{\infty}(\varphi_{\infty})} d\tau_{\infty}(\varphi_{\infty}).
    \]

    Moreover, $\Br_{\mathcal{C}_{f}, a(f)^{-1} \cdot \alpha} BG_{\F_q} = 0 $ unless $\alpha \in \HH^1(\F_q, a(f)|G|^{-2}\Z/\Z)$ so $c_H( G, q^{d}, \Omega_{\infty})$ is $ a(f)^{-1}|G|^2$-periodic in $d$.
\end{theorem}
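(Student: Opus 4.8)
The plan is to deduce the theorem from \Cref{thm:main_theorem_multiheight} by summing over all admissible ramification multiplicities and then reading off the asymptotics of the resulting generating function, using the meromorphic continuation already established in \Cref{lem:analytic_properties_F_beta} together with a Tauberian theorem.

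\textbf{Reduction to a sum of Brauer--Manin volumes.} Write $\Omega_{\infty}=\bigsqcup_{m\in\N}\Omega_{\infty,m}$ for the decomposition of $\Omega_{\infty}$ according to the (finitely many) values $m$ of $h_{\infty}$. Since $H=H_{\infty}\prod_{P\in\A^1_{\F_q}}H_{P,f}$ and $H_{P,f}(\varphi_P)=q^{[\F_q(P):\F_q]f(\rho_{G,P}(\varphi))}$, the definition of the height together with \Cref{con:inertia_type} gives
\[
N_H(G,q^d,\Omega_{\infty})=\sum_{\substack{\underline{n}\in\Hom_{\F_q}(\mathcal{C}_G^*,\N)\\ f(\underline{n})+m=d}}N\big(G,\underline{n},\Omega_{\infty,m}\big).
\]
Apply \Cref{thm:main_theorem_multiheight} with $\mathcal{C}_{\mathrm{st}}:=\mathcal{C}_f$ (which generates $G$, hence $G/Z(G)$): for $\underline{n}$ whose support generates $G$ we obtain the main term $\tfrac{|Z(G)(\F_q)|\,|\Br BG_{\F_q}|}{|G^{\mathrm{ab}}(-1)(\F_q)|}\,\tau_{BG,\F_q}\big(BG[\Adele_{\F_q(t)}]^{\Br}_{\underline{n},\Omega_{\infty,d-f(\underline{n})}}\big)$ up to $O\big(q^{|\underline{n}|-\delta\min_{c\in\mathcal{C}_f}n_c}\big)$, while for the remaining $\underline{n}$ the count vanishes since there are no geometrically connected $G$-covers with such ramification. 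Every $\underline{n}$ that fails to generate $G$ has support missing a component of $\mathcal{C}_f$, so by \Cref{prop:hurwitz_spaces_Tamagawa_measures} and \Cref{cor:points_configuration_space} its Tamagawa volume is at most $\tau_{\infty}(\Omega_{\infty})\,q^{|\underline{n}|}$; summing this, and the genuine error terms, over $\underline{n}$ with $f(\underline{n})\le d$ and over $m$ (using \Cref{lem:error_term} and a geometric series in $m$, exactly as in the proof of \Cref{thm:main_theorem_multiheight}) contributes $O\big(d^{b(f,\F_q)-2}q^{a(f)d}\big)$. Expanding the Brauer--Manin volume via \Cref{lem:character_orthogonality} and writing $C_G:=\tfrac{|Z(G)(\F_q)|}{|G^{\mathrm{ab}}(-1)(\F_q)|}$, we are left with analysing the main term
\[
C_G\sum_{\substack{\underline{n},m\\ f(\underline{n})+m=d}}\ \sum_{\beta\in\Br BG_{\F_q}}\hat{\tau}\big(\beta;BG[\Adele_{\F_q(t)}]_{\underline{n},\Omega_{\infty,m}}\big).
\]

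\textbf{The generating function of the main term.} Using that $\tau_{BG,\F_q}$ and the Brauer--Manin pairing factor over places, that the $\A^1$-part of $\hat{\tau}(\beta;BG[\Adele_{\F_q(t)}]_{\underline{n},\Omega})$ assembles (after the substitution $T_c=X^{f(c)}$) into the Euler product of \Cref{lem:Brauer_twisted_Igusa_integral}, and that $\sum_m\hat{\tau}_{\infty}(\beta;\Omega_{\infty,m})X^m=\int_{\Omega_{\infty}}e^{2i\pi\langle\beta,(\varphi_{\infty},\gamma)\rangle_{\mathrm{BM},\infty}}X^{h_{\infty}(\varphi_{\infty})}\,d\tau_{\infty}=:G_{\beta}(X)$ is a polynomial in $X$, the same computation as in the proof of \Cref{lem:decompose_main_term_Brauer} shows that the generating series $\sum_{d\ge 0}(\,\cdot\,)X^d$ of the displayed main term equals $C_G\sum_{\beta\in\Br BG_{\F_q}}G_{\beta}(X)F_{\beta}(X)$, with $F_{\beta}$ the Euler product of \Cref{lem:analytic_properties_F_beta}.

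\textbf{Extracting the asymptotics.} By \Cref{lem:analytic_properties_F_beta} each $F_{\beta}$ converges absolutely for $|X|<q^{-a(f)}$, extends meromorphically to $|X|<q^{-a(f)+\delta}$ for some $\delta>0$, and on the circle $|X|=q^{-a(f)}$ has poles of order at most $b(f,\F_q)$, those of exact order $b(f,\F_q)$ being the points $e(-\alpha)q^{-a(f)}$ with $\beta\in\Br_{\mathcal{C}_f,a(f)^{-1}\cdot\alpha}BG_{\F_q}$; multiplying by the polynomial $G_{\beta}$ changes none of this. Applying a Tauberian theorem such as \Cref{lem:Tauberian_theorem} to $C_G\sum_{\beta}G_{\beta}F_{\beta}$, the coefficient of $X^d$ equals, up to $O\big(d^{b(f,\F_q)-2}q^{a(f)d}\big)$, the sum over pairs $(\alpha,\beta)$ with $\beta\in\Br_{\mathcal{C}_f,a(f)^{-1}\cdot\alpha}BG_{\F_q}$ of
\[
\frac{C_G}{(b(f,\F_q)-1)!}\Big(\lim_{X\to e(-\alpha)q^{-a(f)}}\big(1-e(\alpha)q^{a(f)}X\big)^{b(f,\F_q)}G_{\beta}(X)F_{\beta}(X)\Big)\,d^{\,b(f,\F_q)-1}\,e(d\cdot\alpha)\,q^{a(f)d}.
\]
Here $G_{\beta}\big(e(-\alpha)q^{-a(f)}\big)=\hat{\tau}_{h_{\infty},\alpha,\infty}(\beta;\Omega_{\infty})$ by \eqref{eq:formula_local_Brauer_transform}, while the limit of $\big(1-e(\alpha)q^{a(f)}X\big)^{b(f,\F_q)}F_{\beta}(X)$ is the conditionally convergent Euler product $a(f)^{b(f,\F_q)}\prod_{P\in\A^1_{\F_q}}(1-q^{-\deg P})^{b(f,\F_q)}\hat{\tau}_{f,\alpha,P}(\beta)$ computed in \Cref{lem:analytic_properties_F_beta}; collecting constants and substituting $C_G=\tfrac{|Z(G)(\F_q)|}{|G^{\mathrm{ab}}(-1)(\F_q)|}$ yields precisely the claimed formula for $c_H(G,q^d,\Omega_{\infty})$. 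Finally, by \Cref{lem:exact_sequence_brauer_group} the group $\Br_{\bar{\mathcal{C}_f}}BG_{\F_q}\supseteq\Br_{\mathcal{C}_f,a(f)^{-1}\cdot\alpha}BG_{\F_q}$ is $|G|^2$-torsion, so a nonzero summand forces $a(f)^{-1}\cdot\alpha$, and hence $\alpha$, to be suitably torsion; this gives $\Br_{\mathcal{C}_f,a(f)^{-1}\cdot\alpha}BG_{\F_q}=0$ unless $\alpha\in\HH^1(\F_q,a(f)|G|^{-2}\Z/\Z)$, whence the sum over $\alpha$ is finite and $c_H(G,q^d,\Omega_{\infty})$ is $a(f)^{-1}|G|^2$-periodic in $d$.

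\textbf{Main obstacle.} All the geometric input (Landesman--Levy, the stable components, and the Brauer-group description of \S\ref{sec:Brauer_group}) is already packaged into \Cref{thm:main_theorem_multiheight} and \Cref{lem:analytic_properties_F_beta}, so the remaining difficulty is purely analytic: $F_{\beta}$ admits only a narrow meromorphic continuation past the critical circle and may carry several poles of maximal order on it, so one needs a Tauberian input robust to this situation (the ``diverges to $0$'' phenomenon for $\beta\notin\Br_{\mathcal{C}_f,a(f)^{-1}\cdot\alpha}BG_{\F_q}$ is exactly what makes only the listed poles survive), together with a careful bookkeeping to identify the limiting Euler products and all arithmetic constants with the advertised quantities $\hat{\tau}_{H,\alpha}(\beta)$ and prefactor.
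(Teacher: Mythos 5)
Your proposal is correct and follows essentially the same route as the paper: reduce to \Cref{thm:main_theorem_multiheight}, control the error via \Cref{lem:error_term}, assemble the main term into the generating series of \Cref{lem:decompose_main_term_Brauer}, and extract the asymptotic from \Cref{lem:analytic_properties_F_beta} via the Tauberian theorem \Cref{lem:Tauberian_theorem}. The only (harmless, arguably cleaner) difference is that you absorb $h_{\infty}$ into a polynomial factor $G_{\beta}(X)$ of the generating function and evaluate it at the poles, whereas the paper first reduces to $h_{\infty}=0$ by a shift in $d$.
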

\begin{proof}
    We may assume that $\Omega_{\infty}$ consists of a single element $\varphi_{\infty}$. If $h_{\infty}(\varphi_{\infty}) = k$ then let $H'$ be the height function with $h_{\infty} = 1$. By definition, we have $N(G, H, q^{d}, \Omega_{\infty}) = N(G, H', q^{d - k}, \Omega_{\infty})$. One checks that the theorem is compatible with changing $H$ by $H'$ so we may assume that $h_{\infty}$ is trivial. In this case $\hat{\tau}_{h_{\infty}, \alpha, \infty}(\beta ; \Omega_{\infty}) = \hat{\tau}_{\infty}(\beta ;\Omega_{\infty})$.

    By the definition of the counting functions and \Cref{thm:main_theorem_multiheight} we have
    \begin{align*}
    &N_H(G, q^{d}, \Omega_{\infty}) = \sum_{\substack{\underline{n} \in \Hom_{\F_q}(\mathcal{C}_G^*, \N) \\ f(\underline{n}) = d}} N(G, \underline{n}, \Omega_{\infty}) = \\ &\frac{|Z(G)(\F_q)| \cdot |\Br BG_{\F_q}|}{|G^{\mathrm{ab}}(-1)(\F_q)| }\sum_{\substack{\underline{n} \in \Hom_{\F_q}(\mathcal{C}_G^*, \N) \\ f(\underline{n}) = d}} \tau_{BG,  \F_q}( BG[\A_{\F_{q}(t)}]_{\underline{n}, \Omega_{\infty}}^{\Br}) + O(q^{|\underline{n}|- \delta \min_{c \in \mathcal{C}_f}(n_c)}).
    \end{align*}
    The sum of the error terms has been dealt with in \Cref{lem:error_term}. It thus suffices to estimate $N_{\mathrm{main}}(G, q^d, \Omega_{\infty})$. 
    
    The periodicity is because \Cref{lem:exact_sequence_brauer_group} implies that $\partial_c(\beta)$ is $|G|^2$-torsion for all $\beta \in \Br_{\bar{\mathcal{C}_f}} BG_{\F_q}$ and $c \in \mathcal{C}_f$.
    
    Using \Cref{lem:decompose_main_term_Brauer} and \Cref{lem:analytic_properties_F_beta} the theorem follows from the following Tauberian theorem.
\end{proof}
\begin{lemma}\label{lem:Tauberian_theorem}
    Let $F(X) = \sum_{n}a_n X^n$ be a power series with positive radius of convergence. Assume that there exist $r, \delta > 0$ such that $F(X)$ has a meromorphic continuation to the disc $|X| \leq q^{r + \delta}$ and such that all of its poles lie on the circle with radius $q^{r}$. Let $b$ be the maximal order of any pole and let $z_1 q^{r}, \cdots, z_k q^{r}$ be the poles of maximal order, with $|z_i| = 1$. Put $c_i = \lim_{X \to z_iq^{r}}(1 - z_i^{-1} q^{-r} X)^b F(X)$. Then 
    \[
    a_n = \frac{n^{b - 1} q^{- r n}}{(b - 1)!}\sum_{i = 1}^k c_i(z_i)^{-n} + O(n^{b - 2} q^{- r n}).
    \]
\end{lemma}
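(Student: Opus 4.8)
The plan is to prove this as a standard Tauberian theorem of Delange type, extracting asymptotics of Taylor coefficients from the analytic behavior of the generating function near its circle of convergence. First I would reduce to a clean model case. Write $F(X) = \sum_{i=1}^k \frac{c_i}{(1 - z_i^{-1}q^{-r}X)^b} + R(X)$, where $R(X)$ is the remainder. By the hypothesis on the poles, $R(X)$ extends meromorphically to $|X| \leq q^{r+\delta}$ with poles only on $|X| = q^r$ of order strictly less than $b$; after subtracting off finitely many further rational terms of the shape $\frac{c'}{(1 - z^{-1}q^{-r}X)^{b'}}$ with $b' < b$ accounting for those lower-order poles, one is left with a function holomorphic on $|X| < q^{r+\delta}$. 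So it suffices to analyze the coefficients of (a) the principal rational terms, (b) the lower-order rational correction terms, and (c) a function holomorphic in a strictly larger disc.

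For (c), if $H(X) = \sum_n h_n X^n$ is holomorphic on $|X| \le q^{r+\delta'}$ for some $0 < \delta' < \delta$, then Cauchy's estimate on the circle $|X| = q^{r+\delta'}$ gives $h_n = O(q^{-(r+\delta')n}) = O(q^{-rn} q^{-\delta' n})$, which is negligible compared to the claimed error $O(n^{b-2}q^{-rn})$. For (a), the coefficient of $X^n$ in $\frac{c_i}{(1 - z_i^{-1}q^{-r}X)^b}$ is $c_i \binom{n+b-1}{b-1} z_i^{-n} q^{-rn}$, and $\binom{n+b-1}{b-1} = \frac{n^{b-1}}{(b-1)!} + O(n^{b-2})$; summing over $i$ and using $|z_i^{-n} q^{-rn}| = q^{-rn}$ gives exactly the main term plus an $O(n^{b-2}q^{-rn})$ error. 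For (b), each correction term contributes coefficients of size $O(n^{b'-1}q^{-rn}) = O(n^{b-2}q^{-rn})$ since $b' \le b-1$, which is absorbed into the error term.

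The one genuinely necessary input is the partial fraction decomposition: that a meromorphic function on a disc with only finitely many poles, each of bounded order, can be written as a finite sum of rational functions with those poles plus a function holomorphic on the whole disc. This follows by subtracting the principal parts of the Laurent expansions at each pole (which are finite sums of negative powers of $(X - z_iq^r)$, rewritten in the normalized form $(1 - z_i^{-1}q^{-r}X)^{-m}$ up to constants), the difference then having removable singularities at all the $z_i q^r$ and hence extending holomorphically. I expect this step — and the careful bookkeeping that subtracting principal parts does not disturb the leading asymptotic — to be the main obstacle, though it is entirely routine complex analysis; the rest is the explicit binomial coefficient expansion. I would also remark that the positivity and positive radius of convergence of $F$ are not really used beyond ensuring $F$ is a genuine power series, the content being purely the location and order of the poles.
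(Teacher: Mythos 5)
Your argument is correct. The paper does not prove this lemma directly but simply cites Wilf's \emph{generatingfunctionology}, Theorem 5.2.1; your proof — subtracting the principal parts at the finitely many poles (finiteness following from meromorphy on a compact disc), bounding the coefficients of the holomorphic remainder by a Cauchy estimate on a circle of radius $q^{r+\delta'}$, and expanding $\binom{n+b-1}{b-1}=\frac{n^{b-1}}{(b-1)!}+O(n^{b-2})$ — is precisely the standard proof of that cited theorem, so the two routes coincide in substance. (One cosmetic remark: the statement assumes only a positive radius of convergence, not positivity of the coefficients, and indeed neither is needed, as you observe.)
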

\begin{proof}
    This is just a convenient rephrasing of (the main term of) \cite[Thm.~5.2.1]{Wilf2006generating}.
\end{proof}
\begin{remark}\hfill
\begin{enumerate}
    \item The Euler product in \eqref{eq:Euler_products_main_theorem} can also be written as an absolutely convergent Euler argument via a similar argument as in \cite[Lem.~8.19]{Loughran2025mallesconjecturebrauergroups}.
    \item If $b(G, \F_q) = 1$ then by being slightly more careful one can get a power-saving error term.
\end{enumerate}
\end{remark}
\begin{remark}\label{rem:different_powers_a(f)}
    A careful comparison using \cite[Lem.~8.21]{Loughran2025mallesconjecturebrauergroups} shows that the contribution from $\alpha = 0$ in \Cref{thm:main_theorem_height_etale} is analogous to the leading constant in \cite[Conj.~9.1]{Loughran2025mallesconjecturebrauergroups}, except that we have a factor $a(f)^{b(f, \F_q)}$ instead of $a(f)^{b(f, \F_q(t)) - 1}$.
    
    Our interpretation of this is that the power of $a(f)$ is there to make sure that the leading constant is compatible when changing the height $H$ by $H^k$ for some $k \in \N$. In the number field case the required power is $a(f)^{b(f, \F_q(t)) - 1}$, c.f.~\cite{SawinLeading}, but in the function field case the correct power is $a(f)^{b(f, \F_q(t))}$ because when replacing $H$ by $H^k$ one also needs to increase the period of $c_H(G, q^d, \Omega_{\infty})$.
\end{remark}
\begin{proof}[Proof of \Cref{thm:main_theorem} when $\mathcal{C}_f$ generates $G$]
    This is just \Cref{thm:main_theorem_height_etale}, with $G$ a constant group scheme, $\Omega_{\infty} = BG[\F_q((t^{-1}))]$ and $h_{\infty} = f$. Note that the $(1 - q^{-1})^{-b(f, \F_q)}$factor cancels the $(1 - q^{-1})^{b(f, \F_q)}$factor at $\infty$ in the Euler product.
\end{proof}
\begin{proof}[Proof of \Cref{thm:main_theorem_radical_disc}]
    Apply \Cref{thm:main_theorem} with the constant function $f = 1$. The leading constant is $|G|^2$-periodic as $a(f) = 1$. So by averaging we remove all Euler products except those coming from $\alpha = 0$ and thus, $\beta \in \Br_{\mathcal{C}_f, 0} BG_{\F_q} = \Br_{\mathrm{unr}} BG_{\F_q}$, where the equality is by definition.

    We are done, as for $\beta \in \Br_{\mathrm{unr}} BG_{\F_q}$ we have $\partial_c(\beta) = 0$ for all $c \in \mathcal{C}_G^*$ by definition.
\end{proof}
\subsubsection{Unbalanced heights}\label{sec:non_balanced height}
Consider now the case that $\mathcal{C}_f$ generates a normal subgroup $M \subsetneq G$ such that $M$ generates $G/Z(G)$. The main extra difficulty is that $\CHur^G_{\underline{n}}$ is non-empty if and only if the support of $\underline{n}$ generates $G$. As in \cite[\S2]{Wright1989Abelian} we deal with this via M\"obius inversion.

Let $\mu$ be the M\"obius function of the lattice of abelian groups. Its defining property is that for all abelian groups $A$ we have
\begin{equation}\label{eq:Mobius_inversion}
    \sum_{B \subseteq A} \mu(B) = \begin{cases}
        1 \text{ if } A = 0 \\
        0 \text{ otherwise}.    \end{cases}
\end{equation}
Its precise values are computed in \cite{Delsarte1948Fonctions}, but will not be relevant to us.

We will need the following group-theoretic properties.
\begin{lemma} The quotient $G/M$ is abelian. Any subgroup $M \subseteq L \subseteq G$ is thus normal. Moreover, a conjugacy class of $L$ remains a conjugacy class in $G$.
\end{lemma}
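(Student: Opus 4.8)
The plan is to prove the three assertions in sequence, each reducing to an elementary group-theoretic observation. First I would show that $G/M$ is abelian. Since $M$ is the subgroup generated by $\mathcal{C}_f$, and by hypothesis $M$ surjects onto $G/Z(G)$, I have $G = M \cdot Z(G)$. Therefore $G/M \cong Z(G)/(Z(G)\cap M)$, which is a quotient of the abelian group $Z(G)$, hence abelian. (One could also argue directly: $[G,G] \subseteq M$ because $M$ contains a generating set for the inner automorphism group $G/Z(G)$, so $G/M$ is a quotient of $G^{\mathrm{ab}}$.)

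For the second claim, if $M \subseteq L \subseteq G$ then $L/M$ is a subgroup of the abelian group $G/M$, hence normal in $G/M$; pulling back along $G \to G/M$ shows $L$ is normal in $G$.

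For the third claim — that a conjugacy class of $L$ remains a single conjugacy class in $G$ — I would argue as follows. Let $x \in L$ and consider its $G$-conjugacy class; I must show every $G$-conjugate of $x$ is already an $L$-conjugate. Write $g \in G$ as $g = m z$ with $m \in M \subseteq L$ and $z \in Z(G)$, using $G = M \cdot Z(G)$. Then $g x g^{-1} = m z x z^{-1} m^{-1} = m x m^{-1}$, which is an $L$-conjugate of $x$ since $m \in L$. The key input here is again the factorization $G = M \cdot Z(G)$, which follows from the hypothesis that $\mathcal{C}_f$ (and hence $M$) generates $G/Z(G)$.

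The main (very minor) obstacle is simply being careful about which hypothesis is doing the work: everything rests on $G = M \cdot Z(G)$, equivalently that $M \to G/Z(G)$ is surjective, which is exactly the standing assumption of \S\ref{sec:non_balanced height}. No deeper structure theory is needed, and the proof is a short formal argument once that factorization is in hand.
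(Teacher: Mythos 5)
Your proof is correct and rests on the same key fact as the paper's, namely the factorization $G = M\cdot Z(G)$ coming from the surjection $M \to G/Z(G)$: the paper deduces abelianness of $G/M$ from $Z(G)$ surjecting onto it, and handles the conjugacy classes via the induced isomorphism $L/Z(L) = L/(L\cap Z(G)) \cong G/Z(G)$ of inner automorphism groups, which is just the packaged form of your computation $gxg^{-1} = mxm^{-1}$ for $g = mz$. No substantive difference.
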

\begin{proof}
    The quotient $G/M$ is abelian as $Z(G)$ surjects onto it. The claim about conjugacy classes is because we an isomorphism of inner automorphism groups $L/Z(L) = L/(L \cap Z(G)) \cong G/Z(G)$.
\end{proof}
The following lemma thus makes sense.
\begin{lemma}\label{lem:mobius_inversion_application}
    For all $d$ we have 
    \begin{align*}
    &\sum_{\substack{\underline{n} \in \Hom_{\F_q}(\mathcal{C}_G^*, \N) \\ f(\underline{n}) = d}} N(G, \underline{n}, \Omega_{\infty}) = \frac{|Z(G)(\F_q)| \cdot |\Br BG_{\F_q}|}{|G^{\mathrm{ab}}(-1)(\F_q)| } \cdot\\ &\sum_{M \subseteq L \subseteq G } \mu(G/L)\sum_{\substack{\underline{n} \in \Hom_{\F_q}(\mathcal{C}_L^*, \N) \\ f(\underline{n}) = d}}  \tau_{BG,  \F_q}( BG[\A_{\F_{q}(t)}]_{\underline{n}, \Omega_{\infty}}^{\Br}) + O(d^{b(f, \F_q) - 2}q^{a(f)d}) 
    \end{align*}
\end{lemma}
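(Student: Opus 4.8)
The plan is to reduce everything to \Cref{thm:main_theorem_multiheight}, applied not to $G$ itself (for which $\mathcal{C}_f$ is unbalanced) but to the intermediate subgroups $L$ with $M \subseteq L \subseteq G$, for which the restriction of $f$ \emph{is} balanced since $\mathcal{C}_f \cap L$ contains $\mathcal{C}_f$ which generates $L/Z(L) \cong L$... more precisely generates the image of $M$ in $L$, which by the preceding lemma is $L$ modulo $L\cap Z(G)$, and this surjects onto $L/Z(L)$; so the hypotheses of \Cref{thm:main_theorem_multiheight} (with $\mathcal{C}_{\mathrm{st}} = \mathcal{C}_f$) are met whenever the support of $\underline n$ generates $L$. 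First I would record the combinatorial identity: for a geometrically connected $G$-cover with ramification supported on a function $\underline n$, the subgroup generated by the conjugacy classes in the support of $\underline n$ must be all of $G$; conversely, decomposing $N(G, \underline n, \Omega_\infty)$ has no Möbius terms, but when we sum the \emph{right-hand side} of \Cref{thm:main_theorem_multiheight} over all $\underline n$ with $f(\underline n) = d$ and \emph{arbitrary} support, we are overcounting, and Möbius inversion \eqref{eq:Mobius_inversion} over the lattice of subgroups between $M$ and $G$ corrects this.

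The key steps, in order: (1) For each $L$ with $M \subseteq L \subseteq G$ apply \Cref{thm:main_theorem_multiheight} to the group scheme $L$, the balanced subset $\mathcal{C}_f \cap \mathcal{C}_L^* \supseteq \mathcal{C}_f$ generating $L/Z(L)$, and the same $\Omega_\infty$ (pulled back via $BL \to BG$, or rather restricted to covers whose monodromy at $\infty$ lands in $L$), obtaining for each $\underline n \in \Hom_{\F_q}(\mathcal{C}_L^*, \N)$ with support generating $L$ the asymptotic with main term proportional to $\tau_{BL, \F_q}$ and error $O(q^{|\underline n| - \delta \min_{c\in\mathcal{C}_f}(n_c)})$. (2) Sum over $\underline n$ with $f(\underline n) = d$; the error terms sum to $O(d^{b(f,\F_q)-2}q^{a(f)d})$ by \Cref{lem:error_term}, uniformly in $L$ (there are finitely many $L$). (3) Observe that a geometrically connected $G$-cover counted by $N(G, \underline n, \Omega_\infty)$ is the same as a geometrically connected $L$-cover for $L$ the (unique minimal) subgroup generated by the support of $\underline n$; since a connected $L$-cover, viewed as a $G$-cover via $BL \to BG$, is geometrically connected iff $L = G$, the Möbius identity $\sum_{M \subseteq L \subseteq G}\mu(G/L)[\,\text{support}(\underline n)\subseteq L\,] = [\,\text{support}(\underline n)\text{ generates }G\,]$ converts $\sum_{f(\underline n)=d} N(G,\underline n,\Omega_\infty)$ into the stated alternating sum — here one also needs that the measure-theoretic main term $\tau_{BG,\F_q}(\cdots^{\Br})$ for a given $\underline n$ only depends on $L = \langle \text{support}\rangle$ through the adelic set $BL[\Adele]_{\underline n,\Omega_\infty}$, with the prefactor $|Z(G)(\F_q)||\Br BG_{\F_q}|/|G^{\mathrm{ab}}(-1)(\F_q)|$ being the one attached to $G$ (not $L$); this requires checking that $\CHur^{G,(\varphi,\gamma)}_{\underline n}$ for support generating only $L$ is identified with $\CHur^{L,(\varphi,\gamma)}_{\underline n}$ compatibly with the Brauer/Tamagawa bookkeeping, using \Cref{lem:functoriality} and that the automorphism group scheme $Z(G)$ of a geometrically connected cover contributes the same normalization. (4) Finally match the error terms: in the range where the support of $\underline n$ does \emph{not} contain $\mathcal{C}_f$ (so $\min_{c\in\mathcal{C}_f}(n_c) = 0$), \Cref{thm:main_theorem_multiheight}'s first case gives that the contribution is dominated by $O(q^{|\underline n|})$, and summing these via \Cref{lem:error_term} (with $\delta$ taken as a small fixed constant and using the dominance of $q^{a(f)d}d^{b(f,\F_q)-2}$) absorbs them into the error term.

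The main obstacle I expect is step (3): carefully reconciling the two descriptions of the main term. The quantity $N(G,\underline n,\Omega_\infty)$ is intrinsically about $G$-covers, but when $\langle\text{support}(\underline n)\rangle = L \subsetneq G$ there are \emph{no} geometrically connected $G$-covers with that ramification, so $N(G,\underline n,\Omega_\infty)=0$ — yet the right-hand side's inner sum ranges over $\underline n \in \Hom_{\F_q}(\mathcal{C}_L^*,\N)$, i.e.\ it is counting $L$-covers. The subtlety is that the normalizing constant and the Tamagawa set on the right are those of $BG$, not $BL$, so one is \emph{not} simply applying \Cref{thm:main_theorem_multiheight} to $L$ verbatim; rather one applies it to $L$ and then must show the resulting $L$-quantities reassemble, after Möbius inversion over $L$, into $G$-quantities. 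Concretely: for fixed $\underline n$ with support generating $G$, $\tau_{BG,\F_q}(BG[\Adele]_{\underline n,\Omega_\infty}^{\Br})$ equals the corresponding $BL$-quantity only when $L = G$, so the right-hand side as written is really $\sum_{L}\mu(G/L)\sum_{\underline n\in\Hom(\mathcal{C}_L^*,\N),\,f(\underline n)=d}\tau_{BG,\F_q}(\cdots)$ where $\underline n$ is extended by zero outside $\mathcal{C}_L^*$; the Möbius sum then collapses precisely to the $\underline n$ with full support, and for those the equality with $N(G,\underline n,\Omega_\infty)$ is \Cref{thm:main_theorem_multiheight} for $G$. Thus the genuine content is (a) that $N(G,\underline n,\Omega_\infty)$ vanishes unless $\langle\text{support}\rangle=G$, and (b) that \Cref{thm:main_theorem_multiheight}, while not directly applicable to $(G,f)$ because $f$ is unbalanced on $G$, \emph{is} applicable on the locus where we restrict to $\underline n$ with full support — which is exactly where its hypotheses hold — and there the error term is controlled by \Cref{lem:error_term}. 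Making this bookkeeping airtight, and in particular justifying that the $\underline n$ with $\mathrm{supp}(\underline n)\not\supseteq\mathcal{C}_f$ but still generating $G$ contribute within the error term (these are the ones where \Cref{thm:main_theorem_multiheight}'s quantitative statement degrades), is the delicate point; it should follow from \Cref{lem:application_landesman_levy}(1) together with \Cref{lem:error_term}.
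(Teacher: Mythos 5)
Your proposal, read to the end, arrives at exactly the paper's argument: $N(G,\underline n,\Omega_\infty)=0$ unless the support of $\underline n$ generates $G$; for such $\underline n$ one applies \Cref{thm:main_theorem_multiheight} to $G$ itself (which is legitimate there, since full support is precisely its hypothesis); the M\"obius identity \eqref{eq:Mobius_inversion} over $M\subseteq L\subseteq G$ encodes the indicator of full support, so summing over $\underline n$ and swapping the order of summation gives the stated double sum; and the $\underline n$ whose support misses part of $\mathcal{C}_f$ are absorbed into the error via \Cref{lem:application_landesman_levy}(1), the bound $\tau\le\#\Conf_{\underline n}(\F_q)\le q^{|\underline n|}$, and \Cref{lem:error_term}. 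The only caveat is that your step (1) — applying \Cref{thm:main_theorem_multiheight} to the subgroups $L$ and producing main terms proportional to $\tau_{BL,\F_q}$ — is a detour that does not match the right-hand side (which involves $\tau_{BG,\F_q}$ of $BG$-adelic sets throughout); you correctly identify and discard this in your ``main obstacle'' paragraph, and the theorem is in fact only ever invoked for $G$. With that detour removed, the proof is correct and is the paper's proof.
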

\begin{proof}
    Note that if the support of $\underline{n}$ does not generate $G$ then $N(G, \underline{n}, \Omega_{\infty}) = 0$ because it only counts geometrically connected curves. For any $\underline{n}$ whose support contains $\mathcal{C}_f$ we thus have by \Cref{thm:main_theorem_multiheight} and \eqref{eq:Mobius_inversion} that $N(G, \underline{n}, \Omega_{\infty})$ is equal to
    \begin{align*}
         \frac{|Z(G)(\F_q)| \cdot |\Br BG_{\F_q}|}{|G^{\mathrm{ab}}(-1)(\F_q)| } \sum_{\substack{M \subseteq L \subseteq G  \\ \underline{n} \in \Hom_{\F_q}(\mathcal{C}^*_L, \N)}} \mu(G/L) \tau_{BG,  \F_q}( BG[\A_{\F_{q}(t)}]_{\underline{n}, \Omega_{\infty}}^{\Br}) + O(q^{|\underline{n}|- \delta \min_{c \in \mathcal{C}_f}(n_c)}).
    \end{align*}
    Note that this also holds if the support of $\underline{n}$ does not contain $\mathcal{C}_f$ as in this case the error term dominates the main term by the same argument as in the proof of \Cref{thm:main_theorem_multiheight}.

    The lemma then follows by summing over $\underline{n}$ and switching the order of summation. The sum over error terms is dealt with in \Cref{lem:error_term}. 
\end{proof}
It thus remains to estimate for all $M \subseteq L \subseteq G$ the sum
\[
N_{f, L, \text{main}}(G, q^d, \Omega_{\infty}) := \sum_{\substack{\underline{n} \in \Hom_{\F_q}(\mathcal{C}_L^*, \N) \\ f(\underline{n}) = d}}  |\Br BG_{\F_q}| \tau_{BG,  \F_q}( BG[\A_{\F_{q}(t)}]_{\underline{n}, \Omega_{\infty}}^{\Br}).
\]
We will estimate this sum in the exact same way as the case that $\mathcal{C}_f$ generates $G$. We will give the relevant statements, but omit the proofs as they are identical to the case that $\mathcal{C}_f$ generates $G$.

Let $F_L(X) := \sum_{d = 0}^{\infty} N_{f, L, \text{main}}(G, q^d, \Omega_{\infty}) X^d$ be the generating series. The analogue of \Cref{lem:decompose_main_term_Brauer} is
\begin{lemma}\label{lem:decompose_main_term_Brauer_L}
    We have $F_L(X) = \sum_{\beta \in \Br BG_{\F_q}} \hat{\tau}_{\infty}(\beta ; \Omega_{\infty})  F_{L, \beta}(X)$ with
    \[
    F_{L, \beta}(X) := \prod_{P \in \A^1_{\F_q}}  \left( 1 + \sum_{c \in \mathcal{C}_{L, \beta}(\F_q(P))} e(\mathrm{cor}_{\F_q(P)/\F_q}(\partial_c(\beta)))X^{(\deg P) f(c)} \right).
    \]

    Where $\mathcal{C}_{L, \beta} := \mathcal{C}^*_L \cap \mathcal{C}_{\beta}$.
\end{lemma}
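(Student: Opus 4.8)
The plan is to follow the proof of \Cref{lem:decompose_main_term_Brauer} essentially verbatim, the only change being that the sum over $\underline{n}$ now runs over $\Hom_{\F_q}(\mathcal{C}_L^*,\N)$ rather than over $\Hom_{\F_q}(\mathcal{C}_G^*,\N)$. First I would invoke the preceding lemma, which says that a conjugacy class of $L$ remains a conjugacy class of $G$, so that $\mathcal{C}_L^*$ sits inside $\mathcal{C}_G^*$ as a set of colours and summing over $\underline{n}$ supported on $\mathcal{C}_L^*$ is literally the same as summing over those $\underline{n}\in\Hom_{\F_q}(\mathcal{C}_G^*,\N)$ which vanish outside $\mathcal{C}_L^*$. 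Then, exactly as in \Cref{lem:decompose_main_term_Brauer}, I would apply \Cref{lem:character_orthogonality} with $B=\Br BG_{\F_q}$ to rewrite $|\Br BG_{\F_q}|\,\tau_{BG,\F_q}(BG[\Adele_{\F_q(t)}]_{\underline{n},\Omega_\infty}^{\Br})$ as $\sum_{\beta\in\Br BG_{\F_q}}\hat{\tau}(\beta;BG[\Adele_{\F_q(t)}]_{\underline{n},\Omega_\infty})$, and use the factorisation $\tau_{BG,\F_q}=\tau_\infty\times\tau_{BG,\Adele_{K,\infty}}$ together with the additivity of the Brauer--Manin pairing to split each summand as $\hat{\tau}_\infty(\beta;\Omega_\infty)\cdot\int_{BG[\Adele_{K,\infty}]_{\underline{n}}}e^{2i\pi\langle\varphi,\beta\rangle_{\mathrm{BM}}}\,d\tau_{BG,\Adele_{K,\infty}}$.

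Next I would assemble the generating series $F_L(X)=\sum_{d}N_{f,L,\mathrm{main}}(G,q^d,\Omega_\infty)X^d$. Setting $T_c=X^{f(c)}$ for $c\in\mathcal{C}_L^*$ and, crucially, $T_c=0$ for $c\in\mathcal{C}_G^*\setminus\mathcal{C}_L^*$, the restriction on the support of $\underline{n}$ means that the $\underline{T}^{\underline{n}}$-coefficients retained are precisely those with $\underline{n}$ supported on $\mathcal{C}_L^*$; by countable additivity of $\tau_{BG,\Adele_{K,\infty}}$ the sum over such $\underline{n}$ of $X^{f(\underline{n})}\int_{BG[\Adele_{K,\infty}]_{\underline{n}}}e^{2i\pi\langle\varphi,\beta\rangle_{\mathrm{BM}}}\,d\tau_{BG,\Adele_{K,\infty}}$ equals the integral of $e^{2i\pi\langle\varphi,\beta\rangle_{\mathrm{BM}}}T(\varphi)$ over all of $BG[\Adele_{K,\infty}]$ after this substitution. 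Finally I would evaluate this integral with \Cref{lem:Brauer_twisted_Igusa_integral}: the right-hand side of \eqref{eq:formula_global_Brauer_transform} involves only the colours $c\in\mathcal{C}_\beta$, so after killing the variables outside $\mathcal{C}_L^*$ only the colours of $\mathcal{C}_{L,\beta}=\mathcal{C}_L^*\cap\mathcal{C}_\beta$ survive and the resulting Euler product is exactly $F_{L,\beta}(X)$; reinstating the factor $\hat{\tau}_\infty(\beta;\Omega_\infty)$ yields the claimed identity.

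There is no genuine obstacle here; the argument is a line-by-line transcription of the proof of \Cref{lem:decompose_main_term_Brauer} with $\mathcal{C}_G^*$ replaced by $\mathcal{C}_L^*$ throughout. The single point that deserves a word is the compatibility between ``supported on $\mathcal{C}_L^*$'' and the colour truncation: \Cref{lem:Brauer_twisted_Igusa_integral} is stated for all of $\mathcal{C}_G^*$ and remains valid after formally setting the extra variables to zero because each local Denef factor is polynomial in the $T_c$, so the substitution simply deletes the Euler-factor terms indexed by colours outside $\mathcal{C}_L^*$. Once this is noted the computation is identical.
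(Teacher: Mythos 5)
Your proposal is correct and is exactly the argument the paper intends: the paper explicitly omits the proof of this lemma, stating it is identical to that of \Cref{lem:decompose_main_term_Brauer}, and your transcription (restricting the sum to $\underline{n}$ supported on $\mathcal{C}_L^*$, equivalently setting $T_c=0$ for $c\notin\mathcal{C}_L^*$ in \Cref{lem:Brauer_twisted_Igusa_integral}) is the right way to carry that out. The one point you flag --- that the truncation of colours commutes with the Euler-product identity because each local factor is polynomial in the $T_c$ and conjugacy classes of $L$ remain conjugacy classes of $G$ --- is indeed the only thing that needs saying.
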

The analogue of \Cref{lem:analytic_properties_F_beta} is as follows.
\begin{lemma}\label{lem:analytic_properties_F_L_beta}
Let $\beta \in \Br BG_{\F_q}$. The power series $F_{L, \beta}(X)$ converges absolutely for $|X| < q^{-a(f)}$. Moreover, there exists $\delta > 0$ such that it has a meromorphic continuation to the disc $|X| < q^{- a(f) + \delta}$. All poles of $F_{L, \beta}(X)$ on the circle $|X| = q^{-a(f)}$ have order at most $b(f, \F_q)$ and the only poles on this circle with order equal to $b(f, \F_q)$ are of the form $e(-\alpha)q^{-a(f)}$ with $\alpha \in \HH^1(\F_q, \Q/\Z)$ such that $\beta \in \Br_{\mathcal{C}, a(f)^{-1} \cdot \alpha} BG_{\F_q}$. 

If $\beta \in \Br_{\mathcal{C}, \alpha} BG_{\F_q}$ then $\lim_{X \to e(-\alpha) q^{-a(f)}} (1 - e(\alpha)q^{a(f)}X)^{b(f, \F_q)} F_{L, \beta}(X)$ is equal to the following conditionally convergent Euler product
\[
a(f)^{b(f, \F_q)}\prod_{P \in \mathbf{A}^1_{\F_q}}(1 - q^{-\deg P})^{b(\F_q, f)}\left( 1 + \sum_{c \in \mathcal{C}_{L, \beta}(\F_q(P))} e(\mathrm{cor}_{\F_q(P)/\F_q}(\partial_c(\beta))) (e(\alpha)q^{a(f)})^{-(\deg P) f(c)} \right).
\]
\end{lemma}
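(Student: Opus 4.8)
\textbf{Proof of \Cref{lem:analytic_properties_F_L_beta}.}
The plan is to repeat essentially verbatim the argument of \Cref{lem:analytic_properties_F_beta}, with the single change that the set of conjugacy classes contributing to the Euler factors is $\mathcal{C}_{L, \beta} = \mathcal{C}^*_L \cap \mathcal{C}_\beta$ instead of $\mathcal{C}_\beta$. The starting point is the explicit Euler product for $F_{L, \beta}(X)$ from \Cref{lem:decompose_main_term_Brauer_L}. Absolute convergence for $|X| < q^{-a(f)}$ follows by comparing the Euler factors with $|Z_{\mathbf{A}^1_{\F_q}}(X)|^{|\mathcal{C}_{L,\beta}(\bar{\F}_q)|}$, exactly as before.

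The key observation making the argument go through unchanged is that $\mathcal{C}_f \subset \mathcal{C}^*_L$: since $M$ is generated by $\mathcal{C}_f$ and $M \subseteq L$, every conjugacy class on which $f$ attains its minimum already lies in $\mathcal{C}^*_L$, and by the lemma preceding \Cref{lem:mobius_inversion_application} such a class remains a conjugacy class of $G$. Hence $\mathcal{C}_f \cap \mathcal{C}_{L, \beta} = \mathcal{C}_f \cap \mathcal{C}_\beta$, so the set of ``minimal'' classes producing poles on the circle $|X| = q^{-a(f)}$ is identical to the one in \Cref{lem:analytic_properties_F_beta}. Concretely, I would again choose $\frac{a(f)}{4} > \delta > 0$ so that the terms in each Euler factor coming from $c \in \mathcal{C}_{L,\beta} \setminus \mathcal{C}_f$ contribute $O(q^{-1-\delta})$ in the region $|X| < q^{-a(f)+\delta}$ and are irrelevant for absolute convergence by comparison with $Z_{\mathbf{A}^1_{\F_q}}(X)$. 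For $C \in \pi_0(\mathcal{C}_f)$ I pick a generator $\chi_C$ of $\HH^1(\F_q, [\F_q(C):\F_q]^{-1}\Z/\Z)$ and an element $d_C \in \HH^1(\F_q, \Q/\Z)$ restricting to $\partial_C(\beta)$ on $\F_q(C)$, write the point-count on $C(\F_q(P))$ via character orthogonality, and factor the ``minimal part'' of each Euler factor as a product of shifted zeta factors $Z_{\mathbf{A}^1_{\F_q}}(Y_{C,k})$ with $Y_{C,k} = e(-k\chi_C)e(-d_C)X^{a(f)^{-1}}$, up to an absolutely convergent Euler product $G(X)$ in the disc $|X| < q^{-a(f)+\delta}$. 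This gives the meromorphic continuation, the bound $b(f,\F_q)$ on pole orders, the description of the maximal-order poles as $e(-\alpha)q^{-a(f)}$ with $(a(f)^{-1}\alpha)|_{\F_q(C)} = \partial_C(\beta)$ for all $C \in \pi_0(\mathcal{C}_f)$ --- i.e.\ $\beta \in \Br_{\mathcal{C}_f, a(f)^{-1}\cdot\alpha} BG_{\F_q}$ --- and, via \Cref{lem:conditional_convergence}, the stated conditionally convergent Euler product for the limit, with $\mathcal{C}_\beta$ replaced by $\mathcal{C}_{L,\beta}$ in the non-minimal Euler factors.

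There is no real obstacle here: the only point requiring care is the bookkeeping inclusion $\mathcal{C}_f \subset \mathcal{C}^*_L \cap \mathcal{C}_\beta$ noted above, which guarantees that enlarging $L$ from $M$ up to $G$ does not change the dominant pole structure and only modifies the subdominant Euler factors (and hence the overall constant). Everything else is identical to \Cref{lem:analytic_properties_F_beta}, so I would simply refer to that proof for the remaining details rather than reproducing the lattice-point and conditional-convergence estimates.
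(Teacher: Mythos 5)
Your proposal is correct and matches the paper's approach: the paper simply omits this proof, stating it is identical to that of \Cref{lem:analytic_properties_F_beta}, and your argument reproduces that proof with $\mathcal{C}_\beta$ replaced by $\mathcal{C}_{L,\beta}$. Your explicit check that $\mathcal{C}_f \subseteq \mathcal{C}_L^*$ (so that the minimal classes, and hence the dominant pole structure, are unchanged for every $M \subseteq L \subseteq G$) is exactly the point that justifies the paper's claim that the two proofs are identical.
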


Recall the notion of a height $H$ and the definition of $N_H(G, q^{d}, \Omega_{\infty})$ from \S\ref{sec:heights}.
\begin{theorem}\label{thm:main_theorem_height_etale_unbalanced}
    Let $\F_q$ be a finite field, $G$ a tame finite \'etale group scheme over $\F_q$, $f \in \Hom(\mathcal{C}_G^*, \Z_{> 0})$ a function such that $\mathcal{C}_f$ generates $G$ and $\Omega_{\infty} \subset BG[\F_q((t^{-1}))]$.
    
    If $q$ is sufficiently large in terms of $|G(\bar{\F}_q)|$ then we have
    \begin{equation*}
      N_H(G, q^{d}, \Omega_{\infty}) = c_H( G, q^{d}, \Omega_{\infty}) d^{b(f, \F_q) - 1}q^{a(f) d} + O(d^{b(f, \F_q) - 2}q^{a(f) d}).  
    \end{equation*}
    With leading constant $c_H( G, q^{d}, \Omega_{\infty})$ given by
    \begin{equation*}
    \frac{a(f)^{b(f, \F_q)} |Z(G)(\F_q)|}{|G^{\mathrm{ab}}(-1)(\F_q)| (b(f, \F_q) - 1)!}\sum_{M \subseteq L \subseteq G}\mu(G/L)\sum_{\alpha \in \HH^1(\F_q, \Q/\Z)} e(d \cdot \alpha) \sum_{\beta \in \Br_{\mathcal{C}_f, a(f)^{-1} \cdot \alpha} BG_{\F_q}} \hat{\tau}_{H, L,\alpha}(\beta)
    \end{equation*}
    Here $\hat{\tau}_{H, L, \alpha}(\beta)$ is the conditionally convergent Euler product
    \begin{equation*}\label{eq:Euler_products_main_theorem_unbalanced}
        \hat{\tau}_{h_{\infty}, \alpha; \Omega_{\infty}}(\beta) \prod_{P \in \A^1_{\F_q}}(1 - q^{-\deg P})^{b(f, \F_q)}\hat{\tau}_{f, L,  \alpha, P}(\beta).
    \end{equation*}
    Where $\hat{\tau}_{f, L,\alpha, P}(\beta)$ is equal to
    \[ 
     1 + \sum_{\substack{c \in \mathcal{C}_{L, \beta}(\F_q(P)))}} e( \mathrm{cor}_{\F_q(P)/\F_q}(\partial_c(\beta)))e(-\deg P \cdot f(c) \cdot \alpha)q^{-(\deg P) a(f) f(c)}.
    \]
    And the local factor at $\infty$ is given by the integral
    \[
    \hat{\tau}_{h_{\infty}, \alpha, \infty}(\beta ; \Omega_{\infty}) = \int_{\Omega_{\infty}} e^{2 i \pi\langle \beta, (\varphi_{\infty}, \gamma) \rangle_{\mathrm{BM}}} e(-h_{\infty}(\varphi_{\infty}) \cdot \alpha)q^{- a(f) h_{\infty}(\varphi_{\infty})} d\tau_{\infty}(\varphi_{\infty}).
    \]

    Moreover, $\Br_{\mathcal{C}_{f}, a(f)^{-1} \cdot \alpha} BG_{\F_q} = 0 $ unless $\alpha \in \HH^1(\F_q, a(f)|G|^{-2}\Z/\Z)$ so $c_H( G, q^{d}, \Omega_{\infty})$ is $ a(f)^{-1}|G|^2$-periodic in $d$.
\end{theorem}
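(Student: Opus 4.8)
The plan is to follow the proof of \Cref{thm:main_theorem_height_etale} verbatim, inserting the M\"obius inversion of \Cref{lem:mobius_inversion_application} to account for the fact that only the tuples $\underline{n}$ whose support generates $G$ contribute geometrically connected covers. First, exactly as in the balanced case, I would reduce to the situation where $\Omega_{\infty}$ consists of a single element $\varphi_{\infty}$ and the local height at $\infty$ is trivial, using the identity $N_H(G, q^d, \Omega_{\infty}) = N_{H'}(G, q^{d - h_{\infty}(\varphi_{\infty})}, \Omega_{\infty})$ for the height $H'$ with $h_{\infty} = 1$ and checking that the claimed formula is compatible with this shift. Then \Cref{lem:mobius_inversion_application} rewrites $N_H(G, q^d, \Omega_{\infty}) = \sum_{f(\underline{n}) = d} N(G, \underline{n}, \Omega_{\infty})$ as
\[
\frac{|Z(G)(\F_q)| \cdot |\Br BG_{\F_q}|}{|G^{\mathrm{ab}}(-1)(\F_q)|}\sum_{M \subseteq L \subseteq G} \mu(G/L) \, \frac{N_{f, L, \text{main}}(G, q^d, \Omega_{\infty})}{|\Br BG_{\F_q}|} + O\!\left(d^{b(f, \F_q) - 2} q^{a(f) d}\right),
\]
where the error term is absorbed via \Cref{lem:error_term}. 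It thus remains to obtain, for each intermediate subgroup $M \subseteq L \subseteq G$, the asymptotics of $N_{f, L, \text{main}}(G, q^d, \Omega_{\infty})$.

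For a fixed $L$ I would pass to the generating series $F_L(X) = \sum_d N_{f, L, \text{main}}(G, q^d, \Omega_{\infty}) X^d$, decompose it over the Brauer group by \Cref{lem:decompose_main_term_Brauer_L} as $F_L(X) = \sum_{\beta \in \Br BG_{\F_q}} \hat{\tau}_{\infty}(\beta ; \Omega_{\infty}) F_{L, \beta}(X)$, and feed in the analytic input of \Cref{lem:analytic_properties_F_L_beta}: each $F_{L, \beta}(X)$ extends meromorphically past the circle $|X| = q^{-a(f)}$, all poles there have order at most $b(f, \F_q)$, and the poles of order exactly $b(f, \F_q)$ occur at $X = e(-\alpha) q^{-a(f)}$ for those $\alpha \in \HH^1(\F_q, \Q/\Z)$ with $\beta \in \Br_{\mathcal{C}_f, a(f)^{-1} \cdot \alpha} BG_{\F_q}$, with the residue given by the explicit conditionally convergent Euler product recorded there. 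Applying the Tauberian theorem \Cref{lem:Tauberian_theorem} to $F_L(X)$ with $r = a(f)$ and $b = b(f, \F_q)$ then gives $N_{f, L, \text{main}}(G, q^d, \Omega_{\infty}) = c_L \, d^{b(f, \F_q) - 1} q^{a(f) d} + O(d^{b(f, \F_q) - 2} q^{a(f) d})$, with $c_L$ the sum over $\alpha$ and over $\beta \in \Br_{\mathcal{C}_f, a(f)^{-1}\alpha} BG_{\F_q}$ of $e(d\alpha)$ times $\hat{\tau}_{\infty}(\beta;\Omega_{\infty})$ times that residue. Summing over $L$ with weights $\mu(G/L)$, reinstating the local factor at $\infty$ as the integral $\hat{\tau}_{h_{\infty},\alpha,\infty}(\beta;\Omega_{\infty})$, and collecting the constants $|Z(G)(\F_q)|$, $|G^{\mathrm{ab}}(-1)(\F_q)|$, $a(f)^{b(f,\F_q)}$ and $(b(f,\F_q)-1)!$ yields the stated formula. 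The periodicity is inherited unchanged: \Cref{lem:exact_sequence_brauer_group} forces every residue $\partial_c(\beta)$ with $\beta \in \Br_{\bar{\mathcal{C}_f}} BG_{\F_q}$ to be $|G|^2$-torsion, so $\Br_{\mathcal{C}_f, a(f)^{-1}\alpha} BG_{\F_q}$ vanishes unless $\alpha \in \HH^1(\F_q, a(f)|G|^{-2}\Z/\Z)$.

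Since \Cref{lem:mobius_inversion_application}, \Cref{lem:decompose_main_term_Brauer_L} and \Cref{lem:analytic_properties_F_L_beta} are already stated (with proofs declared identical to their balanced counterparts), there is no serious obstacle at the level of this theorem, only bookkeeping; the one point that must be checked for that bookkeeping to close is that $a(f)$, $b(f,\F_q)$ and the location of the top-order poles are \emph{the same} for every intermediate $L$. This holds because $\mathcal{C}_f \subseteq \mathcal{C}_L^* \subseteq \mathcal{C}_G^*$: the minimum of $f$ on $\mathcal{C}_L^*$ is still $a(f)^{-1}$ and is attained exactly along $\mathcal{C}_f$, and $\mathcal{C}_{L,\beta} = \mathcal{C}_L^* \cap \mathcal{C}_\beta$ contains $\mathcal{C}_f$ whenever $\beta \in \Br_{\mathcal{C}_f, a(f)^{-1}\alpha} BG_{\F_q}$. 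Hence each summand $F_{L,\beta}$ contributes a pole of the full order $b(f,\F_q)$ at the same points $e(-\alpha)q^{-a(f)}$, the $\mu(G/L)$-weighted sum of the residues assembles into a single leading constant of the asserted shape, and the $L$-dependence survives only inside the Euler factors through the truncated conjugacy-class scheme $\mathcal{C}_{L,\beta}$.
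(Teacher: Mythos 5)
Your proposal is correct and follows essentially the same route as the paper's own proof: reduce to a single $\varphi_\infty$ with trivial height at $\infty$, apply \Cref{lem:mobius_inversion_application}, and then handle each $N_{f,L,\mathrm{main}}$ via \Cref{lem:decompose_main_term_Brauer_L}, \Cref{lem:analytic_properties_F_L_beta} and the Tauberian theorem \Cref{lem:Tauberian_theorem}. Your extra check that $a(f)$, $b(f,\F_q)$ and the top-order pole locations are uniform in $L$ (because $\mathcal{C}_f \subseteq \mathcal{C}_L^*$ and $\mathcal{C}_f \subseteq \mathcal{C}_{L,\beta}$ for the relevant $\beta$) is a worthwhile point the paper leaves implicit in \Cref{lem:analytic_properties_F_L_beta}.
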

\begin{proof}
    We reduce to the case that $\Omega_{\infty}$ consists of a single element and that $h_{\infty} = 0$ in the same way as in the proof of \Cref{thm:main_theorem_height_etale}. We then have that 
    \begin{align*}
        N_H(G, q^d, \Omega_{\infty}) = &\sum_{\substack{\underline{n} \in \Hom_{\F_q}(\mathcal{C}_G^*, \N) \\ f(\underline{n}) = d}} N(G, \underline{n}, \Omega_{\infty}) \\ =  &\frac{|Z(G)(\F_q)| }{|G^{\mathrm{ab}}(-1)(\F_q)| }  \sum_{M \subseteq L \subseteq G } \mu(G/L) N_{f, L, \text{main}}(G, q^d, \Omega_{\infty}) + O(d^{b(f, \F_q) - 2}q^{a(f)d}).
    \end{align*}
    Where the second equality is by \Cref{lem:mobius_inversion_application} and the definition of $N_{f, L, \text{main}}(G, q^d, \Omega_{\infty})$.

    The theorem then follows from \Cref{lem:decompose_main_term_Brauer_L}, \Cref{lem:analytic_properties_F_L_beta} and a Tauberian theorem such as \Cref{lem:Tauberian_theorem}.
    \end{proof}
    \begin{proof}[Proof of \Cref{thm:main_theorem} when $\mathcal{C}_f$ does not generate $G$]
    This is just \Cref{thm:main_theorem_height_etale_unbalanced}, with $G$ a constant group scheme, $\Omega_{\infty} = BG[\F_q((t^{-1}))]$ and $h_{\infty} = f$. 
    \end{proof}
    \begin{remark}
        The leading constant in \Cref{thm:main_theorem_height_etale_unbalanced} has some undesired properties. For example, it is not immediately clear how this constant agrees with the leading constant of \cite{Wright1989Abelian} for $G$ abelian and $H$ the discriminant. This is because there are elements $\beta \in \Br_{\mathcal{C}_f, a(f)^{-1} \cdot \alpha}$ such that $\sum_{M \subseteq L \subseteq G} \mu(G/L) \hat{\tau}_{H, L,\alpha}(\beta) = 0$, in this case all transcendental Brauer elements.

        Such elements do not contribute to the leading constant and so we would prefer to remove them, but it is not clear how to interpret all elements $\beta$ such that $\sum_{M \subseteq L \subseteq G} \mu(G/L) \hat{\tau}_{H, L,\alpha}(\beta) = 0$ for general groups $G$.

        Another issue is that it is not clear how to identify this leading constant with the conjectural constant of \cite[Conj.~9.6]{Loughran2025mallesconjecturebrauergroups}. We expect that the the equality should follow from a Poisson summation argument to rewrite the infinite sum over $BG/M[\F_q(t)]$ in \cite[Conj.~9.6]{Loughran2025mallesconjecturebrauergroups} as a finite sum of Euler products. But this is a rather technical computation and lies outside the scope of this paper.
    \end{remark}
    \begin{remark}\label{rem:why_unbalanced_works}
        In this remark we will give an explanation in terms of the framework of \cite[\S9.2]{Loughran2025mallesconjecturebrauergroups} why the case that $\mathcal{C}_f$ generates $G/Z(G)$ is not significantly more difficult than the balanced case.

        The idea of loc.~cit.~is to decompose elements of $BG[\F_q(t)]$ in terms of their image in $BG/M[\F_q(t)]$. The groupoid fiber $X_{\psi}$ of the map $BG \to BG/M$ at $\psi \in BG/M[\F_q(t)]$ is a gerbe over $\F_q(t)$. The map $\coprod_{\psi}X_{\psi}[\F_q(t)] \to BG[\F_q(t)]$ is not a bijection but the size of the fibers can be controlled with \cite[Lem.~2.14]{Loughran2025mallesconjecturebrauergroups}.

        If $\varphi$ is a lift of $\psi$ then $X_{\psi} \cong BM_{\varphi}$ by \cite[Lem.~2.13]{Loughran2025mallesconjecturebrauergroups}. The assumption that $M$ generates $G/Z(G)$ means that $Z(G) \to G/M$ is surjective. This implies that $\psi_{\bar{\F}_q(t)}$ can be lifted to an element $\bar{\varphi} \in (BZ(G))(\bar{\F}_q(t))$. So $(X_{\psi})_{\bar{\F}_q(t)} \cong BM_{\bar{\varphi}} = BM$ as $Z(G)$ acts trivially by conjugation. This does not imply that $X_{\psi}$ is the base change of a gerbe from $\F_q$, for example if $G = \Z/4\Z$ and $M = 2\Z/4\Z$ then $X_{\psi}(\F_q(t))$ can be empty, but every gerbe over $\F_q$ has a rational point by \cite[Lem~8.8]{Loughran2025mallesconjecturebrauergroups}. One can show that this is the only obstruction.
        
        If $X_{\psi}$ does come from a gerbe over $\F_q$ then the restriction of $H$ to $X_{\psi}$ is balanced. Unfortunately, we cannot apply \Cref{thm:main_theorem_height_etale} since the local heights of this restricted height at a point $P \in \A^1_{\F_q}$ will generally be different from $H_f$. But it shows that this case is not too far from the balanced case.
    \end{remark}
\bibliographystyle{alpha} 
\bibliography{references}

@unpublished{Landesman2025Homological,
      author = {Aaron Landesman and Ishan Levy},
      title = {Homological stability for Hurwitz spaces and applications},
      year = {2025},
      note = {arXiv preprint arXiv:2503.03861 [math.AT]},
      url = {https://arxiv.org/abs/2503.03861},
      eprint = {2503.03861},
      archivePrefix = {arXiv},
      primaryClass = {math.AT}
}

@article {Romagny2011Composantes,
    AUTHOR = {Romagny, Matthieu},
     TITLE = {Composantes connexes et irr\'{e}ductibles en familles},
   JOURNAL = {Manuscripta Math.},
  FJOURNAL = {Manuscripta Mathematica},
    VOLUME = {136},
      YEAR = {2011},
    NUMBER = {1-2},
     PAGES = {1--32},
      ISSN = {0025-2611},
   MRCLASS = {14D23 (14D06 14H10)},
  MRNUMBER = {2820394},
MRREVIEWER = {Orsola Tommasi},
       DOI = {10.1007/s00229-010-0424-7},
       URL = {https://doi.org/10.1007/s00229-010-0424-7},
}

@article {Wood2021Lifting,
    AUTHOR = {Wood, Melanie Matchett},
     TITLE = {An algebraic lifting invariant of {E}llenberg, {V}enkatesh,
              and {W}esterland},
   JOURNAL = {Res. Math. Sci.},
  FJOURNAL = {Research in the Mathematical Sciences},
    VOLUME = {8},
      YEAR = {2021},
    NUMBER = {2},
     PAGES = {Paper No. 21, 13},
      ISSN = {2522-0144},
   MRCLASS = {14H30},
  MRNUMBER = {4240808},
MRREVIEWER = {Filippo F. Favale},
       DOI = {10.1007/s40687-021-00259-2},
       URL = {https://doi.org/10.1007/s40687-021-00259-2},
}

@unpublished{Loughran2025mallesconjecturebrauergroups,
      author = {Daniel Loughran and Tim Santens},
      title = {Malle's conjecture and Brauer groups of stacks},
      year = {2024},
      note = {arXiv preprint arXiv:2412.04196 [math.NT]},
      url = {https://arxiv.org/abs/2412.04196},
      eprint = {2412.04196},
      archivePrefix = {arXiv},
      primaryClass = {math.NT}
}

@article {Chambert-Loir2010Igusa,
    AUTHOR = {Chambert-Loir, Antoine and Tschinkel, Yuri},
     TITLE = {Igusa integrals and volume asymptotics in analytic and adelic
              geometry},
   JOURNAL = {Confluentes Math.},
  FJOURNAL = {Confluentes Mathematici},
    VOLUME = {2},
      YEAR = {2010},
    NUMBER = {3},
     PAGES = {351--429},
      ISSN = {1793-7442},
   MRCLASS = {11G50 (11G35 14G05)},
  MRNUMBER = {2740045},
MRREVIEWER = {Boris \`E. Kunyavski\u{\i}},
       DOI = {10.1142/S1793744210000223},
       URL = {https://doi.org/10.1142/S1793744210000223},
}

@book {Titchmarsh1986Zeta,
    AUTHOR = {Titchmarsh, E. C.},
     TITLE = {The theory of the {R}iemann zeta-function},
   EDITION = {Second},
      NOTE = {Edited and with a preface by D. R. Heath-Brown},
 PUBLISHER = {The Clarendon Press, Oxford University Press, New York},
      YEAR = {1986},
     PAGES = {x+412},
      ISBN = {0-19-853369-1},
   MRCLASS = {11M06},
  MRNUMBER = {882550},
MRREVIEWER = {Matti Jutila},
}

@book {Wilf2006generating,
    AUTHOR = {Wilf, Herbert S.},
     TITLE = {generatingfunctionology},
   EDITION = {Third},
 PUBLISHER = {A K Peters, Ltd., Wellesley, MA},
      YEAR = {2006},
     PAGES = {x+245},
      ISBN = {978-1-56881-279-3; 1-56881-279-5},
   MRCLASS = {05A15 (05-02 33D15 33F10)},
  MRNUMBER = {2172781},
}

@unpublished{landesman2025stablehomologyhurwitzmodules,
      author = {Aaron Landesman and Ishan Levy},
      title = {The stable homology of Hurwitz modules and applications},
      year = {2025},
      note = {arXiv preprint arXiv:2510.02068 [math.NT]},
      url = {https/arxiv.org/abs/2510.02068},
      eprint = {2510.02068},
      archivePrefix = {arXiv},
      primaryClass = {math.NT}
}

@unpublished{ellenberg2013homologicalstabilityhurwitzspaces,
      author = {Jordan S. Ellenberg and Akshay Venkatesh and Craig Westerland},
      title = {Homological stability for Hurwitz spaces and the Cohen-Lenstra conjecture over function fields, II},
      year = {2012},
      note = {arXiv preprint arXiv:1212.0923 [math.NT]},
      url = {https://arxiv.org/abs/1212.0923},
      eprint = {1212.0923},
      archivePrefix = {arXiv},
      primaryClass = {math.NT}
}

@book {Skorobogatov2001Torsors,
    AUTHOR = {Skorobogatov, Alexei},
     TITLE = {Torsors and rational points},
    SERIES = {Cambridge Tracts in Mathematics},
    VOLUME = {144},
 PUBLISHER = {Cambridge University Press, Cambridge},
      YEAR = {2001},
     PAGES = {viii+187},
      ISBN = {0-521-80237-7},
   MRCLASS = {14G05 (11G35 11S25 14D10 14G25 14L30)},
  MRNUMBER = {1845760},
MRREVIEWER = {Tam\'{a}s Szamuely},
       DOI = {10.1017/CBO9780511549588},
       URL = {https://doi.org/10.1017/CBO9780511549588},
}

@article {Malle2004Galois,
    AUTHOR = {Malle, Gunter},
     TITLE = {On the distribution of {G}alois groups. {II}},
   JOURNAL = {Experiment. Math.},
  FJOURNAL = {Experimental Mathematics},
    VOLUME = {13},
      YEAR = {2004},
    NUMBER = {2},
     PAGES = {129--135},
      ISSN = {1058-6458},
   MRCLASS = {11R32 (11R47 12F10)},
  MRNUMBER = {2068887},
MRREVIEWER = {F. Diaz y Diaz},
       URL = {http://projecteuclid.org/euclid.em/1090350928},
}

@article {Malle2002Galois,
    AUTHOR = {Malle, Gunter},
     TITLE = {On the distribution of {G}alois groups},
   JOURNAL = {J. Number Theory},
  FJOURNAL = {Journal of Number Theory},
    VOLUME = {92},
      YEAR = {2002},
    NUMBER = {2},
     PAGES = {315--329},
      ISSN = {0022-314X},
   MRCLASS = {12F12 (11R32 11R47 12F10)},
  MRNUMBER = {1884706},
MRREVIEWER = {Helmut V\"{o}lklein},
       DOI = {10.1006/jnth.2001.2713},
       URL = {https://doi.org/10.1006/jnth.2001.2713},
}

@article {Ellenberg2016Homological,
    AUTHOR = {Ellenberg, Jordan S. and Venkatesh, Akshay and Westerland,
              Craig},
     TITLE = {Homological stability for {H}urwitz spaces and the
              {C}ohen-{L}enstra conjecture over function fields},
   JOURNAL = {Ann. of Math. (2)},
  FJOURNAL = {Annals of Mathematics. Second Series},
    VOLUME = {183},
      YEAR = {2016},
    NUMBER = {3},
     PAGES = {729--786},
      ISSN = {0003-486X},
   MRCLASS = {14H10 (11G25 11R29 14H30 55R80)},
  MRNUMBER = {3488737},
MRREVIEWER = {Benjamin Collas},
       DOI = {10.4007/annals.2016.183.3.1},
       URL = {https://doi.org/10.4007/annals.2016.183.3.1},
}

@incollection {Ellenberg2005Counting,
    AUTHOR = {Ellenberg, Jordan S. and Venkatesh, Akshay},
     TITLE = {Counting extensions of function fields with bounded
              discriminant and specified {G}alois group},
 BOOKTITLE = {Geometric methods in algebra and number theory},
    SERIES = {Progr. Math.},
    VOLUME = {235},
     PAGES = {151--168},
 PUBLISHER = {Birkh\"{a}user Boston, Boston, MA},
      YEAR = {2005},
   MRCLASS = {11R58 (12F12)},
  MRNUMBER = {2159381},
MRREVIEWER = {Xianke Zhang},
       DOI = {10.1007/0-8176-4417-2\_7},
       URL = {https://doi.org/10.1007/0-8176-4417-2_7},
}

@unpublished{ellenberg2023foxneuwirthfukscellsquantumshuffle,
      author = {Jordan S. Ellenberg and TriThang Tran and Craig Westerland},
      title = {Fox-Neuwirth-Fuks cells, quantum shuffle algebras, and Malle's conjecture for function fields},
      year = {2017},
      note = {arXiv preprint arXiv:1701.04541 [math.NT]},
      url = {https://arxiv.org/abs/1701.04541},
      eprint = {1701.04541},
      archivePrefix = {arXiv},
      primaryClass = {math.NT}
}

@article {Turkelli2015Connected,
    AUTHOR = {T{\"{u}}rkelli, Seyfi},
     TITLE = {Connected components of {H}urwitz schemes and {M}alle's
              conjecture},
   JOURNAL = {J. Number Theory},
  FJOURNAL = {Journal of Number Theory},
    VOLUME = {155},
      YEAR = {2015},
     PAGES = {163--201},
      ISSN = {0022-314X},
   MRCLASS = {11R32 (11R58 12F12 14H30)},
  MRNUMBER = {3349443},
MRREVIEWER = {Benjamin Collas},
       DOI = {10.1016/j.jnt.2015.03.005},
       URL = {https://doi.org/10.1016/j.jnt.2015.03.005},
}

@article {Wright1989Abelian,
    AUTHOR = {Wright, David J.},
     TITLE = {Distribution of discriminants of abelian extensions},
   JOURNAL = {Proc. London Math. Soc. (3)},
  FJOURNAL = {Proceedings of the London Mathematical Society. Third Series},
    VOLUME = {58},
      YEAR = {1989},
    NUMBER = {1},
     PAGES = {17--50},
      ISSN = {0024-6115},
   MRCLASS = {11R29 (11R37 11R45)},
  MRNUMBER = {969545},
MRREVIEWER = {Boris Datskovsky},
       DOI = {10.1112/plms/s3-58.1.17},
       URL = {https://doi.org/10.1112/plms/s3-58.1.17},
}

@article {Maki1985Density,
    AUTHOR = {M{\"{a}}ki, Sirpa},
     TITLE = {On the density of abelian number fields},
   JOURNAL = {Ann. Acad. Sci. Fenn. Ser. A I Math. Dissertationes},
  FJOURNAL = {Annales Academiae Scientiarum Fennicae. Series A I.
              Mathematica Dissertationes},
    NUMBER = {54},
      YEAR = {1985},
     PAGES = {104},
      ISSN = {0355-0087},
   MRCLASS = {11N45 (11R21)},
  MRNUMBER = {791087},
MRREVIEWER = {M. Ram Murty},
}

@unpublished{bhargava2015geometryofnumbers,
      author = {Manjul Bhargava and Arul Shankar and Xiaoheng Wang},
      title = {Geometry-of-numbers methods over global fields {I}: Prehomogeneous vector spaces},
      year = {2015},
      note = {arXiv preprint arXiv:1512.03035 [math.NT]},
      url = {https://arxiv.org/abs/1512.03035},
      eprint = {1512.03035},
      archivePrefix = {arXiv},
      primaryClass = {math.NT}
}

@unpublished{landesman2025cohenlenstramomentsfunctionfields,
      author = {Aaron Landesman and Ishan Levy},
      title = {The {C}ohen--{L}enstra moments over function fields via the stable homology of non-splitting {H}urwitz spaces},
      year = {2024},
      note = {arXiv preprint arXiv:2410.22210 [math.NT]},
      url = {https://arxiv.org/abs/2410.22210},
      eprint = {2410.22210},
      archivePrefix = {arXiv},
      primaryClass = {math.NT}
}

@article {Bresciani_2024Valuative,
    AUTHOR = {Bresciani, Giulio and Vistoli, Angelo},
     TITLE = {An arithmetic valuative criterion for proper maps of tame
              algebraic stacks},
   JOURNAL = {Manuscripta Math.},
  FJOURNAL = {Manuscripta Mathematica},
    VOLUME = {173},
      YEAR = {2024},
    NUMBER = {3-4},
     PAGES = {1061--1071},
      ISSN = {0025-2611},
   MRCLASS = {14A20 (14D10 14H25 14J20)},
  MRNUMBER = {4704766},
MRREVIEWER = {Federico Scavia},
       DOI = {10.1007/s00229-023-01491-6},
       URL = {https://doi.org/10.1007/s00229-023-01491-6},
}

@unpublished{gundlach2022mallesconjecturemultipleinvariants,
      author = {Fabian Gundlach},
      title = {Malle's conjecture with multiple invariants},
      year = {2022},
      note = {arXiv preprint arXiv:2211.16698 [math.NT]},
      url = {https://arxiv.org/abs/2211.16698},
      eprint = {2211.16698},
      archivePrefix = {arXiv},
      primaryClass = {math.NT}
}

@MISC {SawinLeading,
    TITLE = {Leading constant in {B}atyrev-{T}schinkel's refinement of {M}anin conjecture},
    AUTHOR = {Will Sawin},
    HOWPUBLISHED = {MathOverflow},
    NOTE = {URL:https://mathoverflow.net/q/448789 (version: 2023-06-13)},
    EPRINT = {https://mathoverflow.net/q/448789},
    URL = {https://mathoverflow.net/q/448789}
}

@article {Wood2010Abelian,
    AUTHOR = {Wood, Melanie Matchett},
     TITLE = {On the probabilities of local behaviors in abelian field
              extensions},
   JOURNAL = {Compos. Math.},
  FJOURNAL = {Compositio Mathematica},
    VOLUME = {146},
      YEAR = {2010},
    NUMBER = {1},
     PAGES = {102--128},
      ISSN = {0010-437X},
   MRCLASS = {11R20 (11R45)},
  MRNUMBER = {2581243},
MRREVIEWER = {Roland Qu\^{e}me},
       DOI = {10.1112/S0010437X0900431X},
       URL = {https://doi.org/10.1112/S0010437X0900431X},
}

@unpublished{liu2025imaginarycasenonabeliancohenlenstra,
      author = {Yuan Liu and Ken Willyard},
      title = {The imaginary case of the nonabelian {C}ohen--{L}enstra heuristics},
      year = {2025},
      note = {arXiv preprint arXiv:2507.21558 [math.NT]},
      url = {https://arxiv.org/abs/2507.21558},
      eprint = {2507.21558},
      archivePrefix = {arXiv},
      primaryClass = {math.NT}
}

@incollection {Samperton2020Schur,
    AUTHOR = {Samperton, Eric},
     TITLE = {Schur-type invariants of branched {$G$}-covers of surfaces},
 BOOKTITLE = {Topological phases of matter and quantum computation},
    SERIES = {Contemp. Math.},
    VOLUME = {747},
     PAGES = {173--197},
 PUBLISHER = {Amer. Math. Soc., [Providence], RI},
      YEAR = {[2020] \copyright 2020},
   MRCLASS = {57M12 (14H30 55R40 57R85 81T45)},
  MRNUMBER = {4079751},
MRREVIEWER = {S. R. Nasyrov},
       DOI = {10.1090/conm/747/15045},
       URL = {https://doi.org/10.1090/conm/747/15045},
}

@article {Delsarte1948Fonctions,
    AUTHOR = {Delsarte, S.},
     TITLE = {Fonctions de {M}\"{o}bius sur les groupes abeliens finis},
   JOURNAL = {Ann. of Math. (2)},
  FJOURNAL = {Annals of Mathematics. Second Series},
    VOLUME = {49},
      YEAR = {1948},
     PAGES = {600--609},
      ISSN = {0003-486X},
   MRCLASS = {20.0X},
  MRNUMBER = {25463},
MRREVIEWER = {H. N. Shapiro},
       DOI = {10.2307/1969047},
       URL = {https://doi.org/10.2307/1969047},
}

@unpublished{ellenberg2025homologicalstabilitygeneralizedhurwitz,
      author = {Jordan S. Ellenberg and Aaron Landesman},
      title = {Homological stability for generalized {H}urwitz spaces and {S}elmer groups in quadratic twist families over function fields},
      year = {2023},
      note = {arXiv preprint arXiv:2310.16286 [math.NT]},
      url = {https://arxiv.org/abs/2310.16286},
      eprint = {2310.16286},
      archivePrefix = {arXiv},
      primaryClass = {math.NT}
}

@article {Cadman2007stacks,
    AUTHOR = {Cadman, Charles},
     TITLE = {Using stacks to impose tangency conditions on curves},
   JOURNAL = {Amer. J. Math.},
  FJOURNAL = {American Journal of Mathematics},
    VOLUME = {129},
      YEAR = {2007},
    NUMBER = {2},
     PAGES = {405--427},
      ISSN = {0002-9327,1080-6377},
   MRCLASS = {14D20 (14A20 14N35)},
  MRNUMBER = {2306040},
MRREVIEWER = {Michael\ A.\ Rose},
       DOI = {10.1353/ajm.2007.0007},
       URL = {https://doi.org/10.1353/ajm.2007.0007},
}

@article {Behrend2006Uniformization,
    AUTHOR = {Behrend, Kai and Noohi, Behrang},
     TITLE = {Uniformization of {D}eligne-{M}umford curves},
   JOURNAL = {J. Reine Angew. Math.},
  FJOURNAL = {Journal f\"{u}r die Reine und Angewandte Mathematik. [Crelle's
              Journal]},
    VOLUME = {599},
      YEAR = {2006},
     PAGES = {111--153},
      ISSN = {0075-4102,1435-5345},
   MRCLASS = {14D20 (14A20 14H30)},
  MRNUMBER = {2279100},
MRREVIEWER = {Hsian-Hua\ Tseng},
       DOI = {10.1515/CRELLE.2006.080},
       URL = {https://doi.org/10.1515/CRELLE.2006.080},
}
\end{document}